\DeclareMathAlphabet{\mathpzc}{OT1}{pzc}{m}{it}
\theoremstyle{plain}
\newtheorem{thm}{Theorem}
\newtheorem*{maintheorem*}{Main Theorem}
\newtheorem*{thm*}{Theorem}
\newtheorem*{thma*}{Theorem A}
\newtheorem*{thmaa*}{Theorem A'}
\newtheorem*{thmb*}{Theorem B}
\newtheorem*{thmo*}{Theorem 1.1}
\newtheorem*{thmc*}{Theorem C}
\newtheorem*{thmd*}{Theorem D}
\newtheorem*{thmf*}{Theorem 4.1}
\newtheorem*{conjecture*}{Conjecture}
\newtheorem*{prop*}{Proposition}
\newtheorem{cor}[thm]{Corollary} 
\newtheorem{lem}[thm]{Lemma}
\newtheorem{prop}[thm]{Proposition}
\newtheorem{definition}[thm]{Definition}
\newtheorem{remark}[thm]{Remark}
\def\bbz{\mathbb{Z}}
\def\bbq{\mathbb{Q}}
\def\bbr{\mathbb{R}}
\def\bba{\mathbb{A}}
\def\bbc{\mathbb{C}}
\def\bbh{\mathbb{H}}
\def\bbn{\mathbb{N}}
\def\bbg{\mathbb{G}}
\def\bbt{\mathbb{T}}
\def\bbv{\mathbb{V}}
\def\bbu{\mathbb{U}}
\def\bbp{\mathbb{P}}
\def\bbl{\mathbb{L}}
\def\bbs{\mathbb{S}}
\def\bbm{\mathbb{M}}
\def\bbx{\mathbb{X}}
\def\ucal{\mathcal{U}}
\def\vcal{\mathcal{V}}
\def\ocal{\mathcal{O}}
\def\ccal{\mathcal{C}}
\def\mcal{\mathcal{M}}
\def\Rcal{\mathcal{R}}
\def\pcal{\mathcal{P}}
\def\wcal{\mathcal{W}}
\def\afr{\mathfrak{a}}
\def\ufr{\mathfrak{u}}
\def\gfr{\mathfrak{g}}
\def\ybf{\mathbf{y}}
\def\xbf{\mathbf{x}}
\def\vbf{\mathbf{v}}
\def\1bf{\mathbf{1}}
\def\ebf{\mathbf{e}}
\def\apzc{\mathpzc{a}}
\def\bpzc{\mathpzc{b}}
\DeclareMathOperator\vol{vol}
\DeclareMathOperator\SL{SL}
\DeclareMathOperator\GL{GL}
\DeclareMathOperator\PSL{PSL}
\DeclareMathOperator\Ad{Ad}
\DeclareMathOperator\Lie{Lie}
\DeclareMathOperator\diag{diag}
\DeclareMathOperator\supp{supp}
\newcommand{\leftexp}[2]{{\vphantom{#2}}^{#1}{#2}}
\def\h{\hspace{1mm}}
\def\vare{\varepsilon}
\def\be{\begin{equation}}
\def\ee{\end{equation}}
\def\ba{\backslash}
\def\TP{\widetilde{\Phi}}
\def\TD{\widetilde{\Delta}}
\newcommand{\wt}[1]{\widetilde{#1}}
\def\peone{P_E^{(1)}}
\def\pfone{P_F^{(1)}}
\begin{document}
\title{Translate of horospheres and counting problems}
\author{Amir Mohammadi}
\address{Dept of Mathematics,University of Texas at Austin, 1 University Station, C1200, Austin, TX 78750}
\email{amir@math.utexas.edu}
\thanks{A. M. was partially supported by the NSF grant DMS-1200388.}
\author{Alireza Salehi Golsefidy}
\address{Mathematics Dept, University of California, San Diego, CA 92093-0112}
\email{golsefidy@ucsd.edu}
\thanks{A. S-G. was partially supported by the NSF grant DMS-0635607 and NSF grant DMS-1001598.}
\subjclass{22E40}
\date{}
\begin{abstract}
Let $G$ be a semisimple Lie group without compact factors, $\Gamma$ be an irreducible lattice in $G.$  
In the first part of the article we give the necessary and sufficient condition under which a sequence of translates of probability ``horospherical measures" 
is convergent. And the limiting measure is also determined when it is convergent (see Theorems~\ref{trans} and~\ref{thm;trans-abs} for the precise statements). 
In the second part, two applications are presented. The first one is of geometric nature and the second one gives an alternative way to count the number of rational points on a flag variety. 
\end{abstract}
\maketitle
\section{Introduction.}
The equidistribution properties of horospherical orbits have been studied  extensively. Various methods have been employed to study this kind of problem, 
Hedlund proved minimality and Furstenberg proved the unique ergodicity of horocycle flows on compact surfaces. S.~Dani~\cite{D1, D2} classified all ergodic invariant measures for the action of a maximal horospherical subgroup of a semisimple Lie group. He also classified closure of the orbits of (not necessarily maximal) horospherical subgroups in~\cite{D3}. His method, to classify measures, relies ultimately on Furstenberg's idea for the proof of the unique ergodicity of horocycle flow on compact surfaces. Another possible approach to this problem would be to use representation theory, see the work of M.~Burger in~\cite{Bur} where these lines of ideas are utilized to classify invariant Radon measures for horocycle flows on geometrically finite surfaces. Also mixing properties of semisimple elements can be used to obtain measure classification for the action of the corresponding horospherical subgroups. These lines of ideas go back to G. A. Margulis' PhD thesis~\cite{Mar4}. They have been extensively utilized by many others ever since, see~\cite{Rat1} where similar ideas are present.  

As we mentioned horocycle flows on the unit tangent bundle of a hyperbolic surface $M$ of finite volume was the starting point of the study of such dynamics. In fact, in this case, Sarnak~\cite{Sar} gave the precise asymptotic behavior of closed horocycle orbits using Eisenstein series. If we do not ask for the best possible error rate, then (as we said earlier) one can describe such an asymptotic behavior using the mixing property of the geodesic flow. The latter has this advantage that it can be easily extended to any rank-1 finite volume locally symmetric manifold. The main reason being that the {\it entire} horospherical orbit gets expanded or contracted depending on the direction of the flow on an orthogonal geodesic. When $G$ is a higher rank semisimple Lie group and $\Gamma$ is a lattice in $G$, then for any closed horospherical orbit $\pi(Ux_0)$ in $G/\Gamma$ there are geodesic flows which expand some part of $\pi(Ux_0)$ and contract some 
other parts. That is the main reason why most of previous works only deal with 
translations in the expanding directions (the directions which expand {\it all} the horospherical orbit.) \cite{Sh,KM1, KW}.\footnote{The study of an arbitrary unipotent flow is much harder. In 
a series of seminal papers Ratner~\cite{Rat2,Rat3,Rat4} gave a complete description of probability measures on a homogeneous space
which are invariant under an arbitrary unipotent flow and then showed the 
closure of any orbit of a unipotent flow is homogeneous.}

In this work, we use soft methods to give the necessary and sufficient 
condition under which a sequence of translations of a closed horospherical 
orbit is equidistributed in the limit in a homogeneous closed subset\footnote{In 
a personal communication Peter Sarnak told us that this result can be also 
proved using Eisenstein series.}. Then using Ratner's theorem we show that
translations of a measure which is absolutely continuous with respect to a
horospherical measure have similar properties. In the second part of the 
article, two applications are given. The first one is an extension of \cite[Theorem 7.1]{EM}. We start with a finite volume locally symmetric space
$\mcal=K\ba G/\Gamma$ and consider a closed horosphere 
$\overline{\ucal}=K\ba Kg_0U\Gamma/\Gamma$ in $\mcal$. We count the number of the 
lifts of $\overline{\ucal}$ in the symmetric space $K\ba G$ which intersect a 
ball of radius $R.$\footnote{When $G=\PSL_2(\bbr)$, using mixing of geodesic 
flows, this result is proved in \cite[Theorem 7.1]{EM} and using the same idea 
one can easily get a similar result for any real rank-1 group. But this method 
is not good enough in the higher rank case.} As another application, we count number 
of rational points on a generalized flag variety with respect to any line-bundle, 
generalizing \cite{FMT} (the case of the anti-canonical line-bundle), and \cite{BaT} and~\cite{Pe} (for the case of an arbitrary metrized line-bundle, with split and quasi-split groups).\footnote{Eisenstein series is the main ingredient in \cite{FMT, BaT, Pe}.} 

Our strategy to handle the counting problems using dynamics is a well developed technique which originated in~\cite{DRS}, see also~\cite{EM}, and has been utilized by many authors, see~\cite{Oh}
and references therein.
The problems in hand, however, present one notably different feature: in almost all previous works one deals with cases in which the possibility of ``intermediate subgroups" is eliminated by the nature of the problem, see e.g. the ``non-focusing" condition in~\cite{EMS2}. On the other hand, intermediate behaviors naturally occur in problems we deal with. Thus, for the applications considered in this paper, a careful analysis of this feature is needed which adds new difficulties in carrying out the proofs.
\section{Statement of the results}\label{s:statement}
We need to set a few notations before stating the main results of this work. 
Let $\bbg$ be a simply connected almost simple $\bbq$-group with a 
fixed embedding in $\mathbb{GL}_l$. 
Our standing assumption in this paper is that $\bbg$ is $\bbq$-isotropic;
indeed the analysis carried out in this paper is only non-trivial under this condition. 
Let $\Gamma=\bbg(\bbz)=\bbg(\bbq)\cap\GL_l(\bbz)$. Let $\bbs$ 
be a maximal $\bbq$-split $\bbq$-torus. Let $\bbs\subseteq \bbp$ be a minimal $\bbq$-parabolic 
subgroup and let $\Delta$ and $\{\lambda_{\alpha}\}_{\alpha\in \Delta}$ be the associated simple roots relative 
to $\bbs$ and the  fundamental $\bbq$-weights (see \cite[Section 12]{BT}), respectively. For any $E\subseteq \Delta$, let $\bbp_E$ be the associated standard $\bbq$-parabolic subgroup, e.g. $\bbp_{\varnothing}=\bbp$ and $\bbp_{\Delta}=\bbg,$ in particular, for every $\alpha\notin E$ the fundamental weight $\lambda_\alpha$ defines a character of $\bbp_E$. 

Let $G=\bbg(\bbr)$, $A=\bbs(\bbr)^{\circ}$, let $Q_E=\overline{\bbp_E(\bbr)^+(\Gamma\cap\bbp_E(\bbr))}^\circ,$ where $\bbp_E(\bbr)^+$ is the group generated by $\bbr$-unipotent subgroups of $\bbp_E(\bbr)$ and the closure is taken with respect to the usual topology.

Alternatively $Q_E$ maybe described as 
follows: let $\bbp_E=\bbl_E R_u(\bbp_E),$ where $\bbl_E$ is the standard Levi factor. Let $\bbl_E=Z(\bbl_E)[\bbl_E,\bbl_E].$ Put $\bbm_E'=[\bbl_E,\bbl_E];$ this is a semisimple $\bbq$-group. 
Let $\bbm'_E=\bbh_1\cdots\bbh_k$ be almost direct product of $\bbq$-simple groups. Put 
\[
\mbox{ $M_E=\prod_{i\in I} \bbh_j(\bbr)^\circ,\quad$
 where $I=\{1\leq i\leq k: \bbh_i\;\text{is $\bbr$-isotropic}\}$}.
\]
Then, $Q_E=M_ER_u(\bbp_E(\bbr)).$
In particular, since $\bbg$ is simply connected, $G$ is the product of almost simple factors, and thanks to the fact that $\bbg$ is $\bbq$-isotopic $Q_{\Delta}=G.$ 

Let us mention some of the significant properties of this subgroup
\begin{enumerate}
 \item $Q_E\cap\Gamma$ is a lattice in $Q_E.$
 \item $\bbp_E(\bbr)^+$ acts ergodically on $Q_E/Q_E\cap\Gamma.$
 \item For any $E\subseteq \Delta$ let $\bbp_E^+\subseteq\bbp_E$ be the $\bbq$-subgroup generated by $\bbq$-unipotent subgroups. Then, 
 $Q_E$ is the group generated by $\bbp_E^+(\bbr)$ and $Q_\varnothing$, see the proof of Proposition~\ref{p:FindTheLimitUTrans}.
\end{enumerate}

For any topological space $X$, let $\pcal(X)$ be the space of probability 
measures on $X$ equipped with the weak* topology. If $H$ is a closed subgroup of $G$ such that $H\cap\Gamma$ is a 
lattice in $H$, then $\mu_H\in \pcal(G/\Gamma)$ denotes the $H$-invariant 
probability measure supported on $H\Gamma/\Gamma$. For any probability measure $\mu\in \pcal(X)$ and $g\in G$, $g\mu$ is a probability measure on $X$ such that $(g\mu)(E):=\mu(g^{-1}E)$; in particular $\supp(g\mu)=g\supp(\mu)$. 

\begin{thm}~\label{trans} 
In the above notation, let $\{a_n\}\subseteq A$ and $E\subseteq \Delta$.  Then
\begin{itemize}
\item[i)] If $\lambda_{\alpha} (a_n)$ goes to zero for some $\alpha\not \in E$, then $a_n \mu_{Q_E}$ diverges in $\pcal (G/\Gamma)$.
\item[ii)] Let $E\subseteq F\subseteq \Delta$. If $\lambda_{\alpha}(a_n)=1$ for any $n$ and any $\alpha\not \in F$ and $\lambda_{\alpha}(a_n)$ goes to infinity for any $\alpha\in F\setminus E$, then $a_n \mu_{Q_E}$ converges to $\mu_{Q_F}$. In particular, when $F=\Delta$, it converges to the Haar measure on $G/\Gamma$.
\end{itemize}
\end{thm}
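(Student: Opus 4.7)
For part (i), my plan is to exhibit a height function whose behavior on the support of $a_n\mu_{Q_E}$ forces mass escape. Fix $\alpha\notin E$ with $\lambda_\alpha(a_n)\to 0$. Since $\lambda_\alpha$ is a $\bbq$-character of $\bbp_E$ that vanishes on $R_u(\bbp_E)$ and on $[\bbl_E,\bbl_E]$, it is trivial on $Q_E$. Pick an irreducible $\bbg$-representation over $\bbq$ with highest weight $\lambda_\alpha$ and primitive integral highest-weight vector $v_\alpha$; then $qv_\alpha=v_\alpha$ for $q\in Q_E$, so
\[
\|a_n q v_\alpha\|=\|(a_nqa_n^{-1})\,a_nv_\alpha\|=|\lambda_\alpha(a_n)|\cdot\|v_\alpha\|\longrightarrow 0.
\]
The function $\phi(g\Gamma):=\min_{\gamma\in\Gamma}\|g\gamma v_\alpha\|$ is well-defined by discreteness of $\Gamma v_\alpha$, continuous, and tends to $0$ exactly along divergent sequences in $G/\Gamma$ heading into the $\alpha$-cusp. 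Since $\phi(a_nq\Gamma)\le|\lambda_\alpha(a_n)|\,\|v_\alpha\|\to 0$ uniformly in $q\in Q_E$, all the mass of $a_n\mu_{Q_E}$ leaves every compact subset of $G/\Gamma$, forcing divergence.

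For part (ii), my plan has three steps. First, tightness: under the hypothesis, $\lambda_\alpha(a_n)$ is either identically $1$ or tends to infinity for every $\alpha\notin E$, so the obstruction from (i) does not arise. Using a non-divergence estimate in the spirit of Dani--Margulis--Kleinbock--Margulis for the unipotent-radical action at the basepoints $a_n\Gamma$, I would conclude uniform mass-retention, so that any weak$^{*}$ subsequential limit $\nu$ of $\{a_n\mu_{Q_E}\}$ is a probability measure on $G/\Gamma$.

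Second, extra invariance: since $a_n\in A$ normalizes $Q_E$, each $a_n\mu_{Q_E}$ and hence $\nu$ is $Q_E$-invariant. Using a standard contraction argument, I would show that $\nu$ is also invariant under every unipotent $u$ with $a_n^{-1}ua_n\to e$; in our setting this applies to every root subgroup $U_\beta$ with $\beta(a_n)\to\infty$. A weight calculation, expressing each such root in terms of fundamental weights and using that $\lambda_\alpha(a_n)=1$ for $\alpha\notin F$ while $\lambda_\alpha(a_n)\to\infty$ for $\alpha\in F\setminus E$, identifies precisely the $U_\beta$ with $\supp(\beta)\subseteq F$ meeting $F\setminus E$; together with $Q_E$ these generate $Q_F$, via the description $Q_F=M_FR_u(\bbp_F(\bbr))$. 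Hence $\nu$ is $Q_F$-invariant.

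Third, identification, which I expect to be the main obstacle. Since $Q_F$ is generated by its unipotent subgroups, Ratner's measure-classification theorem writes $\nu$ as a convex combination of algebraic measures $\mu_H$ on closed orbits $H\Gamma/\Gamma$ with $Q_F\subseteq H$. I must rule out components coming from intermediate $H\supsetneq Q_F$. Following the linearization method of Dani--Margulis and Mozes--Shah, each such intermediate $\bbq$-subgroup is cut out by the stabilization of some vector $w$ in an algebraic representation; concentrating positive $\nu$-mass on $H\Gamma/\Gamma$ would force the translates $a_n\cdot w$ to stay bounded along the $\Gamma$-orbit, and a weight analysis --- analogous to the one in (i), now exploiting $\lambda_\alpha(a_n)\to\infty$ for $\alpha\in F\setminus E$ --- yields a contradiction. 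Once all intermediate possibilities are excluded, $\nu=\mu_{Q_F}$, and together with tightness this upgrades to convergence of the full sequence. This intermediate-subgroup analysis is the core difficulty --- precisely the ``intermediate behavior'' advertised in the introduction --- and is presumably where Proposition~\ref{p:FindTheLimitUTrans} enters.
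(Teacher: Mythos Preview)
Your argument for part~(i) is correct and is essentially the paper's proof: the paper uses the representation $\vartheta_\alpha=\wedge^{\ell_\alpha}\Ad$ and the vector $v_\alpha\in\wedge^{\ell_\alpha}\ufr_\alpha$, on which $\bbp_E$ acts by the character $\rho'_{\Delta\setminus\{\alpha\}}$ (a positive multiple of $\lambda_\alpha$ on $\bbs$ by Lemma~\ref{root}), but your choice of the irreducible with highest weight $\lambda_\alpha$ works equally well.

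Part~(ii), however, has a genuine gap in Step~2. Your contraction argument for extra invariance does not work. The standard argument --- show $u\nu=\nu$ for $u$ with $a_n^{-1}ua_n\to e$ --- requires, when unpacked, that $\mu_{Q_E}$ be invariant under $a_n^{-1}ua_n$; but $a_n$ normalises $Q_E$, so $a_n^{-1}ua_n\in Q_E$ if and only if $u\in Q_E$, and you gain nothing. Concretely, take $\bbg=\mathbb{SL}_3$, $E=\varnothing$, $F=\{\alpha_1\}$. The hypothesis forces $a_n=\alpha_1^{\vee}(t_n)=\diag(t_n,t_n^{-1},1)$ with $t_n\to\infty$. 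The roots with $\beta(a_n)\to\infty$ are $\alpha_1,\alpha_1+\alpha_2,-\alpha_2$; even granting invariance under those, together with $Q_\varnothing$ they generate $Q_{\{\alpha_2\}}$, not $Q_{\{\alpha_1\}}$. In particular you never obtain $U_{-\alpha_1}$-invariance this way, yet $U_{-\alpha_1}\subseteq Q_{\{\alpha_1\}}$. Your ``weight calculation'' claim --- that the expanded root groups are exactly those with $\supp(\beta)\subseteq F$ meeting $F\setminus E$ --- is false precisely because simple roots have \emph{negative} coefficients when written in fundamental weights. This is the phenomenon highlighted by the $\mathbb{SL}_5$ example in the introduction: the cone governed by fundamental weights is strictly larger than the positive Weyl chamber, and the expanding/contracting pattern of root groups does not align with $Q_F$.

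The paper proceeds quite differently. For $E=\varnothing$ it first establishes tightness (Proposition~\ref{p:NoEscape}) via the Eskin--Mozes--Shah non-divergence Theorems~A and~B, the crucial input being Lemma~\ref{d_ilowerbound}, which uses Shah's projection lemma~\eqref{e;shah} together with $\lambda_\alpha(a_n)\ge 1$ to bound the functions $d_\alpha$ from below. It then invokes Mozes--Shah~\cite{MS} directly: any limit $\mu$ is algebraic, $\mu=g\mu_H$, and by~\cite[Proposition~8.6]{BT} the group $g\bbh g^{-1}$ is sandwiched $\bbp_{E'}^+\subseteq g\bbh g^{-1}\subseteq\bbp_{E'}^{(1)}$ for some $E'\subseteq F$ (Lemma~\ref{ourversion}). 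The identification $E'=F$ is then forced by the character argument you anticipated in Step~3, but in the \emph{opposite} direction: one must rule out $H$ too \emph{small}, not too large. Writing $a_n=g_n(gh_ng^{-1})(g\gamma_n)$ with all factors in $P_{E'}$, one has $|\lambda_\alpha|\equiv 1$ on $\bbp_{E'}^{(1)}$ and on $\Gamma\cap P_{E'}$, contradicting $\lambda_\alpha(a_n)\to\infty$ for $\alpha\in F\setminus E'$. Finally, the general case $E\neq\varnothing$ is not handled directly at all: Proposition~\ref{p:Q_ETranslate} reduces it to the case $E=\varnothing$ by approximating $\mu_{Q_E}$ by a suitable $a'_n\mu_{Q_\varnothing}$ with $a'_n\in A\cap Q_E$ (Corollary~\ref{c:PartitionIntoRegions}), a step your plan does not anticipate.
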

In Section~\ref{ss;Abs-cont} we combine Theorem~\ref{trans} and 
Ratner's classification of measures invariant under a unipotent subgroup 
to get a similar statement for translates of certain absolutely continuous 
measures with respect to $\mu_{Q_E}.$ The following is a special 
case of what is proved there. See Theorem~\ref{thm;trans-abs-general} 
and the discussion proceeding that theorem for a general statement and 
a discussion of extra conditions imposed in the absolutely continuous case.  

\begin{thm}\label{thm;trans-abs}
Let the notation and assumptions be as in Theorem~\ref{trans} and let $\mu=f\mu_{Q_E}\in\pcal(G/\Gamma)$
where $f$ is a non-negative continuous function on $G/\Gamma.$
Then
\begin{itemize}
\item[i)] If $\lambda_{\alpha} (a_n)$ goes to zero for some $\alpha\not\in E$, then $a_n \mu$ 
diverges in $\pcal (G/\Gamma)$.
\item[ii)] Let $\{a_n\}\subseteq A$ be a sequence so that
$\lambda_{\alpha}(a_n)$ goes to infinity for any $\alpha\in\Delta\setminus E .$ Then
$a_n\mu$ converges to the $G$-invariant Haar measure on $G/\Gamma$.
\end{itemize}  
\end{thm}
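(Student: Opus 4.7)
The strategy is to deduce both parts from Theorem~\ref{trans} combined with the absolute continuity $\mu=f\mu_{Q_E}$.

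For Part (i), the plan is to extract from the proof of Theorem~\ref{trans}(i) the stronger fact that $\mu_{Q_E}(a_n^{-1}K)\to 0$ for every compact $K\subseteq G/\Gamma$ (the orbits $a_nQ_E\Gamma/\Gamma$ escape completely to the cusp selected by the vanishing $\lambda_\alpha$). Since $f\in L^1(\mu_{Q_E})$, absolute continuity of the Lebesgue integral then gives $a_n\mu(K)=\int_{a_n^{-1}K}f\,d\mu_{Q_E}\to 0$ for every compact $K$, so $a_n\mu\to 0$ vaguely and a fortiori has no weak$^*$ limit in $\pcal(G/\Gamma)$.

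For Part (ii), I first reduce to bounded $f$ by truncating $f_M:=\min(f,M)$: with $c_M:=\int f_M\,d\mu_{Q_E}\uparrow 1$, the error $\|\mu-(f_M\mu_{Q_E})/c_M\|_{\mathrm{TV}}\to 0$, and this bound is preserved by $a_n$-translation. Assuming $f$ bounded, the domination $a_n\mu\le\|f\|_\infty\,a_n\mu_{Q_E}$ together with Theorem~\ref{trans}(ii) for $F=\Delta$ (giving $a_n\mu_{Q_E}\to\mu_G$) shows $\{a_n\mu\}$ is tight, and any weak$^*$-subsequential limit $\nu$ satisfies $\nu\le\|f\|_\infty\mu_G$, so $\nu=h\mu_G$ with $h\in L^\infty(\mu_G)$.

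The key step is producing invariance of $\nu$ under a nontrivial unipotent subgroup. Pick a one-parameter unipotent $\{u_t\}\subseteq R_u(\bbp_E(\bbr))$ inside a root subgroup $U_\gamma$ for some positive root $\gamma\notin\mathrm{span}(E)$ with $\gamma(a_n)\to\infty$; such $\gamma$ exist under the hypothesis $\lambda_\alpha(a_n)\to\infty$ for $\alpha\in\Delta\setminus E$ via the positivity of the inverse Cartan matrix for the irreducible $\bbg$. Setting $g_n:=a_n^{-1}u_t a_n\in R_u(\bbp_E(\bbr))$, one has $g_n\to e$; using $R_u(\bbp_E(\bbr))$-invariance of $\mu_{Q_E}$,
\[
\|g_n\mu-\mu\|_{\mathrm{TV}}=\int|f\circ g_n^{-1}-f|\,d\mu_{Q_E}\longrightarrow 0
\]
by Scheff\'e's lemma (continuity of $f$ gives pointwise convergence, and both functions have $\mu_{Q_E}$-integral $1$). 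Since TV-distance is translation invariant, $\|u_t(a_n\mu)-a_n\mu\|_{\mathrm{TV}}=\|a_n(g_n\mu-\mu)\|_{\mathrm{TV}}\to 0$ along the subsequence, yielding $u_t\nu=\nu$. Collecting such $u_t$, $\nu$ is invariant under a nontrivial unipotent subgroup $U$, and Moore's ergodicity theorem (applicable since $\bbg$ is almost $\bbq$-simple and $\bbq$-isotropic, so the relevant noncompact part of $G$ admits $\Gamma$ as an irreducible lattice) makes $\mu_G$ $U$-ergodic; the Radon--Nikodym density $h$ of $\nu=h\mu_G$ is $U$-invariant, hence $\mu_G$-a.e.\ constant, and the probability normalization forces $\nu=\mu_G$. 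Every weak$^*$-subsequential limit equals $\mu_G$, so $a_n\mu\to\mu_G$. The main obstacle is producing the specific $u_t$: translating the fundamental-weight hypothesis into a root-level contraction $a_n^{-1}u_t a_n\to e$ requires identifying a positive root $\gamma\notin\mathrm{span}(E)$ with $\gamma(a_n)\to\infty$, which uses the Dynkin-diagram structure of $\bbg$ more delicately than the rest of the argument.
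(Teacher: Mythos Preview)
Your argument is correct, and for part~(ii) it follows a genuinely different route than the paper. The paper deduces Theorem~\ref{thm;trans-abs} as the special case $F=\Delta$ of Theorem~\ref{thm;trans-abs-general}, whose proof invokes Ratner's measure classification: after producing a unipotent $V$ with $\sigma$ $V$-invariant and $\mu_{Q_F}$ $V$-ergodic, it rules out mass on lower-dimensional pieces $T(H,V)$ via the Ratner decomposition. You bypass Ratner entirely by observing that the domination $a_n\mu\le\|f\|_\infty\, a_n\mu_{Q_E}$ passes to the limit as $\nu\le\|f\|_\infty\,\mu_G$, so $\nu=h\mu_G$ with bounded density; then a single one-parameter unipotent invariance plus Moore/Mautner ergodicity of $\mu_G$ forces $h$ constant. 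This is more elementary and in fact the same domination trick would streamline the paper's proof of Theorem~\ref{thm;trans-abs-general} as well (the paper already uses the inequality $a_{n_i}\mu(\psi)\le \mu_Q(f)^{-1}\mu_Q(a_{n_i}^{-1}\psi)$ at the very end, so the absolute-continuity conclusion $\nu\ll\mu_{Q_F}$ is implicitly available). What the paper's formulation buys is a uniform treatment of all intermediate $F$, together with the explicit algebraic conditions (a)--(d) needed there.

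One small point: your justification ``positivity of the inverse Cartan matrix'' does not by itself produce a fixed root $\gamma$ with $\gamma(a_n)\to\infty$ along the \emph{whole} sequence---indeed such a $\gamma$ need not exist. What you actually need (and what your subsequential framing already allows) is: given a subsequence with $a_{n_i}\mu\to\nu$, pass to a further subsequence along which some root $\gamma$ of $R_u(\bbp_E)$ satisfies $\gamma(a_{n_i})\to\infty$. This follows immediately from $\rho_E'(a_n)\to\infty$, which holds since $\rho_E'$ is a positive integral combination of the $\lambda_\alpha$, $\alpha\in\Delta\setminus E$ (Lemma~\ref{l:RhoPositiveCone} in the paper). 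The paper makes exactly the same move in Lemma~\ref{l;general-unip-inv}. Part~(i) is handled identically in both: the support of $a_n\mu_{Q_E}$ eventually misses every compact set (Lemma~\ref{out}), so the same holds for $a_n\mu$.
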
 

Let us give a brief account on Theorem~\ref{trans}.
In view of Theorem~\ref{trans} the limiting behavior of the translates of the measures in question 
is governed by the fundamental weights and not the fundamental roots. 
For instance this means that, if $\pi(Q_{\varnothing})$ is translated along a ray 
in the interior of the {\em dual cone} of the positive Weyl chamber, 
it gets equidistributed in $G/\Gamma$ with respect to the Haar measure. 
This cone is larger than the positive Weyl in higher rank. 
However, the dual of a one dimensional cone is in a finite Hausdorff distance of itself, 
that is why the rank one case is much easier to handle. 

To further put this in perspective let us consider the explicit example
$\bbg=\mathbb{SL}_3$. Then we can and will assume that $\bbs$ is the 
$\bbq$-subgroup given by diagonal matrices with determinant one. 
Then $G=\SL_3(\bbr)$, $\Gamma=\SL_3(\bbz)$, and 
\[
A=\{\diag(a_1,a_2,a_3)|\h a_1,a_2,a_3\in \bbr^+,\h a_1a_2a_3=1\}.
\]
 Then there are six roots: for any $1\le i, j\le 3$ and $i\neq j$, 
 \[
 \phi_{ij}(\diag(a_1,a_2,a_3))=a_ia_j^{-1}.
 \]
  We can assume that $\Delta=\{\alpha_1, \alpha_2\}$, where $\alpha_1(\diag(a_1,a_2,a_3))=a_1a_2^{-1}$ 
  and $\alpha_2(\diag(a_1,a_2,a_3))=a_2a_3^{-1}$. Then 
\begin{enumerate}
	\item The coroots are $\alpha_1^{\vee}(a):=\diag(a,a^{-1},1)$ and $\alpha_2^{\vee}(a):=\diag(1,a,a^{-1})$.
	\item The fundamental weights are 
$\lambda_1(\diag(a_1,a_2,a_3)):=a_1$ and $\lambda_2(\diag(a_1,a_2,a_3)):=a_1a_2$ (note that $\lambda_i\circ\alpha_j^{\vee}(a)=a^{\delta_{ij}}$.).
  \item The corresponding proper parabolic subgroups are 
\[
 \bbp_\varnothing=\left(\begin{array}{ccc}* & * & *\\ 0 & * & *\\ 0 & 0 & *\end{array}\right),\quad
 \bbp_{\alpha_1}=\left(\begin{array}{ccc}* & * & *\\ * & * & *\\ 0 & 0 & *\end{array}\right),\quad
 \bbp_{\alpha_2}=\left(\begin{array}{ccc}* & * & *\\ 0 & * & *\\ 0 & * & *\end{array}\right).
 \]
  \item The corresponding $Q$ subgroups are 
\[
 Q_\varnothing=\left(\begin{array}{ccc}1 & * &*\\ 0 & 1& *\\ 0& 0&1 \end{array}\right),\quad
 Q_{1}=\left(\begin{array}{cc}\SL_2(\bbr) & *\\ 0 & 1\end{array}\right),\quad
 Q_{2}=\left(\begin{array}{ccc}1 & * \\ 0 & \SL_2(\bbr) \end{array}\right).
 \]
\end{enumerate}  
We view the group of characters $X^*(\bbs)$ and co-characters $X_*(\bbs)$ as additive groups. Note that $\chi\circ\theta(x)=x^{\langle \chi,\theta\rangle}$ gives us a non-degenerate bilinear form $\langle,\rangle:X^*(\bbs)\times X_*(\bbs)\rightarrow \bbz$. The Weyl
group (which, in this case, is the group of permutations) acts linearly on $X^*(\bbs)$. We take a Euclidean structure on $E:=X^*(\bbs)\otimes_{\bbz} \bbr$ which is invariant under the Weyl group. We can and will identify three 
Euclidean spaces: $E$, $X_*(\bbs)\otimes_{\bbz}\bbr$ and $\afr:=\Lie(A)$ (the Killing form induces a Euclidean structure on $\afr$). In particular, for any $\chi\in X^*(\bbs)$ and $a\in A$, we have $\chi(a)=e^{(\chi,\log(a))}$, where $\chi$ and $\log(a)$ are realized as elements of $E$. In Figure~\ref{F:Example}, the positive Weyl chamber (the blue shaded region) and the cone given by Theorem~\ref{trans} (the red shaded region) are given: in the sense that, if $\pi(Q_{\varnothing})$ is translated along the exponent of a ray in the interior of the red region, then it gets equidistributed in $G/\Gamma$ with respect to the Haar measure. And if $\pi(Q_{\varnothing})$ is translated along the walls of the red region, then it gets equidistributed with respect to the Haar measure of the corresponding $Q_E$ group. Along a ray in the interior of the complement of this cone, $\pi(Q_{\varnothing})$ escapes to infinity.   

\begin{figure}
\includegraphics[width=\columnwidth]{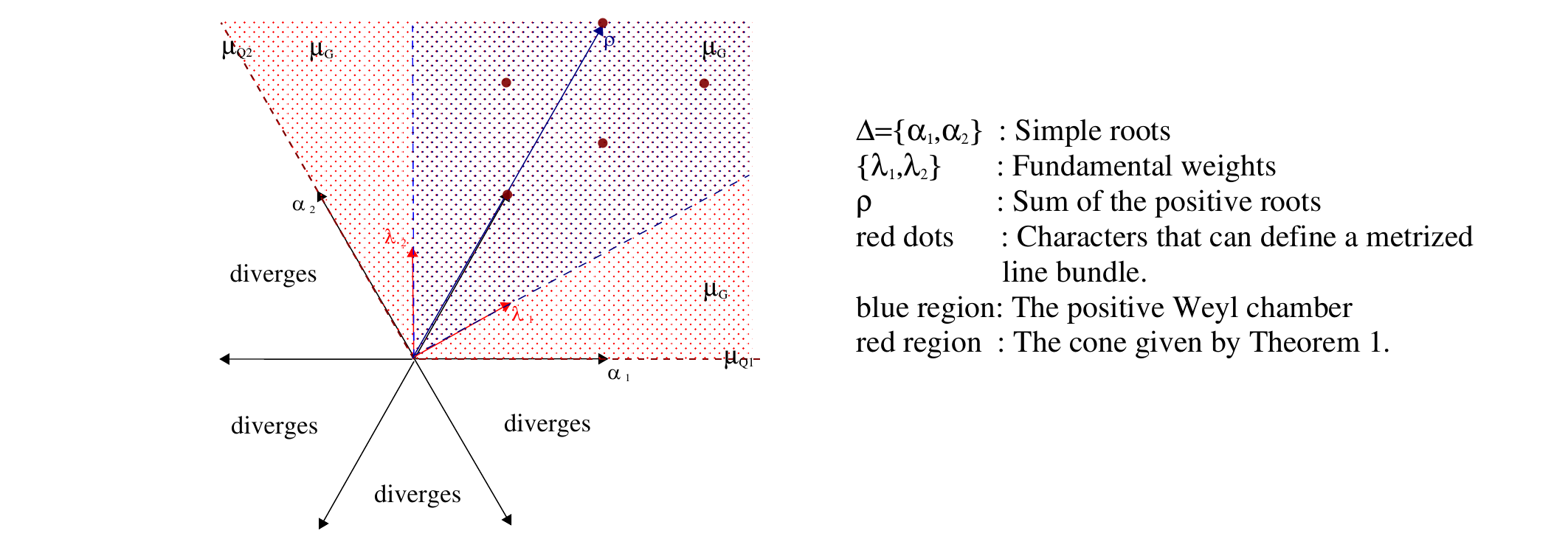}
\caption{The $\mathbb{SL}_3$ case.}
\label{F:Example}
\end{figure} 

So Theorem~\ref{trans} implies that 
\begin{enumerate}
	\item If either $\{a_n^{(1)}\}$ or $\{a_n^{(1)}a_n^{(2)}\}$ goes zero, then $\{\diag(a_n^{(1)},a_n^{(2)},a_n^{(3)})\mu_{Q_{\varnothing}}\}$ diverges.
	\item For $i=1$ or $2$, we have $\alpha_i^{\vee}(a)\mu_{Q_{\varnothing}}\rightarrow \mu_{Q_{i}}$ as $a$ goes to infinity.
	\item If $a_n^{(1)}$ and $a_n^{(1)}a_n^{(2)}$ go to infinity (as $n$ goes to infinity), then $\diag(a_n^{(1)},a_n^{(2)},a_n^{(3)})\mu_{Q_{\varnothing}}\rightarrow \mu_G$.	
\end{enumerate}
As one can see in Figure~\ref{F:Example} there are rays $\{a_t\}$ in $A$ which are outside of the positive Weyl chamber, but at the same time $a_t\mu_{Q_{\varnothing}}$ converges to $\mu_G$. Indeed if $\{a_t\}$ is a ray in the interior of the positive Weyl chamber, then using the mixing property of the action of $A$ and ``thickening'' of $U=Q_{\varnothing}$ one could easily deduce the desired result (this is called {\em wavefront phenomenon}). In fact, if the rank is large enough, then there are co-characters $\theta\in X_*(\bbs)$ such that  
\begin{enumerate}
	\item If $\bbp^+(\theta)$ is the associated parabolic subgroup, then $R_u(\bbp^+(\theta))\cap \bbp_{\varnothing}$ does not contain $R_u(\bbp_E)$ for any $E\subseteq \Delta$. 
	\item $\theta(t)\mu_{Q_{\varnothing}}\rightarrow \mu_G$ as $t$ goes to infinity. 
\end{enumerate}
These examples show that one cannot use the wavefront phenomenon for unipotent radical of larger parabolic subgroups to get the desired result. For instance, 
let $\bbg=\mathbb{SL}_5$ and $\bbs$ be the subgroup given by the diagonal matrices. Let
\[
\theta(t):=\diag(t^6,t^7,t^{-12},t^9,t^{-10}).
\]
Then here are the expansion and contraction directions as $t$ goes to infinity (the $ij$ component is the sign of $\langle \phi_{ij}, \theta\rangle$ for $i\neq j$):
\[
\left[\begin{array}{ccccc}
0&-&+&-&+\\
+&0&+&-&+\\
-&-&0&-&-\\
+&+&+&0&+\\
-&-&+&-&0	
\end{array}\right]. 
\] 
So clearly $R_u(\bbp^+(\theta))\cap \bbp_{\varnothing}$ does not contain $R_u(\bbp_E)$ for any $E\subseteq \Delta$. On the other hand, we have
\[
\langle \lambda_1,\theta\rangle= 6,\h \langle \lambda_2,\theta\rangle= 13,\h \langle \lambda_3,\theta\rangle= 1,\h \langle \lambda_4,\theta\rangle= 10,
\]
and so by Theorem~\ref{trans} we have that  $\theta(t)\mu_{Q_{\varnothing}}\rightarrow \mu_G$ as $t$ goes to infinity ($\phi_{ij}$, $\alpha_i$ and $\lambda_i$ are defined as in the case of $\bbg=\mathbb{SL}_3$). 

Though the above example shows that conjugating along a ray $\{a_t\}$ in the {\em correct} cone does not necessarily expand all the root directions of $R_u(\bbp_E)$ for any $E\subseteq \Delta$, but it does send the volume form of 
$R_u(\bbp_E)$ to infinity for any $E\subseteq \Delta$.

{\bf A geometric application:} as it is mentioned earlier, the first application of 
Theorem~\ref{trans} is a generalization of \cite[Theorem 7.1]{EM}. 
Let $\mcal$ be a finite volume locally symmetric irreducible Riemannian orbifold. That is: there is a connected semisimple Lie group $G$ and an irreducible lattice $\Gamma$ in $G$ such that $X\simeq K\ba G$, where $K$ is a maximal compact subgroup of $G$ and $x_0$ is mapped to the identity coset in $X$. Moreover, $\pi:X\rightarrow X/\Gamma\simeq \mcal$ gives the universal map. Suppose there exists a maximal unipotent subgroup $U$ of $G$ and some $g_0\in G$ such that $\overline{\ucal}=\pi(K\ba Kg_0U)$ is closed in $\mcal.$
Put $\ucal=K\ba Kg_0U.$

We count the number of lifts of $\overline{\ucal}$ to the universal 
cover $X$ which intersect the ball $B(x_0,R)$ centered at $x_0$ and of 
radius $R$ (goes to infinity). This in particular is interesting when the (real) rank of $G$ is at least 2 (also known as the higher rank case) as the method in \cite[Theorem 7.1]{EM} no longer works. Hence we will assume that we are in higher rank case. Thus,  
by Margulis' arithmeticity theorem~\cite[Chapter IX]{Mar-book}, 
there is an absolutely almost simple simply connected $\bbq$-group $\bbg$ such that
\begin{enumerate}
	\item There is a surjective homomorphism $\iota$ from $\bbg(\bbr)$ to $G$ with a finite central kernel.
	\item $\Gamma$ is commensurable with $\iota(\bbg(\bbz))$.
	\item There is a minimal $\bbq$-parabolic $\bbp$ and $g_0\in G$ such that 
	$\overline{\ucal}=\pi(K\ba Kg_0\iota(U'))$, where $U'=R_u(\bbp)(\bbr)$.
\end{enumerate} 
Before stating our result, let us introduce a few other notation and refer the reader to Section~\ref{ss:Riemannian} for more details. As we explained earlier
the Killing form induces a Euclidean structure on $\afr:=\Lie(A)$ and we identify it with $E:=X^*(\bbs)\otimes_{\bbz}\bbr$ and $X_*(\bbs)\otimes_{\bbz}\bbr$. And we choose a Riemannian metric on $X$ such that $d(x_0,a x_0)=\|\log(a)\|$ for any $a\in A$. Let $\rho_{\Delta}\in X^*(\bbs)$ be
\[
\rho_{\Delta}(a):=\det(\Ad(a)|_{\Lie(R_u(\bbp_{\varnothing})(\bbr))}),
\]
and realize it as a vector in $E$.

\begin{thm}\label{t:CountingLifts}
As $R$ goes to infinity,
\[
\#\hspace{.5mm}\{\gamma\in\Gamma\hspace{1mm}|\hspace{1mm}\mathcal{U}\gamma\cap B(x_0,R)\neq\varnothing\} \sim \frac{\vol(U/U\cap\Gamma)}{\vol(\mathcal{M})}\cdot  \rho_{\Delta}(a_0)\cdot\left(\frac{2\pi R}{\|\rho_{\Delta}\|}\right)^{\frac{r-1}{2}}\cdot e^{\|\rho_{\Delta}\| R},
\]
where $\vol$ are Riemannian volume forms, $a_0\in A$ such that $Kg_0U=Ka_0U$ and $r$ is the $\bbr$-rank of $G$. 
\end{thm}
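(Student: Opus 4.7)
I will follow the Duke--Rudnick--Sarnak/Eskin--McMullen dynamical counting strategy, adapted from counting lattice points to counting horospherical lifts. Writing $Kg_0U=Ka_0U$, a lift $\ucal\gamma$ meets $B(x_0,R)$ iff there is $u\in U$ with $a_0u\gamma\in KA_R^+K$, where $A_R^+=\{a\in A^+:\|\log a\|\leq R\}$ is the Cartan ball of radius $R$. Distinct lifts correspond to $\Gamma/\Gamma_\ucal$, and since $\overline{\ucal}$ is closed, $U\cap\Gamma\subseteq\Gamma_\ucal$ is a lattice in $U$ whose covolume is the factor $\vol(U/U\cap\Gamma)$ appearing in the answer. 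Thus $N(R)$ becomes a lattice-point count in a left-$U$-invariant region of $G$.

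Next I decompose the ball via polar coordinates $G=KA^+K$, whose Haar measure has the form $dg=J(a)\,dk_1\,da\,dk_2$ with $J(a)\sim c\,\rho_\Delta(a)$ as $a\to\infty$ in the interior of the chamber. For each $a$ in the interior of $A^+$ with $\|\log a\|$ large, the number of lifts whose Cartan projection lies near $a$ is estimated by a standard thickening: pair the indicator of a thin Cartan shell with a bump function $\phi\in C_c(G/\Gamma)$ supported in a small neighborhood of $e\Gamma$, and observe that the thickened count is essentially $\bigl(\int\phi\,d(a\mu_{Q_\varnothing})\bigr)\cdot(\text{shell volume})$. By Theorem~\ref{trans}(ii) with $E=\varnothing$ and $F=\Delta$, $a\mu_{Q_\varnothing}\to\mu_G$ as all $\lambda_\alpha(a)\to\infty$, so this local count is asymptotic to $\vol(U/U\cap\Gamma)\cdot J(a)/\vol(\mcal)$. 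The factor $\rho_\Delta(a_0)$ enters as the Jacobian of left-translation by $a_0$ comparing $U$-Haar to the transverse measure on the horosphere $\ucal\subset X$.

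Integrating the local counts over $a\in A_R^+$ reduces the theorem to the asymptotic
\[
\int_{A_R^+}\rho_\Delta(a)\,da=\int_{H\in\overline{\afr^+},\,\|H\|\leq R}e^{(\rho_\Delta,H)}\,dH,
\]
which is evaluated by the Laplace method: the linear functional $H\mapsto(\rho_\Delta,H)$ attains its maximum $\|\rho_\Delta\|R$ on the ball $\|H\|\leq R$ at the single point $H^\ast=R\rho_\Delta/\|\rho_\Delta\|$, which lies strictly in the interior of $\afr^+$ because $\rho_\Delta$ is a positive combination of positive roots. A Gaussian expansion tangential to the $(r-1)$-sphere $\|H\|=R$ gives the factor $(2\pi R/\|\rho_\Delta\|)^{(r-1)/2}$, while the radial integration provides $e^{\|\rho_\Delta\|R}$, combining to the stated asymptotic.

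The main obstacle is that Theorem~\ref{trans} is not uniform in $a\in A_R^+$: when $a$ approaches a wall of the Weyl chamber, some $\lambda_\alpha(a)$ remains bounded and $a\mu_{Q_\varnothing}$ degenerates to $\mu_{Q_F}$ for an intermediate $F\subsetneq\Delta$. Since the Laplace peak sits safely on the interior ray $\mathbb{R}_{>0}\cdot\rho_\Delta$, wall contributions ought to be of lower order; however, verifying this rigorously --- the ``intermediate subgroups'' issue flagged in the introduction --- requires a uniform quantitative version of Theorem~\ref{trans} together with non-divergence estimates controlling excursions into the thin part of $G/\Gamma$.
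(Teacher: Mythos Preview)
Your high-level strategy and your Laplace asymptotic for the $A$-integral are correct and match the paper, but two points separate your outline from a complete proof.

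\textbf{Decomposition.} You work with the Cartan decomposition $G=KA^+K$, while the paper works with Iwasawa $G=KAU$. Because $U$ is a \emph{maximal} unipotent subgroup with a closed orbit, the $\bbq$-rank equals the $\bbr$-rank and $G=KAU$ with the very $A=\bbs(\bbr)^\circ$ that enters Theorem~\ref{trans}. Lemma~\ref{decom} then shows, by convexity of the distance function along contracting geodesics, that the image of the $R$-ball in $G/U$ is exactly $KA_RU/U$ with $A_R=\{a:\|\log a\|\le R\}$. This is what makes the unfolding clean: one obtains
\[
\langle \wt{F}_R,\Psi\rangle=\frac{\nu(\pi(U))}{f_{a_0}(R)}\int_K\int_{A_Ra_0}(ka\mu_U)(\Psi)\,\rho'_\Delta(a)\,da\,dk,
\]
and the factor $\rho'_\Delta(a_0)$ you attribute to a Jacobian appears transparently as the shift of the integration region $A_R\leadsto A_Ra_0$. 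Trying to run the same scheme through $KA^+K$ forces you to control the second $K$-integration against $U$ and is unnecessarily awkward.

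\textbf{The wall contributions.} You correctly isolate the real difficulty --- the contribution of $a$'s near the walls, where $a\mu_{Q_\varnothing}$ limits to $\mu_{Q_F}$ for proper $F$ --- but your proposed remedy (a ``uniform quantitative version of Theorem~\ref{trans}'' plus non-divergence) is not what is needed, and no effective rate is used anywhere in the paper. The actual mechanism is the qualitative partition of Corollary~\ref{c:PartitionIntoRegions}: first, by Lemma~\ref{out}, the integrand vanishes unless $\lambda_\alpha(a)\ge e^{-M}$ for all $\alpha$ (with $M$ depending only on $\Psi$), so one only integrates over a translate of the dual cone; second, given $\Psi$ and $\vare'$, this region is cut into finitely many pieces $T_{\Psi,\vare',F}$, indexed by $F\subseteq\Delta$, on each of which $(ka\mu_U)(\Psi)$ is within $\vare'$ of $(ka_{F^c}\mu_{Q_F})(\Psi)$. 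The key structural fact is that for every proper $F$ the $F^c$-projection of $T_{\Psi,\vare',F}$ is \emph{compact}, so the corresponding integral is dominated by $\int_{A_{F,R}}\rho'_\Delta(a_F)\,da_F$ over a lower-dimensional slab; Lemma~\ref{l:Compare} shows this is $o(f_{a_0}(R))$ because $\langle\lambda_\alpha,\rho_\Delta\rangle\neq 0$ for all $\alpha$. Only the $F=\Delta$ piece survives, giving the main term. Once you have this weak convergence $\langle\wt F_R,\Psi\rangle\to\frac{\vol(U/U\cap\Gamma)}{\vol(\mcal)}\langle 1,\Psi\rangle$, pointwise convergence at $e\Gamma$ follows from the sandwich~\eqref{pointwise} and the well-roundedness $f_{a_0}(R+\vare)/f_{a_0}(R)\to 1$.
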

It is worth mentioning that in the setting of Theorem~\ref{t:CountingLifts} the $\bbq$-rank and the $\bbr$-rank
of $\bbg$ are equal. This is due to the fact that $U,$ the maximal unipotent subgroup of $G,$
has a closed orbit. Therefore, in this case the Iwasawa decomposition of $G$ gives $G=KAU$.

{\bf Rational points on a flag variety:} we also count the number of rational points on a flag variety $\bbx$ with 
respect to an arbitrary metrized line bundle. Any (generalized) flag variety is of the form
$\bbg/\bbp_E$ for some $E\subseteq \Delta$. Any $\bbq$-character $\chi$ of 
$\bbp_E$ induces the following line-bundle on $\bbx$: $\bbl_{\chi}:=\bbg\times \bbg_a/\sim$, where $(g,x)\sim (gp,\chi(p)x)$ for any $p\in \bbp_E$. In fact, 
since $\bbg$ is simply connected, any line-bundle is of this 
form~\cite[Section 6.2.1]{Pe}.  
Moreover, when $\bbg$ is simply-connected, $\bbl_{\chi}$ is 
a metrized line-bundle if and only if there are positive integers $n_\alpha$ such 
that the restriction of $\chi$ to $\bbs$ is equal to $\sum_{\alpha\not\in E} n_\alpha \lambda_\alpha,$ we are viewing the group of characters 
$X^*(\bbs)$ of $\bbs$ as an additive group, (see~\cite[Section 6.2.1]{Pe}, Section 3.1 and Lemma 11). For any $E\subseteq \Delta$, 
there is $\rho'_E\in X^*(\bbp_E)$ such that 
\begin{equation}\label{e:rho}
(\wedge^{\dim R_u(\bbp_E)}\Ad)(p) (u)=\rho_E'(p) u,
\end{equation}
for any $u\in\wedge^{\dim R_u(\bbp_E)}\Lie R_u(\bbp_E)$. It is well-known that 
the anti-canonical line-bundle of $\bbx$ corresponds to $\rho_E',$ see~\cite[Page 426]{FMT}. 

On the other hand, any $\chi=\sum_{\alpha\not\in E} n_\alpha \lambda_\alpha,$ where $n_\alpha$ are positive integers, is the highest weight of a unique irreducible representation $\eta_{\chi}:\bbg \rightarrow \GL(\bbv)$ which is strongly rational over $\bbq,$ \cite[Section 12]{BT}. So there is $0\neq v_0\in\bbv(\bbq)$ such that 
\[
\bbp_E=\{g\in \bbg| \eta_{\chi}(g)([v_0])=[v_0]\},
\]
where $[v_0]$ is the corresponding point in the projective space $\bbp(\bbv)$. 
Moreover, $\bbx$ is homeomorphic to the orbit $\eta_{\chi}(\bbg)([v_0])$. 

In the above setting, let $\ccal_E^+$ be the positive cone spanned by $\{\lambda_{\alpha}|\h \alpha\in E\}$
in the vector space $X^*\otimes\bbr,$ see Section~\ref{ss:IndexRelativeRootSystem} for the notation.

Let $H:\bbp(\bbv)(\bbq)\rightarrow \bbr^+$ be the ``usual'' height function, i.e. $H([v])=\|v\|$, where $v$ is a primitive integral vector and $\|\cdot\|$ is the Euclidean norm. The height function $H_{\chi}:\bbx\rightarrow \bbr^+$ associated with the metrized line-bundle $L_{\chi}$ is equal to $H(\eta_{\chi}(g)([v_0]))$. Let 
$
N_{\chi}(T):=\#\{x\in \bbx(\bbq)|\h H_{\chi}(x)\le T\}.
$
Franke, Manin and Tschinkel~\cite[Corollary 5]{FMT} gave the asymptotic behavior of 
$N_{\rho'_E},$ Batyrev and Tschinkel~\cite[Theorem 4.4]{BaT} did it for an arbitrary metrized line-bundle under the assumption that $\bbg$ is globally split, and L.~Peyre~\cite{Pe} gave the asymptotic for the quasi split case\footnote{It is worth mentioning that 
~\cite{Pe} gives a description of the constant $C$ in Theorem~\ref{t:Flag} in terms of Tamagawa numbers.
Moreover, in~\cite{FMT,BaT,Pe} the group $\bbg$ is defined over an arbitrary number field. In our formulation this can be replace by $\bbq$ using restriction of scalars.}. 
In the last section, we generalize their results to the case of an arbitrary almost simple $\bbq$-group $\bbg$ using an alternative approach.
\begin{thm}\label{t:Flag}
We have
\[
N_{\chi}(T)\sim C T^{\apzc_{\chi}} \log(T)^{\bpzc_{\chi}-1},
\]
where $\apzc_{\chi}:=\inf\{\apzc\in \bbr| \apzc\chi-\rho_E'\in \ccal_E^+\}$, $\bpzc_{\chi}$ is the codimension of the face of $\ccal_E^+$ which contains $\apzc_{\chi}\chi-\rho_E'$ and $C$ is a positive real number.
\end{thm}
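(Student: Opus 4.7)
The approach is to apply the Duke--Rudnick--Sarnak / Eskin--McMullen machinery for counting lattice-orbit points on a homogeneous variety, with Theorems~\ref{trans} and~\ref{thm;trans-abs} furnishing the required equidistribution input. First, I would use $\eta_\chi$ to identify $\bbx(\bbq)$ with the projective orbit $\bbg(\bbq)\cdot[v_0]\subset\bbp(\bbv)(\bbq)$. Since $\bbg$ is simply connected, strong approximation together with the arithmeticity of $\Gamma=\bbg(\bbz)$ reduces $N_\chi(T)$, modulo a finite-index correction accounting for the denominators produced when passing from $\eta_\chi(\gamma)v_0$ to its primitive integral representative, to counting $\gamma\in\Gamma/(\Gamma\cap\bbp_E(\bbq))$ for which $\|\eta_\chi(\gamma)v_0\|_{\text{prim}}\le T$. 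This puts us in the classical setting of counting lattice points on the homogeneous variety $G\cdot v_0\cong G/H$, where $H=\mathrm{Stab}_G(v_0)=\ker(\chi|_{\bbp_E(\bbr)^\circ})\cdot\,(\text{finite})$.

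Next I would rewrite the height analytically using the Langlands/Iwasawa decomposition $G=K\,\bbp_E(\bbr)^\circ$. Because $v_0$ is a highest weight vector, for $g=kp$ one has $\eta_\chi(g)v_0=\chi(p)\,\eta_\chi(k)v_0$, so the region of interest is essentially $B_T:=K\cdot\{p\in\bbp_E(\bbr)^\circ:|\chi(p)|\le T\}$. The Eskin--McMullen thickening argument then yields
\[
N_\chi(T)\;\sim\;\frac{C'}{\vol(G/\Gamma)}\cdot\vol_{G/H}\!\bigl(\{gv_0:\|gv_0\|\le T\}\bigr),
\]
where $C'$ absorbs the volumes of $K$, of a finite centralizer, and of $(\Gamma\cap H)\backslash H$. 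The crucial dynamical input is Theorem~\ref{thm;trans-abs}(ii): the translates $a_n\mu_{Q_E}$ converge to the Haar measure $\mu_G$ whenever $\lambda_\alpha(a_n)\to\infty$ for every $\alpha\notin E$. A short additional step, convolving with a bump supported on the $Z(\bbl_E)$-direction transverse to $Q_E$ inside $\bbp_E(\bbr)^\circ$, transfers the statement from $\mu_{Q_E}$ to the absolutely continuous measure actually required by the thickening.

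The final step is the volume asymptotic of $B_T$. Using the factorization $\bbp_E(\bbr)^\circ=M_E\,A_E\,R_u(\bbp_E)(\bbr)$ with $A_E:=Z(\bbl_E)(\bbr)^\circ$, and the facts that $\chi$ is trivial on $M_E$ and $R_u(\bbp_E)(\bbr)$ while the relative Haar density along $A_E$ is governed by $\rho_E'$, the volume reduces (after factoring out the finite-volume pieces) to a Laplace integral on $A_E$. Writing $\chi=\sum_{\alpha\notin E}n_\alpha\lambda_\alpha$ and $\rho_E'=\sum_{\alpha\notin E}m_\alpha\lambda_\alpha$ with $n_\alpha,m_\alpha>0$, and passing to logarithmic coordinates $s_\alpha=\log\lambda_\alpha(a)$, the integral takes the form
\[
\int_{s\ge 0,\;\sum n_\alpha s_\alpha\le\log T}e^{\sum m_\alpha s_\alpha}\prod_{\alpha\notin E}ds_\alpha.
\]
After the rescaling $s=u\log T$ this becomes a Laplace-type integral on the simplex $\{u\ge 0,\sum n_\alpha u_\alpha\le 1\}$; the linear exponent $\sum m_\alpha u_\alpha$ attains its maximum on the face spanned by precisely those vertices $e_\alpha/n_\alpha$ with $m_\alpha/n_\alpha=\apzc_\chi$. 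This face has codimension $\bpzc_\chi$ inside the simplex, matching the description of $\apzc_\chi\chi-\rho_E'$ as lying in a codimension-$\bpzc_\chi$ face of $\ccal_E^+$, and the standard Laplace asymptotic produces $T^{\apzc_\chi}\log(T)^{\bpzc_\chi-1}$ with positive leading constant. The main obstacle, flagged in the introduction as the new feature of these problems, is the intermediate-parabolic phenomenon: the walls of the simplex on which the maximum is attained correspond, under the dictionary in Theorem~\ref{trans}, to proper intermediate subgroups $\bbp_E\subsetneq\bbp_F\subsetneq\bbg$, and along those walls the translates $a_n\mu_{Q_E}$ equidistribute only to $\mu_{Q_F}$ by Theorem~\ref{trans}(ii) rather than to $\mu_G$. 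A careful stratification of $B_T$ by which walls are met, together with Theorem~\ref{trans}(i) to rule out escape of mass from strata on which some $\lambda_\alpha(a_n)$ stays bounded, is what isolates the top stratum (where all $\alpha$ with $m_\alpha/n_\alpha=\apzc_\chi$ simultaneously saturate the constraint), produces the precise $\log(T)^{\bpzc_\chi-1}$ factor, and fixes the positive constant $C$.
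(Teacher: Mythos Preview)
Your outline is correct for the anti-canonical case $\chi=\rho_E'$ (where $F_\chi=\Delta$) but has a genuine gap for general $\chi$. You write the Eskin--McMullen formula
\[
N_\chi(T)\sim \frac{C'}{\vol(G/\Gamma)}\,\vol_{G/H}(B_T),
\]
which presupposes that the translates occurring in the unfolding equidistribute to $\mu_G$. You then correctly observe that on the face of the simplex where the Laplace exponent is maximized the translates $a\mu_{Q_E}$ converge only to $\mu_{Q_{F_\chi}}$, not to $\mu_G$. These two statements are in tension, and your final sentence (``a careful stratification\ldots isolates the top stratum\ldots fixes the positive constant $C$'') does not resolve it: the ``top stratum'' you isolate is precisely the region where the limiting measure is the intermediate $\mu_{Q_{F_\chi}}$, so the naive volume formula with constant $C'/\vol(G/\Gamma)$ is not justified there. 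In particular the leading constant is \emph{not} the one produced by your Laplace computation alone.

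The paper closes this gap by a bootstrap. It first proves the anti-canonical case $\chi=\rho_E'$, where $F_\chi=\Delta$ and your argument does go through (Corollary~\ref{c:Anticanonical}). For general $\chi$, after the stratification (Lemmas~\ref{l:PartitionForIntegration}--\ref{l:FirstStep}) the main term becomes an integral over a \emph{bounded} region in $A_{F_\chi^c}$ of $(k\Theta(\xbf)\mu_{P_{F_\chi}^{(1)}})(\Psi_\vare)\,e^{l_{E^c}(\xbf)-\apzc l_\chi(\xbf)}$; unfolding this yields $\langle\Psi_\vare,\xi\rangle$ with the Eisenstein-type series
\[
\xi(g\Gamma)=\sum_{\gamma\in\Gamma/\Gamma\cap P_{F_\chi}^{(1)}} e^{-\apzc\, l_\chi(H(g\gamma))},
\]
and the already-proved anti-canonical count for the \emph{larger} parabolic $\bbp_{F_\chi}$ is what shows this series converges (Lemma~\ref{l:abs-conv}). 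The limit $\xi(e\Gamma)$ then supplies the correct constant. Your sketch is missing this two-step structure and the appearance of $\xi$; without it the proof for $F_\chi\neq\Delta$ does not close.
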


\section{Preliminary and Notation}\label{s:notation}
\subsection{}\label{ss:IndexRelativeRootSystem}
Since the relative fundamental weights are not very well-known, for the 
convenience of the reader we review the following topics
\begin{enumerate}
	\item The index of $\bbg$.
	\item The relation between the index of $\bbg$ and its relative root system.
	\item Classification of strongly $\bbq$-rational irreducible representation.
\end{enumerate}
We refer the reader to \cite{BT,Spr} for the details and the proofs. 
Let $\bbg$, $\bbs$ and $\Delta$ be as in the introduction. That is: $\bbg$ is a simply connected almost simple $\bbq$-group which is $\bbq$-isotropic,
$\bbs$ is a maximal $\bbq$-split $\bbq$-torus and $\Delta$ is the associated simple roots relative to $\bbs$. 
Let $\bbt\supseteq\bbs$ be a maximal $\bbq$-torus 
in $\bbg$. Let $\TP\subseteq X^*=X^*(\bbt)$ (resp. $\TP^{\vee}\subseteq X_*=X_*(\bbt)$) be the (absolute) roots (resp. coroots) of $\bbg$ relative to 
$\bbt$. The absolute Galois group ${\rm Gal}(\overline{\bbq}/\bbq)$ and 
$(N_{\bbg}(\bbt)/\bbt)(\overline{\bbq})$ act linearly on $X^*$. If 
$\bbt$ is a $k$-split torus for a finite Galois extension $k$ of $\bbq$, then 
the absolute Galois group acts through $H={\rm Gal}(k/\bbq)$ and 
$\wt{W}=(N_{\bbg}(\bbt)/\bbt)(\overline{\bbq})=(N_{\bbg}(\bbt)/\bbt)(k)$. Let $(,)$ be a positive definite bilinear form on $E=X^*\otimes \bbr$ 
which is invariant under the action of $H\ltimes \wt{W}$. Using this 
bilinear form, we identify $X_*$ as the dual of $X^*$ in $E$. Let 
$j:\bbs \rightarrow \bbt$ be the injection map. It induces the surjection  
$j^*:X^*\rightarrow X^*(\bbs)$ and the injection  $j_*:X_*(\bbs)\rightarrow X_*$. Via the above maps, $X_*(\bbs)$ gets identified with 
\[
Y_*:=\{v\in X_*|\h H\cdot v=v\},
\]
$j^*$ gets identified with the orthogonal projection $\pi$ onto 
\[
V=\{v\in E|\h H\cdot v=v\},
\]
and $X^*(\bbs)$ gets identified with $Y^*:=\pi(X^*)$. The root system $\Phi$ 
of $\bbg$ relative to $\bbs$ is equal to $\pi(\TP)\setminus\{0\}$ and $\TD$ 
can be chosen such that $\Delta=\pi(\TD)\setminus\{0\}$. For any Galois 
automorphism $h$, there is a unique element $w_h$ in the (absolute) Weyl group such 
that $w_h(h(\TD))=\TD$. This induces a new linear action $\tau$ of the Galois 
group ${\rm Gal}(\overline{\bbq}/\bbq)$ on $X^*$. Let $\wt{\phi}\in\TP$ 
and $\pi(\wt{\phi})=\phi\in \Phi$. Then 
$\pi^{-1}(\phi)=\tau(H)(\wt{\phi})$. The absolute root system of the 
anisotropic kernel of $\bbg$ is given by $\Phi_0=\ker(\pi)\cap \TP$ and 
$\Delta_0=\TD\cap \Phi_0$ is a set of simple roots in $\Phi_0$. 

Under the above identifications, for any $\wt{\phi}\in \TP$ (resp. $\phi\in \Phi$), we have that $\wt{\phi}^{\vee}=2\wt{\phi}/(\wt{\phi},\wt{\phi})$ is a coroot (resp. 
$\phi^{\vee}=2\phi/(\phi,\phi)$ is a relative coroot). Let $\{\wt{\lambda}_{\wt{\alpha}}|\h {\wt{\alpha}}\in \TD\}$ be the (absolute) fundamental weights of $\bbg$, i.e. for any ${\wt{\alpha}},{\wt{\beta}}\in \TD$ we have
$(\wt{\lambda}_{\wt{\alpha}},{\wt{\beta}}^{\vee})=\delta_{{\wt{\alpha}}{\wt{\beta}}}$. For any $\alpha\in \Delta$, let 
\[
\lambda_{\alpha}:=\pi\left(\sum_{\wt{\alpha}\in \pi^{-1}(\alpha)} \wt{\lambda}_{{\wt{\alpha}}}\right).
\]
 These are called {\it the relative fundamental weights of} $\bbg$. A dominant 
weight $\wt{\chi}$ is the highest weight of an irreducible representation $\eta_{\wt{\chi}}$ which is strongly rational over $\bbq$ if and only if its restriction $\chi=\pi(\wt{\chi})$ to $\bbs$ is a positive integral linear combination of the relative fundamental weights. And the strongly rational representation $\eta_{\wt{\chi}}$ is uniquely determined by $\chi$. So it will be denoted by $\eta_{\chi}$. Note that 
\be\label{e:AlmostDuality}
(\lambda_{\alpha},\beta^{\vee})=\frac{\|\wt{\alpha}\|}{\|\alpha\|}\delta_{\alpha\beta},
\ee
where $\wt{\alpha}\in \pi^{-1}(\alpha)\cap \TD$. We also know that  $\lambda_{\alpha}$ is in the image of $X^*(\bbp_F)\rightarrow X^*(\bbs)$ if $\alpha\not\in F$, in which case its extension to $\bbp_F$ is also denoted by $\lambda_{\alpha}$.

\subsection{}\label{ss;G+}
In this note we will work with groups generated by unipotent subgroups both over $\bbr$ and $\bbq.$ We will use the following convention; if $\bbh$ is a group defined over $\bbq$, we let $\bbh^+$ denote the algebraic group generated $\bbq$-unipotent subgroups. If we let $H=\bbh(\bbr)$ then $H^+=\bbh(\bbr)^+$ denotes the group generated by $\bbr$-unipotent subgroups of $H.$ Let us note that $\bbh^+(\bbr)\subseteq\bbh(\bbr)^+,$ and the inclusion maybe strict. It is clear that $\bbh^+$ (resp. $H^+$) is a normal Zariski connected (resp. connected) subgroup of $\bbh$ (resp. $\bbh(\bbr)$). Hence, if $\bbh$ is $\bbq$-isotropic (resp. $\bbr$-isotropic) and $\bbq$-almost simple group (resp. $\bbr$-almost simple group), then $\bbh^+=\bbh^{\circ}$ (resp. $H^+=\bbh(\bbr)^{\circ}$).

\subsection{}\label{ss:Decompositions}
Let $G$ and $P$ be as in Section~\ref{s:statement}
and denote the Lie algebra of $G$ by $\mathfrak{g}.$  
Recall that $A=\bbs(\bbr)^\circ$ where $\bbs$ is a maximal $\bbq$-split $\bbq$-torus of $\bbg.$
Let $A'$ be a maximal $\bbr$-split torus of $G$ chosen so that
$A\subseteq A'\subseteq P$  
and let $\afr'$ be the Lie algebra of $A'.$ Let $\theta$ be a Cartan 
involution on $\gfr$ chosen such that if $\gfr=\mathfrak{k}+\mathfrak{q}$ is 
the corresponding Cartan decomposition of $\gfr$ then 
$\afr'\subseteq\mathfrak{q}.$ Let now $K$ be the analytic subgroup of $G$ with 
Lie algebra $\mathfrak{k}.$ Then $K$ is a maximal compact subgroup of $G$ and 
one has $G=KP$ (Iwasawa decomposition) and $G=KA'K$ (Cartan decomposition). In 
what follows $\|\cdot\|$ denotes a $K$-invariant Euclidean norm on $\gfr.$ We 
extend this to a norm on the exterior algebra $\wedge \mathfrak{g}$ of $\gfr,$ 
which we also denote by $\|\cdot\|$. With respect to the corresponding inner product, the $A'$-weight (as well as the $A$-weight)
spaces are orthogonal to each other.

\subsection{}\label{ss:Riemannian}
Let $B$ be the Killing form and define $g_0(X,Y)=-B(X,\theta(Y))$ on 
$\Lie(G).$ This defines a positive non-degenerate inner product on $\Lie(G)$ which induces a $G$-right invariant, $K$-bi  invariant Riemannian metric on 
$G.$ We will refer to this metric as {\em the Riemannian metric}. 

The negative of the Killing form turns $\afr'=\Lie(A')$ into a Euclidean space and moreover one can choose the Riemannian metric on $X$ such that $d(x_0,x_0\exp(a))=\|a\|$ for any $a\in \afr'$. On the other hand, the Lie algebra $\afr\subseteq \afr'$ of $A$ can be also viewed as $X^*(\bbs)\otimes \bbr$ (see Section \ref{ss:IndexRelativeRootSystem}).

For any $F\subseteq \Delta$, let $\rho_F$ be a vector in $\afr$ such that 
\be\label{e;rho-rho-notation}
\rho'_F(a)=e^{(\rho_F, \log a)},
\ee
for any $a\in A$, where $\rho'_F$ is defined by~\eqref{e:rho}.

Throughout the 
paper, for any (unimodular) Lie subgroup $H\subset G$ we let $\nu_H$
denotes the Haar measure on $H$ which is obtained from 
the Riemannian metric. In case $H=G,$ 
abusing the notation, we denote by $\nu_G$ the push 
forward of $\nu_G$ on quotients of $G$ as well. 

Let $\Gamma=\bbg(\bbz)$ (as in the introduction) and $\pi:G\rightarrow G/\Gamma$ be the natural projection. For any compactly 
supported continuous function $\psi$ function on $G/\Gamma$ and any measurable 
function $\phi$ on $G/\Gamma$ we let 
\[
\langle\phi,\psi\rangle=\int_{G/\Gamma}\phi(g\Gamma)\overline{\psi(g\Gamma)}d\nu_G.
\] 
For any $\mu\in \pcal(G/\Gamma)$ and a compactly supported continuous function $\psi\in C_c(G/\Gamma)$, we let $\mu(\psi):=\int_{G/\Gamma}\psi d\mu$.  

\subsection{}\label{ss:RationalPoints}
Recall that by a theorem of Borel and Harish-Chandra for any $\bbq$-parabolic 
subgroup $\bbp$ of $\bbg$ there exists a finite subset 
$\Xi\subseteq\bbg(\bbq)$ such that $\bbg(\bbq)=\bbp(\bbq)\cdot\Xi\cdot\Gamma.$ 
As we do not need the double coset representatives, by enlarging $\Xi,$ if 
necessary, we can and will assume that $\Xi$ is symmetric, i.e. $\Xi=\Xi^{-1}$. 
We also know that $(\bbg/\bbp)(\bbq)=\bbg(\bbq)/\bbp(\bbq)$ \cite[Lemma 2.6]{BT}.

\subsection{}\label{ss:MargulisDani}

We now recall the definition of certain functions $d_\alpha:G\rightarrow\bbr.$ 
These functions were considered by Dani and Margulis in~\cite{DM} in order to 
study the recurrence properties of unipotent flows on homogeneous spaces. Let 
$\Delta$ be as before. If $F=\Delta\setminus \{\alpha\}\subset\Delta$,  $\bbp_F$ is 
a  maximal $\bbq$-parabolic subgroup and it will be also denoted by $\bbp_\alpha$. 
Let $\bbu_\alpha=R_u(\bbp_\alpha)$ and let $\mathfrak{u}_\alpha$ denote the Lie algebra of 
$\bbu_\alpha.$ Let $\ell_\alpha=\dim\mathfrak{u}_\alpha$ and let 
$\vartheta_\alpha=\wedge^{\ell_\alpha}\mbox{Ad}$ denote the $\ell_\alpha$-th exterior power of the 
adjoint representation. Note that $\wedge^{\ell_\alpha}\ufr_\alpha$ defines a 
$\bbq$-rational one dimensional subspace of $\wedge^{\ell_\alpha}\gfr.$ Fix once and for all a unit vector 
$v_\alpha\in\wedge^{\ell_\alpha}\ufr_\alpha(\bbq).$ Note that if $g\in P_\alpha$ 
then $\vartheta_\alpha(g)v_\alpha=\det(\mbox{Ad}(g)|_{\ufr_\alpha})v_\alpha,$ and with the notation
as in~\eqref{e:rho} we have
$\rho_F'(g)=\det(\Ad(g)|_{\ufr_\alpha})$. 
Define $d_\alpha:G\rightarrow\bbr$ by 
\[
d_\alpha(g)=\|\vartheta_\alpha(g) v_\alpha\|\hspace{3mm}\mbox{for all}\h\h g\in G.
\]   

We also recall the following definition from \cite{EMS1}. For $d,N\in\bbn$ and $\Sigma>0$ let 
$\mathcal{P}(N,d,\Sigma)$ be the set of continuous maps $\Theta:\bbr^d\rightarrow G$ 
such that for all ${\bf c,a}\in \bbr^d$ with $\|c\|=1$ and any $X\in \mathfrak{g}$ the map 
\[
t\mapsto \Ad(\Theta(t{\bf c}+{\bf a}))(X)\in\mathfrak{g}
\]
has matrix coefficients of the form
$\sum_{j=1}^{N}\sum_{l=0}^{n-1}b_{jl}t^le^{\sigma_jt}$
where $b_{jl},\sigma_j\in\bbc$ and $|\sigma_j|\leq\Sigma.$ Let $\{X_1,\ldots,X_d\}\subset\mathfrak{g}$ and define 
\[
\mbox{$\Theta(\mathbf{t})=\exp(t_1X_1)\cdots\exp(t_dX_d)$ for all $\mathbf{t}=(t_1\dots,t_d)\in\bbr^d.$}
\]
Then there exists some $\Sigma>0$ and $N\in\bbn$ such that for any $g\in G,$ the map $\mathbf{t}\mapsto\Theta(\mathbf{t})g$ belongs to $\mathcal{P}(N,d,\Sigma).$

We will use the following two theorems in the sequel. These two theorems 
provide us with geometric and algebraic formulations of conditions which will 
guarantee a quantitative recurrence property of ``polynomial like" flows.  

\begin{thma*}~\label{shah}(Cf.~\cite[Theorem 3.4]{EMS1})
Let $G,$ $\Gamma$ be as in Theorem~\ref{trans}, and as before let $\pi: G\to G/\Gamma$ be the natural projection. 
Then, given a compact set $C\subseteq G/\Gamma,$ $\epsilon,\Sigma>0,$ $T>0$ and  $N,d\in\bbn$ there exists a compact set $L\subseteq G/\Gamma$ with the following property; for any $\Theta\in\mathcal{P}({N,d,\Sigma})$ 
and any open ball $\Omega$ in $\bbr^d$ with radius at most $T,$ one of the following holds

\begin{itemize}
\item[1.] $\frac{1}{m(\Omega)}m(\{\mathbf{t}\in\Omega\h|\h\pi(\Theta(\mathbf{t}))\in L\})>1-\epsilon,$ where $m$ denotes the Lebesgue measure on $\bbr^d.$\vspace{1mm}
\item[2.] $\pi(\Theta(\Omega))\cap C=\varnothing$
\end{itemize}
\end{thma*}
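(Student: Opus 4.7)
The plan is to prove this via the Dani--Margulis non-divergence machinery, in the polynomial--exponential form carried out by Eskin--Mozes--Shah. Reduction theory provides a threshold $\epsilon_0>0$ such that $\pi(g)\in G/\Gamma$ fails to lie in a prescribed compact set exactly when $d_\alpha(g\gamma)<\epsilon_0$ for some simple root $\alpha\in\Delta$ and some $\gamma\in\Gamma$; equivalently, cuspidal excursions are detected by the exterior-power invariants $v_\alpha$. Thus it is enough, given the compact set $C$ and the tolerance $\epsilon$, to find $\eta>0$ such that on $\Omega$ the set
\[
\{\mathbf{t}\in\Omega : d_\alpha(\Theta(\mathbf{t})\gamma)<\eta \text{ for some } \alpha,\gamma\}
\]
has Lebesgue measure $\le\epsilon\,m(\Omega)$, unless the orbit $\pi(\Theta(\Omega))$ misses $C$ entirely, in which case alternative~(2) holds; one then defines $L$ by the uniform lower bound $d_\alpha(\cdot\gamma)\ge\eta$ over all $\alpha,\gamma$.

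The analytic ingredient is that, by the definition of $\mathcal{P}(N,d,\Sigma)$, every matrix coefficient of $\vartheta_\alpha(\Theta(\mathbf{t}))$ is a linear combination of at most $N$ terms $t^\ell e^{\sigma_j t}$ with $|\sigma_j|\le\Sigma$ and $\ell\le n-1$. Such functions enjoy the $(C,\beta)$-good property on balls of radius $\le T$, with constants depending only on $N,d,\Sigma,T$: for any sub-ball $\Omega'\subseteq\Omega$ one has
\[
\sup_{\Omega'}|f|\le C\bigl(m(\Omega')/m(\Omega)\bigr)^{\beta}\sup_{\Omega}|f|.
\]
Pulling this back, each function $\mathbf{t}\mapsto d_\alpha(\Theta(\mathbf{t})\gamma)=\|\vartheta_\alpha(\Theta(\mathbf{t}))\vartheta_\alpha(\gamma)v_\alpha\|$ is $(C,\beta)$-good, so once we have a single point in $\Omega$ where it is bounded below by a constant, its sublevel set at height $\eta$ occupies only a fraction $\lesssim\eta^\beta$ of $\Omega$.

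The combinatorial difficulty, and the main obstacle, is that a priori infinitely many $\gamma\in\Gamma$ could drive some $d_\alpha(\Theta(\mathbf{t})\gamma)$ below $\eta$. The key observation, going back to Dani--Margulis, is that the relevant $\gamma$'s are organized by nested chains of $\bbq$-parabolic subgroups: smallness of $d_\alpha(g\gamma)$ means the discrete vector $\vartheta_\alpha(\gamma)v_\alpha$ has small norm under $\vartheta_\alpha(g)$, and discreteness of $\Gamma\cdot v_\alpha$ in $\wedge^{\ell_\alpha}\gfr$ restricts the count. If alternative~(2) fails we pick $\mathbf{t}_0\in\Omega$ with $\pi(\Theta(\mathbf{t}_0))\in C$, obtaining a lower bound $d_\alpha(\Theta(\mathbf{t}_0)\gamma)\ge\epsilon_1$ for \emph{all} $\alpha,\gamma$ simultaneously. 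The $(C,\beta)$-good property then gives a quantitative bound on each bad $\gamma$ individually, and a covering/induction argument on the depth of the parabolic flag (choosing at each stage a maximal $\gamma$ that violates the lower bound and passing to the next smaller parabolic) collapses the sum over $\gamma$ to a finite, uniformly bounded one. Making the number of relevant $\gamma$'s and the associated constants uniform over the entire class $\mathcal{P}(N,d,\Sigma)$ is the heart of the argument; choosing $\eta$ small enough then yields alternative~(1) with $L:=\{g\Gamma : d_\alpha(g\gamma)\ge\eta\text{ for all }\alpha,\gamma\}$, which is compact by reduction theory.
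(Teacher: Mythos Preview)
The paper does not prove Theorem~A at all: it is quoted verbatim from \cite[Theorem~3.4]{EMS1} and used as a black box. So there is nothing to compare your argument against in this paper; you are in effect sketching the proof from \cite{EMS1}, and at that level your outline is broadly correct --- reduction theory to detect cusp excursions via the $d_\alpha$, the $(C,\beta)$-good property of the polynomial--exponential coefficients, and a Dani--Margulis covering/induction over flags of $\bbq$-parabolics.

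That said, two points deserve flagging. First, your displayed ``$(C,\beta)$-good'' inequality is misstated: what you wrote, $\sup_{\Omega'}|f|\le C\bigl(m(\Omega')/m(\Omega)\bigr)^{\beta}\sup_{\Omega}|f|$, is false for generic $f$ and is not the definition. The correct property is the sublevel-set estimate
\[
m\bigl(\{\mathbf{t}\in\Omega : |f(\mathbf{t})|<\eta\}\bigr)\le C\Bigl(\frac{\eta}{\sup_{\Omega}|f|}\Bigr)^{\beta}m(\Omega),
\]
which is exactly what you invoke two lines later, so this looks like a slip rather than a misconception. Second, your last paragraph is where all the content lives, and the phrase ``a covering/induction argument on the depth of the parabolic flag \ldots\ collapses the sum over $\gamma$ to a finite, uniformly bounded one'' hides the genuinely delicate step: one must show that at each scale only boundedly many $\Gamma$-translates of the $v_\alpha$ can be small, and that the bad sets for distinct translates are either nested or essentially disjoint. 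This is the Besicovitch-type covering argument of \cite{DM} as adapted in \cite{EMS1}; without it, the union bound over $\gamma$ does not close. As a sketch pointing to \cite{EMS1} this is fine, but as a self-contained proof it is incomplete at precisely the hardest point.
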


The second theorem gives an algebraic description as to why certain ``polynomial like" orbit stays outside a compact set for a long time. 

\begin{thmb*}~\label{ems}(Cf.~\cite[Theorem 3.5]{EMS1})
Let the notation be as in Theorem~\ref{trans}.
Suppose $d,N\in\bbn,$ $\beta,\Sigma>0,$ and $T>0$ are given. 
Then, there exists a compact set $C\subseteq G/\Gamma$ such that for any $\Theta\in\mathcal{P}({N,d,\Sigma}),$ and any open ball $\Omega\subseteq\bbr^d$ of radius at most $T,$ one  of the following conditions is satisfied
\begin{enumerate}
\item There exist $f\in \Xi$ (see \ref{ss:RationalPoints}), $\gamma\in\Gamma$ and $\alpha\in\Delta$ such that 
\[
\sup_{\mathbf{t}\in\Omega}\h d_\alpha(\Theta(\mathbf{t})f\gamma)<\beta,
\]
\item $\pi(\Theta(\Omega))\cap C\neq\varnothing$.
\end{enumerate}
\end{thmb*}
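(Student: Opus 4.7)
The plan is to deduce this quantitative non-divergence statement from the Kleinbock--Margulis machinery combined with reduction theory for arithmetic lattices. The contrapositive formulation I will prove is: if $\pi(\Theta(\Omega))$ avoids a sufficiently large compact set $C$, then a single reduction-theoretic witness $(\alpha, f, \gamma)$ must keep $d_\alpha(\Theta(\mathbf{t}) f \gamma) < \beta$ uniformly for $\mathbf{t} \in \Omega$. The compact set $C$ will be obtained by first fixing a threshold $\eta_0 = \eta_0(\beta, N, d, \Sigma, T)$ much smaller than $\beta$, and then using Borel--Harish-Chandra reduction theory to produce $C$ as the complement of the union of the cusp tubes $\{g\Gamma : d_\alpha(gf\gamma) < \eta_0 \text{ for some } (\alpha, f, \gamma)\}$.

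First, I would establish the $(C_0, \kappa_0)$-good property for the functions $\mathbf{t} \mapsto d_\alpha(\Theta(\mathbf{t}) h)$. For $\Theta \in \mathcal{P}(N, d, \Sigma)$ and any $h \in G$, the vector-valued map $\mathbf{t} \mapsto \vartheta_\alpha(\Theta(\mathbf{t}) h) v_\alpha$ has coordinates which, along any affine line, are polynomial-exponential functions with exponents bounded in terms of $\Sigma$ and degrees bounded in terms of $N$ (one uses that $\vartheta_\alpha$ is a polynomial representation composed with $\Theta$). The Kleinbock--Margulis lemma for such functions then yields constants $C_0, \kappa_0$, depending only on $(N, d, \Sigma, T)$, such that on any ball $\Omega \subseteq \bbr^d$ of radius at most $T$,
\[
m\bigl(\{\mathbf{t} \in \Omega : d_\alpha(\Theta(\mathbf{t}) h) < \epsilon\}\bigr) \le C_0 \Bigl(\tfrac{\epsilon}{\sup_\Omega d_\alpha(\Theta(\cdot) h)}\Bigr)^{\kappa_0} m(\Omega).
\]

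Next, I would invoke reduction theory. Using $\bbg(\bbq) = \bbp(\bbq) \cdot \Xi \cdot \Gamma$ (Section~\ref{ss:RationalPoints}) together with the description of Siegel sets via the coroot characters attached to maximal $\bbq$-parabolic subgroups $\bbp_\alpha$, there is $\eta_0 > 0$ such that whenever $g\Gamma$ lies outside a fixed compact set, some triple $(\alpha, f, \gamma)$ satisfies $d_\alpha(gf\gamma) < \eta_0$. This is the classical statement that the finitely many $\Gamma$-orbits of proper maximal $\bbq$-parabolics parametrize the cusps of $G/\Gamma$, and the functions $d_\alpha$ detect depth into these cusps.

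The heart of the proof is the uniformization step, which is where I expect the main obstacle. Assuming case (2) fails, at each $\mathbf{t} \in \Omega$ some witness triple exists; I must extract a single triple valid on all of $\Omega$. This requires a Margulis-style separation lemma: the lattice orbit $\{\vartheta_\alpha(f\gamma) v_\alpha : \alpha \in \Delta,\, f \in \Xi,\, \gamma \in \Gamma\}$ is discrete in the relevant wedge-product representation, and at any point of $G$ only controllably many vectors can simultaneously be short. Combined with the $(C_0, \kappa_0)$-good property, this forces the pointwise witnesses to collapse to one global triple: if two triples were each to succeed on a positive-measure portion of $\Omega$, then (by $(C_0, \kappa_0)$-goodness) their $d_\alpha$-functions would stay small on a substantial sub-ball, contradicting the separation lemma once $\eta_0$ is chosen sufficiently small. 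Quantifying this trade-off between $\eta_0$ and $\beta$ (using the constants from Step 1 and the separation constants from reduction theory) is the delicate point, and it is what ultimately determines how large the compact set $C$ must be taken.
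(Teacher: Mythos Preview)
The paper does not supply its own proof of Theorem~B; it is stated with the attribution ``Cf.~\cite[Theorem 3.5]{EMS1}'' and used as a black box. Your outline is essentially the strategy of Eskin--Mozes--Shah: $(C,\alpha)$-goodness of the polynomial-exponential functions $d_\alpha(\Theta(\cdot)h)$, reduction theory to manufacture the compact set, and a uniqueness-of-witness argument to pass from pointwise smallness to a single global triple $(\alpha,f,\gamma)$.

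One point in your sketch does not hold as written. In the uniformization step you argue that if two distinct witnesses each succeed on a positive-measure subset of $\Omega$, then by $(C_0,\kappa_0)$-goodness both $d_\alpha$-functions ``would stay small on a substantial sub-ball,'' contradicting separation. But $(C,\alpha)$-goodness runs the other direction: it bounds the measure of the sublevel set in terms of the supremum, not the supremum in terms of a sublevel set. It does not let you conclude that a function small on a positive-measure subset is small on a sub-ball. The mechanism actually used in \cite{EMS1} (following Dani--Margulis) is different: one first shows that at any fixed $g\in G$ only boundedly many vectors $\vartheta_\alpha(gf\gamma)v_\alpha$ can be short simultaneously---this is where the partial-flag structure of the maximal $\bbq$-parabolics and the discreteness of the $\Gamma$-orbit enter---and then runs an inductive covering argument in which $(C,\alpha)$-goodness is used to compare $\sup_\Omega d_\alpha$ on nested balls, forcing the same witness to persist across $\Omega$. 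Your phrase ``Margulis-style separation lemma'' points at the right ingredient, but the contradiction you describe is not the one that closes the argument.
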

\section{Translates of horospherical measures}\label{sec;translates}
In this section Theorem~\ref{trans} is proved.  
\subsection{}\label{ss:PartOneEscape}
Let the notation be as in Sections~\ref{ss:IndexRelativeRootSystem}
and~\ref{ss:MargulisDani}. We start with a connection between $d_\alpha$ and the fundamental weights. 
By this connection, the first part of Theorem~\ref{trans} is proved. 
Let us recall that, for any $F\subseteq \Delta$, $\rho_F'\in X^*(\bbp_F)$ is a character such that for any $p\in \bbp_F(\bbr)$, 
\[
\rho'_F(p):=\det(\Ad(p)|_{\Lie(R_u(\bbp_F))(\bbr)}).
\]
\begin{lem}~\label{root}
For any $\alpha\in\Delta$, the restriction of  $\rho'_{\Delta\setminus\{\alpha\}}$ to $\bbs$ is equal to $k_\alpha\lambda_{\alpha}$ for some positive integer $k_\alpha$.
\end{lem}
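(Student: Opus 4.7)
The plan is to lift the identity from the relative torus $\bbs$ up to the absolute maximal torus $\bbt\supseteq\bbs$, run the standard absolute Weyl-invariance argument there, and then descend with the help of Galois equidistribution on the fiber $\pi^{-1}(\alpha)\cap\TD$. Put $F=\Delta\setminus\{\alpha\}$ and $\wt{F}:=(\pi^{-1}(F)\cap\TD)\cup\Delta_0\subseteq\TD$; then $\bbp_F$ coincides with the standard absolute parabolic attached to $\wt{F}$, so its unipotent radical has $\bbt$-weight decomposition $\bigoplus_{\wt{\phi}\in\TP^+\setminus\langle\wt{F}\rangle_\bbz}\gfr_{\wt{\phi}}$. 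Taking the determinant of the adjoint action therefore gives
\[
\rho'_F|_\bbt=\sum_{\wt{\phi}\in\TP^+\setminus\langle\wt{F}\rangle_\bbz}\wt{\phi}=2\wt{\rho}-2\wt{\rho}_{\wt{F}},
\]
where $2\wt{\rho}$ and $2\wt{\rho}_{\wt{F}}$ are the sums of the positive absolute roots of $\bbg$ and of the Levi of $\bbp_F$, respectively.

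For every $\wt{\beta}\in\wt{F}$ the simple reflection $s_{\wt{\beta}}$ permutes $\TP^+\setminus\langle\wt{F}\rangle_\bbz$: it permutes $\TP^+\setminus\{\wt{\beta}\}$ and preserves the $\wt{\gamma}$-coefficient of every root for $\wt{\gamma}\neq\wt{\beta}$. Hence $\rho'_F|_\bbt$ is $W_{\wt{F}}$-invariant. Since $\bbg$ is simply connected, $\{\wt{\lambda}_{\wt{\gamma}}\}_{\wt{\gamma}\in\TD}$ is a $\bbz$-basis of $X^*(\bbt)$; expanding $\rho'_F|_\bbt=\sum_{\wt{\gamma}}m_{\wt{\gamma}}\wt{\lambda}_{\wt{\gamma}}$ and pairing with $\wt{\beta}^\vee$ for $\wt{\beta}\in\wt{F}$ forces $m_{\wt{\beta}}=0$, so
\[
\rho'_F|_\bbt=\sum_{\wt{\alpha}'\in\pi^{-1}(\alpha)\cap\TD}m_{\wt{\alpha}'}\wt{\lambda}_{\wt{\alpha}'}.
\]
Pairing $\rho'_F|_\bbt=2\wt{\rho}-2\wt{\rho}_{\wt{F}}$ with $\wt{\alpha}'^\vee$ for $\wt{\alpha}'\notin\wt{F}$ now yields $m_{\wt{\alpha}'}=2-(2\wt{\rho}_{\wt{F}},\wt{\alpha}'^\vee)\geq 2$, since $(\wt{\rho},\wt{\alpha}'^\vee)=1$ and $2\wt{\rho}_{\wt{F}}$ is a nonnegative integer combination of elements of $\wt{F}$, each of which has nonpositive Cartan pairing with any distinct simple root. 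Thus every $m_{\wt{\alpha}'}$ is a positive integer.

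Finally, because $\rho'_F$ is a $\bbq$-character of $\bbp_F$, $\rho'_F|_\bbt$ is fixed by the $\tau$-action of ${\rm Gal}(\overline{\bbq}/\bbq)$ on $X^*$: this action preserves $\TD$, hence $\wt{F}$ and $\TP^+\setminus\langle\wt{F}\rangle_\bbz$, and it is orthogonal for $(\cdot,\cdot)$. Since $\tau$ acts transitively on $\pi^{-1}(\alpha)\cap\TD$, the integers $m_{\wt{\alpha}'}$ are all equal to a common positive integer $k_\alpha$. Applying $\pi$ and invoking the definition of $\lambda_\alpha$ from Section~\ref{ss:IndexRelativeRootSystem} then gives
\[
\rho'_F|_\bbs=k_\alpha\,\pi\!\left(\sum_{\wt{\alpha}'\in\pi^{-1}(\alpha)\cap\TD}\wt{\lambda}_{\wt{\alpha}'}\right)=k_\alpha\lambda_\alpha,
\]
as desired. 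The only delicate point in the plan is the bookkeeping of the two Galois actions: one must check that it is really the $\tau$-action (rather than the direct one) that fixes $\rho'_F|_\bbt$ and acts transitively on $\pi^{-1}(\alpha)\cap\TD$, but both facts are built into the Tits-style setup recalled in Section~\ref{ss:IndexRelativeRootSystem}.
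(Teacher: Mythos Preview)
Your proof is correct and follows essentially the same route as the paper: lift to the absolute torus $\bbt$, show that the sum of roots in $R_u(\bbp_F)$ is fixed by the simple reflections $s_{\wt{\beta}}$ for $\wt{\beta}\in\wt{F}$ (hence lies in the span of $\{\wt{\lambda}_{\wt{\alpha}'}:\wt{\alpha}'\in\pi^{-1}(\alpha)\cap\TD\}$), use the $\tau$-Galois invariance and transitivity on $\pi^{-1}(\alpha)\cap\TD$ to equalize the coefficients, and project via $\pi$. The only cosmetic difference is that you make the positivity of the coefficients explicit via the identity $m_{\wt{\alpha}'}=2-(2\wt{\rho}_{\wt{F}},\wt{\alpha}'^\vee)\geq 2$, whereas the paper simply asserts the $k_{\wt{\alpha}}$ are positive integers.
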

\begin{proof} Let $F=\Delta\setminus\{\alpha\}$ and $\wt{F}=\TD\setminus \pi^{-1}(\alpha)$ (see Section \ref{ss:IndexRelativeRootSystem}). By the definition, the set of  positive (absolute) roots in $\ufr_\alpha$ is
\[
\wt{\Phi}_\alpha:=\{\wt{\phi}\in\wt{\Phi}|\h \exists\h \wt{\alpha}\in \TD\cap \pi^{-1}(\alpha): (\wt{\lambda}_{\wt{\alpha}},\wt{\phi})>0\}.
\]
And the restriction of $\rho_F'$ to $\bbs$ is equal to
$
\pi(\wt{\rho}_\alpha),
$
where $\wt{\rho}_\alpha=\sum_{\wt{\phi}\in\wt{\Phi}_\alpha} \wt{\phi}$.
For any $\wt{\beta}\in \wt{F}$, let 
\[
\sigma_{\wt{\beta}}(w):=w-(w,\wt{\beta}^{\vee})\wt{\beta}.
\]
Now we have that $\sigma_{\wt{\beta}}(\TP)=\TP$ and for any $\wt{\alpha}\in \TD\cap\pi^{-1}(\alpha)$
\[
(\wt{\lambda}_{\wt{\alpha}},\phi)=(\sigma_{\wt{\beta}}(\wt{\lambda}_{\wt{\alpha}}),\sigma_{\wt{\beta}}(\phi))=(\wt{\lambda}_{\wt{\alpha}},\sigma_{\wt{\beta}}(\phi)).
\]
Hence $\sigma_{\wt{\beta}}(\wt{\Phi}_\alpha)=\wt{\Phi}_\alpha$ and so $\sigma_{\wt{\beta}}(\wt{\rho}_\alpha)=\wt{\rho}_\alpha$ for any $\wt{\beta}\in \wt{F}$. Thus $\wt{\rho}_\alpha=\sum_{\wt{\alpha}\in \TD\cap \pi^{-1}(\alpha_\alpha)} k_{\wt{\alpha}}\wt{\lambda}_{\wt{\alpha}}$ where $k_{\wt{\alpha}}$ is a positive integer. On the other hand, for any Galois automorphism $h$, we have $\tau(h)(\wt{\Phi}_\alpha)=\wt{\Phi}_\alpha$. Hence for any $\wt{\alpha},\wt{\alpha}'\in \TD\cap \pi^{-1}(\alpha)$ we have $k_{\wt{\alpha}}=k_{\wt{\alpha}'}$. Therefore $\wt{\rho}_\alpha=k_\alpha\sum_{\wt{\alpha}\in \TD\cap \pi^{-1}(\alpha)} \wt{\lambda}_{\wt{\alpha}}$ and the restriction of $\rho_F'$ to $\bbs$ is equal to $k_\alpha \lambda_{\alpha}$.
\end{proof}
Recall the notation from Section~\ref{ss:MargulisDani}, in particular, for any $\alpha\in\Delta,$
$\mathfrak{u}_\alpha=\Lie (R_u(P_\alpha)),$ and $\ell_\alpha=\dim (\mathfrak{u}_\alpha).$

\begin{lem}~\label{out}
For any compact subset $C$ of $G$, if $\pi(aq)$ is in $\pi(C)$ for some $a\in A$ and $q\in Q_{E}$, then for any $\alpha\notin E$, 
\[
|\lambda_\alpha(a)|\ge \inf\{\|\vartheta_{\alpha}(c^{-1})\|^{-1/k_\alpha}|\h c\in C\},
\] 
where $\vartheta_{\alpha}=\wedge^{\ell_\alpha} \Ad:G\rightarrow \GL(\wedge^{\ell_\alpha} \gfr),$ and $k_\alpha$ is the constant in the Lemma~\ref{root}.
\end{lem}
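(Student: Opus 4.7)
The hypothesis $\pi(aq)\in\pi(C)$ unpacks to an equation $aq\gamma=c$ for some $c\in C$ and $\gamma\in\Gamma$. My plan is to apply the representation $\vartheta_\alpha$ to the fixed $\bbq$-rational unit vector $v_\alpha\in\wedge^{\ell_\alpha}\ufr_\alpha(\bbq)$ and extract $\lambda_\alpha(a)$ from the resulting eigenvalue identity. The key algebraic input is that $\bbp_\alpha$ acts on the line $\bbr v_\alpha$ by the character $\rho'_{\Delta\setminus\{\alpha\}}$, whose restriction to $\bbs$ equals $k_\alpha\lambda_\alpha$ by Lemma~\ref{root}. Since $\alpha\notin E$ we have $\bbp_E\subseteq\bbp_\alpha$, hence $Q_E\subseteq\bbp_\alpha(\bbr)$, so $\vartheta_\alpha(aq)v_\alpha$ will already be a scalar multiple of $v_\alpha$.

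The central technical point is to verify that $\rho'_{\Delta\setminus\{\alpha\}}$ is trivial on $Q_E$, so that $\vartheta_\alpha(q)v_\alpha=v_\alpha$. Using the description $Q_E=M_E\,R_u(\bbp_E)(\bbr)$ recalled in Section~\ref{s:statement}, any character vanishes automatically on the unipotent factor $R_u(\bbp_E)(\bbr)$. For the semisimple factor, $\bbl_E\subseteq\bbl_\alpha$ (which follows from $\bbp_E\subseteq\bbp_\alpha$) gives $[\bbl_E,\bbl_E]\subseteq[\bbl_\alpha,\bbl_\alpha]$, and hence $M_E\subseteq[\bbl_\alpha,\bbl_\alpha](\bbr)$; any $\bbq$-character of $\bbp_\alpha$ vanishes on its derived subgroup, and a fortiori on the semisimple group $[\bbl_\alpha,\bbl_\alpha]$. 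Combined with Lemma~\ref{root}, this yields $\vartheta_\alpha(aq)v_\alpha=\lambda_\alpha(a)^{k_\alpha}v_\alpha$.

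Substituting $aq=c\gamma^{-1}$ and applying $\vartheta_\alpha(c^{-1})$ to both sides produces
\[
\vartheta_\alpha(\gamma^{-1})v_\alpha=\lambda_\alpha(a)^{k_\alpha}\,\vartheta_\alpha(c^{-1})v_\alpha.
\]
Taking norms and using $\|\vartheta_\alpha(c^{-1})v_\alpha\|\leq\|\vartheta_\alpha(c^{-1})\|$, the bound reduces to a lower bound on $\|\vartheta_\alpha(\gamma^{-1})v_\alpha\|$. Since $v_\alpha$ is $\bbq$-rational and $\Gamma$ acts through a $\bbq$-rational representation on $\wedge^{\ell_\alpha}\gfr$, the orbit $\vartheta_\alpha(\Gamma)v_\alpha$ is discrete and bounded away from the origin; the convention implicit in the choice of the unit vector $v_\alpha$ is that this lower bound equals $1$. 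Rearranging then gives $|\lambda_\alpha(a)|^{k_\alpha}\geq\|\vartheta_\alpha(c^{-1})\|^{-1}$, and the conclusion follows after taking the infimum over $c\in C$. I expect the only nontrivial step to be the vanishing of the character on $Q_E$, since the remaining manipulations are formal once the representation-theoretic setup is in place.
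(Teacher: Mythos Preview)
Your proposal is correct and follows essentially the same approach as the paper's proof: write $aq=c\gamma^{\pm 1}$, use that $aq\in\bbp_\alpha$ acts on $v_\alpha$ by the character $\rho'_{\Delta\setminus\{\alpha\}}$, invoke Lemma~\ref{root} to identify this with $\lambda_\alpha(a)^{k_\alpha}$ on $A$, and bound from below using the arithmeticity of $\Gamma$. Your treatment of why the character is trivial on $Q_E$ is in fact more detailed than the paper's, which simply writes ``since $\alpha\notin E$''; the paper also handles the integral lower bound by re-choosing $v_\alpha$ as a shortest vector in $\wedge^{\ell_\alpha}\ufr_\alpha\cap\wedge^{\ell_\alpha}\gl_N(\bbz)$ rather than appealing to a convention, but the content is the same.
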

\begin{proof}If $\pi(aq)\in\pi(C)$, then $aq=c\gamma$ for some $\gamma\in\Gamma$ and $c\in C.$ 
Let $v_\alpha$ be a non-zero vector with the shortest length in $\wedge^{\ell_\alpha} \mathfrak{u}_\alpha\cap\hspace{1mm} \wedge^{\ell_\alpha}\mathfrak{gl}_N(\bbz).$ By Lemma~\ref{root}, the definition of $\vartheta_{\alpha}$, and since $\alpha\notin E$ we have 
\[
\lambda_\alpha(a)^{k_\alpha}=\vartheta_{\alpha}(aq) v_\alpha=\vartheta_{\alpha}(c\gamma) v_\alpha. 
\]
Because of the arithmetic structure of $\Gamma$, $\|\vartheta_{\alpha}(c\gamma)(v_\alpha)\|\ge \|\vartheta_{\alpha}(c^{-1})\|^{-1}$. Altogether $|\lambda_\alpha(a)|^{k_\alpha}\ge\|\vartheta_{\alpha}(c^{-1})\|^{-1}$. 
\end{proof}

\begin{proof}[Proof of part (i) of Theorem~\ref{trans}] It is enough to show that, under the assumption on $\{a_n\}$, for any compact subset $\bar{C}$ of $G/\Gamma,$ $(a_n \mu_{Q_E})(\bar{C})$ tends to zero as $n$ goes to infinity. One can consider $\bar{C}$ as $\pi(C)$ where $C$ is a compact subset of $G$. Since $\mbox{supp}(a_n \mu_{Q_E})=a_n Q_E$, to complete the proof it is enough to note that $\pi(a_n Q_E)\cap\pi(C)$ is empty for large enough $n$ which is a consequence of our hypothesis and Lemma~\ref{out}.
\end{proof}
\subsection{}\label{ss:SecondPartConvergence}
In order to prove the second part of Theorem~\ref{trans}, first we prove the special case where $\varnothing=E\subseteq F\subseteq \Delta.$ For simplicity in notation we let $Q=Q_{\varnothing}$.

\begin{thm}\label{thm;U-trans}
Let $F\subseteq \Delta$. Let $\{a_n\}$ be a sequence of elements in $A$ such that
\begin{enumerate}
	\item for any $\alpha\in \Delta\setminus F$ and $n\in\bbn$, $\lambda_{\alpha}(a_n)=1$,
	\item for any $\alpha\in F$, $\lim_{n\rightarrow\infty}\lambda_{\alpha}(a_n)=\infty$.
\end{enumerate}
Then $a_n \mu_Q$ converges to $\mu_{Q_F}$. In particular, when $F=\Delta$, it 
converges to the Haar measure on $G/\Gamma$.
\end{thm}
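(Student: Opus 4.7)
Every measure $a_n\mu_Q$ is invariant under the maximal unipotent $U'=R_u(\bbp)(\bbr)\subseteq Q$ (since $A$ normalizes $U'$), so any weak-$*$ limit $\mu$ is automatically $U'$-invariant. The strategy has three stages: (a) show tightness of $\{a_n\mu_Q\}$, so that limits live in $\pcal(G/\Gamma)$; (b) apply Ratner's measure classification to any limit $\mu$, writing $\mu=\int\mu_{H_x}\,dm(x)$ over closed subgroups $U'\subseteq H_x$ with $H_x\Gamma/\Gamma$ closed of finite $H_x$-volume; (c) exploit the hypothesis $\lambda_\alpha(a_n)\to\infty$ for $\alpha\in F$ to force $H_x\supseteq Q_F$ for $m$-a.e.\ $x$, and conclude $\mu=\mu_{Q_F}$.

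For (a), parametrize a bounded fundamental domain for $Q\cap\Gamma$ in $Q$ by a polynomial map $\Theta_0\in\mathcal{P}(N,d,\Sigma)$ via exponential coordinates on $\Lie(Q)$. Since $A$ normalizes each root space of $Q$, the map $\mathbf{t}\mapsto a_n\Theta_0(\mathbf{t})$ remains in $\mathcal{P}(N,d,\Sigma)$ with constants independent of $n$. Apply Theorem~B$^*$: if its algebraic alternative $\sup_{\mathbf{t}}d_\alpha(a_n\Theta_0(\mathbf{t})f\gamma)<\beta$ were to hold for some $\alpha\in\Delta$, $f\in\Xi$, $\gamma\in\Gamma$, then Lemma~\ref{root} (identifying $\rho'_{\Delta\setminus\{\alpha\}}|_{\bbs}=k_\alpha\lambda_\alpha$) together with the computation in the proof of Lemma~\ref{out} would yield an upper bound of the form $\lambda_\alpha(a_n)^{k_\alpha}\leq\beta'$ for a fixed constant $\beta'$ depending only on $\beta$ and $\Xi$, contradicting $\lambda_\alpha(a_n)\geq 1$. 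Hence Theorem~A$^*$ produces a fixed compact $L\subseteq G/\Gamma$ with $(a_n\mu_Q)(L)>1-\epsilon$ uniformly in $n$, establishing tightness.

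For (b)--(c), let $\mu$ be a weak-$*$ limit and decompose it via Ratner's theorem. We claim $H_x\supseteq Q_F$ $m$-a.e. Otherwise, since the set of $\Gamma$-conjugacy classes of such subgroups is countable (Mozes--Shah), some standard intermediate $Q_E$ with $E\subsetneq\Delta$ and $F\not\subseteq E$ captures positive $m$-mass of $\mu$. Choose $\alpha\in F\setminus E$ and linearize via $\vartheta_\alpha$ and $v_\alpha$ from Section~\ref{ss:MargulisDani}: an argument parallel to the proof of Lemma~\ref{out}, combined with Theorem~A$^*$, bounds the mass of $a_n\mu_Q$ concentrating near the singular locus attached to $H_x$ by $\lambda_\alpha(a_n)^{-c}$ for some $c>0$, which tends to zero as $n\to\infty$ --- a contradiction. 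Therefore $H_x\supseteq Q_F$ a.e. Combining this with $\supp(\mu)\subseteq Q_F\Gamma/\Gamma$ (clear from $a_n\in A\subseteq Q_F$ and $Q\subseteq Q_F$), which forces $H_x\subseteq Q_F$, we obtain $H_x=Q_F$ and hence $\mu=\mu_{Q_F}$; independence from the chosen subsequence upgrades this to convergence of the full sequence. The main obstacle is step~(c): translating the algebraic input $\lambda_\alpha(a_n)\to\infty$ into quantitative non-concentration of $a_n\mu_Q$ near each proper intermediate orbit demands a careful Dani--Margulis linearization in the fundamental representation $\eta_{\lambda_\alpha}$, together with enumeration of the intermediate closed subgroups between $U'$ and $G$ that carry finite-volume closed orbits --- this is precisely the ``intermediate behavior'' subtlety flagged in the introduction as the central new difficulty in higher rank, where the simpler wavefront/mixing arguments of the rank-one setting fail.
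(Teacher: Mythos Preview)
Your outline has the right two-step architecture (tightness, then identify the limit), but both steps contain gaps, and step (c) departs from the paper's route in a way that makes your acknowledged ``main obstacle'' unnecessarily hard.

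\textbf{Tightness.} Your argument for (a) does not go through. In Lemma~\ref{out} the key identity $\vartheta_\alpha(aq)v_\alpha=\lambda_\alpha(a)^{k_\alpha}v_\alpha$ uses that $q\in Q_E\subseteq\ker\rho'_{\Delta\setminus\{\alpha\}}$, so the vector stays on the line $\bbr v_\alpha$. In your setting the vector is $\vartheta_\alpha(a_n\Theta_0(\mathbf t)f\gamma)v_\alpha$ with $f\gamma\notin P_\alpha$ in general; thus $\vartheta_\alpha(f\gamma)v_\alpha$ is an arbitrary integral vector in $V_\alpha$, and the action of $a_n$ on it is \emph{not} by the scalar $\lambda_\alpha(a_n)^{k_\alpha}$. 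There is no inequality $\lambda_\alpha(a_n)^{k_\alpha}\le\beta'$ to be extracted (and even if there were, this would not contradict $\lambda_\alpha(a_n)\ge 1$ without control on $\beta'$). The paper's proof of Proposition~\ref{p:NoEscape} instead projects to the sum $W_\alpha$ of highest-weight spaces: Shah's lemma~\cite[Lemma~1.4]{Sh} gives $\|w\|\le e\cdot\sup_{\mathbf t}\|\mathrm{pr}_{W_\alpha}(\vartheta_\alpha(q\Theta(\mathbf t))w)\|$, and since every highest weight is a nonnegative combination of fundamental weights, $\vartheta_\alpha(a_n)$ does not contract $W_\alpha$ when $\lambda_\alpha(a_n)\ge 1$ for all $\alpha$. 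This is the missing ingredient.

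\textbf{Identifying the limit.} The paper does \emph{not} use an ergodic decomposition of $\mu$ followed by Dani--Margulis linearization. Instead it applies Mozes--Shah~\cite[Theorem~1.1]{MS} directly: since each $a_n\mu_Q$ is ergodic for suitable one-parameter unipotent subgroups, any limit is a \emph{single} algebraic measure $\mu=g\mu_H$, and moreover there exist $g_n\to e$ with $\pi(a_nQ)\subseteq\pi(g_ngH)$ and $g_n^{-1}Qg_n\subseteq gHg^{-1}$. From $R_u(\bbp_\varnothing)\subseteq g\bbh g^{-1}$ and~\cite[Prop.~8.6]{BT} one gets $\bbp_E^+\subseteq g\bbh g^{-1}\subseteq\bbp_E^{(1)}$ for some $E\subseteq F$. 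The whole ``quantitative non-concentration'' you flag as the obstacle is replaced by a one-line character computation: writing $a_n=g_n(gh_ng^{-1})(g\gamma_n)$ with all factors in $P_E$, and applying $\lambda_\alpha$ for $\alpha\in F\setminus E$, one gets $|\lambda_\alpha(a_n)|=|\lambda_\alpha(g_n)|\cdot|\lambda_\alpha(g\gamma_1)|$ bounded, contradicting $\lambda_\alpha(a_n)\to\infty$; hence $E=F$. Your linearization sketch, by contrast, would need both a classification of all intermediate $H$ with $U'\subseteq H$ and $H\cap\Gamma$ a lattice (not just the $Q_E$'s --- Lemma~\ref{ourversion} only gives $\bbp_E^+\subseteq g\bbh g^{-1}\subseteq\bbp_E^{(1)}$), and a genuine polynomial-divergence estimate that you do not supply.

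A small correction: it is false that $A\subseteq Q_F$. What is true, and what you need, is that the hypothesis $\lambda_\alpha(a_n)=1$ for $\alpha\notin F$ gives $a_n\in\bbs_F^{(1)}\subseteq\bbp_F^+$ by Lemmas~\ref{l:CharParabolic} and~\ref{l:SandP1}, hence $a_n\in Q_F$.
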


The proof of Theorem~\ref{thm;U-trans} is carried out in two steps. First in Section~\ref{sss:NoEscape} using the recurrence properties of certain flows, we show that under the given conditions $a_n \mu_{Q}$ does not escape to infinity (see Proposition~\ref{p:NoEscape}). Then we will complete the proof in Section~\ref{sss:Convergence}.

\subsubsection{}\label{sss:NoEscape}
The goal of this section is to use the recurrence properties of certain flows to prove Proposition~\ref{p:NoEscape}.
\begin{prop}~\label{p:NoEscape}
Let $\wt{X}$ be the one-point compactification of $X=G/\Gamma$ and $\{a_n\}$ be as in Theorem~\ref{thm;U-trans}. If $\mu$ is the limit of a subsequence of $\{a_n\mu_Q\}$ in $\pcal(\wt{X})$, then $\mu(\{\infty\})=0$. 
\end{prop}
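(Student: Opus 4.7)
The plan is to apply the Dani--Margulis non-divergence estimate (Theorem~A) and its algebraic complement (Theorem~B) to a polynomial parametrization of $Q$-orbits. First observe that $a_n\mu_Q$ is the $Q$-invariant probability measure supported on the compact orbit $Qa_n\Gamma/\Gamma=Q/U_n$ with $U_n:=a_n(Q\cap\Gamma)a_n^{-1}$; the goal is to show that, for every $\epsilon>0$, there is a compact $L\subseteq G/\Gamma$ with $(a_n\mu_Q)(L)\ge 1-2\epsilon$ for all large $n$. Fix an $A$-weight basis $X_1,\ldots,X_d$ of $\Lie(Q)$ and set
\[
\Theta_g(\mathbf{t}):=\exp(t_1X_1)\cdots\exp(t_dX_d)\,g,
\]
which lies in $\mathcal{P}(N_0,d,0)$ uniformly in $g$ because $Q$ is unipotent. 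Given $\epsilon$ and a compact $C\subseteq G/\Gamma$ (still to be chosen), Theorem~A provides a compact $L=L(C,\epsilon)$ so that on every ball $B(0,T)$ one of the alternatives holds: either (G) the proportion of $\mathbf{t}\in B(0,T)$ with $\pi(\Theta_g(\mathbf{t}))\in L$ exceeds $1-\epsilon$, or (B) $\pi(\Theta_g(B(0,T)))\cap C=\varnothing$.

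I would then cover a fundamental domain $\mathcal{F}_n\subseteq Q$ for $U_n$ by bounded-overlap translates $u_i\,\Theta(B(0,T))$ (a standard tiling in exponential coordinates) and apply the dichotomy to each $\Theta_{u_ia_n}$. Type-(G) balls contribute at most $\epsilon$ mass outside $L$, so provided the centers $u_i$ yielding type~(B) cover only $\le\epsilon\vol(\mathcal{F}_n)$ of the fundamental domain, we conclude $(a_n\mu_Q)(L)\ge 1-2\epsilon$. For a type-(B) ball, Theorem~B yields $\alpha_i\in\Delta$, $f_i\in\Xi$, $\gamma_i\in\Gamma$ with
\[
\sup_{\mathbf{t}\in B(0,T)}d_{\alpha_i}\bigl(\Theta(\mathbf{t})u_ia_nf_i\gamma_i\bigr)<\beta,
\]
and since $\Theta(\mathbf{t})u_i\in Q\subseteq\bbp_{\alpha_i}$, the underlying Pl\"ucker vector factors as $\vartheta_{\alpha_i}(\Theta(\mathbf{t})u_i)\cdot w_i$ with $w_i:=\vartheta_{\alpha_i}(a_nf_i\gamma_i)v_{\alpha_i}$. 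A Markov-type inequality on the polynomial in $\mathbf{t}$ forces $\|w_i\|<C'\beta$ for some $C'$ depending only on $T,d,N_0$.

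The final step, which I expect to be the main obstacle, is bounding the total measure of such ``bad'' $u_i$'s. Split by whether $f_i\gamma_i\in\bbp_{\alpha_i}(\bbq)$. In the parabolic case, Lemma~\ref{root} gives $w_i=c_{f_i\gamma_i}\,\lambda_{\alpha_i}(a_n)^{k_{\alpha_i}}v_{\alpha_i}$, and discreteness of $\vartheta_{\alpha_i}(\Xi\Gamma)v_{\alpha_i}\cap\bbr v_{\alpha_i}$ forces $|c_{f_i\gamma_i}|\ge\delta_0>0$; combined with the hypothesis $\lambda_{\alpha_i}(a_n)\ge 1$ from Theorem~\ref{thm;U-trans}, this yields $\|w_i\|\ge\delta_0\|v_{\alpha_i}\|$, contradicting $\|w_i\|<C'\beta$ once $\beta$ is small. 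The non-parabolic case is the technical heart: $\vartheta_{\alpha_i}(f_i\gamma_i)v_{\alpha_i}$ decomposes into $A$-weight components of weights $\mu<k_{\alpha_i}\lambda_{\alpha_i}$ which rescale by $e^{(\mu,\log a_n)}$ under $\vartheta_{\alpha_i}(a_n)$. The hypothesis on the \emph{fundamental weights} $\lambda_\alpha(a_n)$ (rather than simple roots) must be invoked to show that not all weight components can be simultaneously shrunk, and the remaining weight-by-weight analysis, together with discreteness of $\vartheta_{\alpha_i}(\Gamma)v_{\alpha_i}$, bounds the measure of the corresponding bad $u_i$'s. Granting this, $(a_n\mu_Q)(L)\ge 1-2\epsilon$ for large $n$, so any limit $\mu$ satisfies $\mu(\{\infty\})\le 2\epsilon$; letting $\epsilon\to0$ concludes $\mu(\{\infty\})=0$.
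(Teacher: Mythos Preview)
Your proposal has a genuine gap in the ``non-parabolic case,'' which you yourself flag as the main obstacle and leave unresolved. The suggested weight-by-weight analysis does not go through as stated: weights $\mu$ strictly below the highest weight $k_\alpha\lambda_\alpha$ can certainly satisfy $(\mu,\log a_n)<0$ even when all $\lambda_\beta(a_n)\ge 1$, so there is no obstruction to the non-highest components of $\vartheta_\alpha(f_i\gamma_i)v_\alpha$ being simultaneously contracted by $\vartheta_\alpha(a_n)$. Discreteness of $\vartheta_\alpha(\Gamma)v_\alpha$ gives a lower bound \emph{before} applying $a_n$, not after. Moreover, covering the $n$-dependent fundamental domain $\mathcal{F}_n$ by balls $u_i\Theta(B(0,T))$ introduces constants (number of balls, norms of $u_i$) that grow with $n$; the Markov-type inequality you invoke actually controls $\|\vartheta_\alpha(u_i)w_i\|$ rather than $\|w_i\|$, and $u_i$ is unbounded as $n\to\infty$. (A minor side point: $Q=Q_\varnothing$ need not be unipotent, since $M_\varnothing$ can contain $\bbr$-isotropic, $\bbq$-anisotropic semisimple factors; this only changes $\Sigma=0$ to some $\Sigma>0$.)

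The paper's argument bypasses the case split entirely via two simplifications. First, since $Q\cap\Gamma$ is a \emph{cocompact} lattice in $Q$ (Godement), one covers a fixed compact fundamental set $\mathcal{Q}\subset Q$ by finitely many translates $q_1\Theta(\mathcal{O}),\ldots,q_m\Theta(\mathcal{O})$, \emph{independent of $n$}, and applies Theorems~A and~B to the maps $\mathbf{t}\mapsto a_nq_i\Theta(\mathbf{t})$. Second, and this is the key idea you are missing, the uniform lower bound on $d_\alpha$ is obtained by projecting to the \emph{highest weight space} $W_\alpha$ (Lemma~\ref{d_ilowerbound}): since $q_i\Theta(\mathcal{O})$ contains a neighborhood of the identity in a maximal unipotent subgroup, Shah's lemma~\cite[Lemma~1.4]{Sh} gives
\[
\sup_{\mathbf{t}\in\mathcal{O}}\|{\rm pr}_{W_\alpha}(\vartheta_\alpha(q_i\Theta(\mathbf{t}))v)\|\gg\|v\|
\]
for every vector $v$ and every $\alpha$; and $\vartheta_\alpha(a_n)$ acts on $W_\alpha$ by products of the fundamental weights $\lambda_\beta(a_n)\ge 1$, hence does not contract. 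This rules out alternative~(1) of Theorem~B for \emph{every} ball $q_i\Theta(\mathcal{O})$, so each one meets the compact set $C$, and Theorem~A then gives the $(1-\vare)$ density estimate directly. There is no ``bad'' set of centers to bound at all.
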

\begin{remark}\label{r:NoEscape}
Proposition~\ref{p:NoEscape} is equivalent to saying that for any $\vare>0$ there is a compact subset of $C'\subseteq G,$ such that $(a_n \mu_Q)(\pi(C'))>1-\vare$. 
\end{remark}

The proof is similar to the proof of~\cite[Theorem 1.1]{EMS1}. The major difference is that our subgroup $Q$ is not reductive. What pays the price here is the stated conditions on the translating sequence $\{a_n\}.$ Let $\{X_1,\dots, X_d\}$ be a basis for the Lie algebra of $Q$ and define 
\[
\mbox{${\Theta}:\bbr^k\rightarrow G$\hspace{2mm} by \hspace{2mm}${\Theta}(\mathbf{t})=\exp(t_1X_1)\cdots\exp(t_dX_d).$}
\] 
Then there exists $N,\Sigma$ such that for any $g,g'\in G$ the map ${\bf t}\mapsto g{\Theta}({\bf t})g'$ is in $\mathcal{P}(N,d,\Sigma).$

Let $B$ be a small ball in $\bbr^d$ such that $\exp$ is a diffeomorphism into $G$ and~\cite[Proposition 3.6]{EMS2} holds on $B.$\footnote{This choice is to guarantee the ``polynomial like'' behavior of ${\Theta}.$} Note that push forward of the Lebesgue measure on $\Theta(B)$ and the restriction of $\nu_Q$ on $\Theta(B)$ are absolutely continuous with respect to each other. Moreover $\Theta$ is a diffeomorphism whose derivative at the origin is the identity matrix. Hence taking $B$ small enough we may and will assume that the Radon-Nikodym derivative of the former with respect to the latter is bounded above and below by $2$ and $1/2.$

For any $\alpha\in\Delta$, let $W_\alpha$ be the sum of the highest weight spaces in the representation $V_\alpha=\wedge^{\ell_\alpha}\mathfrak{g}$, where $\ell_\alpha:=\dim \ufr_\alpha$.  
Recall that $Q$ contains a maximal unipotent subgroup of $G.$ Thus $q{\Theta}(B)q^{-1},$ for any $q\in Q,$ contains a neighborhood of the identity in this maximal unipotent subgroup. This, thanks to~\cite[Lemma 1.4]{Sh}, implies that for any $q\in Q$ there is a constant $e,$ depending on $q,$ such that: 
\begin{equation}\label{e;shah}
\|v\|\le e\cdot \sup_{{\bf t}\in \ocal} \|\mbox{pr}_{W_\alpha}(\vartheta_\alpha(q{\Theta}({\bf t})) v)\|. 
\end{equation}
We conclude from this the following
\begin{lem}~\label{d_ilowerbound}
Let $\ocal\subseteq B$ be any ball and $\Xi\subseteq\bbg(\bbq)$ and $\{q_1,\dots,q_m\}\subset Q$ be finite sets. Then there is some $\theta>0$ such that; if $|\lambda_{\alpha}(a)|\ge 1$ for all $\alpha,$ then 
\[
\sup_{{\bf t}\in \ocal} d_\alpha(aq_i{\Theta}({\bf t})f\gamma)\ge \theta,
\] 
for all $1\leq i\leq m,$ $\alpha\in\Delta,$ $\gamma\in \Gamma,$ and $f\in \Xi$. 
\end{lem}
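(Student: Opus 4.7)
The plan is to combine the recurrence estimate~\eqref{e;shah} with (i) an arithmetic lower bound on $\|\vartheta_\alpha(f\gamma) v_\alpha\|$ coming from the integrality of $\Gamma$, and (ii) the fact that $\vartheta_\alpha(a)$ acts on $W_\alpha$ without contraction under the hypothesis $|\lambda_\beta(a)|\geq 1$ for all $\beta\in\Delta$.

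Set $w_{f,\gamma}:=\vartheta_\alpha(f\gamma)v_\alpha$. Since $\Xi\subseteq\bbg(\bbq)$ is finite, $\vartheta_\alpha$ is $\bbq$-rational, $\Gamma=\bbg(\bbz)$, and $v_\alpha$ is a fixed nonzero $\bbq$-vector of $\wedge^{\ell_\alpha}\ufr_\alpha$, there is an integer $N=N(\Xi)\geq 1$ such that $Nw_{f,\gamma}$ is a nonzero integer vector in $\wedge^{\ell_\alpha}\mathfrak{gl}_l(\bbz)$ for every $(f,\gamma)\in\Xi\times\Gamma$. Hence $\|w_{f,\gamma}\|\geq c_0$ for some uniform $c_0=c_0(\Xi)>0$. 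For each $1\leq i\leq m$, applying~\eqref{e;shah} with $q=q_i$ and $v=w_{f,\gamma}$ then yields a constant $e_i>0$ depending only on $q_i$ with
\[
\sup_{\mathbf{t}\in\ocal}\|\mathrm{pr}_{W_\alpha}(\vartheta_\alpha(q_i\Theta(\mathbf{t}))w_{f,\gamma})\|\;\geq\;c_0/e_i.
\]

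To absorb the translation by $a$, recall from Section~\ref{ss:Decompositions} that the $A$-weight decomposition of $\wedge\gfr$ is orthogonal. Since $W_\alpha$ is a sum of $A'$-weight spaces, it is also a sum of $A$-weight spaces, so $\mathrm{pr}_{W_\alpha}$ commutes with $\vartheta_\alpha(a)$. Moreover, every $A$-character $\mu$ occurring in $W_\alpha$ is the restriction to $\bbs$ of the highest $A'$-weight of some irreducible summand of $V_\alpha$, hence a dominant relative weight, i.e.\ a non-negative integral combination of $\{\lambda_\beta\}_{\beta\in\Delta}$. The hypothesis $|\lambda_\beta(a)|\geq 1$ then forces $|\mu(a)|\geq 1$, so $\|\vartheta_\alpha(a)w\|\geq\|w\|$ for every $w\in W_\alpha$. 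Therefore
\begin{align*}
\sup_{\mathbf{t}\in\ocal}d_\alpha(aq_i\Theta(\mathbf{t})f\gamma)
&\geq\sup_{\mathbf{t}\in\ocal}\|\vartheta_\alpha(a)\mathrm{pr}_{W_\alpha}(\vartheta_\alpha(q_i\Theta(\mathbf{t}))w_{f,\gamma})\|\\
&\geq\sup_{\mathbf{t}\in\ocal}\|\mathrm{pr}_{W_\alpha}(\vartheta_\alpha(q_i\Theta(\mathbf{t}))w_{f,\gamma})\|\;\geq\;c_0/e_i,
\end{align*}
and setting $\theta:=\min_{1\le i\le m}(c_0/e_i)>0$ completes the argument. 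The main subtlety is verifying that every $A$-character on $W_\alpha$ is a non-negative combination of the relative fundamental weights, which is exactly what makes the conclusion sensitive to the (larger) cone $\{a:|\lambda_\beta(a)|\geq 1\;\forall\beta\}$ rather than only the positive Weyl chamber.
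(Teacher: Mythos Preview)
Your proof is correct and follows essentially the same approach as the paper: both combine the arithmetic lower bound $\|\vartheta_\alpha(f\gamma)v_\alpha\|\geq e'$ (coming from integrality of $\Gamma$ and finiteness of $\Xi$), the non-contraction of $\vartheta_\alpha(a)$ on $W_\alpha$ under the hypothesis $\lambda_\beta(a)\geq 1$, and the estimate~\eqref{e;shah}. The only difference is cosmetic---you apply~\eqref{e;shah} first and then absorb $a$, whereas the paper first projects to $W_\alpha$, then removes $a$, then invokes~\eqref{e;shah}; the content is identical.
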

\begin{proof}
By the definition of $d_\alpha$, we have:
\[
d_\alpha(aq_i{\Theta}({\bf t})f\gamma)=\|\vartheta_\alpha(aq_i{\Theta}({\bf t})f \gamma)v_\alpha\|.
\] 
It is clear that there is a constant $e'$ depending on the set $\Xi$ (and independent of $\gamma$) such that $\|\vartheta_\alpha(f \gamma)v_\alpha\|\ge e'$. On the other hand, because the weight spaces are assumed to be orthogonal to one another, we have 
\[
\|\vartheta_\alpha(a{\Theta}({\bf t})f \gamma)v_\alpha\|\ge\|\vartheta_\alpha(a)(\mbox{pr}_{W_\alpha}(\vartheta_\alpha(q{\Theta}({\bf t})f \gamma) v_\alpha))\|.
\]  
Since any highest weight is a product of fundamental weights and the fundamental weights at $a$ are at least 1, by the definition of $W_\alpha,$ we have 
\[
\|\vartheta_\alpha(a)({\rm pr}_{W_\alpha}(\vartheta_\alpha({\Theta}({\bf t})f \gamma) v_\alpha))\|\geq\|{\rm pr}_{W_\alpha}(\vartheta_\alpha({\Theta}({\bf t})f \gamma) v_\alpha)\|.
\]  
So one can finish the argument using~\eqref{e;shah}.
\end{proof}

\begin{proof}[Proof of Proposition~\ref{p:NoEscape}]
Since $Q=Q_\varnothing$ by the Godement compactness criterion $Q\cap\Gamma$ is a uniform lattice in $Q.$ Let $\vare>0$ be given. 
Let $\mathcal{O}\subset B$ be a small open ball such that  for all $x\in Q\Gamma/\Gamma,$ the map $q\mapsto qx$ is injective on $\Theta(\ocal).$ 
Let $\mathcal{Q}\subset Q$ be a compact subset such that $Q\Gamma/\Gamma=\mathcal{Q}\Gamma/\Gamma.$ Let $\{q_1,\dots,q_m\}\subset Q$ be so that $\mathcal{Q}=\cup_{i} q_i{\Theta}(\ocal).$ Further assume that the multiplicity of this covering is at most $\kappa.$ Using Lemma~\ref{d_ilowerbound} and Theorem B for each $1\leq i\leq m,$ there exits some compact set $C_i\subset G$ such that $\pi(a_nq_i{\Theta}(\ocal))\cap \pi(C_i)$ is nonempty for all $n.$ Let $C'_i$ be the compact set given by Theorem A applied to $C_i$ and $B,$ that is; we have 
\be\label{e;high-dens-ret}
m(\{{\bf t}\in \ocal | \pi(a_nq_i{\Theta}({\bf t}))\in \pi(C'_i) \})\ge (1-\vare) m(\ocal),
\ee
where $m$ is the Lebesgue measure on $\bbr^d$. Let $C'=\cup_i C'_i,$ then~\eqref{e;high-dens-ret} holds for any $1\leq i\leq m$ and $C'$ in place of $C'_i.$ This, in view of the choice of $B,$ implies that 
\[
\nu_Q(\{\Theta({\bf t})| \pi(a_nq_i{\Theta}({\bf t}))\in \pi(C')\})\ge (1-4\vare)\nu_Q(\Theta(\ocal)).
\]
Now the definition of $\mu_Q$ and the choice of $\mathcal{O}$ and $q_i$'s imply that  
\[
(a_n\mu_Q)(\pi(C'))\ge 1-4\kappa\vare,
\] 
and this finishes the proof by Remark~\ref{r:NoEscape}.
\end{proof}
\subsubsection{}\label{sss:Convergence}
In this Section first we complete the proof of Theorem~\ref{thm;U-trans} by proving Proposition~\ref{p:FindTheLimitUTrans} and then we prove the second part of Theorem~\ref{trans}.

\begin{prop}\label{p:FindTheLimitUTrans}
Let $\{a_n\}$ be as in Theorem~\ref{thm;U-trans}. Assume $\{a_n\mu_Q\}$ converges to $\mu\in\pcal(G/\Gamma)$. Then $\mu=\mu_{Q_F}$.
\end{prop}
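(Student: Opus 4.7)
I aim to show $\mu=\mu_{Q_F}$, the unique $Q_F$-invariant probability measure on the closed orbit $Q_F\Gamma/\Gamma\cong Q_F/(Q_F\cap\Gamma)$.

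I first establish three structural facts about $\mu$. By Proposition~\ref{p:NoEscape}, $\mu\in\pcal(G/\Gamma)$. Next, the hypothesis $\lambda_\alpha(a_n)=1$ for all $\alpha\notin F$ forces $a_n$ to lie in the $\bbq$-split subtorus of $A$ spanned by the coroots $\{\alpha^\vee:\alpha\in F\}$; this subtorus sits inside the semisimple Levi factor $\bbm_F'$, and because it is $\bbq$-split it can meet only the $\bbq$-isotropic (hence $\bbr$-isotropic) almost-simple factors of $\bbm_F'$. Therefore $a_n\in M_F\subseteq Q_F$. Combined with $Q\subseteq Q_F$, this yields $\supp(a_n\mu_Q)=Qa_n\Gamma/\Gamma\subseteq Q_F\Gamma/\Gamma$, and passing to the weak-$*$ limit gives $\supp\mu\subseteq Q_F\Gamma/\Gamma$. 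Finally, because $A$ normalizes $Q$, each $a_n\mu_Q$ is $Q$-invariant, and this invariance is preserved under weak-$*$ limits, so $\mu$ is $Q$-invariant.

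Since $Q_F\cap\Gamma$ is a lattice in $Q_F$, there is a unique $Q_F$-invariant probability measure on $Q_F\Gamma/\Gamma$; hence it suffices to prove $\mu$ is $Q_F$-invariant. By property~(3) of the introduction, $Q_F$ is generated by $\bbp_F^+(\bbr)$ and $Q$, so given the $Q$-invariance of $\mu$ it is enough to establish invariance under $\bbp_F^+(\bbr)$. This latter group is generated by $\bbr$-unipotent subgroups; those attached to positive roots of the Levi root system $\langle F\rangle$ already lie in $U\subseteq Q$, so the substantive remaining task is invariance of $\mu$ under the opposite (negative-root) unipotent subgroups contained in $M_F$. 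For this I would invoke Ratner's measure-classification theorem: since $\mu$ is $U$-invariant for a maximal unipotent $U\subseteq Q$ and $Q$ normalizes $U$, the $U$-ergodic decomposition of $\mu$ consists of $Q$-invariant measures, each an algebraic probability measure supported on a closed finite-volume orbit of a closed connected subgroup $H_y$ satisfying $Q\subseteq H_y\subseteq Q_F$. The proposition then follows provided $H_y=Q_F$ for almost every ergodic component.

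The main obstacle is precisely this last identification. By Ratner's classification there are only countably many candidate intermediate subgroups $Q\subseteq H\subsetneq Q_F$ producing closed finite-volume orbits, each giving a nowhere-dense closed subset of $Q_F\Gamma/\Gamma$. To rule out concentration of $\mu$ on any of these, I would combine the Mozes-Shah closure theorem for algebraic measures with a quantitative non-concentration argument for $\{Qa_n\Gamma/\Gamma\}$: the expansion hypothesis $\lambda_\alpha(a_n)\to\infty$ for every $\alpha\in F$ forces the $Q$-translates to spread through all $M_F$-directions transverse to $Q$ inside $Q_F$, so no proper intermediate closed orbit can absorb positive mass in the weak-$*$ limit. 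Once this is pinned down, $H_y=Q_F$ almost surely, $\mu$ is $Q_F$-invariant, and uniqueness of the invariant probability on $Q_F\Gamma/\Gamma$ forces $\mu=\mu_{Q_F}$.
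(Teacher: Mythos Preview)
Your preliminary reductions are fine: $a_n\in Q_F$, $\supp\mu\subseteq Q_F\Gamma/\Gamma$, and $Q$-invariance of $\mu$ are all correct and match the paper. The gap is exactly where you flag it yourself: ruling out proper intermediate subgroups $Q\subseteq H\subsetneq Q_F$. Your proposed mechanism---``$\lambda_\alpha(a_n)\to\infty$ for all $\alpha\in F$ forces the $Q$-translates to spread through all $M_F$-directions transverse to $Q$''---is not justified and, in fact, is precisely the difficulty the paper is built around. As the discussion after Theorem~\ref{trans} (and the $\mathbb{SL}_5$ example there) makes explicit, growth of all fundamental weights does \emph{not} force expansion in every root direction of $M_F$; there can be contracting directions. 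So an argument that tries to propagate invariance to the opposite unipotents of $M_F$ via conjugation-expansion will not go through directly, and ``quantitative non-concentration'' is doing no work as stated.

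The paper's route is different and worth internalizing. Rather than decomposing $\mu$ ergodically, it applies the Mozes--Shah theorem~\cite{MS} to the sequence of $U$-ergodic measures $a_n\mu_Q$ to obtain that $\mu$ is a \emph{single} homogeneous measure $g\mu_H$, together with the structural output of~\cite[Theorem~1.1]{MS}: a sequence $g_n\to e$ with $a_nQ\Gamma\subseteq g_ngH\Gamma$ and $g_n^{-1}Qg_n\subseteq gHg^{-1}$. Combining this with~\cite[Proposition~8.6]{BT} and the Borel--Harish-Chandra finiteness (Lemma~\ref{ourversion}) pins down $\bbp_E^+\subseteq g\bbh g^{-1}\subseteq\bbp_E^{(1)}$ for some $E\subseteq F$. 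The key step you are missing is then purely arithmetic: write $a_n=g_n\,(gh_ng^{-1})\,(g\gamma_n)$ with $h_n\in H$, $\gamma_n\in\Gamma$, all factors in $P_E$. For $\alpha\in F\setminus E$ the $\bbq$-character $\lambda_\alpha$ is trivial on $\bbp_E^{(1)}\supseteq gHg^{-1}$ and satisfies $|\lambda_\alpha(\gamma_1^{-1}\gamma_n)|=1$ (arithmeticity of $\Gamma$), whence $|\lambda_\alpha(a_n)|=|\lambda_\alpha(g_n)|\cdot|\lambda_\alpha(g\gamma_1)|$ stays bounded, contradicting $\lambda_\alpha(a_n)\to\infty$. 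This forces $E=F$, and then a short group-theoretic argument gives $gHg^{-1}=Q_F$. Your outline needs this character/arithmetic step (or a genuine substitute) to close.
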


To prove Proposition~\ref{p:FindTheLimitUTrans}, the main results of Mozes and Shah~\cite{MS}, specially Theorem 1.1, will be used. Before going into the proof of Proposition~\ref{p:FindTheLimitUTrans}, let us briefly recall some of the properties of parabolic $\bbq$-subgroups.
 
\begin{lem}\label{l:CharParabolic}
Let the notation be as in Section~\ref{ss:IndexRelativeRootSystem} (recall that $\bbg$ is simply-connected). For any $F\subseteq \Delta$ 
\[
{\rm Im}(X^*(\bbp_F)_{\bbq}\rightarrow X^*(\bbs))=
\bigoplus_{\alpha\in \Delta\setminus F} \bbz \lambda_{\alpha},
\]
 where $X^*(\bbp_F)_{\bbq}$ is the set of characters which are defined over $\bbq$.
\end{lem}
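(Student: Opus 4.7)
The plan is to pass to $\overline{\bbq},$ describe $X^*(\bbp_F)_{\overline{\bbq}}$ using simple connectedness of $\bbg,$ take Galois invariants, and then restrict to $\bbs.$

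Over $\overline{\bbq}$ the unipotent radical of $\bbp_F$ has no characters, so $X^*(\bbp_F)=X^*(\bbl_F)=X^*(\bbl_F/[\bbl_F,\bbl_F]).$ The absolute simple roots of the Levi $\bbl_F$ relative to $\bbt$ are exactly those $\wt\beta\in\TD$ with $\pi(\wt\beta)\in F\cup\{0\},$ namely $\Delta_0\cup(\pi^{-1}(F)\cap\TD);$ so a character of $\bbl_F$ is a character of $\bbt$ annihilating the simple coroots $\wt\beta^\vee$ for $\wt\beta$ in this set. Because $\bbg$ is simply-connected, $X^*(\bbt)$ is the full absolute weight lattice with $\bbz$-basis $\{\wt\lambda_{\wt\alpha}\}_{\wt\alpha\in\TD}$ dual to the simple coroots; hence
\[
X^*(\bbp_F)_{\overline{\bbq}}=\bigoplus_{\wt\alpha\in\pi^{-1}(\Delta\setminus F)\cap\TD}\bbz\,\wt\lambda_{\wt\alpha}.
\]

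I would next take Galois invariants. By Galois descent, $X^*(\bbp_F)_{\bbq}$ equals the fixed points of the modified Galois action $\tau$ on $X^*(\bbp_F)_{\overline{\bbq}},$ and $\tau$ permutes the $\wt\lambda_{\wt\alpha}$ the same way it permutes $\TD.$ Using the identity $\pi^{-1}(\alpha)=\tau(H)\wt\alpha$ recalled in Section~\ref{ss:IndexRelativeRootSystem}, the $\tau(H)$-orbits inside $\pi^{-1}(\Delta\setminus F)\cap\TD$ are precisely the fibers $\pi^{-1}(\alpha)\cap\TD$ for $\alpha\in\Delta\setminus F.$ Thus $X^*(\bbp_F)_{\bbq}$ is free of rank $|\Delta\setminus F|$ with basis
\[
\chi_\alpha:=\sum_{\wt\alpha\in\pi^{-1}(\alpha)\cap\TD}\wt\lambda_{\wt\alpha},\quad \alpha\in\Delta\setminus F.
\]

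To finish, restriction of characters from $\bbt$ to $\bbs$ corresponds under the identifications of Section~\ref{ss:IndexRelativeRootSystem} to the orthogonal projection $\pi,$ which sends $\chi_\alpha$ to $\lambda_\alpha$ by the very definition of the relative fundamental weight. It remains to establish injectivity of the restriction $X^*(\bbp_F)_{\bbq}\to X^*(\bbs);$ since any such character factors through the abelian torus $\bbl_F^{\mathrm{ab}},$ vanishing on $\bbs$ would force it to descend to the $\bbq$-anisotropic quotient $\bbl_F^{\mathrm{ab}}/\mathrm{image}(\bbs),$ which has no nontrivial $\bbq$-rational characters. Putting these steps together yields the image $\bigoplus_{\alpha\in\Delta\setminus F}\bbz\,\lambda_\alpha.$

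The only real hurdle is the bookkeeping in Step~1, namely identifying the absolute simple roots of $\bbl_F$ and verifying transitivity of $\tau(H)$ on $\pi^{-1}(\alpha)\cap\TD;$ both are standard consequences of the Borel--Tits theory set up in Section~\ref{ss:IndexRelativeRootSystem}, so there is no genuine difficulty once the dictionary is in place.
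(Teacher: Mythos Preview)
Your proposal is correct and follows essentially the same route as the paper: identify $X^*(\bbp_F)$ over $\overline{\bbq}$ with $\bigoplus_{\wt\alpha\in\TD\setminus(\pi^{-1}(F)\cup\Delta_0)}\bbz\,\wt\lambda_{\wt\alpha}$ via simple connectedness, use Galois invariance to force the coefficients $c_{\wt\alpha}$ to be constant along $\tau(H)$-orbits $\pi^{-1}(\alpha)\cap\TD$, and then restrict to $\bbs$ via $\pi$ to obtain $\sum c_\alpha\lambda_\alpha$. The only cosmetic difference is that you package the argument as ``compute $\tau$-invariants, then restrict'' and thereby obtain both inclusions at once, whereas the paper proves the inclusion $\subseteq$ directly and relies on the remark at the end of Section~\ref{ss:IndexRelativeRootSystem} (that each $\lambda_\alpha$ with $\alpha\notin F$ extends to a $\bbq$-character of $\bbp_F$) for the reverse inclusion; your explicit injectivity step via the anisotropic quotient of $\bbl_F^{\mathrm{ab}}$ is a clean addition that the paper leaves implicit.
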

\begin{proof}
Since $\bbg$ is simply-connected, it is well-known (see Section~\ref{ss:IndexRelativeRootSystem} and~\cite[Lemma 8.1.4]{Spr}) that
\be\label{e:CharParaAbsolute}
{\rm Im}(X^*(\bbp_F)\rightarrow X^*(\bbt))=\bigoplus_{\wt{\alpha}\in \TD\setminus (\pi^{-1}(F)\cup \Delta_0)} \bbz \wt{\lambda}_{\wt{\alpha}}.
\ee
Let $\lambda\in {\rm Im}(X^*(\bbp_F)_{\bbq}\rightarrow X^*(\bbs))$. So by Equation (\ref{e:CharParaAbsolute}), 
\[
\lambda=\sum_{\wt{\alpha}\in \TD\setminus (\pi^{-1}(F)\cup \Delta_0)} c_{\wt{\alpha}} \wt{\lambda}_{\wt{\alpha}}.
\]
On the other hand, since $\lambda$ is defined over $\bbq$, for any $h\in {\rm Gal}(\overline{\bbq}/\bbq)$, we have $\tau(h)(\lambda)=\lambda$. Hence
\be\label{e:Coef}
c_{\wt{\alpha}}=(\wt{\alpha}^{\vee},\lambda)=
(\tau(h)(\wt{\alpha}^{\vee}),\lambda)=(\tau(h)(\wt{\alpha})^{\vee},\lambda)=c_{\tau(h)(\wt{\alpha})}.
\ee
Thus for any $\wt{\alpha}\in \TD\setminus (\pi^{-1}(F)\cup \Delta_0)$, $c_{\wt{\alpha}}$ only depends on $\alpha:=\pi(\wt{\alpha})$ and so we will denote it by $c_{\alpha}$. Therefore
\[
\lambda=\pi(\lambda)=\sum_{\alpha\in \Delta\setminus F} c_{\alpha} \lambda_{\alpha},
\]
which finishes the proof.
\end{proof}
By Lemma~\ref{l:CharParabolic} we have that the restriction of $\rho_F'$ to $\bbs$ (which will be denoted by $\rho_F'$ again) is in $\sum_{\alpha\in \Delta\setminus F} \bbz \lambda_{\alpha}$. In the next lemma (for the future use) we recall that $\rho_F'$ is in fact in the positive cone generated by $\{\lambda_{\alpha}\}_{\alpha\in\Delta\setminus F}$. Notice that it is a  generalization of Lemma~\ref{root}. 

\begin{lem}\label{l:RhoPositiveCone}
Let $\bbg, \bbs, \Delta$ be as in Section~\ref{ss:IndexRelativeRootSystem}. As before for any $F\subseteq \Delta$, let $\rho_F'\in X^*(\bbs)$ be a character induced by
\[
a \mapsto \det(\Ad(a)|_{\Lie(R_u(\bbp_F))}).
\] 
Then $\rho_F'\in \sum_{\alpha\in\Delta\setminus F}\bbz^+ \lambda_{\alpha}.$
\end{lem}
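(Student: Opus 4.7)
The argument mimics that of Lemma~\ref{root}, with the single simple root there replaced by the set $\Delta\setminus F$ of ``non-Levi'' simple roots. Set $\wt{F}:=\pi^{-1}(F)\cup\Delta_0\subseteq\TD$; this is the set of absolute simple roots of the Levi $\bbl_F$. Let $\wt{\Phi}_F$ denote the set of positive absolute roots whose simple-root expansion has a nonzero coefficient for some element of $\TD\setminus\wt{F}$; these are precisely the roots appearing in $\Lie R_u(\bbp_F)_{\overline{\bbq}}$, so the restriction of $\rho_F'$ to $\bbt$ equals $\wt{\rho}_F:=\sum_{\wt{\phi}\in\wt{\Phi}_F}\wt{\phi}$, and the goal becomes to show that $\pi(\wt{\rho}_F)\in\sum_{\alpha\in\Delta\setminus F}\bbz^+\lambda_\alpha$.

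First I would establish Weyl invariance along the Levi: for each $\wt{\beta}\in\wt{F}$, the reflection $\sigma_{\wt{\beta}}$ preserves $\wt{\Phi}_F$, because it alters only the coefficient of $\wt{\beta}$ in the simple-root expansion and hence preserves both positivity and the presence of a nonzero coefficient outside $\wt{F}$. Therefore $\sigma_{\wt{\beta}}(\wt{\rho}_F)=\wt{\rho}_F$ for every $\wt{\beta}\in\wt{F}$, which as in Lemma~\ref{root} forces
\[
\wt{\rho}_F=\sum_{\wt{\alpha}\in\TD\setminus\wt{F}}k_{\wt{\alpha}}\wt{\lambda}_{\wt{\alpha}},\quad k_{\wt{\alpha}}=(\wt{\rho}_F,\wt{\alpha}^{\vee})\in\bbz,
\]
where the integrality follows from the fact that $\wt{\rho}_F$ is an integer combination of roots.

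To see each $k_{\wt{\alpha}}$ is strictly positive, I would pair each $\wt{\phi}$ contributing to $k_{\wt{\alpha}}$ with $\sigma_{\wt{\alpha}}(\wt{\phi})$: whenever both lie in $\wt{\Phi}_F$ and are distinct their joint contribution is $(\wt{\phi},\wt{\alpha}^{\vee})+(\sigma_{\wt{\alpha}}(\wt{\phi}),\wt{\alpha}^{\vee})=0$. The unpaired terms come from (a) $\wt{\phi}=\wt{\alpha}$ itself (which lies in $\wt{\Phi}_F$ since $\wt{\alpha}\notin\wt{F}$), contributing $+2$, and (b) those $\wt{\phi}$ whose only non-Levi simple root is $\wt{\alpha}$, with coefficient $c\ge 1$ satisfying $c\le(\wt{\phi},\wt{\alpha}^{\vee})$, so that $\sigma_{\wt{\alpha}}(\wt{\phi})$ falls into the Levi; such terms contribute $(\wt{\phi},\wt{\alpha}^{\vee})\ge c\ge 1$. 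Consequently $k_{\wt{\alpha}}\ge 2$, so $k_{\wt{\alpha}}\in\bbz^+$. This small case analysis of the $\wt{\alpha}$-string is the only place where something beyond the template of Lemma~\ref{root} is needed, and I expect it to be the main technical point.

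Finally I would descend to the relative root system exactly as in the last paragraph of the proof of Lemma~\ref{root}: since $F$ and $\Delta_0$ are stable under the action $\tau$ of $\mathrm{Gal}(\overline{\bbq}/\bbq)$, so is $\wt{F}$, and therefore so is $\wt{\Phi}_F$; hence $\tau(h)(\wt{\rho}_F)=\wt{\rho}_F$ and the computation leading to~\eqref{e:Coef} yields $k_{\wt{\alpha}}=k_{\tau(h)(\wt{\alpha})}$. Thus $k_{\wt{\alpha}}$ depends only on $\alpha:=\pi(\wt{\alpha})$; denote this common value by $k_\alpha$. Applying $\pi$ and grouping fundamental weights within each Galois orbit gives
\[
\rho_F'=\pi(\wt{\rho}_F)=\sum_{\alpha\in\Delta\setminus F}k_\alpha\,\pi\!\Bigl(\sum_{\wt{\alpha}\in\TD\cap\pi^{-1}(\alpha)}\wt{\lambda}_{\wt{\alpha}}\Bigr)=\sum_{\alpha\in\Delta\setminus F}k_\alpha\lambda_\alpha,
\]
with $k_\alpha\in\bbz^+$, completing the proof.
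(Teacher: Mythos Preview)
Your proposal is correct and follows essentially the same strategy as the paper's proof. The paper organizes the argument as ``split case first, then descend to the general case via the index,'' which is exactly your ``work at the absolute level, then project via $\pi$''; the positivity of the coefficients is obtained by the same reflection computation (the paper partitions $\Phi_F$ as $\{\alpha\}\cup\sigma_\alpha(\Psi_{F,\alpha})\cup\Phi_{F,\alpha}$ and uses $(\phi,\alpha^\vee)<0$ for $\phi\in\Psi_{F,\alpha}$, which is precisely your pairing argument viewed from the Levi side). One small remark: in your case~(b) the inequality $c\le(\wt{\phi},\wt{\alpha}^{\vee})$ is in fact an equality, since a root cannot have coefficients of mixed sign; this does not affect your conclusion.
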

\begin{proof}
First let us assume that $\bbg$ splits over $\bbq$ i.e. $\bbs$ is a maximal torus in $\bbg$. Let $\Psi_F$ be the subset of the positive roots $\Phi^+$ which consists of integer combination of elements of $F$. Let $\Phi_F:=\Phi^+\setminus \Psi_F$. Then (since $\bbg$ is $\bbq$-split) we have
\be\label{e:RhoSumRoots}
\rho_F'=\sum_{\phi\in\Phi_F}\phi.
\ee
Since any root $\phi$ can be written as either a positive or a negative linear combination of simple roots, $\sigma_{\alpha}(\Phi^+\setminus \{\alpha\})=\Phi^+\setminus \{\alpha\}$ for any $\alpha\in \Delta$. By a similar logic, one can show that for $\alpha\in \Delta\setminus F$ we have
\be\label{e:UnstableRoots}
\{\phi\in \Psi_F|\h \sigma_{\alpha}(\phi)\not \in \Psi_F\}=\{\phi\in \Psi_F|\h (\phi,\alpha^{\vee})\neq 0\}.
\ee
Let us denote the set in Equation (\ref{e:UnstableRoots}) by $\Psi_{F,\alpha}$. Notice that as $\sigma_{\alpha}(\phi)=\phi-(\phi,\alpha^{\vee})\alpha\in \Phi^+$ for any $\phi\in \Psi_{F,\alpha}$, we have that
\be\label{e:ObtuseRoots}
\forall\h \alpha\in \Delta\setminus F,\h \forall\h \phi\in \Psi_{F,\alpha},\h\h\h\h\h (\phi,\alpha^{\vee})<0. 
\ee
For any $\alpha\in \Delta\setminus F$, let $\Phi_{F,\alpha}:=\Phi_F\setminus (\sigma_{\alpha}(\Psi_{F,\alpha})\cup\{\alpha\})$. Then by the above discussion we have
$\sigma_{\alpha}(\Phi_{F,\alpha})=\Phi_{F,\alpha}$. Therefore 
\be\label{e:AlphaComponentRho}
\sigma_{\alpha}(\rho_F')-\rho_F'=-2 \alpha+\sum_{\phi\in \Psi_{F,\alpha}}(\phi,\alpha^{\vee}) \alpha,
\ee 
which implies that $(\rho_F',\alpha^{\vee})=2-\sum_{\phi\in \Psi_{F,\alpha}}(\phi,\alpha^{\vee})$. Now, by (\ref{e:ObtuseRoots}), (\ref{e:AlmostDuality}), and Lemma~\ref{l:CharParabolic}, we have that $\rho_F'\in \bbz^+ \lambda_{\alpha}$.

Now we use the basics of the index and the relative root system of $\bbg$ as described in Section~\ref{ss:IndexRelativeRootSystem} to get the general case. As in Section~\ref{ss:IndexRelativeRootSystem},
the absolute roots, the coroots, etc will be denoted by an extra tilde and $\pi$ is the orthogonal projection
from $X^*(\bbt)$ to $X^*(\bbs)$. Let $\Delta_0=\TD\cap \ker(\pi)$ and $\wt{F}=\pi^{-1}(F)\cap \TD$. 
By the definition of $\rho_F'$ we have
\be\label{e:RelativeRho}
\rho_F'=\pi\left(\sum_{\wt{\phi}\in\TP_{\wt{F}\cup \Delta_0}} \wt{\phi}\right),
\ee
where $\TP_{\wt{F}\cup \Delta_0}\subseteq \TP^+$ consists of those (absolute) roots that are not a linear combination of elements of $\wt{F}\cup \Delta_0$. 

Now from the split case we know that there are $\wt{c}_{\wt{\alpha}}\in \bbz^+$ such that
\be\label{e:AbsoluteRho}
 \sum_{\wt{\phi}\in\TP_{\wt{F}\cup \Delta_0}} \wt{\phi}=\sum_{\wt{\alpha}\in\TD\setminus (\wt{F}\cup\Delta_0)} \wt{c}_{\wt{\alpha}}\wt{\lambda}_{\wt{\alpha}}.
\ee
Now by a similar argument as in the proof of Lemma~\ref{l:CharParabolic} using the fact that $\rho_F'$ is invariant under the Galois action one can deduce that $\wt{c}_{\wt{\alpha}}$ depends only on $\alpha:=\pi(\wt{\alpha})$ and it will be denoted by $c_{\alpha}$. And so
\[
\rho_F'=\pi\left(\sum_{\wt{\phi}\in\TP_{\wt{F}\cup \Delta_0}} \wt{\phi}\right)=
\pi\left(\sum_{\wt{\alpha}\in\TD\setminus (\wt{F}\cup\Delta_0)} \wt{c}_{\wt{\alpha}}\wt{\lambda}_{\wt{\alpha}}\right)=
\pi\left(\sum_{\alpha\in\Delta\setminus F}c_{\alpha}\sum_{\wt{\alpha}\in \pi^{-1}(\alpha)\cap \TD} \wt{\lambda}_{\wt{\alpha}}\right)=
\sum_{\alpha\in\Delta\setminus F}c_{\alpha}\lambda_{\alpha}.
\]
\end{proof}

\begin{lem}\label{l:Reductive}
Let $\bbh$ be a Zariski-connected reductive $\bbq$-group and $\bbs$ a maximal $\bbq$-split $\bbq$-torus in $\bbh$.  
If $X^*(\bbh)_{\bbq}=\{0\}$, then $\bbs\subseteq \bbh^+$. 
\end{lem}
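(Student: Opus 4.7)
The plan is to reduce to the already-recorded fact from Section~\ref{ss;G+} that a $\bbq$-isotropic, $\bbq$-almost simple, Zariski-connected group coincides with its $+$-subgroup. The reduction proceeds through the abelianization and then through the almost-simple decomposition of the derived group.

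First I would use the hypothesis to push $\bbs$ into the derived group. Since $\bbh$ is connected reductive, the quotient $\bbt:=\bbh/[\bbh,\bbh]$ is a $\bbq$-torus, and every $\bbq$-character of $\bbh$ factors through $\bbt$ (its image is abelian). Hence $X^*(\bbh)_\bbq=X^*(\bbt)_\bbq$, so the assumption $X^*(\bbh)_\bbq=\{0\}$ says exactly that $\bbt$ is $\bbq$-anisotropic. The image of $\bbs$ in $\bbt$ is a $\bbq$-split subtorus of an anisotropic torus, hence trivial; therefore $\bbs\subseteq[\bbh,\bbh]$.

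Next I would decompose the semisimple group $[\bbh,\bbh]$ as an almost direct product $\bbh_1\cdots\bbh_k$ of $\bbq$-simple $\bbq$-subgroups, with the isogeny $\varphi:\bbh_1\times\cdots\times\bbh_k\to[\bbh,\bbh]$. Let $\bbs^\sharp$ be the identity component of $\varphi^{-1}(\bbs)$; it is a $\bbq$-split torus mapping onto $\bbs$ with finite kernel. The projection of $\bbs^\sharp$ into a $\bbq$-anisotropic factor $\bbh_i$ is a $\bbq$-split torus in a $\bbq$-anisotropic group and therefore trivial. Consequently $\bbs^\sharp$, and hence $\bbs$, lies in the product of the $\bbq$-isotropic factors of $[\bbh,\bbh]$.

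Finally, for each $\bbq$-isotropic $\bbq$-simple factor $\bbh_i$, the remark recalled in Section~\ref{ss;G+} gives $\bbh_i^+=\bbh_i^\circ=\bbh_i$. Since $\bbq$-unipotent subgroups of $\bbh_i$ are $\bbq$-unipotent subgroups of $\bbh$, we have $\bbh_i\subseteq\bbh^+$ for each such factor, and the product of these factors is likewise contained in $\bbh^+$. Combined with the previous paragraph, this yields $\bbs\subseteq\bbh^+$, as desired.

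The main potential obstacle is the bookkeeping with the finite kernels appearing in the almost-direct-product decomposition; passing to the connected component of $\varphi^{-1}(\bbs)$ (a torus, hence easy to analyze) is the clean way to sidestep this. Beyond that, every step invokes only standard structure theory of reductive $\bbq$-groups and the fact, already used in the paper, that an isotropic almost-$\bbq$-simple group is generated by its $\bbq$-unipotent subgroups.
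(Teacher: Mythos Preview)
Your proposal is correct and follows essentially the same route as the paper's proof: reduce $\bbs$ into $[\bbh,\bbh]$ via the vanishing of $X^*(\bbh)_\bbq$, pass to the almost-$\bbq$-simple decomposition, observe that $\bbs$ has trivial projection to the $\bbq$-anisotropic factors, and then invoke that each $\bbq$-isotropic almost-simple factor coincides with its $+$-subgroup. The only cosmetic differences are that the paper cites \cite[12.1]{Bor} for the first reduction and \cite[Corollaire 6.9]{BoT2} for the last step, whereas you argue the first directly via the anisotropy of $\bbh/[\bbh,\bbh]$ and the last via the remark in Section~\ref{ss;G+}.
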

\begin{proof}
It is well-known that
\begin{enumerate}
	\item $\bbh=Z(\bbh)\cdot [\bbh,\bbh]$, where $Z(\bbh)$ is the center of $\bbh$.
	\item $[\bbh,\bbh]$ is a semisimple $\bbq$-group.
	\item $[\bbh,\bbh]$ can be written as almost direct product $\bbh_1\bbh_2 \cdots\bbh_m$ of its almost $\bbq$-simple $\bbq$-groups $\bbh_i$.
	\item Any unipotent subgroup of $\bbh$ is a subset of $[\bbh,\bbh]$. In particular, $\bbh^+\subseteq [\bbh,\bbh]$. 
\end{enumerate}
On the other hand, by \cite[Paragraph 12.1]{Bor}, since $X^*(\bbh)_{\bbq}=\{0\}$, $\bbs\subseteq [\bbh,\bbh]$. So without loss of generality we can and will assume that $\bbh$ is the almost product of its almost $\bbq$-simple factors $\bbh_i$. Let us also assume that the first $m'$ factors are the $\bbq$-isotropic ones. Thus by \cite[Corollaire 6.9]{BoT2} 
\be\label{e:H+}
\bbh^+=\bbh_1\bbh_2\cdots\bbh_{m'}.
\ee
On the other hand, it is easy to see that $\bbs$ is almost product of the Zariski-connected component of $\bbs\cap \bbh_i$. Hence 
\be\label{e:S}
\bbs\subseteq \bbh_1\bbh_2\cdots\bbh_{m'}.
\ee
Equations (\ref{e:H+}) and (\ref{e:S}) give us the claim.
\end{proof}
\begin{lem}\label{l:SandP1}
Let the notation be as in Section~\ref{ss:IndexRelativeRootSystem}. For any $F\subseteq \Delta$, let $\bbp_F^{(1)}=\bigcap_{\lambda\in X^*(\bbp_F)_{\bbq}}\ker(\lambda)$. Then 
\[
\bbs_F^{(1)}:= (\bbs\cap \bbp_F^{(1)})^{\circ}\subseteq \bbp_F^+.
\]
\end{lem}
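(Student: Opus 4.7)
The plan is to apply Lemma~\ref{l:Reductive} with $\bbh=[\bbl_F,\bbl_F]$, the derived subgroup of the standard Levi factor of $\bbp_F$. Since $\bbh$ is semisimple (and Zariski-connected), $X^*(\bbh)_{\bbq}=\{0\}$, and $\bbh^+\subseteq\bbp_F^+$; therefore the desired inclusion will follow once I identify $\bbs_F^{(1)}$ with a maximal $\bbq$-split $\bbq$-torus of $\bbh$.

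Concretely, the first step is to verify the equality
\[
\bbs_F^{(1)}=(\bbs\cap [\bbl_F,\bbl_F])^{\circ}.
\]
By Lemma~\ref{l:CharParabolic}, the image of $X^*(\bbp_F)_{\bbq}\to X^*(\bbs)$ is $\bigoplus_{\alpha\in\Delta\setminus F}\bbz\lambda_{\alpha}$, so
\[
\bbs\cap \bbp_F^{(1)}=\bigcap_{\alpha\in\Delta\setminus F}\ker(\lambda_{\alpha}\vert_{\bbs}).
\]
The almost-duality identity~\eqref{e:AlmostDuality} gives $\lambda_{\alpha}\circ\beta^{\vee}(t)=t^{c_{\alpha}\delta_{\alpha\beta}}$ for a nonzero constant $c_{\alpha}$, so the identity component of the above intersection is the subtorus generated by $\{\beta^{\vee}(\bbg_m)\}_{\beta\in F}$. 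On the other hand, $[\bbl_F,\bbl_F]$ is generated by the (absolute) root subgroups for roots whose restriction lies in the $\bbz$-span of $F$, so $(\bbs\cap[\bbl_F,\bbl_F])^{\circ}$ is also the subtorus generated by these coroots; a dimension count ($|F|$ on both sides) confirms the two coincide. In particular $(\bbs\cap[\bbl_F,\bbl_F])^{\circ}$ is a maximal $\bbq$-split $\bbq$-torus of $[\bbl_F,\bbl_F]$.

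The second step is a direct appeal to Lemma~\ref{l:Reductive}: the group $\bbh=[\bbl_F,\bbl_F]$ is Zariski-connected, reductive (in fact semisimple), and satisfies $X^*(\bbh)_{\bbq}=\{0\}$, so Lemma~\ref{l:Reductive} gives
\[
\bbs_F^{(1)}\subseteq\bbh^{+}\subseteq\bbp_F^{+},
\]
as required. The only genuinely non-formal part of the argument is the identification of $\bbs_F^{(1)}$ with $(\bbs\cap[\bbl_F,\bbl_F])^{\circ}$, which is really bookkeeping with the relative root data recalled in Section~\ref{ss:IndexRelativeRootSystem}; the remainder is just an invocation of the previous lemma.
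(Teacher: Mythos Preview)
Your proof is correct and, like the paper's, rests on Lemma~\ref{l:Reductive}; the difference is only in the choice of the reductive group to which that lemma is applied. The paper passes to the quotient $\bbh:=\bbp_F^{(1)}/R_u(\bbp_F)$: this group is tautologically reductive with $X^*(\bbh)_{\bbq}=\{0\}$ (by the very definition of $\bbp_F^{(1)}$), $\bbs_F^{(1)}$ injects into it, and $\bbh^+=\bbp_F^+/R_u(\bbp_F)$, so Lemma~\ref{l:Reductive} applies without any further computation. You instead work with the subgroup $\bbh=[\bbl_F,\bbl_F]$, which forces you to first identify $\bbs_F^{(1)}$ with $(\bbs\cap[\bbl_F,\bbl_F])^{\circ}$ via the coroot bookkeeping and the almost-duality~\eqref{e:AlmostDuality}. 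Your route has the virtue of making the structure of $\bbs_F^{(1)}$ explicit (it is the subtorus generated by the coroots $\{\beta^{\vee}\}_{\beta\in F}$), whereas the paper's route is shorter and avoids any root-data computation by choosing $\bbh$ so that the hypotheses of Lemma~\ref{l:Reductive} are automatic.
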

\begin{proof}
Let $\bbh:=\bbp_F^{(1)}/R_u(\bbp_F)$. It is clear that $\bbh$ is a $\bbq$-subgroup of $\bbh$, $\bbh^+=\bbp_F^+/R_u(\bbp_F)$, and $\bbs_F^{(1)}$ injects into $\bbh$. Now using Lemma~\ref{l:Reductive} one can finish the proof.   
\end{proof}

\begin{lem}~\label{ourversion} In the setting of Theorem~\ref{thm;U-trans}, 
assume $\{a_n\mu_Q\}$ converges to $\mu\in\pcal(G/\Gamma).$
Then, there are a Zariski-connected $\bbq$-subgroup $\bbh$ of $\bbg$ and $g\in G$ with the following properties.
\begin{enumerate}
	\item $\pi(H)$ is a closed subset of $\pi(G)$ and $\mu=g\mu_H$, where $H$ is the connected component of $\bbh(\bbr)$ (recall that $\pi:G\to G/\Gamma$ is the quotient map).
	\item $\Gamma\cap H$ is a lattice in $H$ and it is Zariski-dense in $\bbh$. 
	\item There is a sequence $\{g_n\}\subseteq G$ such that
			\begin{enumerate}
				\item $g_n\rightarrow e$ as $n$ goes to infinity.
				\item For any $n,$ $\pi(a_n Q)\subseteq \pi(g_ngH)$.
				\item For any $n$, $g_n^{-1}Qg_n\subseteq gHg^{-1}\subseteq Q_F$.
			\end{enumerate}	
		\item For some $E\subseteq F$, $\bbp^+_E\subseteq g\bbh g^{-1} \subseteq \bbp_E^{(1)}.$  	
\end{enumerate}
\end{lem}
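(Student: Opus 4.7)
The approach combines the no-escape estimate (Proposition~\ref{p:NoEscape}) with the Mozes--Shah theorem on limits of algebraic measures, and then uses the explicit form of the sequence $\{a_n\}$ (in particular $\lambda_\alpha(a_n)=1$ for $\alpha\notin F$) to locate the resulting subgroup inside a standard parabolic.

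First, since $A$ normalizes $Q=U$, each measure $a_n\mu_Q$ is $U$-invariant and supported on the closed $U$-orbit $\pi(a_nQ)$; by Proposition~\ref{p:NoEscape}, the limit $\mu$ is a genuine probability measure on $G/\Gamma$. Applying the Mozes--Shah theorem~\cite[Theorem 1.1]{MS} to this convergent sequence of algebraic measures produces a closed connected subgroup $H\subseteq G$ generated by unipotent elements, an element $g\in G$, and a sequence $g_n\to e$ in $G$, such that $\pi(H)$ is closed, $H\cap\Gamma$ is a lattice in $H$, $\mu=g\mu_H$, and $\pi(a_nQ)\subseteq\pi(g_ngH)$ for all large $n$. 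A standard Borel density / arithmeticity argument (the Zariski closure of $\Gamma\cap H\subseteq\bbg(\bbq)$ is a $\bbq$-subgroup, and by Borel density it equals the Zariski closure of the unipotent-generated $H$) then yields $H=\bbh(\bbr)^\circ$ for a Zariski-connected $\bbq$-subgroup $\bbh\subseteq\bbg$ with $\Gamma\cap H$ Zariski-dense in $\bbh$. This establishes properties (1), (2), (3a) and (3b).

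For (3c), reading the inclusion $\pi(a_nQ)\subseteq\pi(g_ngH)$ at the level of stabilizer groups gives $a_nQa_n^{-1}\subseteq g_ngHg^{-1}g_n^{-1}$, and since $a_n\in A$ normalizes $Q=U$ this simplifies to $g_n^{-1}Qg_n\subseteq gHg^{-1}$. For the remaining inclusion $gHg^{-1}\subseteq Q_F$, the hypothesis $\lambda_\alpha(a_n)=1$ for $\alpha\notin F$ places $a_n\in\bbs_F^{(1)}(\bbr)\subseteq\bbp_F^+(\bbr)\subseteq Q_F$ via Lemma~\ref{l:SandP1}; combined with $U\subseteq Q_F$, this yields $\pi(a_nQ)\subseteq\pi(Q_F)$ for every $n$, so the limit support $\pi(gH)$ lies in the closed set $\pi(Q_F)$. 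After absorbing a $\Gamma$-element into $g$ so that $g\in Q_F$, comparing stabilizers of nested closed orbits delivers $gHg^{-1}\subseteq Q_F$.

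Finally, for (4), passing $g_n^{-1}Qg_n\subseteq gHg^{-1}$ to the limit gives $U=\bbp_\varnothing^+(\bbr)\subseteq gHg^{-1}$, so $\bbp_\varnothing^+\subseteq g\bbh g^{-1}$ on the Zariski closure. Since $g\bbh g^{-1}$ is a Zariski-connected $\bbq$-subgroup generated by $\bbq$-unipotent subgroups, containing $\bbp_\varnothing^+$ and (after~(3c) and the computation above) contained in the Zariski closure of $Q_F$, which itself lies in $\bbp_F^{(1)}$, a structure result for $\bbq$-subgroups of $\bbg$ generated by unipotents identifies $g\bbh g^{-1}=\bbp_E^+$ for a unique $E\subseteq F$; the sandwich $\bbp_E^+\subseteq g\bbh g^{-1}\subseteq\bbp_E^{(1)}$ then follows from Lemma~\ref{l:CharParabolic}. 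The principal obstacle is exactly this last step: classifying the $\bbq$-subgroups generated by unipotents that sit between $\bbp_\varnothing^+$ and $\bbp_F^{(1)}$ requires careful use of the Levi decomposition and the structure theory of relative roots (Section~\ref{ss:IndexRelativeRootSystem}), and verifying $E\subseteq F$ depends sensitively on the $Q_F$-containment from~(3c).
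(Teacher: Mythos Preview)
Your overall architecture—Mozes--Shah for the algebraic limit, then the constraint $a_n\in Q_F$ to pin down the support—agrees with the paper. There is a minor looseness in (3c): orbit inclusion $\pi(a_nQ)\subseteq\pi(g_ngH)$ does not by itself give the group inclusion $Q\subseteq g_ngHg^{-1}g_n^{-1}$. The paper avoids this by noting that each $a_n\mu_Q$ is $Q$-invariant (since $A$ normalizes $Q$), hence so is $\mu$, and then uses that Mozes--Shah identifies $gHg^{-1}$ with the full invariance group $\{g':g'\mu=\mu\}$ of $\mu$, whence $Q\subseteq gHg^{-1}$.

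The genuine gap is in (4). You assert that $g\bbh g^{-1}$ is a $\bbq$-subgroup \emph{generated by $\bbq$-unipotent subgroups} and then invoke an unnamed structure theorem to conclude $g\bbh g^{-1}=\bbp_E^+$. Neither step is correct. Mozes--Shah only tells you that $H$ is generated by \emph{real} unipotents, which does not force $\bbh=\bbh^+$ over $\bbq$; and the equality $g\bbh g^{-1}=\bbp_E^+$ is in fact false in general (it holds only when $\bbg$ is $\bbq$-split, as the paper explicitly remarks). The correct input is \cite[Proposition~8.6]{BT}: a $\bbq$-subgroup of $\bbg$ containing $R_u(\bbp_\varnothing)$ is trapped between $\bbp_E^+$ and $\bbp_E$ for some $E$, with unipotent radical equal to $R_u(\bbp_E)$. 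This gives only the left inclusion in (4) and places $g\bbh g^{-1}$ inside $\bbp_E$, not inside $\bbp_E^{(1)}$.

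For the right inclusion $g\bbh g^{-1}\subseteq\bbp_E^{(1)}$, your route via $Q_F$ yields at best $g\bbh g^{-1}\subseteq\bbp_F^{(1)}$, which is too weak: since $E\subseteq F$ one has $\bbp_E^{(1)}\subseteq\bbp_F^{(1)}$, so the extra characters $\lambda_\alpha$ with $\alpha\in F\setminus E$ are not yet killed. The paper's argument is substantively different and is the real content of (4): knowing $R_u(g\bbh g^{-1})=R_u(\bbp_E)$, one checks that the centralizer of $R_u(\bbp_E)$ in the standard Levi $\bbl_E$ is central in $\bbg$, so Auslander--Wang \cite[Corollary~8.28]{Rag} applies and $g\Gamma g^{-1}$ meets the radical of $g\bbh g^{-1}$ in a lattice; Borel--Harish-Chandra then forces $X^*(g\bbh g^{-1})_\bbq=\{1\}$, i.e.\ $g\bbh g^{-1}\subseteq\bbp_E^{(1)}$. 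This lattice-in-the-radical step is missing from your argument and cannot be replaced by the $Q_F$-containment.
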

\begin{remark}
When $\bbg$ is a {\em $\bbq$-split} simply connected $\bbq$-almost simple group, then $\bbp^+_E=\bbp^{(1)}_E$ for any $E\subseteq \Delta$. In particular, in this case, $g\bbh g^{-1}=\bbp^+_E$ and $Q_E=\bbp^+_E(\bbr)$.
\end{remark}
\begin{proof}
Recall that $\bbp_\varnothing(\bbr)^+\subseteq Q$ and it acts topologically transitively on $\pi(Q).$ By the definition $\bbp_\varnothing(\bbr)^+$ is generated by one parameter $\bbr$-unipotent subgroups. Let $\{u^{(i)}(t)\}$ be a finite set of one-parameter unipotent subgroups of $Q$, such that $\mu_Q$ is $u^{(i)}(t)$-ergodic for any $i$, and the group generated by them is $\bbp_\varnothing(\bbr)^+$. Let $Y$ be the subset of $\pi(Q)$ which consists of $u^{(i)}(t)$-generic points for all $i$. Hence $Y$ is a $\mu_Q$-co-null set and for any $y \in Y$ 
\[
\lim_{T\rightarrow\infty}\frac{1}{T}\int^T_0 f(u^{(i)}(t)y) dt=
\int_{\pi(Q)} f \h d\mu_Q,
\] 
for any (bounded) continuous function $f$ on $\pi(Q)$.

For any $n$ and $i$, $\mu_n=a_n \mu_Q$ is $u^{(i)}_n(t)=a_n u^{(i)}(t) a_n^{-1}$-invariant and ergodic. Therefore by \cite[Corollary 1.1]{MS} and \cite{D1, D2}, $\mu$ is algebraic, i.e. there is a closed subgroup $H$ of $G$ such 
that $\pi(H)$ is a closed subset of $G/\Gamma$, and $g\in G$ such that 
$\mu=g\mu_H$. This shows half of part 1.

 By \cite[Theorem 1.1 and Proposition 2.1]{MS}, we have 
\begin{enumerate}
	\item $\mu$ is $gH^{\circ}g^{-1}$-ergodic where $H^{\circ}$ is the connected component of $H$.
	\item $\Gamma\cap H$ is a lattice in $H$.
	\item $\Gamma\cap H$ is Zariski-dense in $H$.
\end{enumerate}
Hence $H$ is connected and there is a Zariski-connected $\bbq$-subgroup $\bbh$ of $\bbg$ such that $\bbh(\bbr)^{\circ}=H$, which completes the proof of part 1 and also gives part 2. 

Let $\{g_n\}$ be a sequence of elements of $G$ such that 
\begin{itemize}
\item[i)] $g_n\rightarrow e,$ as $n\rightarrow \infty$.
\item[ii)]$\pi(g_n g)$ is a generic point for all the unipotent flows $u^{(i)}_n(t)$ with respect to $\mu_n$.
\end{itemize}
Then by \cite[Theorem 1.1, Part 1 and 2.]{MS} and using the fact that $u^{(i)}(t)$ generate $Q$ one can finish the proof of part 3, except for the last inclusion in (c). 

By \cite[Theorem 1.1, part 1.]{MS} we also have 
\be\label{e:MeasureStabilizer}
gHg^{-1}=\{g'\in G|\h g'\mu =\mu\}.
\ee
Since $\mu_n$'s are $Q$-invariant so is $\mu$, hence by Equation~(\ref{e:MeasureStabilizer}), $Q\subseteq gHg^{-1}$. In particular we have  $R_u(\bbp_\varnothing)\subseteq g\bbh g^{-1}$ and by~\cite[Proposition 8.6]{BT} there is $E\subseteq \Delta$ such that 
\begin{enumerate}
	\item $\bbp^+_E\subseteq g\bbh g^{-1}\subseteq \bbp_E,$
	\item $R_u(g\bbh g^{-1})=R_u(\bbp_E)$.
\end{enumerate}
It is worth mentioning that we have $\bbp_E^+\subseteq g\bbh g^{-1}$ because of~\cite[Proposition 8.6]{BT} and the fact that $\bbp_E^+$ is the smallest normal subgroup of
$\bbp_E$ which contains $R_u(\bbp_\varnothing).$

On the other hand, by Lemma~\ref{l:CharParabolic} and our assumption on $\{a_n\}$, we have that $a_n\in \bbp_F^{(1)}$ for any $n$. Hence by Lemma~\ref{l:SandP1}, $a_n\in \bbs_F^{(1)}\subseteq \bbp_F^+$ for large enough $n$. Thus $a_nQ\subseteq Q_F$ for large enough $n,$ by the definition of $Q_F$. Therefore $a_n\mu_Q$ can be also viewed as elements of $\pcal(\pi(Q_F))$. Hence by \cite[Corollary 1.1]{MS} $\mu$ is also in $\pcal(\pi(Q_F))$. Hence we get $gHg^{-1}\subset Q_F$ which completes part 3. We also get $\bbp^+_E \subseteq \bbp_F$, which shows that $E\subseteq F$. 

Let $\bbl_E$ be the standard Levi factor of $\bbp_E$. It is clear that the intersection $Z_{\bbp_E}(R_u(g\bbh g^{-1}))\cap \bbl_E$ of the centralizer of $R_u(g\bbh g^{-1})=R_u(\bbp_E)$ in $\bbp_E$ and $\bbl_E$ is in the center of $\bbg$. Hence by a corollary of Auslander-Wang's theorem~\cite[Corollary 8.28]{Rag} $g\Gamma g^{-1}\cap R(g\bbh g^{-1})(\bbr)$ is a lattice in $R(g\bbh g^{-1})(\bbr)$, where $R(g \bbh g^{-1})$ is the radical of $g\bbh g^{-1}.$ 
Therefore by a theorem of Borel and Harish-Chandra we have $X^*(g \bbh g^{-1})_{\bbq}=\{1\}$ and so $g \bbh g^{-1}\subseteq \bbp_E^{(1)}$, which completes the proof of part 4.    
\end{proof}
 
\begin{lem}~\label{parabolic}
Let $\bbu=R_u(\bbp)$ be the unipotent radical of the standard minimal $\bbq$-parabolic and let $F\subseteq \Delta$. Then 
\[
T(\bbu,\bbp_F):=\{q\in \bbg|\h q^{-1}\bbu q\subseteq \bbp_F\}=\bbp_F.
\]
\end{lem}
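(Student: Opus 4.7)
The plan proceeds by establishing the two inclusions separately.

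For $\bbp_F \subseteq T(\bbu,\bbp_F)$, observe that the containment $\bbu \subseteq \bbp = \bbp_\varnothing \subseteq \bbp_F$ (since $\varnothing \subseteq F$) immediately gives $q^{-1}\bbu q \subseteq q^{-1}\bbp_F q = \bbp_F$ for any $q \in \bbp_F$.

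For the reverse inclusion $T(\bbu,\bbp_F) \subseteq \bbp_F$, given $q \in \bbg$ with $q^{-1}\bbu q \subseteq \bbp_F$, the conjugate $q\bbp_F q^{-1}$ is a parabolic subgroup containing $\bbu$ and lying in the conjugacy class of $\bbp_F$. The crux is to show that $\bbp_F$ is the unique parabolic in its $\bbg(\overline{\bbq})$-conjugacy class containing $\bbu$; equivalently, that the $\bbu$-action on the generalized flag variety $\bbg/\bbp_F$ has a unique fixed point, namely the identity coset. Granting this, $q\bbp_F q^{-1} = \bbp_F$, so $q \in N_\bbg(\bbp_F) = \bbp_F$ by the self-normalization of parabolic subgroups.

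To verify the uniqueness, the plan is to use Bruhat decomposition relative to $\bbp$. Passing to the absolute picture of Section~\ref{ss:IndexRelativeRootSystem}, $\bbp$ has absolute type $\Delta_0$ and $\bbp_F$ has absolute type $\wt F = \pi^{-1}(F) \cup \Delta_0$, so the $\bbp$-orbits on $\bbg/\bbp_F$ are indexed by double-coset representatives $w$ in $W_{\Delta_0} \ba W / W_{\wt F}$. Writing $q = p_1 w p_2$ with $p_1,p_2 \in \bbp$ and using that $\bbu$ is normal in $\bbp$, the condition $q^{-1}\bbu q \subseteq \bbp_F$ simplifies, after absorbing $p_2 \in \bbp \subseteq \bbp_F$, to $w^{-1}\bbu w \subseteq \bbp_F$. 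At the level of root subgroups this becomes
\[
w^{-1}\bigl(\wt\Phi^+ \setminus \wt\Phi^+_{\Delta_0}\bigr) \subseteq \wt\Phi^+ \cup \bigl(-\wt\Phi^+_{\wt F}\bigr).
\]
A standard Coxeter-group argument then forces $w \in W_{\wt F}$, and since representatives of $W_{\wt F}$ lie in $\bbp_F$, we conclude $q \in \bbp_F \cdot \bbp_F \cdot \bbp_F = \bbp_F$.

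The step I expect to require the most care is the bookkeeping around the anisotropic kernel when $\bbg$ is not $\bbq$-quasi-split: $\bbp$ is then a proper, non-Borel parabolic over $\overline{\bbq}$, and one must verify that the ambiguity in choosing $w$ within its $W_{\Delta_0}$-double coset does not weaken the Coxeter-theoretic conclusion. This is workable precisely because $\Delta_0 \subseteq \wt F$ forces $W_{\Delta_0} \subseteq W_{\wt F}$, so the double-coset ambiguity is absorbed into $\bbp_F$ itself.
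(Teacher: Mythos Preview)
Your argument is correct and follows the same overall strategy as the paper's proof: both use the Bruhat decomposition to reduce to a Weyl-group representative. The difference is in the endgame. The paper works with the \emph{relative} Bruhat decomposition $\bbg = \bbp \cdot N_\bbg(\bbs) \cdot \bbp$, reduces to $q \in N_\bbg(\bbs)$, and then observes that since $\bbp = Z_\bbg(\bbs)\bbu$ and $q$ normalises $Z_\bbg(\bbs)$, the hypothesis $q^{-1}\bbu q \subseteq \bbp_F$ upgrades for free to $q^{-1}\bbp q \subseteq \bbp_F$; the conclusion then follows directly from \cite[Proposition~4.4]{BT}, which asserts $T(\bbp,\bbp_F)=\bbp_F$ for parabolics. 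You instead pass to the absolute root system, translate the containment into the root condition $w^{-1}(\wt\Phi^+\setminus\wt\Phi^+_{\Delta_0})\subseteq \wt\Phi^+\cup(-\wt\Phi^+_{\wt F})$, and argue directly in the Coxeter group that (after choosing $w$ minimal in its $wW_{\wt F}$-coset) this forces $w\in W_{\wt F}$. The paper's route is shorter and avoids the bookkeeping around $\Delta_0$ that you flagged; your route is self-contained and makes explicit what is hidden inside the Borel--Tits citation. Your observation that $\Delta_0\subseteq\wt F$ absorbs the double-coset ambiguity is exactly what makes the Coxeter argument go through, and is the combinatorial shadow of the paper's trick of passing from $\bbu$ to $\bbp$.
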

\begin{proof} 
After using Bruhat decomposition, without loss of generality, we can assume that $q\in N_{\bbg}(\bbs)$. Therefore $q^{-1}\bbp q=q^{-1}Z_{\bbg}(\bbs)\bbu q\subseteq \bbp_F$. So $q\in T(\bbp,\bbp_F)$. However the later is equal to $\bbp_F$ (See~\cite[Proposition 4.4]{BT}).
\end{proof}
\begin{proof}[Proof of Proposition~\ref{p:FindTheLimitUTrans}]
Let $E\subseteq \Delta$ be as in Lemma~\ref{ourversion}. 
We first prove that $E=F.$ Assume to the contrary that $\alpha\in F\setminus E$.
 
Since $g\bbh g^{-1}\subseteq \bbp_E$, by Lemma~\ref{ourversion}, part 3(c), 
$g_n\in T(\bbu,\bbp_E)$, for any $n$. Hence by Lemma~\ref{parabolic}, $g_n\in 
\bbp_E$. By Lemma~\ref{ourversion}, part 3(b), for any $n$, there are 
$h_n\in H$ and $\gamma_n\in\Gamma$ such that 
\be\label{good!}
a_n=g_n g h_n \gamma_n=g_n (g h_n g^{-1}) (g\gamma_n).
\ee 
Since $a_n, g_n, g h_n g^{-1}$'s are in $P_E$, so are $g \gamma_n$'s and $\gamma_1^{-1}\gamma_n$. 

We note that $\lambda_{\alpha}(g h_n g^{-1})=1$ since $gHg^{-1}\subseteq \bbp^{(1)}_E$. We also know that $|\lambda_{\alpha}(\gamma_1^{-1}\gamma_n)|=1$ as $\lambda_{\alpha}$ is a $\bbq$-character and $\Gamma$ preserves the $\bbz$-structure. Thus
\[
|\lambda_{\alpha}(g\gamma_n)|=|\lambda_{\alpha}(g\gamma_1)|\cdot|\lambda_{\alpha}(\gamma^{-1}_1\gamma_n)|=|\lambda_{\alpha}(g\gamma_1)|.
\] 
Altogether we have
\[
|\lambda_{\alpha}(a_n)|=|\lambda_{\alpha}(g_n)|\cdot|\lambda_{\alpha}(g\gamma_1)|,
\] 
which is a contradiction since the left hand side tends to infinity and the right hand side remains bounded as $n$ goes to infinity.

Using (c) part 3 of Lemma~\ref{ourversion}, we have $Q\subseteq gHg^{-1}\subseteq Q_E.$ Hence thanks to part 4 of Lemma~\ref{ourversion} the proof will be complete if we show $\bbp_E(\bbr)^+\subseteq \langle\bbp_E^+(\bbr),Q\rangle$. Let 
$\bbp_E=\bbl_E R_u(\bbp_E),$ where $\bbl_E$ is the standard Levi factor. Let $\bbl_E=Z(\bbl_E)[\bbl_E,\bbl_E]$ and $\bbm=[\bbl_E,\bbl_E].$ 
Then, $\bbm$ is a semisimple $\bbq$-group and can be written as almost direct product
$\bbm=\bbh_1\cdots\bbh_k$ of $\bbq$-almost simple groups.
In order to complete the proof we need to show $\bbm(\bbr)^+\subseteq\langle\bbp_E^+(\bbr),Q\rangle.$

Note that if $\bbh$ is a $\bbq$-isotropic $\bbq$-almost simple group, then $\bbh(\bbr)^+=\bbh^+(\bbr).$ Hence there is nothing to prove if all $\bbh_i$'s are $\bbq$-isotropic. Thus let us assume $\bbh_i$ is $\bbq$-anisotropic for some $i.$
Therefore, $R_u(\bbp_\varnothing)\cap\bbh_i$ is trivial and so $\bbh_i$ centralizes 
$R_u(\bbp_\varnothing)\cap\bbl_E.$ This implies that $\bbh_i$ normalizes
$R_u(\bbp_\varnothing)=(R_u(\bbp_\varnothing)\cap\bbl_E)R_u(\bbp_E).$
Hence $\bbh_i\subseteq N_\bbg(R_u(\bbp_\varnothing))=\bbp_\varnothing.$ This implies that $\bbh_i(\bbr)^+\subset Q$ and finishes the proof. 
\end{proof}

Let $\Xi:\bbs\rightarrow \prod_{\alpha\in\Delta}\bbg_m,$
$
\Xi(s):=(\lambda_{\alpha}(s))_{\alpha\in \Delta}.
$
By the discussion in Section \ref{ss:IndexRelativeRootSystem}, it is clear that $\Xi$ is onto and $\ker(\Xi)$ is a finite $\bbq$-group (if $\bbg$ is $\bbq$-split, then it is well-known that $\Xi$ is an isomorphism of $\bbq$-algebraic groups). Hence it induces an isomorphism between $A=\bbs(\bbr)^{\circ}$ and $\prod_{\alpha\in \Delta}\bbr^+$. Hence for any $a\in A$ and $F\subseteq \Delta$, there is a unique $a_F\in A$ such that 
\be\label{e:FProjection}
\lambda_{\alpha}(a_F)=
\begin{cases}
\lambda_{\alpha}(a)&\text{if $\alpha\in F$,}\\
1&\text{if $\alpha\in \Delta\setminus F$.}
\end{cases}
\ee
It will be called the $F$-projection of $a$. By the above definition, it is clear that for any $F\subseteq \Delta$ and $a\in A$, we have 
\be\label{e:Decomposition}
a=a_F\cdot a_{F^c},
\ee
where $F^c=\Delta\setminus F$. By Lemma~\ref{l:CharParabolic} and Lemma~\ref{l:SandP1}, we have that 
\be\label{e:A_FandQ}
a_F\in Q_F,
\ee
for any $a\in A$ and $F\subseteq \Delta$. 
\begin{cor}~\label{c:PartitionIntoRegions} 
For any compactly supported continuous function $\Psi$ on $G/\Gamma$, and positive real numbers $M$ and $\vare'$, there is $(x_{\Psi,\vare',F})_{F\subseteq \Delta}$ such that for any subset $F$ of $\Delta$ we have 
 \begin{equation}~\label{e:measure-estimate} 
 |\int_{G/\Gamma} \Psi\h d(a\mu_Q)-\int _{G/\Gamma}\Psi\h d(a_{F^c} \mu_{Q_F})|<\vare'
 \end{equation} 
 if $\lambda_{\alpha}(a)\ge x_{\Psi,\vare',F}$ for any $\alpha\in F$ and $e^{-M}\le \lambda_{\alpha}(a)\le \max_{|F|<|F'|}(x_{\Psi,\vare',F'})$ for any $\alpha\in \Delta\setminus F$.
\end{cor}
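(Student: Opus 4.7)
The plan is to proceed by reverse induction on $|F|$, starting from $F=\Delta$ and decreasing. This ordering is dictated by the statement itself: the upper bound $\max_{|F|<|F'|}(x_{\Psi,\vare',F'})$ in the condition on $a$ references constants only for subsets strictly larger than $F$, so once the $x_{\Psi,\vare',F'}$ are chosen for all $|F'|>|F|$, the compact region in which $a_{F^c}$ is constrained to lie is already determined.

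For the base case $F=\Delta$, the condition on $\alpha\in \Delta\setminus F$ is vacuous; since $\Xi$ identifies $A$ with $\prod_{\alpha\in\Delta}\bbr^+$, the element $a_{F^c}=a_\varnothing$ is the identity, and $\mu_{Q_\Delta}=\mu_G$. The desired bound is then exactly the convergence $a\mu_Q\to\mu_G$ furnished by Theorem~\ref{thm;U-trans}, so taking $x_{\Psi,\vare',\Delta}$ large enough works.

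For the inductive step, suppose $x_{\Psi,\vare',F'}$ is fixed for every $F'\supsetneq F$, and set $R:=\max_{|F'|>|F|}x_{\Psi,\vare',F'}$. Then the set
\[
K_F:=\{b\in A : \lambda_\alpha(b)=1 \text{ for } \alpha\in F,\ e^{-M}\le \lambda_\alpha(b)\le R \text{ for } \alpha\in\Delta\setminus F\}
\]
is compact via $\Xi$. For $a$ in the region associated with $F$, the decomposition \eqref{e:Decomposition} gives $a=a_{F^c}a_F$ with $a_{F^c}\in K_F$ and $\lambda_\alpha(a_F)\ge x_{\Psi,\vare',F}$ for every $\alpha\in F$ while $\lambda_\alpha(a_F)=1$ for $\alpha\notin F$. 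Using $(g\mu)(\psi)=\mu(\psi\circ L_g)$, the estimate to prove reduces to
\[
\Bigl|\int \Psi_b\,d(a_F\mu_Q)-\int \Psi_b\,d\mu_{Q_F}\Bigr|<\vare', \qquad \Psi_b(x):=\Psi(bx),\ b=a_{F^c}\in K_F.
\]
For each fixed $b$, this tends to zero by Theorem~\ref{thm;U-trans} applied to $a_F$.

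The main (and really only) obstacle is to upgrade this pointwise-in-$b$ statement to a statement uniform over $b\in K_F$, since $b$ varies with $a$ and the threshold from Theorem~\ref{thm;U-trans} depends on the test function. My plan for this is an Arzel\`a--Ascoli argument: the family $\mcal:=\{\Psi_b:b\in K_F\}\subset C_c(G/\Gamma)$ is uniformly bounded by $\|\Psi\|_\infty$, has common support contained in the compact set $K_F^{-1}\cdot\supp(\Psi)$, and is equicontinuous (the left action of the compact set $K_F$ on any fixed precompact neighbourhood of the support of $\Psi$ is uniformly continuous, and $\Psi$ itself is uniformly continuous). Hence $\mcal$ is totally bounded in the uniform norm, so one can pick a finite $(\vare'/3)$-net $\Psi_{b_1},\dots,\Psi_{b_N}$, apply Theorem~\ref{thm;U-trans} to obtain thresholds $x_1,\dots,x_N$ that work for the centres, and set $x_{\Psi,\vare',F}:=\max_i x_i$. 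A standard triangle inequality then gives the required bound for every $b\in K_F$, completing the induction.
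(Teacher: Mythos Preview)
Your proof is correct and is exactly the ``compactness argument'' the paper alludes to in its one-line proof; the paper cites Proposition~\ref{p:FindTheLimitUTrans} (together with Proposition~\ref{p:NoEscape}, which is implicit) and leaves the uniformity in $a_{F^c}$ to the reader, while you spell it out via Arzel\`a--Ascoli. One trivial wording fix: your inductive hypothesis should read ``for every $F'$ with $|F'|>|F|$'' rather than ``for every $F'\supsetneq F$'', since the upper bound $R=\max_{|F'|>|F|}x_{\Psi,\vare',F'}$ (which you do write correctly) ranges over all larger subsets, not just supersets of $F$.
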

\begin{proof} 
We are essentially dealing with a subset of $A$ whose $F^c$-projection is compact. So the above claim is a direct corollary of Proposition~\ref{p:FindTheLimitUTrans} and a compactness argument.
\end{proof} 
\begin{proof}[Proof of Theorem~\ref{thm;U-trans}]
It is a consequence of Proposition~\ref{p:NoEscape} and Proposition~\ref{p:FindTheLimitUTrans}.
\end{proof} 
\subsection{}\label{ss:GeneralCase}
Retain the notation from Section~\ref{s:statement}. Here we complete the proof of the second part of Theorem~\ref{trans}.

\begin{prop}\label{p:Q_ETranslate}
Let $\{a_n\}\subseteq A$ and $E\subseteq F\subseteq \Delta$. If $\lambda_{\alpha}(a_n)=1$ for any $\alpha\in \Delta\setminus F$ and $\lim_{n\rightarrow \infty} \lambda_{\alpha}(a_n)=\infty$ for any $\alpha\in F\setminus E$, then $a_n\mu_{Q_E}$ converges to $\mu_{Q_F}$ in $\pcal(G/\Gamma)$.
\end{prop}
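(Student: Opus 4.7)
I would deduce Proposition~\ref{p:Q_ETranslate} from the already-proved Theorem~\ref{thm;U-trans} via a diagonal argument rather than by repeating the no-escape/Mozes--Shah analysis of Section~\ref{sec;translates} with $\mu_{Q_E}$ in place of $\mu_Q$. The central observation is that Theorem~\ref{thm;U-trans} itself supplies an explicit approximation of $\mu_{Q_E}$ by translates of $\mu_Q$; composing this approximation with the translates $a_n$ and then extracting a diagonal in $(n,m)$ delivers $\mu_{Q_F}$ via one more application of Theorem~\ref{thm;U-trans}.

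First I reduce. Since $A$ normalizes $Q_E$ and $(a_n)_E\in Q_E$ by~\eqref{e:A_FandQ}, one has $a_n\mu_{Q_E}=(a_n)_{F\setminus E}\mu_{Q_E}$; moreover the hypothesis $\lambda_\alpha(a_n)=1$ for $\alpha\in\Delta\setminus F$ forces $(a_n)_{\Delta\setminus F}=e$, so after replacing $a_n$ by $(a_n)_{F\setminus E}$ I may assume $\lambda_\alpha(a_n)=1$ for every $\alpha\in E\cup(\Delta\setminus F)$. Next, I pick $\{b_m\}\subseteq A$ with $\lambda_\alpha(b_m)=1$ for $\alpha\notin E$ and $\lambda_\alpha(b_m)\to\infty$ for $\alpha\in E$. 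Theorem~\ref{thm;U-trans}, applied with $E$ in the role of ``$F$'', yields $b_m\mu_Q\to\mu_{Q_E}$; weak-$*$ continuity of left translation then gives $a_nb_m\mu_Q\to a_n\mu_{Q_E}$ as $m\to\infty$ for each fixed $n$.

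Now run the diagonal. Fix a countable dense family $\{\psi_i\}\subset C_c(G/\Gamma)$ together with a cutoff $\chi\in C_c(G/\Gamma)$ for which $\mu_{Q_F}(\chi)>1-\epsilon$. For each $n$ choose $m_n\geq n$ so that $|(a_nb_{m_n}\mu_Q)(\psi_i)-(a_n\mu_{Q_E})(\psi_i)|<1/n$ for all $i\leq n$, and similarly for $\chi$. The element $a_nb_{m_n}$ satisfies $\lambda_\alpha(a_nb_{m_n})=1$ on $\Delta\setminus F$, $\lambda_\alpha(a_nb_{m_n})=\lambda_\alpha(a_n)\to\infty$ on $F\setminus E$, and $\lambda_\alpha(a_nb_{m_n})=\lambda_\alpha(b_{m_n})\to\infty$ on $E$ since $m_n\to\infty$. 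Hence Theorem~\ref{thm;U-trans} applies and $a_nb_{m_n}\mu_Q\to\mu_{Q_F}$. Combined with the approximation this forces $(a_n\mu_{Q_E})(\psi_i)\to\mu_{Q_F}(\psi_i)$ for every $i$, which is vague convergence; the estimate for $\chi$ transfers tightness from $a_nb_{m_n}\mu_Q$ to $a_n\mu_{Q_E}$, so no mass escapes and the convergence holds in $\pcal(G/\Gamma)$.

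The main obstacle, which dictates the necessity of the reduction step, is arranging that the diagonal element $a_nb_{m_n}$ meets the precise hypothesis of Theorem~\ref{thm;U-trans}: trivial weights on \emph{all} of $\Delta\setminus F$ and divergent weights on \emph{all} of $F$. Trivialness outside $F$ is automatic because $\Delta\setminus F\subseteq\Delta\setminus E$, but divergence on $E$ would be spoiled if $\lambda_\alpha(a_n)$ were allowed to oscillate or shrink for $\alpha\in E$; using the $Q_E$-invariance of $\mu_{Q_E}$ to normalize away the $E$-component of $a_n$ is exactly what removes this obstruction. Once this alignment is secured, the rest is a routine diagonal/tightness argument.
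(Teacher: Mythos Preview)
Your proposal is correct and follows essentially the same route as the paper: reduce to $\lambda_\alpha(a_n)=1$ for $\alpha\in E\cup F^c$ using $Q_E$-invariance, approximate $\mu_{Q_E}$ by suitable translates $a'\mu_Q$ via the already-proved $E=\varnothing$ case, and then apply that case once more to the composite $a_na'$ with target $F$. The only packaging difference is that the paper invokes the uniform Corollary~\ref{c:PartitionIntoRegions} directly for a single test function $\psi$ (choosing $a'_n$ large enough relative to the threshold $x_{\psi,\vare/2,F}$), whereas you replace this uniformity by a diagonal argument over a countable dense family of test functions; your separate tightness step is in fact redundant, since vague convergence of probability measures to the probability measure $\mu_{Q_F}$ already forces convergence in $\pcal(G/\Gamma)$.
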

\begin{proof}
Let $a_{n,E}$ (resp. $a_{n,E^c}$) be the $E$-projection (resp. $(\Delta\setminus E)$-projection) of $a_n$ (see Equation (\ref{e:FProjection})). So we have that $a_n\mu_{Q_E}=a_{n,E^c}\mu_{Q_E}$. Hence without loss of generality we can and will assume that $\lambda_{\alpha}(a_n)=1$ for any $\alpha\in E\cup F^c$. Now let $\psi$ be a compactly supported continuous function on $G/\Gamma$ and let $\vare$ be a given positive real number. For any $n$, by Corollary~\ref{c:PartitionIntoRegions}, one can find $a'_n\in A\cap Q_E$ such that 
\be\label{e:Q_EEstimate}
|\int_{G/\Gamma} (a_n^{-1}\psi) d(a'_n\mu_Q)-\int_{G/\Gamma} (a_n^{-1}\psi) d\mu_{Q_E}|<\vare/2.
\ee
We can further assume that for large enough $n$, $\lambda_{\alpha}(a_n a'_n)\ge x_{\psi,\vare/2,F}$ for any $\alpha\in F$ (notice that $a_n a'_n\in Q_F$). Hence again by Corollary~\ref{c:PartitionIntoRegions} we have that
\be\label{e:Q_FEstimate}
|\int_{G/\Gamma} \psi d(a_n a'_n\mu_Q)-\int_{G/\Gamma} \psi d\mu_{Q_F}|<\vare/2.
\ee
Therefore by (\ref{e:Q_EEstimate}) and (\ref{e:Q_FEstimate}), we have that
\be\label{e:FinalEstimate}
|\int_{G/\Gamma} \psi d(a_n\mu_{Q_E})-\int_{G/\Gamma} \psi d\mu_{Q_F}|<\vare,
\ee
for large enough $n$, which completes the proof of the proposition.
\end{proof}
\begin{cor}\label{c:EPartitionIntoRegions}
For any compactly supported continuous function $\Psi$ on $G/\Gamma$, and positive real numbers $M$ and $\vare'$, there is $(x_{\Psi,\vare',F})_{E\subseteq F\subseteq \Delta}$ such that for any $E\subseteq F\subseteq \Delta$ we have 
 \begin{equation}~\label{e:Emeasure-estimate} 
 |\int_{G/\Gamma} \Psi\h d(a\mu_{Q_E})-\int _{G/\Gamma}\Psi\h d(a_{F^c} \mu_{Q_F})|<\vare'
 \end{equation} 
 if $\lambda_{\alpha}(a)\ge x_{\Psi,\vare',F}$ for any $\alpha\in F\setminus E$ and 
\[ 
 e^{-M}\le \lambda_{\alpha}(a)\le \max\{x_{\Psi,\vare',F'}|\h E\subseteq F'\subseteq \Delta,\h |F|<|F'|\}
\] 
 for any $\alpha\in \Delta\setminus F$.
\end{cor}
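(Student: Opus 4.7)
The proposal is to argue by descending induction on $|F|$, following the same compactness scheme that yielded Corollary~\ref{c:PartitionIntoRegions} but with Proposition~\ref{p:Q_ETranslate} in place of Proposition~\ref{p:FindTheLimitUTrans}. The induction is forced by the shape of the hypothesis: the upper bound on $\lambda_\alpha(a)$ for $\alpha\in\Delta\setminus F$ is itself phrased in terms of constants $x_{\Psi,\vare',F'}$ associated with strictly larger subsets $F'$, so $x_{\Psi,\vare',F'}$ must be in hand before $x_{\Psi,\vare',F}$ can be produced.

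The base case $F=\Delta$ is transparent: $F^c=\varnothing$, the upper-bound hypothesis is vacuous, and $Q_\Delta=G$. The required estimate reduces to the weak-$*$ convergence $a\mu_{Q_E}\to \mu_G$ whenever $\lambda_\alpha(a)\to\infty$ for all $\alpha\in\Delta\setminus E$, which is exactly the $F=\Delta$ case of Proposition~\ref{p:Q_ETranslate}. A threshold $x_{\Psi,\vare',\Delta}$ is then extracted by routine sequential compactness.

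For the inductive step, I would assume $x_{\Psi,\vare',F'}$ has already been defined for every $F'$ with $|F'|>|F|$, set $C:=\max\{x_{\Psi,\vare',F'}\mid E\subseteq F'\subseteq\Delta,\ |F|<|F'|\}$, and consider the compact set
\[
K_F:=\{b\in A\mid \lambda_\alpha(b)=1\text{ for }\alpha\in F,\ e^{-M}\leq\lambda_\alpha(b)\leq C\text{ for }\alpha\in\Delta\setminus F\}.
\]
For $a$ in the region of the statement, I would use the product decomposition afforded by the isomorphism $\Xi$, writing $a=a_{F^c}\cdot a_{F\setminus E}\cdot a_E$ with $a_{F^c}\in K_F$ and $\lambda_\alpha(a_{F\setminus E})$ large for $\alpha\in F\setminus E$. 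By~\eqref{e:A_FandQ}, $a_E\in Q_E$, so $a\mu_{Q_E}=a_{F^c}\bigl(a_{F\setminus E}\mu_{Q_E}\bigr)$, and the task reduces to bounding
\[
\Bigl|\int\Psi\,d\bigl(a_{F^c}(a_{F\setminus E}\mu_{Q_E})\bigr)-\int\Psi\,d\bigl(a_{F^c}\mu_{Q_F}\bigr)\Bigr|
\]
uniformly in $a_{F^c}\in K_F$.

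Proposition~\ref{p:Q_ETranslate} supplies the pointwise weak-$*$ convergence $a_{F\setminus E}\mu_{Q_E}\to\mu_{Q_F}$, and pre-translation by any fixed $b\in K_F$ is continuous on $\pcal(G/\Gamma)$, so the displayed quantity tends to zero pointwise in $a_{F^c}$. The step I expect to need genuine care, though not a serious obstacle, is upgrading this to uniformity over the compact base $K_F$. I would handle it by the standard contradiction/subsequence argument: if uniformity failed, extract sequences $b_n\to b_\infty$ in $K_F$ and $c_n$ with $\lambda_\alpha(c_n)\to\infty$ for $\alpha\in F\setminus E$ along which the estimate fails; Proposition~\ref{p:Q_ETranslate} forces $c_n\mu_{Q_E}\to\mu_{Q_F}$, and joint continuity of translation on $\pcal(G/\Gamma)$ combined with $b_n\to b_\infty$ then forces the difference to zero, a contradiction. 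This produces the threshold $x_{\Psi,\vare',F}$ and closes the induction.
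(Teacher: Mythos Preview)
Your proposal is correct and is precisely the argument the paper has in mind: the paper's own proof is a one-line remark that this follows from Proposition~\ref{p:Q_ETranslate} by the same compactness scheme as Corollary~\ref{c:PartitionIntoRegions}, and you have simply unfolded that scheme (including the implicit descending induction on $|F|$ needed to make the upper bounds well-defined, and the sequential-compactness extraction of the thresholds).
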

\begin{proof}
This is an easy consequence of Proposition~\ref{p:Q_ETranslate} (similar to the proof of Corollary~\ref{c:PartitionIntoRegions}). 
\end{proof}
\subsection{Translates of absolutely continuous measures}\label{ss;Abs-cont}
As we mentioned, combining Theorem~\ref{trans} and Ratner's measure classification theorem one can get a similar statement for translates of certain absolutely continuous measures 
with respect to $\mu_{Q_E}.$ 
 
Let us fix some notation first. For any $F\subseteq \Delta$
let $\bbm'_F=[\bbl_F,\bbl_F]$ where $\bbl_F$ is the Levi subgroup of $\bbp_F.$ 
Let $\bbm'_F=\prod_{i=1}^{k_F}\bbh_i$ be the almost direct product decomposition
of $\bbm'_F$ into $\bbq$-almost simple factors.
We assume these factors are arranged so that 
$\bbh_1,\ldots,\bbh_{l_F}$ denote all the $\bbq$-isotropic factors of
$\bbm'_F$ if there are any such factors, and $l_F=0$ otherwise. 
Then $(\bbm'_F)^+=\prod_{i=1}^{l_F}\bbh_i$ if $l_F>0$ and $(\bbm'_F)^+=\{e\}$ otherwise.
Similarly let $m_F$ 
be so that $\bbh_{l_F+1},\ldots\bbh_{l_F+m_F}$ are all $\bbq$-anisotropic factors of 
$\bbm'_F$ which are $\bbr$-isotropic if there are any such factors, and $m_F=0$ otherwise. 

With this notation and convention, recall from the introduction that $Q_F=M_FR_u(\bbp_F(\bbr))$
where $M_F=\prod_{i=1}^{m_F+l_F}\bbh_i(\bbr)^\circ$ if $m_F+l_F\geq 1$ and $M_F=\{e\}$ otherwise.   

We also note that arguing as in the proof of Proposition~\ref{p:FindTheLimitUTrans} we see that
if $E\subseteq F\subseteq\Delta$ then any $\bbq$-almost simple $\bbq$-anisotropic factor of $M_F$ is a subset of $Q_\varnothing\subseteq Q_E.$ 

\begin{thm}\label{thm;trans-abs-general}
Let the notation and assumptions be as in Theorem~\ref{trans}. Let $E\subseteq\Delta$ and 
let $\mu=f\mu_{Q_E}\in\pcal(G/\Gamma)$ where $f$ is a non-negative 
continuous function on $G/\Gamma$. Then
\begin{enumerate}
\item If $\lambda_{\alpha} (a_n)$ goes to zero for some $\alpha\not\in E$, then $a_n \mu$ 
diverges in $\pcal (G/\Gamma)$.
\item Let the notation be as above further assume that 
\begin{enumerate}
\item Let $E\subseteq F\subseteq \Delta$ be so that if we write $Q_F=M_FR_u(Q_F),$ 
as above, then the adjoint action of $M_F$ on $\Lie(R_u(Q_F))$ has no fixed vector. 
\item With $F$ as in ${\rm (a)},$
suppose $\mu$ is invariant under $\bbh_i(\bbr)^\circ$ for all $l_F+1\leq i\leq l_F+m_F$
if $m_F\neq0.$ 
\item Let $\{a_n\}\subseteq A$ be a sequence such that  
$\lambda_{\alpha}(a_n)=1$ for any $n$ and any $\alpha\not \in F$ and 
$\lambda_{\alpha}(a_n)$ goes to infinity for any $\alpha\in F\setminus E,$ 
\item Suppose $\det (\Ad(a_n)|_{R_u(Q_{\varnothing})\cap H_j})\to\infty$ for all $1\leq j\leq l_F.$
\end{enumerate}
\end{enumerate}  
Then, $a_n\mu\to\mu_{Q_F}.$
\end{thm}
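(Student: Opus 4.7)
Part~(1) reduces immediately to Theorem~\ref{trans}(i). For any compact $C\subseteq G/\Gamma$ and $\varepsilon>0$, pick $M$ large enough that $\int_{\{f>M\}}f\,d\mu_{Q_E}<\varepsilon/2$; this is possible since $f\mu_{Q_E}$ is a probability measure. Then
\[
(a_n\mu)(C)=\int\mathbf{1}_C(a_nx)\,f(x)\,d\mu_{Q_E}(x)\le M\,(a_n\mu_{Q_E})(C)+\varepsilon/2,
\]
and by Theorem~\ref{trans}(i) the first term tends to zero. So no compact set retains mass, and $\{a_n\mu\}$ diverges in $\pcal(G/\Gamma)$.

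For Part~(2), the plan proceeds in three stages. First, tightness of $\{a_n\mu\}$ follows from the very same truncation, now using that $\{a_n\mu_{Q_E}\}$ is tight (it converges to $\mu_{Q_F}$ by Theorem~\ref{trans}(ii)). Second, assumption~(c) combined with Lemmas~\ref{l:CharParabolic} and~\ref{l:SandP1} gives $a_n\in\bbp_F^+(\bbr)\subseteq Q_F$ for every $n$, so $\supp(a_n\mu)\subseteq Q_F\Gamma/\Gamma$; this set is closed, so any weak-$*$ subsequential limit $\nu$ is a probability measure on $Q_F\Gamma/\Gamma$. Third and main, one shows $\nu$ is $Q_F$-invariant, after which the transitivity of $Q_F$ on $Q_F/(Q_F\cap\Gamma)$ forces $\nu=\mu_{Q_F}$ by uniqueness of the $Q_F$-invariant probability measure.

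To establish $Q_F$-invariance of $\nu$, I would verify it factor by factor in the decomposition $Q_F=M_FR_u(\bbp_F(\bbr))$ with $M_F=\prod_{i=1}^{l_F+m_F}\bbh_i(\bbr)^\circ$. For the $\bbq$-anisotropic-but-$\bbr$-isotropic factors $\bbh_i$ with $l_F+1\le i\le l_F+m_F$: being $\bbq$-anisotropic they contain no $\bbq$-split torus and are centralized by $\bbs$, hence commute with each $a_n$; assumption~(b) then transports $\bbh_i(\bbr)^\circ$-invariance from $\mu$ through the limit to $\nu$. For each $\bbq$-isotropic factor $H_j=\bbh_j(\bbr)^\circ$ with $1\le j\le l_F$, I would run the proof of Proposition~\ref{p:FindTheLimitUTrans} \emph{internally inside $H_j$}: the maximal $\bbq$-unipotent $R_u(Q_\varnothing)\cap H_j$ of $H_j$ is contained in $Q_E$, so $\mu$ is absolutely continuous with respect to a measure invariant under it, and assumption~(d) supplies exactly the volume-expansion condition that plays the role of the fundamental-weight hypothesis in Theorem~\ref{thm;U-trans}; Mozes--Shah algebraicity together with the parabolic analysis of Lemma~\ref{ourversion} (applied inside $H_j$) then yields $H_j$-invariance of $\nu$. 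Finally, for the unipotent radical: the directions of $F\setminus E$ along which $a_n$ grows force $a_n^{-1}ua_n\to e$ for $u$ in certain root subgroups of $R_u(\bbp_F)$, giving invariance of $\nu$ along a nontrivial subspace of $\Lie R_u(\bbp_F)$; assumption~(a), that $M_F$ has no fixed vectors on $\Lie R_u(\bbp_F)$, then propagates this partial invariance through $M_F$-conjugation (combined with the $M_F$-invariance just proved) to yield invariance under all of $R_u(\bbp_F)(\bbr)$.

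The principal obstacle is carrying the Mozes--Shah step through in the absolutely-continuous setting, since $\mu$ itself need not be invariant under any unipotent subgroup of $G$; only $\mu_{Q_E}$ is. I would handle this by a standard approximation: on a compact set of nearly full $\mu_{Q_E}$-mass, cover by small $Q_E$-adapted boxes on which the continuous density $f$ is nearly constant, and decompose $\mu$ as a sum of approximately proportional restrictions of $\mu_{Q_E}$ to these boxes plus a small error. The proof of Theorem~\ref{thm;U-trans} applies piece-by-piece (its Mozes--Shah input tolerates restriction of the starting $U$-invariant measure to a positive-measure Borel set), after which summation followed by weak-$*$ passage to the limit recovers $\nu=\mu_{Q_F}$. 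The ``intermediate subgroups'' hazard flagged in the introduction is precisely what the assumptions~(a), (b), and~(d) are designed to rule out at each step of the factor-by-factor analysis above.
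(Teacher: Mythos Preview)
Your treatment of Part~(1) and of tightness in Part~(2) matches the paper. The divergence from the paper's argument---and the genuine gap---is in how you try to identify the limit.

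You aim to prove that any subsequential limit $\nu$ is \emph{fully $Q_F$-invariant}, handling $M_F$ factor by factor and then $R_u(\bbp_F)$. The paper does something quite different and much lighter: after reducing to compactly supported $f$ bounded by~$1$, it only constructs a \emph{small} unipotent subgroup $V=V_1\cdots V_{l_F+m_F}$ such that (i) $\nu$ is $V$-invariant and (ii) $\mu_{Q_F}$ is $V$-\emph{ergodic}. For $1\le j\le l_F$ one takes $V_j$ to be any nontrivial subgroup of $R_u(Q_\varnothing)\cap H_j$ contracted by $a_n$; assumption~(d) guarantees such a direction exists (nothing more is claimed). For $l_F+1\le j\le l_F+m_F$ one takes a maximal unipotent in $H_j$ and uses~(b). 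Assumption~(a) is used, via the Mautner phenomenon, to get $V$-ergodicity of $\mu_{Q_F}$---not to propagate invariance of $\nu$. Then Ratner's theorem decomposes $\nu$ over the tubes $T(H,V)$, and the domination
\[
(a_n\mu)(\psi)\le \tfrac{1}{\mu_{Q_E}(f)}\,(a_n\mu_{Q_E})(\psi)\longrightarrow \tfrac{1}{\mu_{Q_E}(f)}\,\mu_{Q_F}(\psi)
\]
for $\psi\in C_c(T(H,V))$, combined with $\mu_{Q_F}(T(H,V))=0$ for proper $H\subsetneq Q_F$, kills the singular part. Thus $\nu=\mu_{Q_F}$.

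Your route runs into real trouble at the $\bbq$-isotropic factors $H_j$. You propose to ``run Proposition~\ref{p:FindTheLimitUTrans} internally in $H_j$,'' but that proposition (via Mozes--Shah) needs the sequence $a_n\mu$ to be invariant and ergodic under a unipotent flow, which it is not; your box-decomposition patch does not restore invariance, and Mozes--Shah does not tolerate restriction to a positive-measure set in the way you suggest. Separately, assumption~(d) gives only $\det(\Ad(a_n)|_{R_u(Q_\varnothing)\cap H_j})\to\infty$, which is strictly weaker than the fundamental-weight hypothesis of Theorem~\ref{thm;U-trans} inside $H_j$; it buys you a single contracting direction, not $H_j$-invariance. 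Finally, your use of~(a) to ``propagate'' partial $R_u(\bbp_F)$-invariance through $M_F$-conjugation is not what~(a) says: ``no fixed vector'' ensures no trivial subrepresentation, not that a given nontrivial subspace and its $M_F$-conjugates generate. The paper sidesteps all of this by asking only for $V$-invariance of $\nu$ and $V$-ergodicity of the target, then letting Ratner plus the domination $a_n\mu\ll a_n\mu_{Q_E}\to\mu_{Q_F}$ do the identification.
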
 

Comparing Theorem~\ref{trans} and Theorem~\ref{thm;trans-abs-general}:
in what follows with an example we show that already for the case of $\mathbb{G}=\mathbb{SL}_4$ extra conditions are necessary. Similar examples maybe constructed to show that all of these conditions are in some sense necessary. The issue maybe explained as follows: 
in the case when one considers translates of
$\mu_{Q_E}$ this measure is, by definition, $Q_E$-invariant so one does not need to require any conditions on ``dynamics of $a_n$ action on $\Lie Q_E$''. 
However, in the absolutely continuous case, one needs extra conditions on 
the expansion factors of $a_n$ along the leaves that are subsets of 
$\Lie Q_E$ in order to get a homogeneous measure. 
The condition on $F$ is to guarantee the $\mu_{Q_F}$ is $M_F$-ergodic.   

Let $G=\SL_4(\bbr),$ $\Gamma=\SL_4(\bbz),$ $U$ the group of unipotent upper triangular matrices and   
\[
 P_F=\left(\begin{array}{cccc}* & * & * & *\\ * & * & * & *\\ 0 & 0 & * & *\\ 0 & 0 & 0 & *\end{array}\right)\subseteq G.
\]
Take $\{a_n\}$ to be a sequence of diagonal matrices so that
\[
a_n:=\alpha_1^{\vee}(e^n)=\diag(e^n,e^{-n},1,1).
\]
Then $a_n\mu_{U}\to\mu_{Q_F}.$ 
This, however, is not true of $a_n\mu,$ where $\mu=f\mu_{U}$ for an arbitrary 
compactly supported function $f;$ indeed the limiting measure of $a_n\mu$ 
is not a homogeneous measure. 

First note that Theorem~\ref{thm;trans-abs} is a special case of Theorem~\ref{thm;trans-abs-general}.
Indeed the only part which requires a remark is that under the assumption of Theorem~\ref{thm;trans-abs}
we have $\det(\Ad(a_n|_{R_u(Q_\varnothing)}))\to\infty:$ this follows from Lemma~\ref{l:RhoPositiveCone}.

We now turn to the proof of Theorem~\ref{thm;trans-abs-general}. 
The divergence part is clear as under this condition $\mu_{Q_E}$
diverges. So we will focus on part (2);
for simplicity in notation put $Q=Q_E.$ 

Recall that $Q\Gamma/\Gamma$ is a closed orbit 
and $f\mu_{Q}$ is a probability measure on $Q\Gamma/\Gamma.$
Therefore: for any $\vare>0$ there exists a compact subset 
$K_\vare\subseteq G/\Gamma$ so that $\int_{K_\vare^c}f(x)d\mu_Q(x)<\vare.$
Given $\vare>0$ choose $\Phi_\vare\in C_c(G/\Gamma)$ 
so that $\chi_{K_\vare}\leq\Phi_\vare\leq 1.$

Let $\vare>0$ be arbitrary and put $f_\vare=f\Phi_\vare.$ Let $\{a_n\}$ be a sequence as in part (2)
then for any $\psi\in C_c(G/\Gamma)$ we have
\begin{align*}
 \int_{G/\Gamma}\psi(a_nx)f_\vare(x)d\mu_Q(x)&\leq 
 \int_{G/\Gamma}\psi(a_nx)d\mu(x)=
 \int_{G/\Gamma}\psi(a_nx)f(x)d\mu_Q(x)\\
 &=\int_{K_\vare} \psi(a_nx)f(x)d\mu_Q(x)+\int_{K_\vare^c}\psi(a_nx)f(x)d\mu_Q(x)\\
 &\leq \int_{G/\Gamma}\psi(a_nx)f_\vare(x)d\mu_Q(x)+ \vare\|\psi\|_\infty.
\end{align*}
In view of this calculation we may and will assume from now that
$f$ is a continuous compactly supported function on $G/\Gamma.$
Using a partition of unity argument it suffices to show the statement 
for a continuous approximation of characteristic function of small open balls. 
To be more precise, let us fix $q\in\mbox{supp}(\mu_Q)$ and let $B(q,r)=B_Q(e,r)q$ 
where $B_Q(e,r)$ denotes the ball of radius $r$ around $e$ in $Q.$ 
Let  $\delta$ be a positive number which is less than $\frac{1}{10}$-th the injectivity radius 
at $q$ and let $f$ be a nonnegative continuous function bounded by 1 which equals one on $B(q,4\delta)$ and equals 0 on the complement of $B(q,5\delta)$ in $G/\Gamma.$

Indeed the first part is a consequence of the first part of Theorem~\ref{trans}, 
so we will prove the second part.
Let $X=G/\Gamma$ and $\wt{X}$ be its one-point compactification. 
For any $\sigma\in \pcal(X),$ a Borel probability measure, and any measurable function $\varphi$ on $X$, let 
\[
\sigma(\varphi):=\int_X \varphi d\sigma.
\]
\begin{lem}\label{l:GeneralCaseNoEscape}
Let the notation and assumptions be as in Theorem~\ref{thm;trans-abs}. 
If $\mu_{\infty}$ is a limit of a subsequence of $\{a_n\mu\}$ in $\pcal(\wt{X})$, then $\mu_{\infty}(\{\infty\})=0.$
\end{lem}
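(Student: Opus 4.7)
My plan is to dominate $\{a_n\mu\}$ by the sequence $\{a_n\mu_{Q_E}\}$ of translates of the homogeneous measure and then invoke the convergence result Proposition~\ref{p:Q_ETranslate} already established for the latter. Since $\mu=f\mu_{Q_E}$ with $f$ a bounded non-negative continuous function on $G/\Gamma$, the change of variables $y=a_nx$ yields
\[
(a_n\mu)(\psi)=\int_{G/\Gamma}\psi(y)\,f(a_n^{-1}y)\,d(a_n\mu_{Q_E})(y)\le \|f\|_\infty\,(a_n\mu_{Q_E})(\psi)
\]
for every non-negative measurable $\psi$. Hence tightness of $\{a_n\mu\}$ in $\pcal(G/\Gamma)$ reduces to tightness of $\{a_n\mu_{Q_E}\}$.

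To handle the latter, I would decompose $a_n=a_{n,E}\cdot a_{n,E^c}$ as in~\eqref{e:Decomposition}, using the projections defined in~\eqref{e:FProjection}. By Lemma~\ref{l:CharParabolic}, the relation $\lambda_\alpha(a_{n,E})=1$ for every $\alpha\notin E$ places $a_{n,E}$ in $\bbs_E^{(1)}$, and Lemma~\ref{l:SandP1} upgrades this to $a_{n,E}\in\bbp_E^+(\bbr)\subseteq Q_E$. The $Q_E$-invariance of $\mu_{Q_E}$ then gives
\[
a_n\mu_{Q_E}=a_{n,E^c}\,\mu_{Q_E}.
\]
Since $\lambda_\alpha(a_{n,E^c})=\lambda_\alpha(a_n)\to\infty$ for every $\alpha\in\Delta\setminus E$ by the hypothesis of Theorem~\ref{thm;trans-abs}, Proposition~\ref{p:Q_ETranslate} applied with $F=\Delta$ yields $a_{n,E^c}\mu_{Q_E}\to\mu_G$ in $\pcal(G/\Gamma)$. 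Because $G/\Gamma$ has finite volume, $\mu_G$ is a probability measure, so weak convergence to $\mu_G$ implies (via Portmanteau and the tightness of the limit) that $\{a_n\mu_{Q_E}\}$ is itself tight in $\pcal(G/\Gamma)$.

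Combined with the domination above, $\{a_n\mu\}$ is also tight, and any subsequential limit $\mu_\infty$ in $\pcal(\wt{X})$ therefore satisfies $\mu_\infty(\{\infty\})=0$. I do not anticipate a serious technical obstacle: the argument is a short reduction to the already-established Proposition~\ref{p:Q_ETranslate}, and the only structural point to verify, namely the containment $a_{n,E}\in Q_E,$ follows immediately from Lemmas~\ref{l:CharParabolic} and~\ref{l:SandP1}.
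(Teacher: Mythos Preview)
Your argument is correct and follows essentially the same route as the paper: both proofs reduce the non-escape of $\{a_n\mu\}$ to the already-established convergence $a_n\mu_{Q_E}\to\mu_{Q_F}$ (here $\mu_G$) from Theorem~\ref{trans}/Proposition~\ref{p:Q_ETranslate}, and then use that $f$ is bounded to transfer tightness. Your domination $(a_n\mu)(K^c)\le\|f\|_\infty\,(a_n\mu_{Q_E})(K^c)$ is in fact a cleaner packaging of what the paper does via a Chebyshev-type estimate with a test function $\psi\in C_c(X)$; and your decomposition $a_n=a_{n,E}\cdot a_{n,E^c}$, while correct (it is exactly~\eqref{e:A_FandQ}), is not needed since Proposition~\ref{p:Q_ETranslate} with $F=\Delta$ applies to $a_n$ directly. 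One small point: you assume $f$ is bounded, whereas Theorem~\ref{thm;trans-abs} only asks that $f$ be continuous; the paper handles this by first reducing to compactly supported $f$ in the paragraph preceding the lemma, so in context your assumption is justified.
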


\begin{proof}
Recall that $\mu=\frac{1}{\mu_Q( f)}fd\mu_Q$ where $f$ is a nonnegative compactly 
supported continuous function as above, in particular $f$ is bounded by $1.$  
Our assumption on $\{a_n\},$ thanks to Theorem~\ref{trans}, implies that $a_n\mu_Q$ 
converges to $\mu_{Q_F}.$ 

Let $\vare>0$ be given and let $\psi\in C_c(X)$ be such that $0\leq\psi\leq 1$ and
$\mu_{Q_F}(\psi)> 1-\vare^2\mu_Q( f)/4.$ 
Suppose $n$ is large enough so that $\mu_Q(a_n^{-1}\psi)>1-\vare^2\mu_Q( f)/2,$ we have 
\begin{align*}
a_n\mu(\psi)=\mu(a_n^{-1}\psi)=\frac{1}{\mu_Q( f)}\int_{G/\Gamma}(a_n^{-1}\psi) f d\mu_Q
\geq\frac{1-\vare}{\mu_Q( f)}\int_{\{x:a_n^{-1}\psi(x)\geq 1-\vare\}}fd\mu_Q
\geq1-2\vare,
\end{align*}
which completes the proof of the lemma.
\end{proof}

Let now $\{a_{n_i}\mu\}$ be a subsequence converging to $\sigma\in\mathcal{P}(X).$ 
\begin{lem}\label{l;general-unip-inv}
There is a unipotent subgroup $V\subseteq Q$ such that $\sigma$ is invariant under $V$
and $\mu_{Q_F}$ is $V$-ergodic. 
\end{lem}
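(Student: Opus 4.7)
My plan exploits the fact that $\mu=f\mu_Q$ is absolutely continuous with respect to the $Q$-invariant measure $\mu_Q$, so that contraction of a unipotent direction by $a_n^{-1}$ converts into invariance of $\sigma$. Concretely, for any $v\in Q$ set $v_n:=a_n^{-1}va_n$; since $A$ normalizes $R_u(\bbp_E)(\bbr)$ and each almost $\bbq$-simple factor of $[\bbl_E,\bbl_E]$, we have $v_n\in Q=Q_E$ and therefore $v_n\mu_Q=\mu_Q$. Writing
\[
v(a_n\mu)-a_n\mu\;=\;a_n\bigl((f\circ v_n^{-1}-f)\,\mu_Q\bigr),
\]
the total variation equals $\|f\circ v_n^{-1}-f\|_{L^1(\mu_Q)}$, which tends to $0$ by uniform continuity of the compactly supported $f$ whenever $v_n\to e$; passing to the limit along the chosen subsequence yields $v\sigma=\sigma$.

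To produce a rich supply of such $v$, I pass to a further subsequence so that $\phi(a_n)$ converges in $[0,\infty]$ for every $A$-weight $\phi$ appearing in $\Lie Q$, and let $\mathfrak{v}\subseteq \Lie Q$ be the direct sum of weight spaces with $\phi(a_n)\to\infty$. Then $\mathfrak{v}$ is closed under the Lie bracket (whenever $\phi_1+\phi_2$ is a weight, $(\phi_1+\phi_2)(a_n)=\phi_1(a_n)\phi_2(a_n)\to\infty$) and contains no pair $\pm\phi$, so $V:=\exp(\mathfrak{v})$ is a connected unipotent subgroup of $Q$. By the first paragraph, $\sigma$ is $V$-invariant.

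The main obstacle is to establish $\mu_{Q_F}$-ergodicity of this $V$, and the hypotheses (a), (c), and (d) are tailored precisely for this. Expanding each root as $\phi=\sum_\gamma d_{\phi,\gamma}\lambda_\gamma$, condition (c) places every $\phi$ with $d_{\phi,\gamma'}\ge 0$ for all $\gamma'\not\in F$ and $d_{\phi,\gamma}>0$ for some $\gamma\in F\setminus E$ into the expanding cone, so a large portion of $R_u(\bbp_F)(\bbr)$ sits inside $V$. Condition (d), applied to each $\bbq$-isotropic factor $\bbh_j$ of $\bbm_F'$, forces $\det(\Ad(a_n)|_{R_u(Q_\varnothing)\cap\bbh_j})\to\infty$, so at least one root direction of $\bbh_j$ is expanded and $V\cap\bbh_j(\bbr)^\circ$ is a nontrivial unipotent subgroup. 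Condition (a) then prevents $V$ from being trapped inside any proper $M_F$-invariant subalgebra of $\Lie R_u(Q_F)$. Once $V$ is seen to contain a horospherical subgroup of a one-parameter semisimple flow in $Q_F$ whose semisimple part projects nontrivially onto every $\bbr$-simple factor of $M_F$, ergodicity of $\mu_{Q_F}$ under $V$ follows from Moore's mixing theorem on the simple factors of $M_F$ combined with the Mautner phenomenon propagated through the semidirect structure $Q_F=M_F\ltimes R_u(\bbp_F)(\bbr)$. The bulk of the remaining work is this weight-combinatorial verification and the careful bookkeeping needed to invoke Mautner.
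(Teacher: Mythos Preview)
Your invariance argument via contraction is correct and matches the paper. The gap is in the ergodicity half, and it concerns the $\bbq$-anisotropic but $\bbr$-isotropic almost simple factors $H_j$, $l_F+1\le j\le l_F+m_F$, of $M_F$. As noted in the paper (see the proof of Proposition~\ref{p:FindTheLimitUTrans}), each such $H_j$ lies in $Q_\varnothing$ and therefore \emph{centralizes} $A=\bbs(\bbr)^\circ$. Consequently every $A$-weight on $\Lie H_j$ is trivial, so your subalgebra $\mathfrak{v}$, built from weight spaces with $\phi(a_n)\to\infty$, has zero intersection with $\Lie H_j$. Your $V$ thus projects trivially onto these $\bbr$-simple factors of $M_F$, and by Moore's theorem $V$ cannot act ergodically on $Q_F\Gamma/\Gamma$ when $m_F\neq 0$. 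Condition~(d) is of no help here: it only constrains the $\bbq$-isotropic factors $1\le j\le l_F$.

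This is exactly why the paper invokes assumption~(b). Since each such $H_j$ centralizes $A$ and $\mu$ is already $H_j$-invariant by~(b), $\sigma=\lim a_{n_i}\mu$ inherits $H_j$-invariance directly, without any contraction argument. The paper then enlarges $V$ by adjoining a maximal unipotent subgroup of each $H_j$ for $l_F+1\le j\le l_F+m_F$; together with the contracted pieces $V_1,\ldots,V_{l_F}$ coming from~(d) this gives a $V$ that meets every $\bbr$-isotropic simple factor of $M_F$ nontrivially, and then condition~(a) plus the generalized Mautner phenomenon yield $V$-ergodicity of $\mu_{Q_F}$. Your proof can be repaired by incorporating assumption~(b) in this way; without it the claim that $V$ ``projects nontrivially onto every $\bbr$-simple factor of $M_F$'' is simply false.
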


\begin{proof}
Let $\{a_n\}$ be as in the statement of Theorem~\ref{thm;trans-abs-general}.
By assumption (c) we have 
\[
a_n\in \cap_{\alpha\in \Delta\setminus F} \ker \lambda_{\alpha}.
\]
Hence by Lemma~\ref{l:CharParabolic}, $a_n\in \bbp_F^{(1)}$. Since $a_n\in \bbs(\bbr)^{\circ},$ 
by Lemma~\ref{l:SandP1}
we have that $a_n\in \bbp_F^+$. This implies that $a_n\in (\bbm'_F)^+$, where 
$\bbm'_F=[\bbl_F,\bbl_F]$ is as in the beginning of this section. Hence for any $n$ there are 
$a_n^{(j)}\in A\cap \bbh_i$ such that $a_n=a_n^{(1)} \cdots a_n^{(l_F)},$
in particular $l_F\geq 1.$ 

Now assumption ({d}), 
and the above
decomposition imply 
\[
\det (\Ad(a^{(j)}_n)|_{R_u(Q_{\varnothing})\cap H_j})\to\infty\quad
\text{for all $1\leq j\leq l_F.$}
\]
This, in particular, implies that for each $1\leq j\leq l_F$ there exists
a nontrivial subgroup $V_j$
of $R_u(Q_{\varnothing})\cap H_j$ such that  
\be\label{e;contraction}
\mbox{for all $v\in V_j$ we have $(a_n^{(j)})^{-1}va_n^{(j)}\to e$ as $n\to\infty.$}
\ee

Recall that 
$M_F=H_1\cdots H_{l_F+m_F}.$ 
For $1\leq j\leq l_F$ let $V_j\subseteq H_j$ be a nontrivial subgroup so that
~\eqref{e;contraction} holds.  
If $m_F\neq0,$ for every $l_F+1\leq j\leq l_F+m_F$ 
let $V_j$ be a maximal unipotent subgroup of $H_j.$
Put $V=V_1\cdots V_{l_F+m_F}.$ We will show that $V$ satisfies the claim. 

First note that in view of our choice of $F,$ see (a) in the statement,
that $R_u(Q_F)$ is generated by 
$\{u\in R_u(Q_F):\text{ there exists a maximal torus of $M_F$ which expands or contracts $u$}\}.$ 
Therefore, it follows from the generalized Mautner phenomena,
~\cite[Lemma 3]{Mar-ICM}, 
that $\mu_{Q_F}$ is $V$-ergodic.   


We now show $\sigma$ is $V$-invariant.
As was mentioned, it was shown in the course of the proof of Proposition~\ref{p:FindTheLimitUTrans} that if $m_F\neq0,$ then for all 
$l_F+1\leq j\leq l_F+m_F$ we have $H_j\subseteq Q_\varnothing.$ Therefore
for all such $j$ the subgroup $H_j$ centralizes $A.$
This and assumption (b) imply that $\sigma$ is invariant
under $V_j$ for all $l_F+1\leq j\leq l_F+m_F.$ 

In view of the above paragraph, let $v\in V_1\cdots V_{l_F}$ and
put $v_i=a_{n_i}^{-1}va_{n_i}.$ 
Recall that $v_i\rightarrow e$ as $i$ tends to infinity and $\mu_Q$ is invariant under $v_i$ for all $i$. 
Let $\psi\in C_c(X)$ be any nonnegative function. We have 
\begin{align*}
|va_{n_i}\mu(\psi)-a_{n_i}\mu(\psi)|
=&\frac{1}{\mu_Q( f)}\left|\int_X\psi(va_{n_i}x)f(x)d\mu_Q-\int_X\psi(a_{n_i}x)f(x)d\mu_Q\right|\\ 
\leq&\frac{1}{\mu_Q( f)}\int_X\psi(a_{n_i}x)|f(v_i^{-1}x)-f(x)|d\mu_Q,
\end{align*} 
which goes to zero as $i\rightarrow\infty.$ This implies $v\cdot\sigma=\sigma,$ as we wanted. 
\end{proof}

Let now $V$ be any unipotent subgroup of $G.$ 
Let $\mathcal{H}$ denote the collection of closed subgroups $H$ of $G$ 
such that $H\cap\Gamma$ is a lattice in $H$ and the subgroup generated by all one parameter unipotent subgroups contained in $H$ acts ergodically on $H/H\cap\Gamma.$ 
In the case in hand since $\Gamma\subset\bbg(\bbz)$ it follows from the definition 
that $\mathcal{H}$ is a countable set. For any $H\in\mathcal{H}$ let 
\[
T(H,V)=\{g\in G\h|\h Vg\subset gH\},\quad\text{and}
\]
\[
S(H,V)=\bigcup\{T(H',V)|\h H'\in\mathcal{H},\h H'\subset H
\h\mbox{and}\h\dim H'<\dim H\}.
\]
We have $\pi(T(H,V)\setminus S(H,V))=\pi(T(H,V))\setminus \pi(S(H,V)).$
The following fundamental result of M.~Ratner~\cite{Rat2} describes $V$-invariant probability measures on $G/\Gamma.$ 
The following formulation is taken from~\cite{EMS2}.

\begin{thm*}{\rm{(Ratner)}}
Let $\mu$ be a $V$-invariant probability measure on $G/\Gamma.$ For any $H\in\mathcal{H}$ let $\mu_H$ be the restriction of $\mu$ to $\pi(T(H,V)\setminus S(H,V)).$ Then $\mu_H$ is $V$-invariant and any $V$-ergodic component of $\mu_H$ is a $gHg^{-1}$-invariant measure on a closed orbit $g\pi(H)$ for some $g\in T(H,V)\setminus S(H,V).$ In particular for any Borel measurable set $B\subset G/\Gamma$ we have
\[\mu(B)=\sum_{H\in\mathcal{H}^*}\mu_H(B),\]
where $\mathcal{H}^*$ is a countable subset of $\mathcal{H}$ consisting of one representative for each $\Gamma$-conjugacy class of elements in $\mathcal{H}.$ 
\end{thm*}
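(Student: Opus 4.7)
The plan is to deduce this decomposition from Ratner's measure classification theorem for $V$-ergodic measures combined with the ergodic decomposition. First I would invoke the ergodic decomposition theorem on $(G/\Gamma,\mu)$ under the $V$-action to write $\mu=\int_\Omega \mu_\omega\,d\lambda(\omega)$, where each $\mu_\omega$ is a $V$-invariant, $V$-ergodic Borel probability measure on $G/\Gamma$. The whole weight of the theorem then sits in understanding the individual ergodic components; the decomposition of $\mu$ into the pieces $\mu_H$ is essentially a bookkeeping step once the ergodic components are classified.

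Next I would apply the classification of ergodic $V$-invariant probability measures, which is Ratner's deep theorem: for each $\omega$ there exists a closed subgroup $L_\omega \subseteq G$ containing $V$, with $L_\omega\cap\Gamma$ a lattice in $L_\omega$ and the unipotent part acting ergodically on $L_\omega/L_\omega\cap\Gamma$, and an element $g_\omega\in G$ such that $\pi(g_\omega L_\omega)$ is closed and $\mu_\omega=g_\omega\mu_{L_\omega}$. Writing $L_\omega=g_\omega H_\omega g_\omega^{-1}$ with $H_\omega\in\mathcal{H}$, the containment $V\subseteq L_\omega$ becomes $Vg_\omega\subseteq g_\omega H_\omega$, so $g_\omega\in T(H_\omega,V)$. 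To obtain a canonical choice, for each ergodic component I would select $H_\omega$ so that its dimension is minimal among all $H\in\mathcal{H}$ for which $\mu_\omega$ is of the form $g\mu_H$ with $g\in T(H,V)$; the existence of such a minimal $H$ is guaranteed because $\dim$ is integer-valued and bounded by $\dim G$. With this minimal choice, $g_\omega\notin S(H_\omega,V)$, for if there were a strictly smaller $H'\in\mathcal{H}$ with $g_\omega\in T(H',V)$ then the $V$-ergodic measure $\mu_\omega$ would have to sit inside the closed sub-orbit $g_\omega\pi(H')$, contradicting minimality. Conjugating within $\Gamma$ if necessary, I replace $H_\omega$ by its representative in $\mathcal{H}^*$.

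Once each ergodic component is labelled by a unique $H\in\mathcal{H}^*$, I would partition $\Omega=\bigsqcup_{H\in\mathcal{H}^*}\Omega_H$ and define $\mu_H'=\int_{\Omega_H}\mu_\omega\,d\lambda(\omega)$. By construction $\mu_H'$ is $V$-invariant, each of its $V$-ergodic components is of the form $g\mu_H$ on a closed orbit $g\pi(H)$ with $g\in T(H,V)\setminus S(H,V)$, and $\mu_H'$ is supported on $\pi(T(H,V)\setminus S(H,V))$. To identify $\mu_H'$ with the restriction $\mu_H$ in the statement I would argue conversely: any $V$-ergodic component of the restriction of $\mu$ to $\pi(T(H,V)\setminus S(H,V))$ must live on a closed $gH$-orbit with $H$ its canonical minimal subgroup, so it belongs to $\Omega_H$. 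The countability of $\mathcal{H}^*$, which is crucial for the sum to make sense and for the partition to be measurable, follows from the fact that subgroups $H\in\mathcal{H}$ are parametrized by $\bbq$-subgroups of $\bbg$ defined over a countable field, combined with finiteness of the number of $\Gamma$-conjugacy classes of such subgroups of bounded complexity (essentially Dani--Margulis/Ratner linearization arguments).

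The main obstacle is of course the underlying measure classification itself: Ratner's theorem that every $V$-ergodic invariant probability measure is algebraic. Its proof requires the polynomial divergence of unipotent orbits, the $R$-property, and a delicate induction via entropy and shearing arguments; this is by far the technical heart and would be cited rather than reproduced. Secondary difficulties are the measurability of the assignment $\omega\mapsto H_\omega$ (needed so that the partition $\Omega=\bigsqcup\Omega_H$ is measurable and $\mu_H'$ is well-defined) and the countability of $\mathcal{H}^*$; both are handled by stratifying subgroups according to dimension and by the Dani--Margulis linearization that bounds the relevant data in terms of finitely many $\bbq$-algebraic parameters, which for arithmetic $\Gamma$ yields only countably many $\Gamma$-conjugacy classes.
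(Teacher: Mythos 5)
This theorem is not proved in the paper at all: it is Ratner's measure classification, quoted in the decomposition form of Eskin--Mozes--Shah (the paper explicitly says ``The following formulation is taken from~\cite{EMS2}'') and used as a black box in Section 4.4. So there is no in-paper argument to compare against; what you have written is a sketch of how the decomposition form is deduced from the classification of \emph{ergodic} $V$-invariant measures, which is indeed the route taken in \cite{MS} and \cite{EMS2}. Your skeleton --- ergodic decomposition, Ratner's theorem applied to each component $\mu_\omega$ to get $\mu_\omega=g_\omega\mu_{H_\omega}$ with $g_\omega\in T(H_\omega,V)$, then grouping components by the $\Gamma$-conjugacy class of $H_\omega$, with countability of $\mathcal H$ coming from the fact that $H\in\mathcal H$ is recovered from the countable group $H\cap\Gamma\subseteq\bbg(\bbz)$ --- is the correct one.

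There is, however, one step that is wrong as written: the claim that if $g_\omega\in T(H',V)$ for some $H'\subsetneq H_\omega$ with $\dim H'<\dim H_\omega$, then $\mu_\omega$ ``would have to sit inside the closed sub-orbit $g_\omega\pi(H')$.'' Membership in $T(H',V)$ is the pointwise condition $g_\omega^{-1}Vg_\omega\subseteq H'$; it constrains the basepoint, not the measure, and the orbit $g_\omega\pi(H_\omega)$ strictly contains $g_\omega\pi(H')$, so no contradiction with minimality arises. What is actually needed is that for the canonical choice (where $g_\omega H_\omega g_\omega^{-1}$ is the full stabilizer $\{h:h\mu_\omega=\mu_\omega\}$ and $\mu_\omega$ lives on the single closed orbit $g_\omega\pi(H_\omega)$) one has $\mu_\omega(\pi(S(H_\omega,V)))=0$, so that a basepoint $g\in T(H_\omega,V)\setminus S(H_\omega,V)$ can be chosen. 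This is not automatic: one must rule out that the $\Lambda$-invariant measure on the closed orbit charges $\pi(T(H',V))$ for some lower-dimensional $H'$, and since $\pi(T(H',V))$ is $V$-invariant, positive measure would force full measure by ergodicity; excluding this uses the algebraic structure of the sets $T(H',V)$ (the Dani--Margulis/Mozes--Shah linearization and self-intersection analysis), which is precisely the extra content of the decomposition theorem beyond the bare ergodic classification. Your sketch treats this as a one-line consequence of a minimal-dimension choice, and that is the genuine gap; the remaining points (measurability of $\omega\mapsto H_\omega$, countability of $\mathcal H^*$) are fine as you describe them.
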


We now continue the proof of Theorem~\ref{thm;trans-abs}. 
Let the notation be as before and note that $G\in\mathcal{H}.$ 
Let us recall that $a_{n_i}\mu\rightarrow\sigma$ and $\sigma$ is $V$-invariant. 
Thanks to the above seminal theorem and the fact that $\mu_{Q_F}$ is $V$-ergodic 
we need to show $\mu(S(G,V))=0.$ 

Let $H\subseteq Q_F$ be in $\mathcal{H}$ with $\dim H<\dim Q_F.$ 
Let $\psi$ be a continuous compactly
supported function on $T(H,V)$ then
\[
a_{n_i}\mu( \psi)=\mu(a_{n_i}^{-1}\psi)\leq
\frac{1}{\mu_Q(f)}\mu_Q(a_{n_i}^{-1}\psi|_{B(q,5\delta)})\leq
\frac{1}{\mu_Q(f)}\mu_Q(a_{n_i}^{-1}\psi).
\]
Now $a_{n_i}\mu_Q\rightarrow\mu_{Q_F}$ and $\mu_{Q_F}(\psi)=0$ imply that 
$\mu_Q(a_{n_i}^{-1}\psi)\rightarrow 0.$ Thus we get  $\sigma(T(H,V))=0,$ which finishes the proof.

\section{Counting horospheres.} \label{s:CountingHorosphers}
Eskin and McMullen~\cite{EM} considered the following counting problem, among others. Let $x_0$ be a fixed point in $\mathbb{H}^2$ the two dimensional hyperbolic space, $c$ a horocycle, and $\Gamma$ a lattice in $G={\rm PSL}_2(\bbr)$ such that $C=\pi(c)$ is a closed horocycle in $\Sigma=\mathbb{H}^2/\Gamma$. Then $N(R)$ the number of $\Gamma$ translates of $c$ which intersects the ball $B(x_0,R)$ of radius $R$ centered at $x_0$ is asymptotically \[\frac{1}{\pi}\frac{{\rm length}(C)}{{\rm area}(\Sigma)}{\rm area}(B(x_0,R)).\]
They also remark that their method does not work for the maximal unipotent subgroups in higher-rank groups. Indeed they used mixing and a key lemma called {\em wavefront lemma} which can show a special case of Theorem~\ref{trans}. Namely, if $a_n$'s go to infinity in the positive Weyl chamber (instead of the interior of the dual of the Weyl chamber), then $\{a_n\mu_U\}$ converge to the Haar measure of $G$. In this section, we address the higher rank version of this problem and prove Theorem~\ref{t:CountingLifts}. With the same notation as in Theorem \ref{t:CountingLifts}, let
\[
N( R)=\#\hspace{.5mm}\{\gamma\in\Gamma\h|\h\mathcal{U}\gamma\cap B(x_0,R)\neq\varnothing\}.
\]  
Using the Iwasawa decomposition we can find $a_0\in A$ such that  $Ug_0K =U a_0 K$.

 Without loss of generality we can assume that $G$ is the real points of a simply connected algebraic $\bbr$-group. 
 Let $\widetilde{B}_R=\{g\in G\hspace{1mm}|\hspace{1mm}d(x_0, x_0g)\le R\}.$  
 Put $A_R=A\cap\widetilde{B}_R$, and $\overline{B}_R=\{gU\hspace{1mm}|\hspace{1mm}g\in\widetilde{B}_R\}$. 
Define $F_R$ as follows 
 \[
 F_R(g\Gamma):=\#\{\gamma\in\Gamma: \mathcal{U}\gamma\cap B(x_0,R)g\neq\varnothing\};
 \]
then $N( R)=F_R(\Gamma).$ 
Since the stabilizer of $\mathcal{U}$ is $U$ and in view of the fact that $\widetilde{B}_R={\widetilde{B}_R}^{-1}$
we have
\[
F_R(g\Gamma)=\sum_{\gamma\in\Gamma/\Gamma\cap U} \1bf_R(g\gamma a_0 U),
\] 
 where $\1bf_R$ is the characteristic function of $\overline{B}_R$. Let $\widetilde{F}_R(g\Gamma)= \frac{1}{f_{a_0}(R)}\1bf_R(g\Gamma)$, where $f_{a_0}(R)=\int_{A^{>}_Ra_0}\rho'_{\Delta}(a) da$ and $A^{>}_Ra_0=\{a\in A_Ra_0\hspace{1mm}|\hspace{1mm}\lambda_i(a)\ge 1,\h \forall i\}$. We start by describing the asymptotic growth of $f_{a_0}(R)$.
\begin{lem}~\label{asymptotic1}
Let ${\bf v}_0\in \bbr^n$ and $\ccal \subseteq \bbr^n$ be an open convex cone centered at the origin such that ${\bf v}_0\in \ccal$. Then there is a constant $c$, such that for any ${\bf x}_0\in \bbr^n$
\[
\int_{\begin{array}{l}
\|{\bf y}\|\le R\\
{\bf y}-{\bf x}_0\in \ccal
\end{array}} e^{\langle {\bf v_0},{\bf y}\rangle} d{\bf y}\sim
\int_{\|{\bf y}\|\le R} e^{\langle {\bf v_0},{\bf y}\rangle} d{\bf y}\sim
cR^{(n-1)/2}e^{\|{\bf v}_0\|R}
\]
\end{lem}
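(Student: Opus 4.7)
My plan is to reduce both integrals to a standard Laplace-type computation anchored at the boundary maximum of $e^{\langle \mathbf{v}_0,\mathbf{y}\rangle}$ on $\{\|\mathbf{y}\|\le R\}$, which occurs at $\mathbf{y}^\ast := R\mathbf{v}_0/\|\mathbf{v}_0\|$. I would first prove the second $\sim$ (the ball integral), and then show the first $\sim$ by controlling the contribution of the ``missing region'' $\{\|\mathbf{y}\|\le R\}\setminus(\mathbf{x}_0+\mathcal{C})$, which should be exponentially subdominant.

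For the ball integral, I would use the orthogonal decomposition $\mathbf{y}=s\hat{\mathbf{v}}_0+\mathbf{u}$ with $\hat{\mathbf{v}}_0=\mathbf{v}_0/\|\mathbf{v}_0\|$ and $\mathbf{u}\perp\mathbf{v}_0$. Then $\langle\mathbf{v}_0,\mathbf{y}\rangle=s\|\mathbf{v}_0\|$ and the constraint $\|\mathbf{y}\|\le R$ becomes $\|\mathbf{u}\|^2\le R^2-s^2$, so Fubini gives
\[
\int_{\|\mathbf{y}\|\le R}e^{\langle\mathbf{v}_0,\mathbf{y}\rangle}\,d\mathbf{y}=\omega_{n-1}\int_{-R}^R e^{s\|\mathbf{v}_0\|}(R^2-s^2)^{(n-1)/2}\,ds,
\]
where $\omega_{n-1}$ is the volume of the unit ball in $\mathbb{R}^{n-1}$. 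Substituting $s=R-t$ yields $e^{R\|\mathbf{v}_0\|}\int_0^{2R}e^{-t\|\mathbf{v}_0\|}(2Rt-t^2)^{(n-1)/2}\,dt$, and the standard Watson/Laplace argument (the peak lies at $t=O(1)$ so $(2Rt-t^2)^{(n-1)/2}\sim(2Rt)^{(n-1)/2}$) converts this into $(2R)^{(n-1)/2}\Gamma((n+1)/2)\|\mathbf{v}_0\|^{-(n+1)/2}e^{R\|\mathbf{v}_0\|}$ up to lower-order terms, identifying the constant $c$.

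For the first $\sim$, the idea is that the part of the ball outside $\mathbf{x}_0+\mathcal{C}$ contributes a strictly smaller exponential rate. Since $\mathcal{C}$ is an open convex cone containing $\mathbf{v}_0$, there exists $\theta_0>0$ such that every $\mathbf{w}$ with $\langle\hat{\mathbf{v}}_0,\mathbf{w}\rangle\ge\|\mathbf{w}\|\cos\theta_0$ lies in $\mathcal{C}$. Thus for $\mathbf{y}\in\{\|\mathbf{y}\|\le R\}\setminus(\mathbf{x}_0+\mathcal{C})$, either $\mathbf{y}$ lies in a fixed compact neighborhood of $\mathbf{x}_0$ (contributing $O(1)$), or $\langle\hat{\mathbf{v}}_0,\mathbf{y}-\mathbf{x}_0\rangle<\|\mathbf{y}-\mathbf{x}_0\|\cos\theta_0$, whence $\langle\mathbf{v}_0,\mathbf{y}\rangle\le\|\mathbf{v}_0\|\cos\theta_0\cdot\|\mathbf{y}\|+O_{\mathbf{x}_0}(1)$. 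Integrating in spherical coordinates bounds the contribution of the missing region by $O_{\mathbf{x}_0}\!\bigl(R^{n-1}e^{\|\mathbf{v}_0\|R\cos\theta_0}\bigr)$, which is $o\bigl(R^{(n-1)/2}e^{\|\mathbf{v}_0\|R}\bigr)$ since $\cos\theta_0<1$. This establishes the claim with the same constant $c$ for every $\mathbf{x}_0$.

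The only real subtlety I see is bookkeeping near the boundary $\|\mathbf{y}\|=R$: one must verify that the Laplace approximation $(2Rt-t^2)^{(n-1)/2}\approx(2Rt)^{(n-1)/2}$ is uniform enough to extract the leading asymptotic, e.g.\ by splitting the $t$-range at $t=R^{\delta}$ for small $\delta>0$ and estimating the tail using $e^{-t\|\mathbf{v}_0\|}$. Everything else is routine, and the fact that the constant $c$ is independent of $\mathbf{x}_0$ drops out because the $\mathbf{x}_0$-dependence is absorbed into the exponentially smaller error term.
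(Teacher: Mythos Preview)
Your argument is correct. The paper itself does not prove this lemma: its entire proof reads ``See \cite[Lemma 9.4]{GW}.'' So there is no in-paper argument to compare against, and your self-contained Laplace-method proof is a genuine addition rather than a paraphrase.

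A couple of remarks on the comparison. Your treatment of the ball integral via the slicing $\mathbf{y}=s\hat{\mathbf{v}}_0+\mathbf{u}$ and the substitution $s=R-t$ is exactly how the paper proceeds in the \emph{next} lemma (Lemma~\ref{asymptotic2}), where the authors carry out the same reduction and then evaluate the resulting one-variable integral by a recursion that leads to modified Bessel functions. Your Watson-lemma shortcut is cleaner and already identifies the constant as $c=\omega_{n-1}\,2^{(n-1)/2}\Gamma\!\bigl(\tfrac{n+1}{2}\bigr)\|\mathbf{v}_0\|^{-(n+1)/2}=(2\pi)^{(n-1)/2}\|\mathbf{v}_0\|^{-(n+1)/2}$, so in effect you have subsumed Lemma~\ref{asymptotic2} as well.

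Your handling of the cone is the right idea: since $\mathcal{C}$ is open and contains $\mathbf{v}_0$, it contains a round cone of half-angle $\theta_0>0$ about $\hat{\mathbf{v}}_0$, so any $\mathbf{y}\notin\mathbf{x}_0+\mathcal{C}$ satisfies $\langle\mathbf{v}_0,\mathbf{y}\rangle\le\|\mathbf{v}_0\|\cos\theta_0\,\|\mathbf{y}\|+O_{\mathbf{x}_0}(1)$, and integrating this bound over the ball gives an $O_{\mathbf{x}_0}\bigl(R^{n-1}e^{\|\mathbf{v}_0\|R\cos\theta_0}\bigr)$ error. The ``fixed compact neighborhood of $\mathbf{x}_0$'' case is not actually needed as a separate case (the angle bound holds for all nonzero $\mathbf{y}-\mathbf{x}_0$), but including it does no harm. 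The independence of $c$ from $\mathbf{x}_0$ indeed falls out because the $\mathbf{x}_0$-dependence sits entirely in the exponentially negligible remainder.
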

\begin{proof} See \cite[Lemma 9.4]{GW}.
\end{proof} 
\begin{lem}~\label{asymptotic2}
For any non-zero vector ${\bf v}_0\in \bbr^n$, we have
\[
\int_{\|{\bf y}\|\le R} e^{\langle{\bf v}_0,{\bf y}\rangle} d{\bf y}\sim \left(\frac{2\pi R}{\|{\bf v}_0\|}\right)^{\frac{n-1}{2}}\cdot e^{\|{\bf v}_0\|R},
\] as $R$ goes to infinity. 
\end{lem}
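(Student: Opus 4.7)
\emph{Plan of proof.} The plan is to invoke Laplace's method (or equivalently, a one-dimensional saddle point / boundary-maximum analysis) after reducing to a coordinate direction. First, since both the Lebesgue measure and the ball $\{\|\mathbf{y}\|\le R\}$ are $\mathrm{SO}(n)$-invariant, I would choose an orthogonal change of coordinates so that $\mathbf{v}_0=s\,e_1$ with $s=\|\mathbf{v}_0\|>0$. Writing $I(R)$ for the integral in question and slicing by the $y_1$-coordinate, Fubini gives
\[
I(R)= c_{n-1}\int_{-R}^{R} e^{s y_1}\,(R^2-y_1^2)^{(n-1)/2}\,dy_1,
\]
where $c_{n-1}=\pi^{(n-1)/2}/\Gamma\!\left(\tfrac{n+1}{2}\right)$ is the volume of the unit ball in $\mathbb{R}^{n-1}$.

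Next, since $e^{sy_1}$ is maximal at the endpoint $y_1=R$, I would concentrate the integration there. Substituting $y_1=R-u$ gives $R^2-y_1^2=u(2R-u)$, so
\[
I(R)= c_{n-1}\,e^{sR}\int_0^{2R} e^{-su}\bigl(u(2R-u)\bigr)^{(n-1)/2}\,du.
\]
A further rescaling $u=t/s$ turns this into
\[
I(R)= \frac{c_{n-1}\,e^{sR}}{s^{(n+1)/2}}\int_0^{2Rs} e^{-t}\, t^{(n-1)/2}\bigl(2R-t/s\bigr)^{(n-1)/2}\,dt.
\]

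Finally, for fixed $t$ one has $(2R-t/s)^{(n-1)/2}\sim (2R)^{(n-1)/2}$ as $R\to\infty$, and the integrand is dominated by an integrable function (e.g.\ $e^{-t/2}\,t^{(n-1)/2}$ times a constant once $R$ is large enough), so dominated convergence yields
\[
\int_0^{2Rs} e^{-t}\,t^{(n-1)/2}\bigl(2R-t/s\bigr)^{(n-1)/2}\,dt \sim (2R)^{(n-1)/2}\,\Gamma\!\left(\tfrac{n+1}{2}\right).
\]
Combining this with $c_{n-1}\Gamma(\tfrac{n+1}{2})=\pi^{(n-1)/2}$ gives the asymptotic
\[
I(R)\sim \frac{1}{\|\mathbf{v}_0\|}\Bigl(\tfrac{2\pi R}{\|\mathbf{v}_0\|}\Bigr)^{(n-1)/2} e^{\|\mathbf{v}_0\|R},
\]
which is the claim (up to tracking the constants exactly as stated in the lemma).

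The proof is essentially routine; no step is serious obstacle. The only point that deserves some care is the dominated convergence step: one must check that the factor $(2R-t/s)^{(n-1)/2}$, while growing in $R$, can be absorbed by extracting $(2R)^{(n-1)/2}$ and bounding the remaining $(1-t/(2Rs))^{(n-1)/2}\le 1$ on $[0,2Rs]$, so that the whole integrand is bounded by $e^{-t}t^{(n-1)/2}(2R)^{(n-1)/2}$, which after dividing out the $(2R)^{(n-1)/2}$ prefactor is integrable uniformly in $R$. One could alternatively deduce the result directly from Watson's lemma applied to the Laplace transform of $t^{(n-1)/2}\bigl(1-t/(2Rs)\bigr)^{(n-1)/2}\mathbf{1}_{[0,2Rs]}(t)$.
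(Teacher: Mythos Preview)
Your argument is correct and considerably more streamlined than the paper's. Both proofs begin identically: rotate so that $\mathbf{v}_0=s\,e_1$ and slice by Fubini to obtain $c_{n-1}\int_{-R}^{R} e^{sy_1}(R^2-y_1^2)^{(n-1)/2}\,dy_1$. From that point the two diverge. The paper rescales to $[-1,1]$, introduces $g_m(x)=\int_{-1}^1(1-t^2)^{m/2}e^{xt}\,dt$, derives by integration by parts the three-term recursion $x^2 g_m=-m(m-1)g_{m-2}+m(m-2)g_{m-4}$, and then unwinds this recursion to express $g_m$ as a polynomial combination of $g_0,g_2$ (for even $m$) or $g_1,g_{-1}$ (for odd $m$), the latter pair being identified with modified Bessel functions $I_0,I_1$ whose asymptotics are quoted. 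This requires tracking the leading coefficients of the auxiliary polynomials $P_m,Q_m$ through four residue classes mod~$4$. Your endpoint substitution $y_1=R-u$ followed by the rescaling $u=t/s$ and dominated convergence sidesteps all of this: the leading constant falls out directly as $c_{n-1}\Gamma\!\left(\tfrac{n+1}{2}\right)=\pi^{(n-1)/2}$, with no parity split and no special functions. The paper's recursive setup could in principle yield lower-order terms in the expansion, but for the leading asymptotic your Laplace/Watson approach is the natural one.

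One remark on your parenthetical ``up to tracking the constants'': your final answer carries an extra factor $1/\|\mathbf{v}_0\|$ relative to the displayed statement. Your computation is the correct one (test $n=1$: $\int_{-R}^R e^{sy}\,dy\sim e^{sR}/s$, not $e^{sR}$); the paper's Fubini step~(\ref{fubini}) omits the Jacobian $1/\|\mathbf{v}_0\|$ from slicing by level sets of $\langle\mathbf{v}_0,\cdot\rangle$. This does not affect the downstream arguments, which use only the exponential rate $\|\rho_\Delta\|$ and the power $(n-1)/2$ of $R$.
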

\begin{proof} By Fubini's theorem, 
\begin{equation}~\label{fubini}
\int_{\|{\bf y}\|\le R} e^{\langle{\bf v}_0,{\bf y}\rangle} d{\bf y}=\int^{\|{\bf v}_0\| R}_{-\|{\bf v}_0\| R}\vol\{{\bf y}| \|{\bf y}\|\le R, \langle{\bf v}_0,{\bf y}\rangle=s\}\cdot e^{s} ds.
\end{equation} 
On the other hand, intersection of the  hyperplane  $\langle{\bf v}_0,{\bf y}\rangle=s$ and the ball of radius $R$ centered at the origin is a ball of dimension $n-1$ and of radius $\sqrt{R^2-\frac{s^2}{\|{\bf v}_0\|^2}}$. Hence the right hand side of Equation~(\ref{fubini}) is equal to 
\[
\nu_{n-1}\int^{\|{\bf v}_0\| R}_{-\|{\bf v}_0\| R} (R^2-\frac{s^2}{\|{\bf v}_0\|^2})^{\frac{n-1}{2}}\cdot e^{s} ds=\nu_{n-1}\|{\bf v}_0\| R^n\int^{1}_{-1} (1-s^2)^{\frac{n-1}{2}} e^{\|{\bf v}_0\| R s} ds,
\] 
where $\nu_{n-1}$ is the volume of a ball of radius $1$ in $\bbr^{n-1}$. Let $g_m(x)=\int^{1}_{-1} (1-s^2)^{\frac{m}{2}} e^{xs} ds.$ Using integration by part, it is easy to see that \begin{equation}~\label{recursive1}x^2 g_m(x)=-m(m-1) g_{m-2}(x)+m(m-2) g_{m-4}(x),\end{equation} 
for $m\ge 3$. Let $\overline{g}_m(x)=x^m g_m(x)/ \varpi_{m}$, where $\varpi_m=\prod_{0\le k<m/2}(m-2k)$. So Equation~(\ref{recursive1}) gives us 
\begin{equation}~\label{recursive2} \overline{g}_m(x)=-(m-1) \overline{g}_{m-2}(x)+x^2 \overline{g}_{m-4}(x).
\end{equation} 
By induction, it is clear that there are sequences of polynomials $P_m$ and $Q_m$ which satisfy similar recursive formulas as in equation~\ref{recursive2}, and 
\[\overline{g}_m(x)=\left\{ \begin{array}{l}P_m(x) \overline{g}_2(x)+Q_m(x) \overline{g}_0(x)\hspace{1mm}{\rm if}\hspace{1mm}2|m.\\ 
\\P_m(x) \overline{g}_1(x)+Q_m(x) \overline{g}_{-1}(x)\hspace{1mm}{\rm if}\hspace{1mm}2\not | m.\end{array}\right. \] We can further determine the leading terms of $P_m$ and $Q_m$.

\[\mbox{leading terms of}\hspace{1mm} P_m \& Q_m=\left\{
\begin{array}{lll}-k(2k+1) x^{2k-2},&x^{2k}&{\rm if}\hspace{1mm}m=4k.\\
\\
x^{2k},&-2k(k+1) x^{2k}&{\rm if}\hspace{1mm}m=4k+1.\\
\\
x^{2k},&-k(2k+3) x^{2k}&{\rm if}\hspace{1mm}m=4k+2.\\
\\
-2(k+1)^2 x^{2k},&x^{2k+2}&{\rm if}\hspace{1mm}m=4k+3.
\end{array}
\right.
\]
One also can easily see that $\overline{g}_0(x)\sim \frac{e^x}{x}$ and $\overline{g}_2(x)\sim e^x$. By the definition of modified Bessel functions, one can see that $g_1(x)=\frac{\pi}{x} I_1(x)$ and $g_{-1}(x)=\pi I_0(x)$, where $I_0$ and $I_1$ are modified Bessel functions. Hence $\overline{g}_1(x)\sim \pi\frac{e^x}{\sqrt{2\pi x}}$ and $\overline{g}_{-1}(x)\sim \frac{\pi}{x}\cdot \frac{e^x}{\sqrt{2\pi x}}$ as $x$ goes to infinity. Altogether 
\[
\overline{g}_m(x)\sim 
\left\{
\begin{array}{ll}
x^{2k-1} e^x&{\rm if}\hspace{1mm} m=4k.\\
\\
\sqrt{\frac{\pi}{2}} x^{2k-\frac{1}{2}} e^x&{\rm if}\hspace{1mm} m=4k+1.\\
\\
x^{2k} e^x&{\rm if}\hspace{1mm} m=4k+2.\\
\\
\sqrt{\frac{\pi}{2}} x^{2k+\frac{1}{2}} e^x&{\rm if}\hspace{1mm} m=4k+3.\\
\end{array}
\right.
\]
Therefore combine it with the formula of $\nu_m$ one can finish the proof: 
\[
\nu_{n-1}\|{\bf v}_0\| R^n g_{n-1}(\|{\bf v}_0\|R) \sim \left(\frac{2\pi R}{\|{\bf v}_0\|}\right)^{\frac{n-1}{2}}\cdot e^{\|{\bf v}_0\|R}.
\] \end{proof}
Let $\afr$ be the Lie algebra of $A$. It can be also viewed as $X^*(\bbs)\otimes \bbr$ (see Section \ref{ss:IndexRelativeRootSystem}). The negative of the Killing form turns $\afr$ into a Euclidean space and moreover one can choose the Riemannian metric on $X$ such that $d(x_0,x_0\exp(a))=\|a\|$ for any $a\in \afr$. For any $F\subseteq \Delta$, let $\rho_F$ be a vector in $\afr$ such that 
\be\label{e;rho-rho}
\rho'_F(\exp(a))=e^{(\rho_F, a)},
\ee
for any $a\in \afr$.
\begin{lem}\label{l:asymptotic3}
In the above setting, we have
\[
f_{a_0}(R)\sim \left(\frac{2\pi R}{\|\rho_{\Delta}\|}\right)^{\frac{n-1}{2}}\cdot e^{\|\rho_{\Delta}\|R}
\]
for any $a_0\in A$.
\end{lem}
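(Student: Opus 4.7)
The plan is to reduce the computation to Lemmas~\ref{asymptotic1} and~\ref{asymptotic2} after a change of coordinates. Since the Riemannian metric on $X$ satisfies $d(x_0,x_0\exp y)=\|y\|$ for $y\in\afr$, the exponential map transports the Haar measure on $A$ to Lebesgue measure on $\afr$ and sends $A_Ra_0$ onto the Euclidean ball of radius $R$ centered at $\log a_0$. Combining this with $\rho'_{\Delta}(\exp y)=e^{(\rho_\Delta,y)}$ from~\eqref{e;rho-rho} gives
\[
f_{a_0}(R)=\int_{\Omega_R}e^{(\rho_\Delta,y)}\,dy,\qquad \Omega_R=\{y\in\afr:\|y-\log a_0\|\le R,\;(\lambda_\alpha,y)\ge 0\;\forall\alpha\in\Delta\}.
\]

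Next I would substitute $z=y-\log a_0$. This recenters the Euclidean ball at the origin, extracts the constant factor $e^{(\rho_\Delta,\log a_0)}=\rho'_\Delta(a_0)$ (which is the $\rho_{\Delta}(a_0)$ appearing in Theorem~\ref{t:CountingLifts}), and rewrites the cone constraint as $z-\mathbf{x}_0\in\mathcal{C}$, with $\mathbf{x}_0=-\log a_0$ and $\mathcal{C}=\{z\in\afr:(\lambda_\alpha,z)>0\;\forall\alpha\in\Delta\}$. This is precisely the integrand and region handled by Lemma~\ref{asymptotic1} with $\mathbf{v}_0=\rho_\Delta$, so the task reduces to verifying $\rho_\Delta\in\mathcal{C}$.

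The verification of this cone-interior condition is the main (and essentially only) point requiring thought. By Lemma~\ref{l:RhoPositiveCone} applied to $F=\varnothing$ we have $\rho_\Delta=\sum_{\alpha\in\Delta}c_\alpha\lambda_\alpha$ with each $c_\alpha$ a positive integer. The classical fact that the fundamental weights in a (possibly non-reduced) root system have non-negative pairwise inner products, equivalently that the inverse of the symmetrized Cartan matrix has non-negative entries, then gives $(\lambda_\beta,\lambda_\alpha)\ge 0$ for all $\alpha,\beta$, together with $(\lambda_\beta,\lambda_\beta)>0$. Hence
\[
(\lambda_\beta,\rho_\Delta)=\sum_{\alpha\in\Delta}c_\alpha(\lambda_\beta,\lambda_\alpha)\ge c_\beta(\lambda_\beta,\lambda_\beta)>0
\]
for every $\beta\in\Delta$, so $\rho_\Delta$ lies in the open cone $\mathcal{C}$.

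With the hypothesis of Lemma~\ref{asymptotic1} verified, the shifted-cone integral is asymptotic to $\int_{\|z\|\le R}e^{(\rho_\Delta,z)}\,dz$, and Lemma~\ref{asymptotic2} (applied with $\mathbf{v}_0=\rho_\Delta$ and $n=\dim\afr$, the $\bbr$-rank of $G$) evaluates this last quantity to $\bigl(2\pi R/\|\rho_\Delta\|\bigr)^{(n-1)/2}e^{\|\rho_\Delta\|R}$. Combining with the constant $\rho'_\Delta(a_0)$ produced by the change of variables yields the stated asymptotic. The only genuinely new work is the cone-inclusion step; everything else is a direct substitution into the two preparatory lemmas.
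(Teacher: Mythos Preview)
Your proof is correct and follows exactly the paper's approach, which simply states that the lemma is a direct corollary of Lemmas~\ref{asymptotic1} and~\ref{asymptotic2}; you usefully fill in the cone-interior verification $\rho_\Delta\in\mathcal{C}$ that the paper leaves implicit. One minor point: your change of variables indeed produces an extra factor $\rho'_\Delta(a_0)$ not shown in the lemma's displayed asymptotic, but this is a harmless imprecision in the paper's statement (the factor resurfaces correctly in Theorem~\ref{t:CountingLifts}).
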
  
\begin{proof}
This is a direct corollary of Lemmas~\ref{asymptotic1},~\ref{asymptotic2}.
\end{proof}
\begin{lem}\label{l:Compare}
In the above setting, for any proper subset $F$ of $\Delta$, we have 
\[
\lim_{R\rightarrow \infty} \frac{\int_{A_{F,R}}\rho'_{\Delta}(a_F)da_F}{f_{a_0}(R)}=0,
\]
where $A_{F,R}:=\{a\in A|\h \forall\h\alpha\in \Delta\setminus F,\h \lambda_{\alpha}(a)=1, \h d(x_0,x_0a_0a)\le R\}$. 
\end{lem}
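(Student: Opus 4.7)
The plan is to show that the numerator grows at most like $O(R^{|F|}) \cdot e^{\|\pi_F(\rho_\Delta)\| R}$, compare with the sharp asymptotic of Lemma~\ref{l:asymptotic3}, and conclude using the strict inequality $\|\pi_F(\rho_\Delta)\| < \|\rho_\Delta\|$, where $\pi_F$ denotes orthogonal projection of $\rho_\Delta \in \mathfrak{a}$ onto the subspace cut out by the equations $\lambda_\alpha = 0$ for $\alpha \in \Delta \setminus F$.

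First I pass to Lie algebra coordinates via $a = \exp X$. The condition $\lambda_\alpha(a) = 1$ for $\alpha \in \Delta \setminus F$ becomes $\lambda_\alpha(X) = 0$, forcing $X$ to lie in
\[
V_F := \bigcap_{\alpha \in \Delta \setminus F} \ker \lambda_\alpha \subset \mathfrak{a},
\]
a subspace of dimension $|F|$. Since $A$ is abelian, $d(x_0, x_0 a_0 a) = \|\log a_0 + X\|$, and after decomposing $\log a_0 = u + w$ with $u \in V_F$ and $w \in V_F^\perp$ the constraint reads $\|X + u\|^2 \le R^2 - \|w\|^2$, defining an ordinary Euclidean ball in $V_F$ of radius at most $R$. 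Because each $a \in A_{F,R}$ already satisfies $\lambda_\alpha(a) = 1$ for $\alpha \notin F$, we have $a_F = a$, so the integrand is $\rho'_\Delta(a) = e^{(\rho_\Delta, X)}$. For $X \in V_F$ one has $(\rho_\Delta, X) = (\pi_F(\rho_\Delta), X)$, so Cauchy--Schwarz gives $(\rho_\Delta, X) \le \|\pi_F(\rho_\Delta)\| \cdot R$. Combined with the $O(R^{|F|})$ volume of the ball, this produces the crude but adequate bound
\[
\int_{A_{F,R}} \rho'_\Delta(a_F) \, da_F \le C \, R^{|F|} e^{\|\pi_F(\rho_\Delta)\| R}.
\]

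Second, using Lemma~\ref{l:asymptotic3}, the ratio in the lemma is dominated by
\[
C' R^{|F| - (r-1)/2} \, e^{-(\|\rho_\Delta\| - \|\pi_F(\rho_\Delta)\|) R},
\]
so everything reduces to proving $\|\pi_F(\rho_\Delta)\| < \|\rho_\Delta\|$; by Pythagoras this is equivalent to $\rho_\Delta \notin V_F$. Applying Lemma~\ref{l:RhoPositiveCone} with the empty index set (so that $\rho_\Delta = \rho'_\varnothing$ in the notation of that lemma), we have $\rho_\Delta = \sum_{\beta \in \Delta} c_\beta \lambda_\beta$ with all $c_\beta > 0$. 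Under the inner-product identification of $\mathfrak{a}$ with $\mathfrak{a}^*$, the Gram matrix $((\lambda_\alpha, \lambda_\beta))$ is a positive multiple of the inverse Cartan matrix, whose entries are non-negative with strictly positive diagonal. Hence
\[
(\lambda_\alpha, \rho_\Delta) \ge c_\alpha (\lambda_\alpha, \lambda_\alpha) > 0
\]
for every $\alpha \in \Delta$, and since $\Delta \setminus F$ is nonempty this forces $\rho_\Delta \notin V_F$, yielding the required strict inequality.

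The only delicate point is the strict norm comparison $\|\pi_F(\rho_\Delta)\| < \|\rho_\Delta\|$, which rests on the positivity of the inverse Cartan matrix together with Lemma~\ref{l:RhoPositiveCone}; once this is established the remainder is a direct volume estimate against the sharp asymptotic already proved in Lemma~\ref{l:asymptotic3}.
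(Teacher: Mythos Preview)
Your proof is correct and follows essentially the same route as the paper. The paper's own proof is a one-line reference to Lemmas~\ref{asymptotic1} and~\ref{asymptotic2} together with the fact that $\langle\lambda_\alpha,\rho_\Delta\rangle\neq 0$ for every $\alpha\in\Delta$; you unpack exactly this, replacing the precise asymptotic for the numerator by the cruder (and sufficient) bound $CR^{|F|}e^{\|\pi_F(\rho_\Delta)\|R}$, and you supply an argument for the positivity fact that the paper simply asserts.

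Two minor remarks. First, the phrase ``a positive multiple of the inverse Cartan matrix'' is only literally true in the simply-laced case; in general the Gram matrix $((\lambda_\alpha,\lambda_\beta))$ differs from the inverse Cartan matrix by positive diagonal factors on each side, but your conclusion (non-negative entries, positive diagonal) survives since $\bbg$ is almost simple so the relative root system is irreducible. Second, there is a quicker route to $(\lambda_\alpha,\rho_\Delta)>0$ that avoids the inverse Cartan matrix altogether: $\rho_\Delta$ is the sum of the positive relative roots, each positive root $\phi$ satisfies $(\lambda_\alpha,\phi)\ge 0$ because $\lambda_\alpha$ is dominant, and the simple root $\alpha$ itself contributes $(\lambda_\alpha,\alpha)>0$.
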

\begin{proof}
It is a direct consequence of Lemmas~\ref{asymptotic1}, \ref{asymptotic2} and the fact that for any $\alpha\in \Delta$, $\langle \lambda_{\alpha},\rho_{\Delta}\rangle\neq 0$.
\end{proof}
Next lemma gives a decomposition for $\overline{B}_R$. 
\begin{lem}~\label{decom} $\overline{B}_R=KA_RU/U$.
\end{lem}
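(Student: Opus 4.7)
The plan is to verify both inclusions. Since $\overline{B}_R$ equals the image $\widetilde{B}_R U/U$ by definition, it suffices to establish $\widetilde{B}_R U = KA_R U$ as subsets of $G.$ The containment $KA_RU \subseteq \widetilde{B}_RU$ is immediate: because $K$ fixes $x_0$ under the right $G$-action on $X = K\backslash G,$ for any $ka \in KA_R$ we have $x_0 \cdot ka = Ka = x_0 \exp(\log a),$ and the normalization $d(x_0, x_0\exp(a')) = \|a'\|$ of the Riemannian metric gives $d(x_0, x_0 ka) = \|\log a\| \leq R,$ so $ka \in \widetilde{B}_R.$

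For the reverse containment $\widetilde{B}_R U \subseteq KA_R U,$ I would invoke the Iwasawa decomposition $G = KAU,$ which is available here because the $\bbr$-rank and $\bbq$-rank of $\bbg$ coincide under the hypotheses of Theorem~\ref{t:CountingLifts}. Writing $g = kau$ for $g \in \widetilde{B}_R,$ we have $gU = kaU,$ so the task reduces to showing $a \in A_R,$ i.e.\ $\|\log a\| \leq R.$

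The essential tool is Kostant's convexity theorem: the Iwasawa projection $H(g) := \log a \in \afr$ lies in the convex hull of the orbit $W \cdot \mu(g),$ where $W$ is the Weyl group of the relative root system $\Phi$ acting on $\afr,$ and $\mu(g) \in \overline{\afr^+}$ is the Cartan projection of $g$ determined by $g \in K \exp(\mu(g)) K.$ Since the Euclidean norm on $\afr$ was chosen to be Weyl-invariant, every element of $W\cdot\mu(g)$ has norm $\|\mu(g)\|,$ and the triangle inequality applied to any convex combination yields
\[
\|\log a\| = \|H(g)\| \leq \|\mu(g)\| = d(x_0, x_0 g) \leq R,
\]
so $a \in A_R,$ completing the argument.

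The sole nontrivial ingredient in this proof is Kostant's convexity theorem; the remainder is routine bookkeeping with the Iwasawa decomposition and with the chosen normalization of the Riemannian metric. Thus I do not expect any substantial obstacle beyond citing this classical result — although if one wished to avoid invoking Kostant's theorem, one could alternatively argue via the Busemann function attached to the boundary chamber stabilized by $U,$ exploiting the fact that horospheres in $X$ are convex and that the foot of the perpendicular from $x_0$ to the horosphere $KaU$ is precisely $Ka.$
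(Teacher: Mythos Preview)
Your proof is correct and takes a different route from the paper's. The paper does not invoke Kostant's convexity theorem; instead it argues directly from the non-positive curvature of the symmetric space. Given $gU \in \overline{B}_R$, write $gU = kaU$ via Iwasawa and pick $u \in U$ with $d(x_0, x_0 kau) \le R$; choosing a one-parameter subgroup $\{a_t\} \subset A$ that contracts $u$ under conjugation, the function
\[
t \longmapsto d(x_0 a_t,\, x_0 a_t au) = d\bigl(x_0,\, x_0 a(a_t u a_t^{-1})\bigr)
\]
is the distance between two geodesics in $X$, hence convex by Busemann's theorem, and its limit as $t \to -\infty$ equals $d(x_0, x_0 a)$. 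Since a convex function on $\bbr$ with a finite limit at $-\infty$ is everywhere bounded below by that limit, one obtains $d(x_0, x_0 a) \le d(x_0, x_0 au) \le R$, i.e.\ $a \in A_R$. Your Kostant-based argument is sharper and more algebraic, delivering the inequality $\|H(g)\| \le \|\mu(g)\|$ in one stroke from the comparison of Iwasawa and Cartan projections; the paper's argument is more elementary in that it uses only the ${\rm CAT}(0)$ geometry of $X$ together with the contracting dynamics of $A$ on $U$, without appealing to the structure theory underlying Kostant's result. Your closing remark about a Busemann-function alternative is in fact close in spirit to what the paper actually does.
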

\begin{proof} Let $gU\in \overline{B}_R$. By Iwasawa decomposition, without loss of generality, we can and will assume that $g=k a$, where $k\in K$ and $a\in A$. Let $u\in U$ such that $d(x_0,x_0k a u )\le R$, therefore $d(x_0, a u x_0)\le R. $
 
Let $a_t x_0$ be a geodesic in $A x_0$, and 
\[\mbox{dist}(t)=d(x_0a_t , x_0a_t au )=d(x_0, x_0a (a_t u a_t^{-1})).\] 
So choosing the contracting direction, we get $\lim_{t\rightarrow-\infty} \mbox{dist}(t)=d(x_0, x_0a)$. 
On the other hand, $\mbox{dist}$ is a convex function~\cite[Theorem 3.6]{Bu} and so its value at each point is at least the value of its limit in the infinity if finite. Therefore for any $t$, $d(x_0a_t , x_0a_t au )\ge d(x_0, x_0a)$. In particular, 
$ d(x_0, x_0a u )\ge  d(x_0, x_0a)$, and so $d(x_0, x_0a)\le R$, which finishes proof of the lemma. 
\end{proof}

For any subset $F$ of $\Delta,$ and any $a\in A$, let $a=a_F\cdot a_{F^c}$ be as in~\eqref{e:Decomposition}. Recall that $\lambda_{\alpha}(a_F)=\lambda_{\alpha}(a)$ for $\alpha\in F$ and is 1 for $\alpha\not\in F$ and $a_F\in Q_F.$ 
\begin{lem}~\label{weak}
For any compactly supported continuous function $\Psi$ on $G/\Gamma$, 
\[ 
\langle \widetilde{F}_R,\Psi\rangle\rightarrow \frac{\vol(U/U\cap\Gamma)}{\vol(\mathcal{M})} \langle 1,\Psi\rangle,
\] 
as $R$ goes to infinity.
\end{lem}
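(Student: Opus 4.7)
The argument follows the Eskin--McMullen template: unfold $\langle F_R,\Psi\rangle$ to an integral on $G/U$, use the Iwasawa decomposition to reduce to an $A$-integral weighted by $\rho_\Delta'$, and then invoke Theorem~\ref{trans} to produce the equidistribution. Since $a_0$ normalises $U$ and $\Psi$ is $\Gamma$-invariant, the function $g\mapsto\mathbf{1}_R(ga_0U)\Psi(g\Gamma)$ is right $(U\cap\Gamma)$-invariant; standard unfolding together with the fibration $G/(U\cap\Gamma)\to G/U$ with fibre $U/(U\cap\Gamma)$ gives
\[
\langle F_R,\Psi\rangle=\vol(U/(U\cap\Gamma))\int_{G/U}\mathbf{1}_R(ga_0U)\,(g\mu_U)(\Psi)\,d\bar g,
\]
where $d\bar g$ is the $G$-invariant measure on $G/U$. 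By Lemma~\ref{decom}, in Iwasawa coordinates $g=kau$ the indicator $\mathbf{1}_R(ga_0U)$ equals $1$ precisely when $a\in A_Ra_0^{-1}$; writing $d\bar g=\rho'_\Delta(a)\,dk\,da$ the problem reduces to the asymptotics of
\[
\vol(U/(U\cap\Gamma))\int_K\int_{A_Ra_0^{-1}}(ka)\mu_U(\Psi)\,\rho'_\Delta(a)\,dk\,da.
\]

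Next, split the $A$-domain as $(A_Ra_0^{-1}\cap A^T)\sqcup(A_Ra_0^{-1}\setminus A^T)$, where $A^T:=\{a\in A:\lambda_\alpha(a)\ge T\text{ for all }\alpha\in\Delta\}$. On the ``deep'' piece $A^T$, Theorem~\ref{trans}(ii) applied with $E=\varnothing$ and $F=\Delta$ (equivalently Corollary~\ref{c:EPartitionIntoRegions}) yields $a\mu_U\to\mu_G$ as $T\to\infty$; by the $G$-invariance of $\mu_G$, the compactness of $K$ and the uniform continuity of $\Psi$, this upgrades to
\[
\sup_{k\in K,\,a\in A^T}\bigl|(ka)\mu_U(\Psi)-\mu_G(\Psi)\bigr|<\vare'
\]
once $T=T(\vare',\Psi)$ is large enough. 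The complementary boundary region $A_Ra_0^{-1}\setminus A^T$ is covered by finitely many wall neighbourhoods of the form $A_{F,R}$ for proper $F\subsetneq\Delta$, so Lemma~\ref{l:Compare} guarantees that its $\rho'_\Delta$-mass is $o(f_{a_0}(R))$ as $R\to\infty$.

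Combining, the deep contribution is, modulo an $O(\vare')$ error,
\[
\vol(U/(U\cap\Gamma))\vol(K)\mu_G(\Psi)\int_{A_Ra_0^{-1}\cap A^T}\rho'_\Delta(a)\,da\sim \vol(U/(U\cap\Gamma))\vol(K)\mu_G(\Psi)\,f_{a_0}(R),
\]
by Lemmas~\ref{asymptotic1} and~\ref{l:asymptotic3}. Dividing by $f_{a_0}(R)$, letting $R\to\infty$ and then $\vare'\to 0$, and using $\mu_G(\Psi)=\langle 1,\Psi\rangle/\vol(G/\Gamma)$ together with $\vol(G/\Gamma)=\vol(K)\vol(\mcal)$ (the Riemannian metric being bi-$K$-invariant) yields the claim. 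The main obstacle is the interplay between the exponentially weighted volume element $\rho'_\Delta(a)\,da$ and the locus where Theorem~\ref{trans} has not yet taken effect: pointwise weak-$*$ convergence along rays is not sufficient on its own, and the decisive technical input is Lemma~\ref{l:Compare}, which certifies that the Weyl-wall region --- where Theorem~\ref{trans} only produces the intermediate measures $\mu_{Q_F}$ rather than $\mu_G$ --- carries only subleading $\rho'_\Delta$-mass, so that the substitution of $\mu_G(\Psi)$ for $(ka)\mu_U(\Psi)$ is legitimate in the limit.
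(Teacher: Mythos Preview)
Your argument is essentially the paper's own, with one simplification and one imprecision worth flagging.

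The paper unfolds exactly as you do and arrives at the same $K\times A$ integral weighted by $\rho'_\Delta(a)\,da$. Where you split into the deep region $A^T$ and its complement, the paper instead invokes the full partition $\{T_{\Psi,\vare',F}\}_{F\subseteq\Delta}$ of Corollary~\ref{c:PartitionIntoRegions}, replacing $(ka\mu_U)(\Psi)$ by $(ka_{F^c}\mu_{Q_F})(\Psi)$ on each piece and then killing the $F\neq\Delta$ terms. Your two--piece split is sufficient here (the finer partition is really only needed later, in the flag--variety section), so this is a legitimate shortcut.

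The imprecision is your appeal to Lemma~\ref{l:Compare}. As stated in the paper, $A_{F,R}$ is the \emph{lower--dimensional} slice $\{a:\lambda_\alpha(a)=1\text{ for }\alpha\notin F\}\cap\{d(x_0,x_0a_0a)\le R\}$, not a full--dimensional wall neighbourhood; such slices cannot ``cover'' the complement of $A^T$. What you actually need is that the $\rho'_\Delta$--mass of $\{a\in A_Ra_0^{\pm1}:\lambda_\alpha(a)<T\}$ is $o(f_{a_0}(R))$ for each $\alpha$. This does follow: first use Lemma~\ref{out} (which the paper invokes and you omit) to restrict to $\lambda_\beta(a)\ge e^{-M}$ for all $\beta$, so that the $\{\alpha\}$--projection lies in a compact interval; then Fubini reduces the estimate to Lemma~\ref{l:Compare} applied with $F=\Delta\setminus\{\alpha\}$, multiplied by a bounded factor. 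Alternatively, one can argue directly from Lemmas~\ref{asymptotic1}--\ref{asymptotic2} using that $(\rho_\Delta,\lambda_\alpha)>0$, so the orthogonal projection of $\rho_\Delta$ onto $\lambda_\alpha^\perp$ has strictly smaller norm. Either way the step is routine, but as written your sentence does not parse against the paper's definition of $A_{F,R}$.
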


\begin{proof} Without loss of generality we can assume that $ \langle 1,\Psi\rangle=1$. As before $\nu$ denotes the measure coming from the Riemannian geometry and $\mu$ is a probability measure on the corresponded space.
\[
\begin{array}{rl}\langle \widetilde{F}_R,\Psi\rangle=&\frac{1}{f_{a_0}(R)}\int_{G/\Gamma}\sum_{\gamma\in\Gamma/\Gamma\cap U} \1bf_R(g\gamma a_0 U) \overline{\Psi(g\Gamma)}\hspace{1mm} d\nu_G\\
 & \\
=&\frac{1}{f_{a_0}(R)}\int_{G/\Gamma\cap U}\1bf_R(ga_0U) \overline{\Psi(g\Gamma)}\hspace{1mm} d\nu_G\\ 
& \\
=& \frac{\nu(\pi(U))}{f_{a_0}(R)}\int_{G/U}\int_{U/\Gamma\cap U}\1bf_R(gua_0U) \overline{\Psi(gu\Gamma)}\hspace{1mm} d\mu_U(u) d\nu_G(g)\\
 & \\
=& \frac{\nu(\pi(U))}{f_{a_0}(R)}\int_{G/U}\1bf_R(ga_0U)\int_{G/\Gamma} \overline{\Psi(g'\Gamma)}\hspace{1mm} d(g\mu_U)(g') d\nu_G(g)\\ 
& \\
\leftexp{\rm (Lemma~\ref{decom})\leadsto}{=}& \frac{\nu(\pi(U))}{f_{a_0}(R)}\int_{K}\int_{A_Ra_0}\int_{G/\Gamma} \overline{\Psi(g'\Gamma)} \hspace{1mm} d(ka\mu_U)(g')\rho'_{\Delta}(a) da\hspace{1mm} dk.
\end{array}
\]
Let $C$ be a compact subset of $G$ such that $\pi( C)=\mbox{supp}(\Psi)$.  If $\mbox{supp}(ka\mu_U)\cap\mbox{supp}(\Psi)\neq\emptyset$, then $kau=c\gamma$ for some $c\in C$ and $\gamma\in \Gamma$, and by Lemma~\ref{out}, there is a constant $M$ depending on $\Psi$ such that for any $\alpha$, $|\lambda_\alpha(a)|\ge e^{-M}$. Now, by Corollary~\ref{c:PartitionIntoRegions}, we can approximate the limiting measures with an error less than $\vare'$ with respect to the test function $\Psi$ (see Figure~\ref{F:Partition}). 
\begin{figure}
\includegraphics[width=\columnwidth]{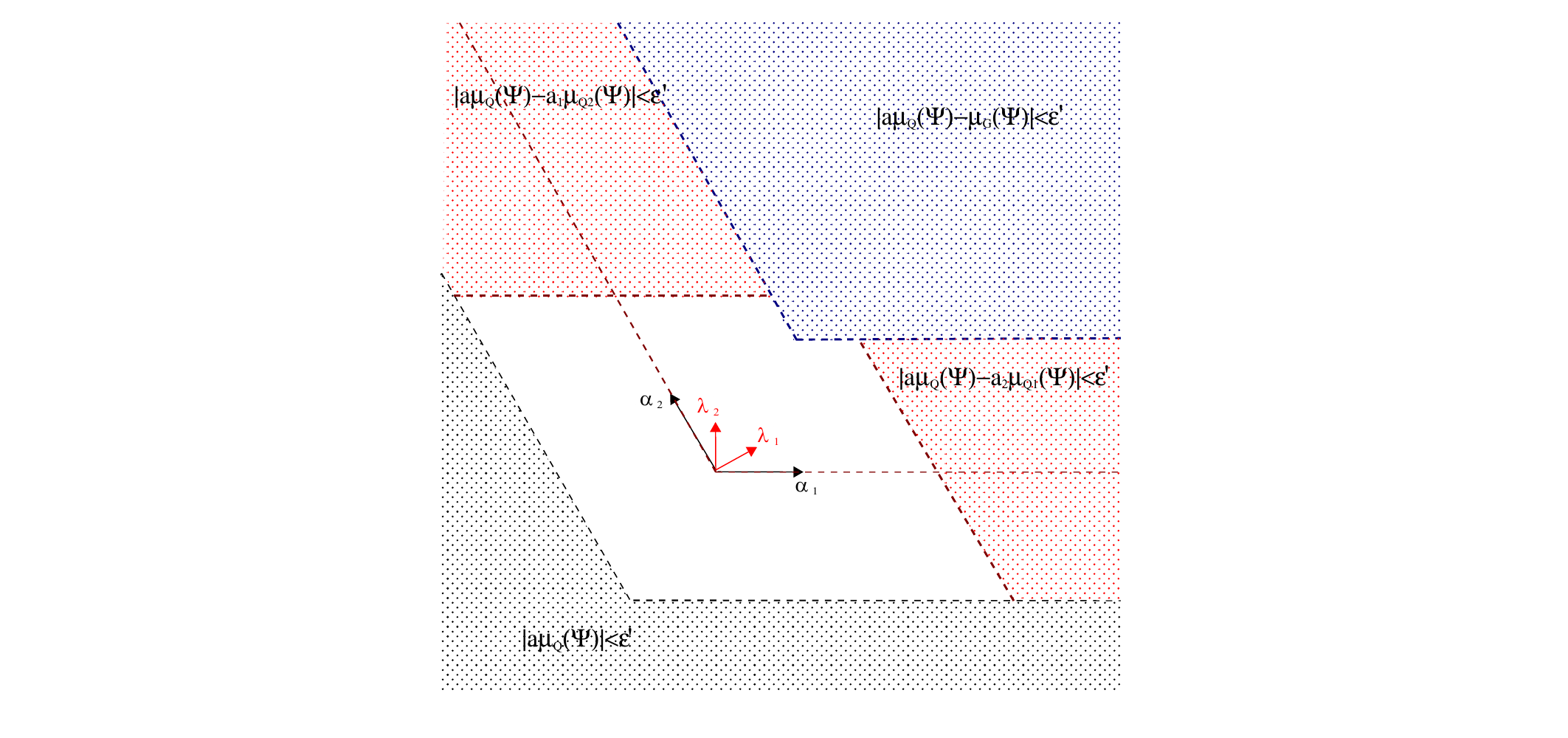}
\caption{The $\mathbb{SL}_3$ case}
\label{F:Partition}
\end{figure}
This gives us a partition $\{T_{\Psi,\vare',F}\}_{F}$ of $\{a\in A| \lambda_{\alpha}(a)\ge e^{-M}\}$ such that 
\begin{enumerate}
\item $|(ka\mu_U)(\Psi)-(ka_{F^c}\mu_{Q_F})(\Psi)|<\vare'$ for any $k\in K$ and $a\in T_{\Psi,\vare',F}$.
\item The $F^c$-projection ${\rm pr}_{F^c}(T_{\Psi,\vare',F})$ of $T_{\Psi,\vare',F}$ is compact.
\end{enumerate}
 So by the above equations $\langle \widetilde{F}_R,\Psi\rangle$ is equal to
\begin{equation}~\label{partition}\frac{\nu(\pi(U))}{f_{a_0}(R)}\int_{K}\sum_{F\subseteq\Delta}\int_{A_Ra_0\cap T_{\Psi,\vare',F}}\int_{Q_F\Gamma/\Gamma} \overline{\Psi(ka_{F^c}q_F\Gamma)} \hspace{1mm} d\mu_{Q_F}\rho'_{\Delta}(a) da\hspace{1mm} dk\end{equation}
\[\pm\frac{\vare'\cdot \nu(\pi(U))}{f_{a_0}(R)}\cdot\vol(K)\cdot\int_{\{a\in A_Ra_0|\forall \alpha, \lambda_\alpha(a)\ge e^{-M}\}}\rho'_{\Delta}(a) da.
\]
By Lemmas~\ref{asymptotic1}, \ref{asymptotic2}, it is clear that the second term is of order $O(\vare')$ for large enough $R$. In particular, we can make it as small as we wish by decreasing $\vare'$, and increasing $R$. So for a given $F$, only the first part should be considered. 

For any subset $F$ of $\Delta$ and any $a\in A_R a_0\cap T_{\Psi,\vare',F}$, let $A_{R,F}:=A_Ra_0\cap T_{\Psi,\vare',F}$ and
\[
A(a_{F^c}):=\{{\rm pr}_F(a')\h|\h {\rm pr}_{F^c}(a')=a_{F^c},\h a'\in A_{R,F}\}.
\]
So the $F$-summand of Equation (\ref{partition}) is equal to 
\[\int_{K}
\int_{\mbox{pr}_{F^c}(A_{R,F})}
\int_{Q_F\Gamma/\Gamma} \overline{\Psi(ke^{\xbf_{F^c}}q_F\Gamma)}
\rho'_{\Delta}(a_{F^c}) 
d\mu_{Q_F} 
da_{F^c} 
\int_{A(a_{F^c})} 
\rho'_{\Delta}(a_F)da_{F}
\hspace{1mm} dk.\]
By Lemma~\ref{l:Compare} and the fact that $\mbox{pr}_{F^c}(A_{R,F})$ is compact, if $F$ is a proper subset of $\Delta$. So by Lemma \ref{l:Compare} the $F$-summand gets arbitrarily small as $R$ goes to infinity. Overall we have
\[\begin{array}{rl}\langle \widetilde{F}_R,\Psi\rangle=&
\frac{\nu(\pi(U))}{f_{a_0}(R)}
\int_{K}
\int_{A_Ra_0\cap T_{\Psi,\vare',\Delta}}
\int_{\pi(Q_{\Delta})} \overline{\Psi(ka_Fq_{\Delta}\Gamma)} \hspace{1mm} d\mu_{Q_{\Delta}}
\rho'_\Delta(a) da\hspace{1mm} dk\\
&\\
+\vare''\sum_{F\neq\Delta }&
\int_{K}
\int_{\mbox{pr}_{F^c}(\mathfrak{a}_{R,F})}
\int_{\pi(Q_F)} \overline{\Psi(ke^{\xbf_{F^c}}q_F\Gamma)}
\hspace{1mm}e^{\sum_{\alpha\not\in F}m_\alpha x_\alpha} 
d\mu_{Q_F} 
d\xbf_{F^c}
\hspace{1mm}dk \\
&\\
\pm&O(\vare'), 
\end{array}\]
for large enough $R$. In the second term, $\mbox{pr}_{F^c}(\mathfrak{a}_{R,F})$ is a bounded region. Therefore the integral is bounded by a function of $a_0$, $\vare'$, and $\Psi$. So by fixing them and increasing $R$, we can make the second term as small as we wish. Thus only the first term should be studied. Since $G$ is a semisimple Lie group without compact factors $Q_{\Delta}=G$. Hence the first term equals to
$\frac{\nu(\pi(U))}{\vol(G/\Gamma)}\vol(K)\cdot\frac{1}{f_{a_0}(R)}\int_{A_Ra_0\cap T_{\Psi,\vare',\varnothing}}\rho(a)^2 da.$ We get the desired result by again using Lemma~\ref{asymptotic1}.
\end{proof}
\noindent
We now prove the pointwise convergence. Similar to~\cite{DRS}, it is enough to take ${\Psi}_{\vare}$ an approximation of the identity near $\pi(g)$, and then study what happens to $\widetilde{F}_R(g)$ after a small perturbation of $g$. Let $\Psi_{\vare}$ be such that 
\[\mbox{supp}(\Psi_{\vare})\subseteq\{\pi(g')| d(x_0g,x_0g')\le \vare\hspace{1mm} \&\hspace{1mm} d(x_0g^{-1},x_0g'^{-1})\le \vare\}.\] 
Then it is easy to see that 
\begin{equation}~\label{pointwise}
\frac{f_{a_0}(R-\vare)}{f_{a_0}(R)}\cdot\widetilde{F}_{R-\vare}(\pi(g))\le
\langle \widetilde{F}_R,\Psi_{\vare}\rangle\le
\frac{f_{a_0}(R+\vare)}{f_{a_0}(R)}\cdot\widetilde{F}_{R+\vare}(\pi(g)).
\end{equation}
So the following lemma together with Lemma~\ref{weak} and Equation~(\ref{pointwise}) show that $\widetilde{F}_R(\pi(g))$ tends to $\frac{\vol(U/U\cap\Gamma)}{\vol(\mathcal{M})}$, as $R$ goes to infinity, for any $g\in G$.  
\begin{lem}~\label{wellroundness!}
We have
\[b(\vare)=\lim_{R\rightarrow \infty} \frac{f_{a_0}(\vare+ R)}{f_{a_0}(R)},\]
with $b(\vare)\rightarrow 1$ as $\vare\rightarrow 0$.
\end{lem}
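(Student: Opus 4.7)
The plan is to derive the statement directly from the sharp asymptotic established in Lemma~\ref{l:asymptotic3}, which gives
\[
f_{a_0}(R) \sim \left(\frac{2\pi R}{\|\rho_{\Delta}\|}\right)^{(n-1)/2} e^{\|\rho_{\Delta}\| R}
\]
as $R\to\infty$, with the right-hand side independent of $a_0\in A$. Set
\[
g(R) := \left(\frac{2\pi R}{\|\rho_{\Delta}\|}\right)^{(n-1)/2} e^{\|\rho_{\Delta}\| R}.
\]
Factor the ratio as
\[
\frac{f_{a_0}(R+\vare)}{f_{a_0}(R)} \;=\; \frac{f_{a_0}(R+\vare)}{g(R+\vare)} \cdot \frac{g(R+\vare)}{g(R)} \cdot \frac{g(R)}{f_{a_0}(R)}.
\]
By Lemma~\ref{l:asymptotic3} the first and third factors each tend to $1$ as $R\to\infty$, while a direct computation gives
\[
\frac{g(R+\vare)}{g(R)} \;=\; \left(1+\frac{\vare}{R}\right)^{(n-1)/2} e^{\|\rho_{\Delta}\|\,\vare} \;\longrightarrow\; e^{\|\rho_{\Delta}\|\,\vare}.
\]
Hence the limit $b(\vare)$ exists and equals $e^{\|\rho_{\Delta}\|\,\vare}$; in particular $b(\vare)\to 1$ as $\vare\to 0$.

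There is no serious obstacle here, as the statement is essentially a corollary of the asymptotic already proved. The only thing to observe is that the leading-order constant in Lemma~\ref{l:asymptotic3} is independent of the base point $a_0$, so that the dependence on $a_0$ is absorbed into the lower-order correction and disappears after dividing.
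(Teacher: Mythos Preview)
Your proof is correct and follows essentially the same approach as the paper, which simply records the result as a direct corollary of Lemmas~\ref{asymptotic1} and~\ref{asymptotic2} (equivalently, of Lemma~\ref{l:asymptotic3}). Your explicit computation of $b(\vare)=e^{\|\rho_{\Delta}\|\vare}$ via the factorization through $g(R)$ just makes the corollary transparent.
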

\begin{proof} This is a direct corollary of  Lemmas~\ref{asymptotic1} and~\ref{asymptotic2}.
\end{proof}

\section{Batyrev-Tschinkel-Manin's conjecture for flag variety.}
In this section, we prove Theorem \ref{t:Flag}. To do so, first we prove it 
for the anti-canonical line-bundle and then deal with the general case. Though 
the general approach is similar to the previous section, there are several 
technical differences, specially for an arbitrary metrized line-bundle.

As we mentioned in Section \ref{ss:RationalPoints}, by a theorem of Borel-Harish-Chandra, there is $\Xi,$ a finite subset of $\bbg(\bbq),$ such that $\bbg(\bbq)=\bbp(\bbq)\cdot\Xi\cdot\Gamma$, where $\Gamma=\bbg(\bbz)$. Also $(\bbg/\bbp_E)(\bbq)=\bbg(\bbq)/\bbp_E(\bbq),$ for any $E\subseteq\Delta,$~\cite[Lemma 2.6]{BT}.  
So it is enough to understand the asymptotic behavior of 
\[
N_T=\#\{\gamma\in\Gamma/\Gamma\cap \bbp_E(\bbr)\h|\h \|\eta_\chi(\gamma)v\|\le T\},
\]
where $\chi$ is the highest weight of an irreducible representation 
$\eta_{\chi}:\bbg\rightarrow \mathbb{GL}(\bbv)$, $\|\h\|$ 
is a $K$-invariant norm on $\mathbb{V}(\bbr)$ and $\|v\|=1.$ 

Recall $\bbp_E^{(1)}=\cap_{\lambda\in X^*(\bbp_E)_{\bbq}}\ker(\lambda),$ put $\peone=\bbp_E^{(1)}(\bbr).$
We note that $\peone\cap\Gamma$ is a lattice in $\peone,$ 
we will denote by $\mu_{\peone}$ the $\peone$-invariant probability
measure on $\peone/\peone\cap\Gamma.$ 

Recall that we fixed a $\bbq$-torus $\bba'\subseteq\bbp_\varnothing$ so that $\bba'(\bbr)^\circ=A'$ is a 
maximal $\bbr$-split torus. 
As was noted before $Q_E$ is a normal subgroup of $\peone.$ 
We have
$
\peone=\mathbb{Y}_E(\bbr)\bba_E''(\bbr)Q_E
$
where 
\begin{itemize}
 \item $\mathbb{Y}_E=(\prod_{J_E}\bbh_j)\mathbb{D}_E$ where $\bbh_j$ is a $\bbq$-almost simple group which is a $\bbr$-anisotropic and $\mathbb{D}_E$ is a $\bbq$-torus which is $\bbr$-anisotropic, and 
 \item $\bba''_E\subseteq \bba'$ is a $\bbq$-anisotropic torus which is $\bbr$-split.
\end{itemize}
Moreover, the product is an almost direct product. 
In particular, $\mathbb{Y}_E(\bbr)$ is a compact group. Also note that $\mathbb{Y}_E$ and $\bba''$ centralize $\bba$.  
Put $Y_E=\mathbb{Y}_E(\bbr)$ and $A_E''=\bba_E''(\bbr).$

Note that if $E\subseteq F,$ then $\mathbb{Y}_F\bba''_F\subseteq\mathbb{Y}_E\bba''_E.$
To see this first note that $\bba''_F\subseteq\bba''_E,$ because $\bba''_F\subseteq\bbp_E$
and it is a central component of Levi subgroup of $\bbp_F.$ Thus $\bba''_F$ is a central component
of the Levi subgroup of $\bbp_E;$ this implies $\bba''_F\subseteq\bba''_E.$
To see the inclusion $\mathbb{Y}_F\subseteq\mathbb{Y_E}$
one argues as in the proof of Proposition~\ref{p:FindTheLimitUTrans}.
Note also that the above product decomposition implies that $\mathbb{Y}_F$ is
also a factor of $\mathbb{Y}_E;$ the same is true of $A''_F$ and $A''_E$
as they are tori.

Recall our notation $Q_E=M_ER_u(Q_E).$ Put $B_E=Y_EA''_EM_E$
and let
\[
\mbox{$\mathfrak{p}:\peone\to\peone/R_u(Q_E)=B_E$}
\] 
be the natural projection.
Abusing the notation let us also denote the induced fiberation 
$\peone/\peone\cap\Gamma\to B_E/B_E\cap\Gamma$ by $\mathfrak{p}.$

The group $B_E$ is an almost direct product of commuting factors
$Y_E, A_E, M_E$ which intersect $\Gamma$ in lattices, therefore, the $B_E$
invariant probability measure on $B_E/B_E\cap\Gamma$ has a product 
decomposition into the corresponding probability measure.     
Altogether we get:
for any $\psi\in C_c(G/\Gamma)$ we have
\begin{align}
\label{e;Fubini}\int_{G/\Gamma}\psi d\mu_{\peone}&=\int_{G/\Gamma} \psi d\mu_{B_E}d\mu_{R_u(Q_E)}\\&
\notag=\int_{G/\Gamma}\psi d\mu_{Y_E}d\mu_{A''_E}d\mu_{M_E}d\mu_{R_u(Q_E)}
=\int_{G/\Gamma}\psi d\mu_{Y_E}d\mu_{A''_E}d\mu_{Q_E}.
\end{align}

Let $B_T$ be the ball of radius $T$ centered at the origin in $\mathbb{V}(\bbr)$, 
and similar to the previous section let 
$\widetilde{B}_T$ be the pull back of $B_T$ in $G$, 
and $\overline{B}_T$ be the image of $\widetilde{B}_T$ in $G/\peone$. 
This time, it is much simpler than the previous section to give a decomposition of $\overline{B}_T$. 
With the notation as in~\eqref{e:Decomposition} (where we defined the $F$-component $a_F$ of $a\in A$ for $F\subseteq \Delta$ and showed that $a=a_F\cdot a_{F^{c}}$) we have the following.
\begin{lem}~\label{B-decomposition}
Let $A_{E^c,T}=\{a\in A\h|\h a=a_{E^c}\h,\h\chi(a)\le T\}.$ Then 
\[
\overline{B}_T=K A_{E^c,T} \peone/\peone.
\] 
\end{lem}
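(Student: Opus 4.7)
The plan is to establish the stronger identity $\widetilde{B}_T = K\cdot A_{E^c,T}\cdot\peone$ inside $G$, from which the lemma follows by projection to $G/\peone$.

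\emph{Step 1 (decomposition $G = K\cdot A_{E^c}\cdot\peone$).} The Iwasawa decomposition of Section~\ref{ss:Decompositions} gives $G = K\cdot P_\varnothing(\bbr)^\circ \subseteq K\cdot P_E(\bbr)^\circ$, where the superscript denotes the identity component. By Lemma~\ref{l:CharParabolic} the $\bbq$-rational characters of $\bbp_E$ are $\bigoplus_{\alpha\notin E}\bbz\lambda_\alpha$, and they exhibit $\bbp_E/\bbp_E^{(1)}$ as a $\bbq$-split torus of dimension $|\Delta|-|E|$. On real points the map $P_E(\bbr)^\circ\to(\bbr^+)^{|\Delta|-|E|}$ given by $(\lambda_\alpha)_{\alpha\notin E}$ has kernel $P_E(\bbr)^\circ\cap\peone$, while $A_{E^c}$ maps isomorphically onto the target; for each $h\in P_E(\bbr)^\circ$ one can therefore pick a unique $a\in A_{E^c}$ with $\lambda_\alpha(a)=\lambda_\alpha(h)$ for every $\alpha\notin E$, and then $a^{-1}h\in\peone$. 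Since $A_E\subseteq Q_E\subseteq\peone$ by~\eqref{e:A_FandQ} and the definition of $\peone$, the decomposition $G = K\cdot A_{E^c}\cdot\peone$ follows.

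\emph{Step 2 (norm computation).} I claim that for $g=kap$ with $k\in K$, $a\in A_{E^c}$, $p\in\peone$ we have $\|\eta_\chi(g)v\|=\chi(a)$. Indeed, $v$ is the highest weight vector of $\eta_\chi$, so the line $[v]$ is $\bbp_E$-stable with $\bbp_E$ acting through the character $\chi$, cf.\ Section~\ref{ss:RationalPoints}. Writing $\chi=\sum_{\alpha\notin E}n_\alpha\lambda_\alpha$, Lemma~\ref{l:CharParabolic} places $\chi\in X^*(\bbp_E)_{\bbq}$, and by the very definition of $\bbp_E^{(1)}$ such a character is trivial on $\peone$; hence $\eta_\chi(p)v=v$. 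Consequently $\eta_\chi(g)v=\chi(a)\eta_\chi(k)v$, and the $K$-invariance of $\|\cdot\|$ together with $\|v\|=1$ and $\chi(a)>0$ (since $a\in A=\bbs(\bbr)^\circ$) yields $\|\eta_\chi(g)v\|=\chi(a)$.

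\emph{Conclusion.} Combining the two steps, $g\in\widetilde{B}_T$ iff in any decomposition of Step~1 one has $\chi(a)\le T$, i.e.\ $a\in A_{E^c,T}$. This proves $\widetilde{B}_T = K\cdot A_{E^c,T}\cdot\peone$, and quotienting by $\peone$ yields the lemma. The only mildly delicate point is the connectedness bookkeeping in Step~1, which is handled by observing that the Iwasawa factor $AN$ already lies in $P_E(\bbr)^\circ$, so every $g\in G$ admits a decomposition with $h\in P_E(\bbr)^\circ$ to which the splitting via $(\lambda_\alpha)_{\alpha\notin E}$ applies.
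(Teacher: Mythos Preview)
Your proof is correct and takes essentially the same approach as the paper: the paper's brief argument invokes the Langlands decomposition of $\bbp_E(\bbr)$, the $K$-invariance of $\|\cdot\|$, and the facts that $\peone$ fixes $v$ while $A$ acts on $v$ via $\chi$ (which factors through the $E^c$-component). Your Step~1 simply unpacks the relevant piece of the Langlands decomposition explicitly via Iwasawa and the splitting along $(\lambda_\alpha)_{\alpha\notin E}$, and your Step~2 is exactly the paper's norm computation.
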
 

\begin{proof} Recall that $\|\h\|$ is $K$-invariant, $\peone$ is the stabilizer of the vector $v$ and $A$ acts by the character $\chi$ on $v$ which indeed factors through the $E^c$ component. Now the lemma follows from Langlands decomposition of $\bbp_E(\bbr)$, see for example~\cite{Kn}.
\end{proof}

Let $\1bf_T$ denote the characteristic function of $B_T$ and define 
\[
F_T(g\Gamma):=\sum_{\gamma\in\Gamma/\Gamma\cap\bbp_E(\bbr)} \1bf_T(\eta(g\gamma)v).
\]
Let $\widetilde{F}_T(g\Gamma)=\frac{1}{f(T)}F_T(g\Gamma)$, where 
\[f(T)=\int_{A^+_{E^c,T}} \rho'_E(a) da\h\h{\rm and}\h\h A^+_{E^c,T}=\{a\in A_{E^c,T}\h|\h\forall \alpha,\h \lambda_\alpha(a)\ge 1\}
.\] 
We will prove that $\widetilde{F}_T(e)$ tends to a constant as $T$ goes to infinity. In the analogues case in the previous section, we proved a much stronger statement. We proved both weak and pointwise convergence of $\widetilde{F}_R$ to a constant function. However in this section, we cannot prove such statements. Instead we only prove what we need for the counting problem, namely pointwise convergence at the identity.

Let $\chi=\sum_{\alpha\in E^c}c_\alpha \lambda_\alpha,$ let us emphasize again that we denote the logarithm of characters of $A$ with the same notation as the characters themselves. And by Lemma~\ref{l:RhoPositiveCone} we have $\rho'_E=\sum_{\alpha\in E^c}m_\alpha\lambda_\alpha$, where $m_{\alpha}\in \bbz^+$. In particular, if $\bbg$ is $\bbq$-split and $E=\varnothing$, then $m_{\alpha}=2$ for any $\alpha\in \Delta$.

We identify $\bbr^{|\Delta|}$ with $A$ (see (\ref{e:Decomposition})) via the isomorphism $\Theta:\bbr^{|\Delta|}\rightarrow A$ such that 
\be\label{e:Theta}
 \left(\log(\lambda_{\alpha}(\Theta(\xbf)))\right)_{\alpha\in \Delta}=\xbf,
\ee
for any $\xbf\in \bbr^{|\Delta|}$. Similarly using $\Theta$ we identify $\vcal_F:=\{\xbf=(x_{\alpha})_{\alpha\in\Delta}|\h \forall\h\alpha\in\Delta\setminus F,\h x_{\alpha}=0\}$ with $A_F$, for any subset $F$ of $\Delta$. 

We can and will assume that the Haar measure $da_F$ of $A_F$ is the push-forward $\Theta_*(d\xbf_F)$ of the Lebesgue measure of $\vcal_F$, i.e. for any integrable function $g\in L^1(A_F)$ we have 
\[
\int_{A_F} g(a_F) da_F=\int_{\vcal_F} g(\Theta(\xbf_F)) d\xbf_F.
\] 
In particular for any bounded set $B\subseteq \vcal_{E^c}$ we have
$
\int_{\Theta(B)} \rho_E'(a)da=\int_{B} e^{\sum_{\alpha\in E^c}m_{\alpha}x_{\alpha}} d\xbf_{E^c}. 
$
For any $F\subseteq \Delta$, let ${\rm pr}_F:\bbr^{|\Delta|}\rightarrow \vcal_F$ be the (natural) projection onto $\vcal_F$.
 
\begin{definition}\label{d:PositiveLinearFunctional}
\begin{enumerate}
  \item For $F\subseteq \Delta$, let $\vcal_F^+:=\{\xbf\in \vcal_F\setminus \{0\}|\h\forall\h\alpha\in F,\h x_{\alpha}\ge 0\}$.
	\item A linear function $l:\vcal_F\rightarrow \bbr$ is called a {\em positive linear functional} if $l(\vcal_F^+)=\bbr^+$.
	\item Let $l_F:\vcal_F\rightarrow \bbr$ be a positive linear functional such that $\rho_{F^c}'(\Theta(\xbf))=e^{l_F(\xbf)}$, where (as before) $F^c:=\Delta\setminus F$. In particular, $l_{E^c}(\xbf)=\sum_{\alpha\in E^c} m_{\alpha} x_{\alpha}$.
	\item Let $l_{\chi}:\vcal_{E^c}\rightarrow \bbr$ be a positive linear functional such that $\chi(\Theta(\xbf))=e^{l_{\chi}(\xbf)}$. So in the above notation we have
	$l_{\chi}(\xbf)=\sum_{\alpha\in E^c} c_{\alpha} x_{\alpha}$.
	\item In this Section (as we said earlier) $E$ and $\chi$ are fixed. For $T\in \bbr^{\ge 1}$ and $\ybf\in \vcal_{E^c}$ let
	\[
	\vcal^+_{\ybf,T}:=\{\xbf\in \vcal_{E^c}|\h \xbf-\ybf\in \vcal^+_{E^c},\h l_{\chi}(\xbf)\le \log T\}.
	\]
 Notice that $\vcal^+_{\ybf,T}=\vcal^+_{0,e^{l_{\chi}(-\ybf)}T}+\ybf$.
\end{enumerate}
\end{definition}   
\begin{lem}\label{l:MaxCone}
For $F\subseteq \Delta$, let $l_1$ and $l_2$ be two positive linear functional on $\vcal_F$, and let $\mathbf{P}(\vcal_F^+):=\{[\vbf]|\h \vbf\in \vcal_F^+\}$ where $[\vbf]:=\bbr^+ \vbf$. Then
\begin{enumerate}
	\item The function $g:\mathbf{P}(\vcal_F^+)\rightarrow \bbr^+$, $g([\vbf]):=l_1(\vbf)/l_2(\vbf)$ is well-defined and continuous.
	\item We have 
	\[
	\max_{[\vbf]\in \mathbf{P}(\vcal_F^+)}g([\vbf])=\max_{\alpha\in F} g([\ebf_{\alpha}]),
	\]
	 where $\{\ebf_{\alpha}\}_{\alpha\in\Delta}$ is the standard basis of $\bbr^{|\Delta|}$, i.e. for any $\alpha'\in\Delta\setminus\{\alpha\}$ the $\alpha'$-component of $\ebf_{\alpha}$ is zero and the $\alpha$-component is 1.  
\end{enumerate}
\end{lem}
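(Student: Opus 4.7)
The plan is to treat the two parts separately, and both turn out to be quite direct once one observes that every element of $\vcal_F^+$ is a nonnegative combination of the basis vectors $\{\ebf_\alpha\}_{\alpha\in F}$. I would first spell out the consequences of the positivity assumption: since $\ebf_\alpha\in\vcal_F^+$ for every $\alpha\in F$, positivity of $l_i$ forces $l_i(\ebf_\alpha)>0$ for $i=1,2$ and every $\alpha\in F$. Consequently, for any $\vbf=\sum_{\alpha\in F}x_\alpha\ebf_\alpha\in\vcal_F^+$ (so the $x_\alpha\geq 0$ are not all zero), $l_i(\vbf)=\sum_\alpha x_\alpha l_i(\ebf_\alpha)>0$, which guarantees in particular that $l_2$ never vanishes on $\vcal_F^+$.

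For part (1), well-definedness amounts to checking that $g(\lambda\vbf)=g(\vbf)$ for any $\lambda>0$, and this is immediate from the linearity of $l_1$ and $l_2$, so $g$ descends from $\vcal_F^+$ to $\mathbf{P}(\vcal_F^+)$. Continuity is then inherited from the continuity of $\vbf\mapsto l_1(\vbf)/l_2(\vbf)$ on $\vcal_F^+$, where this ratio is continuous because $l_2$ is continuous and nonzero there.

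For part (2), set $M:=\max_{\alpha\in F}g([\ebf_\alpha])=\max_{\alpha\in F}l_1(\ebf_\alpha)/l_2(\ebf_\alpha)$. By definition of $M$ we have $l_1(\ebf_\alpha)\leq M\,l_2(\ebf_\alpha)$ for each $\alpha\in F$. Given any $\vbf=\sum_{\alpha\in F}x_\alpha\ebf_\alpha\in\vcal_F^+$ with $x_\alpha\geq 0$, linearity yields
\[
l_1(\vbf)=\sum_{\alpha\in F}x_\alpha l_1(\ebf_\alpha)\leq M\sum_{\alpha\in F}x_\alpha l_2(\ebf_\alpha)=M\,l_2(\vbf),
\]
so $g([\vbf])\leq M$. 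Equality is achieved at $[\ebf_{\alpha^*}]$ for any $\alpha^*$ realizing the maximum, proving the claim.

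There is no real obstacle here: the whole statement is essentially the elementary fact that a ratio of positive linear forms on a simplicial cone attains its extrema at the vertices. The only thing one has to be mildly careful about is to record that positivity of the $l_i$ guarantees $l_2(\vbf)>0$ throughout $\vcal_F^+$, so that the quotient makes sense everywhere on $\mathbf{P}(\vcal_F^+)$ and no compactness or limiting argument is needed.
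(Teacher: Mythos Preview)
Your proof is correct and essentially follows the same approach as the paper: both arguments hinge on the elementary linearity computation $l_1(\vbf)=\sum_\alpha x_\alpha l_1(\ebf_\alpha)\le M\sum_\alpha x_\alpha l_2(\ebf_\alpha)=M\,l_2(\vbf)$. The only minor organizational difference is that the paper first asserts existence of the global maximum (via compactness of $\mathbf{P}(\vcal_F^+)$) and then shows it is attained at a vertex, whereas you start from the vertex maximum $M$ and directly bound $g$ by it, thereby sidestepping any compactness argument.
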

\begin{proof}
The first part is clear. Hence $a:=\max_{[\vbf]\in \mathbf{P}(\vcal_F^+)}g([\vbf])$ exists. Let 
$
\vbf\in \ker(l_1-al_2)\cap \vcal^+_F.
$
And assume that $\vbf=\sum_{\alpha\in F'}v_{\alpha} \ebf_{\alpha}$ for some $v_{\alpha}\in \bbr^+$ and $\varnothing\subsetneq F'\subseteq F$. Then
\begin{align*}
l_1(\vbf)&=l_1(\sum_{\alpha\in F'} v_{\alpha}\ebf_{\alpha})=\sum_{\alpha\in F'} v_{\alpha} l_1(\ebf_{\alpha})\\
&\le a \sum_{\alpha\in F'} v_{\alpha} l_2(\ebf_{\alpha})=a l_2(\vbf)\\
&=l_1(\vbf).
\end{align*}
Therefore for any $\alpha\in F'$ we have that $a=g([\ebf_{\alpha}])$.
\end{proof}
\begin{definition}\label{d:aAndbParameters}
Let $E$ and $\chi$ be as before, and let $E\subseteq F\subseteq \Delta$. We set 
\[
\apzc_F:=\max_{[\vbf]\in \mathbf{P}(\vcal_{F\setminus E})}\frac{l_{E^c}(\vbf)}{l_{\chi}(\vbf)},
\]
and $\apzc:=\apzc_{\Delta}$. By Lemma~\ref{l:MaxCone}, we have 
\[
\apzc_F=\max_{\alpha\in F\setminus E}\frac{m_{\alpha}}{c_{\alpha}}.
\]
We set also $\bpzc_F:=\dim \ker(l_{E^c}-\apzc_F l_{\chi})|_{\vcal_{F\setminus E}}$ and $\bpzc:=\bpzc_{\Delta}$. We set 
\[
F_{\chi}:=E\cup \{\alpha\in E^c|\h \apzc=m_{\alpha}/c_{\alpha}\}
\]
and call it the {\em max-type} of $\chi$. In particular, by Lemma~\ref{l:MaxCone}, 
we have $\apzc=\apzc_{F_{\chi}}$ and $\bpzc=|F_{\chi}\setminus E|$.
\end{definition}
\begin{lem}~\label{int-est1} 
Let $E$ and $\chi$ be as before, and $E\subseteq F\subseteq \Delta$. Then 
\be\label{e:IntegralEstimate}
\int_{\xbf_{F\setminus E} \in\vcal^+_{0,T}\cap \vcal_{F\setminus E}} e^{l_{E^c}(\xbf_{F\setminus E})} d\xbf_{F\setminus E}\sim C\h T^{\apzc_F} (\log T)^{\bpzc_F-1},\hspace{3mm} \mbox{as T goes to infinity,}
\ee
where $\apzc_F,\bpzc_F$ are as in Definition \ref{d:aAndbParameters} and $C=C(\chi,F)$ is a positive number. In particular, 
\begin{enumerate}
	\item if the max-type $F_{\chi}$ of $\chi$ is not a subset of  $F$ (see Definition~\ref{d:aAndbParameters}), then 
	\[
	\lim_{T\rightarrow \infty} \frac{\int_{\xbf_{F\setminus E} \in\vcal^+_{0,T}\cap \vcal_{F\setminus E}} e^{l_{E^c}(\xbf_{F\setminus E})} d\xbf_{F\setminus E}}{f(T)}=0.
	\]
	\item if $\chi=\rho_E'$, then 
\[
\int_{\xbf\in\vcal^+_{0,T}} e^{l_{E^c}(\xbf)} d\xbf\sim C\h T (\log T)^{|E^c|-1},\hspace{3mm} \mbox{as T goes to infinity.}
\]
\end{enumerate}
\end{lem}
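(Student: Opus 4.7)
My plan is to reduce the integral to an explicit exponential integral over a simplex and then extract its asymptotic by a Laplace-type argument.

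\emph{Step 1 (reduction).} Since $\xbf_{F\setminus E}$ is supported only in coordinates $\alpha\in F\setminus E\subseteq E^c$, we have $l_{E^c}(\xbf_{F\setminus E})=\sum_{\alpha\in F\setminus E}m_\alpha x_\alpha$ and $l_\chi(\xbf_{F\setminus E})=\sum_{\alpha\in F\setminus E}c_\alpha x_\alpha$. Setting $u_\alpha=c_\alpha x_\alpha$ and $L=\log T$ transforms the left hand side of~\eqref{e:IntegralEstimate} into
$$I(T) \;=\; \frac{1}{\prod_{\alpha\in F\setminus E} c_\alpha}\,J_k(L), \qquad J_k(L) := \int_{\substack{u_\alpha\ge 0\\ \sum u_\alpha\le L}} e^{\sum r_\alpha u_\alpha}\,du,$$
where $r_\alpha := m_\alpha/c_\alpha$ and $k = |F\setminus E|$. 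By Lemma~\ref{l:MaxCone}, $\apzc_F = \max_{\alpha\in F\setminus E} r_\alpha$, and the set $M_F := \{\alpha\in F\setminus E : r_\alpha=\apzc_F\}$ has cardinality $\bpzc_F$.

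\emph{Step 2 (asymptotics of $J_k(L)$).} An elementary induction on $j$ gives
$$S_j(s) := \int_{\substack{v_i\ge 0\\ \sum v_i\le s}}\!\! e^{\apzc_F\sum v_i}\,dv_1\cdots dv_j \;\sim\; \frac{s^{j-1}}{(j-1)!\,\apzc_F}\,e^{\apzc_F s}\qquad (s\to\infty).$$
Splitting the coordinates into $u'=(u_\alpha)_{\alpha\in M_F}$ and $u''=(u_\alpha)_{\alpha\in (F\setminus E)\setminus M_F}$ and integrating in $u'$ first, Fubini gives
$$J_k(L) \;=\; \int_{\substack{u''\ge 0\\ \sum u''_\alpha\le L}} e^{\sum_{\alpha\notin M_F} r_\alpha u''_\alpha}\, S_{\bpzc_F}\!\left(L-\sum u''_\alpha\right) du''.$$
Since $r_\alpha-\apzc_F<0$ for $\alpha\in (F\setminus E)\setminus M_F$, a routine dominated convergence argument lets one substitute the asymptotic for $S_{\bpzc_F}$ and pull out the leading factor, yielding
$$J_k(L) \;\sim\; \frac{L^{\bpzc_F-1}}{(\bpzc_F-1)!\,\apzc_F}\,e^{\apzc_F L}\prod_{\alpha\in (F\setminus E)\setminus M_F}\frac{1}{\apzc_F-r_\alpha}.$$
Restoring $L=\log T$ and collecting constants gives~\eqref{e:IntegralEstimate} with
$$C \;=\; \frac{1}{\apzc_F\,(\bpzc_F-1)!}\,\prod_{\alpha\in F\setminus E}\frac{1}{c_\alpha}\,\prod_{\alpha\in (F\setminus E)\setminus M_F}\frac{1}{\apzc_F-r_\alpha}.$$

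\emph{Step 3 (the two special cases).} Specializing Step 2 to $F=\Delta$ (so $F\setminus E=E^c$, $\apzc_\Delta=\apzc$, $\bpzc_\Delta=\bpzc$) gives $f(T)\sim C'\,T^{\apzc}(\log T)^{\bpzc-1}$. For (1), if $F_\chi\not\subseteq F$, pick $\alpha_0\in F_\chi\setminus F$; then $\alpha_0\in E^c$ and $r_{\alpha_0}=\apzc$, so either no maximizer of $r_\alpha$ on $E^c$ lies in $F\setminus E$, giving $\apzc_F<\apzc$, or $\apzc_F=\apzc$ with $M_F=(F\cap F_\chi)\setminus E\subsetneq F_\chi\setminus E$, giving $\bpzc_F<\bpzc$; in either case $I(T)/f(T)\to 0$. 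For (2), when $\chi=\rho'_E$ we have $c_\alpha=m_\alpha$, hence $r_\alpha=1$ for all $\alpha\in E^c$, so $\apzc=1$ and $M_\Delta=E^c$ gives $\bpzc=|E^c|$, matching the stated asymptotic.

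The only delicate point is the dominated convergence step in Step 2, i.e.\ showing that the contribution from $u''$ far from the origin is negligible; this is straightforward because the integrand there is bounded by $\prod_{\alpha\notin M_F} e^{(r_\alpha-\apzc_F)u''_\alpha}$ with $r_\alpha-\apzc_F<0$, which is integrable on $[0,\infty)^{k-\bpzc_F}$ and shrinks the tails exponentially.
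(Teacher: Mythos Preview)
Your proof is correct. The paper does not actually prove the main asymptotic~\eqref{e:IntegralEstimate} but simply refers to~\cite{GW} and~\cite[Section~6]{GOS}; it then deduces parts (1) and (2) exactly as you do in Step~3, by comparing $(\apzc_F,\bpzc_F)$ with $(\apzc,\bpzc)$. Your Steps~1--2 therefore supply a clean, self-contained derivation that the paper outsources. The change of variables $u_\alpha=c_\alpha x_\alpha$ and the splitting into the ``maximal'' coordinates $M_F$ and the rest is the natural Laplace/Watson-type argument for such simplex integrals, and your dominated-convergence justification is fine since $S_{\bpzc_F}(s)e^{-\apzc_F s}\le C\,s^{\bpzc_F-1}\le C\,L^{\bpzc_F-1}$ on the region of integration. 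One small remark: your identification $\bpzc_F=|M_F|$ is indeed the intended meaning (as witnessed by the paper's own assertion $\bpzc=|F_\chi\setminus E|$ immediately after Definition~\ref{d:aAndbParameters}), even though the ``$\dim\ker$'' phrasing in the definition is somewhat awkward for a single linear functional.
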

\begin{proof} For the proof of (\ref{e:IntegralEstimate}) see either~\cite{GW} or~\cite[Section 6]{GOS}. If $F_{\chi}\not\subseteq F$, then either $\apzc_F< \apzc$ or $\bpzc_F<\bpzc$. So by (\ref{e:IntegralEstimate}), one gets the first part. The second part is clear.
\end{proof}
\begin{cor}~\label{int-est2} Let $E$ and $\chi$ be as before. Let $\ybf\in \vcal_{E^c}$. Then for any $E\subseteq F\subseteq \Delta$ we have 
\[
\int_{\xbf\in \wcal_T} e^{l_{E^c}(\xbf)} d{\xbf}\sim Ce^{l_{E^c}(\ybf)-\apzc_F l_{\chi}(\ybf)}\h T^{\apzc_F} (\log T)^{\bpzc_F-1},\h\h\h\h\text{as $T$ goes to infinity,}
\]
where 
\[
\wcal_T:=\{\xbf\in \vcal|\h \xbf-\ybf\in \vcal^+_{F\setminus E},\h l_{\chi}(\xbf)\le \log T\}
\]
and $\apzc_F$, $\bpzc_F$ are as in Definition~\ref{d:aAndbParameters} 
and $C=C(\chi,E)$ is as in Lemma~\ref{int-est1}. 
\end{cor}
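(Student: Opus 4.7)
The plan is to reduce Corollary~\ref{int-est2} to Lemma~\ref{int-est1} by a translation of variables. Since $\ybf\in\vcal_{E^c}$ is fixed, every $\xbf\in\wcal_T$ can be written uniquely as $\xbf=\ybf+\zbf$ with $\zbf\in\vcal_{F\setminus E}$ (by the constraint $\xbf-\ybf\in\vcal^+_{F\setminus E}$), and the condition $\xbf-\ybf\in\vcal^+_{F\setminus E}$ means precisely $\zbf\in\vcal^+_{F\setminus E}\cup\{0\}$, i.e. $\zbf$ is a non-negative vector supported on $F\setminus E$. Under this change of variables the Lebesgue measure $d\xbf$ on the $|F\setminus E|$-dimensional affine slice $\ybf+\vcal_{F\setminus E}$ becomes the Lebesgue measure $d\zbf$ on $\vcal_{F\setminus E}$.

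Using the linearity of $l_{E^c}$ and $l_\chi$, I would compute
\[
l_{E^c}(\xbf)=l_{E^c}(\ybf)+l_{E^c}(\zbf),\qquad l_\chi(\xbf)=l_\chi(\ybf)+l_\chi(\zbf),
\]
so the constraint $l_\chi(\xbf)\le\log T$ becomes $l_\chi(\zbf)\le\log T'$ where $T':=Te^{-l_\chi(\ybf)}$. Therefore
\[
\int_{\xbf\in\wcal_T}e^{l_{E^c}(\xbf)}\,d\xbf \;=\; e^{l_{E^c}(\ybf)}\int_{\zbf\in\vcal^+_{0,T'}\cap\vcal_{F\setminus E}}e^{l_{E^c}(\zbf)}\,d\zbf.
\]

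At this point Lemma~\ref{int-est1} applies directly to the inner integral (with $F\setminus E$ playing its role and $T$ replaced by $T'$) and yields the asymptotic $C(T')^{\apzc_F}(\log T')^{\bpzc_F-1}$. Substituting $T'=Te^{-l_\chi(\ybf)}$ gives the factor $T^{\apzc_F}e^{-\apzc_F l_\chi(\ybf)}$, and since $l_\chi(\ybf)$ is a fixed constant,
\[
(\log T')^{\bpzc_F-1}=(\log T-l_\chi(\ybf))^{\bpzc_F-1}\sim(\log T)^{\bpzc_F-1}
\]
as $T\to\infty$. Combining these factors with the overall prefactor $e^{l_{E^c}(\ybf)}$ produces exactly $Ce^{l_{E^c}(\ybf)-\apzc_F l_\chi(\ybf)}T^{\apzc_F}(\log T)^{\bpzc_F-1}$, as claimed. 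There is essentially no obstacle here; the only minor point is the asymptotic equivalence of $(\log T-l_\chi(\ybf))^{\bpzc_F-1}$ with $(\log T)^{\bpzc_F-1}$, which is immediate because $l_\chi(\ybf)$ does not depend on $T$.
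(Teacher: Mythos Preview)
Your proof is correct and follows essentially the same route as the paper: both perform the translation $\xbf'=\xbf-\ybf$ (your $\zbf$), use linearity of $l_{E^c}$ and $l_\chi$ to rewrite the domain as $\vcal^+_{0,\,e^{-l_\chi(\ybf)}T}\cap\vcal_{F\setminus E}$, apply Lemma~\ref{int-est1}, and then absorb the shift in the logarithm into the asymptotic.
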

\begin{proof}
Let $\xbf':=\xbf-\ybf$. Then
\[
\int_{\xbf\in \wcal_T} e^{l_{E^c}(\xbf)} d\xbf 
=
\int_{\xbf'\in\vcal^+_{0,e^{-l_{\chi}(\ybf)}T}\cap \vcal_{F\setminus E}} e^{l_{E^c}(\xbf'+\ybf)} d\xbf'
=
e^{l_{E^c}(\ybf)} \int_{\xbf'\in\vcal^+_{0,e^{-l_{\chi}(\ybf)}T}\cap\vcal_{F\setminus E}} e^{l_{E^c}(\xbf')} d{\xbf'}.
\]
Therefore by Lemma~\ref{int-est1} we have
\begin{align*}
\int_{\xbf\in\wcal_T} e^{l_{E^c}(\xbf)} d\xbf &
\sim e^{l_{E^c}(\ybf)}\cdot C (e^{-l_{\chi}(\ybf)}T)^{\apzc_F}(-l_{\chi}(\ybf)+\log T)^{\bpzc_F-1}\\
&
\sim C e^{l_{E^c}(\ybf)-\apzc_F l_{\chi}(\ybf)} T^{\apzc_F}(\log T)^{\bpzc_F-1},
\end{align*}
as $T$ goes to infinity.
\end{proof}
Let $\{\Psi_{\vare}\}$ be a family of continuous nonnegative functions on $G/\Gamma$ 
which approximates the Dirac function at $\pi(e)\in G/\Gamma$, i.e. 
for any continuous function $f\in C(G/\Gamma)$ we have 
\[
\lim_{\vare\rightarrow 0}\langle \Psi_{\vare},f\rangle=f(\pi(e)).
\]  
Moreover assume that for every $F\subseteq\Delta$ we have 
\be\label{e;supp-psi-vare}
\mbox{supp}(\Psi_{\vare})\subseteq\pi(\{g\in G\h|\h \|\vartheta(g^{-1})\|\le e^{\vare} \})\cap \pi(K \Theta(B_{F^c,\vare}) Q_F),
\ee
where $\vartheta=\oplus_{\alpha\in\Delta} \wedge^{\dim R_u(\bbp_{\alpha})}\Ad$, $\Theta$ is as in (\ref{e:Theta}) and 
\[
B_{F^c,\vare}:=\{\xbf\in \vcal_{F^c}|\h\forall\h\alpha\in\Delta,\h |x_{\alpha}|\le \vare\}.
\]
 
Unlike in the proof of the geometric example, $\langle \wt{F}_T,\Psi_{\vare}\rangle$ 
does not converge to a constant function as $T$ goes to infinity. 
Nevertheless we can show that it does converge to a function $L$ of $\vare$.

\begin{prop}\label{p:AlmostWeak}
There are a positive real number $\vare_0$ and a function $L:(0,\vare_0)\rightarrow \bbr$ such that
\begin{enumerate}
	\item $\lim_{T\rightarrow \infty} \langle \wt{F}_T,\Psi_{\vare}\rangle=L(\vare)$ for any $0<\vare<\vare_0$.
	\item $\lim_{\vare\rightarrow 0^+}L(\vare)$ exists.
\end{enumerate} 
\end{prop}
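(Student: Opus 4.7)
The plan is to follow the same template as Lemma 4.7: unfold the pairing $\langle F_T,\Psi_\vare\rangle$, apply the partition/approximation statement of Corollary~3.14 (adapted to $\mu_{\peone}$), and read off the asymptotics via the integral estimates of Lemma 5.6 and Corollary 5.7.

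First I would unfold the pairing. Since $\chi$ is a $\bbq$-character of $\bbp_E$, we have $\chi|_{\peone}\equiv 1$, so the height $g\mapsto\|\eta(g)v\|$ descends to $G/\peone$ and Lemma~5.2 identifies $\overline{B}_T\subseteq G/\peone$ with $KA_{E^c,T}\peone/\peone$. Using the standard unfolding over $\Gamma/\Gamma\cap\peone$ together with the decomposition $dg=\rho'_E(a)\,dk\,da\,dp$ coming from $G=KA\peone$, one obtains
\[
\langle F_T,\Psi_\vare\rangle=\nu(\pi(\peone))\int_K\int_{A_{E^c,T}}(ka\mu_{\peone})(\overline{\Psi_\vare})\,\rho'_E(a)\,da\,dk.
\]
By Lemma~3.3 together with~\eqref{e;supp-psi-vare}, the integrand vanishes unless $\lambda_\alpha(a)\ge e^{-C\vare}$ for every $\alpha\notin E$ and some absolute $C>0$, so the $a$-integration can be restricted to this set.

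Next I would apply Corollary~3.14. Thanks to the product decomposition~\eqref{e;Fubini} of $\mu_{\peone}$ and the fact that $Y_E$ and $A''_E$ commute with $A$, the equidistribution result of Proposition~3.13 lifts from $\mu_{Q_E}$ to $\mu_{\peone}$. For any tolerance $\vare'>0$ this yields a partition $\{T_{\vare',F}\}_{E\subseteq F\subseteq\Delta}$ of the restricted $a$-domain such that on $T_{\vare',F}$,
\[
(ka\mu_{\peone})(\Psi_\vare)=\bigl(ka_{F^c}(\mu_{Y_E}\otimes\mu_{A''_E}\otimes\mu_{Q_F})\bigr)(\Psi_\vare)+O(\vare'),
\]
with the $F^c$-projection $a_{F^c}$ constrained to a compact set depending only on $\vare'$ and $\Psi_\vare$, while the $(F\setminus E)$-projection is constrained only by $l_\chi(a)\le\log T$.

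I would then evaluate each piece asymptotically. Splitting $a=a_{F\setminus E}a_{F^c}$ and integrating $\rho'_E$ in the $(F\setminus E)$-variables by Corollary~5.7 gives that the $F$-contribution grows like $T^{\apzc_F}(\log T)^{\bpzc_F-1}$ times an integral over the bounded $a_{F^c}$-region and $K$. Since $f(T)\sim CT^{\apzc}(\log T)^{\bpzc-1}$ by Lemma~5.6(2), after dividing by $f(T)$ the pieces with $F\not\supseteq F_\chi$ vanish in the limit and only the finitely many $F$ with $F_\chi\subseteq F\subseteq\Delta$ contribute a nonzero amount. A diagonal choice $\vare'=\vare'(T)\to 0$ absorbs the $O(\vare')$ errors and proves part~(1), with
\[
L(\vare)=\sum_{F\supseteq F_\chi}L_F(\vare),
\]
where each $L_F(\vare)$ is explicit in terms of $\Psi_\vare$, $\mu_{Y_E}\otimes\mu_{A''_E}\otimes\mu_{Q_F}$, and the constant from Corollary~5.7.

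For part~(2), each $L_F(\vare)$ is an integral of $\Psi_\vare$ against the smooth Haar-type measure $dk\cdot e^{l_{E^c}(\xbf_{F^c})}d\xbf_{F^c}\cdot d(\mu_{Y_E}\otimes\mu_{A''_E}\otimes\mu_{Q_F})$, which has a continuous density transverse to $\pi(Q_F)$ at $\pi(e)$. Since~\eqref{e;supp-psi-vare} guarantees that $\supp(\Psi_\vare)$ is a genuine transverse neighborhood of $\pi(e)$ with respect to each of the foliations by $Q_F$, the Dirac approximation gives a pointwise limit for each $L_F(\vare)$ as $\vare\to 0^+$, and hence for $L(\vare)$. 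The main obstacle is verifying that the limiting measures on each $\pi(Q_F)$-piece really do admit continuous transverse densities at $\pi(e)$ simultaneously for all $F\supseteq F_\chi$; this is exactly what dictates the special form~\eqref{e;supp-psi-vare} of $\Psi_\vare$, which is designed to respect every $Q_F$-slicing at once.
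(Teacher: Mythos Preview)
Your outline follows the paper's strategy through Lemma~\ref{l:FirstStep} (the unfolding, the partition of Lemma~\ref{l:PartitionForIntegration}, and the reduction via Lemma~\ref{l:GettingToMaxTypeTerm}), but there is a genuine gap at the decisive step.

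After the reduction, what one actually gets for $T\ge T_1(\vare,\vare')$ is
\[
\langle\wt{F}_T,\Psi_\vare\rangle
= C\int_K\int_{{\rm pr}_{F_\chi^c}(\Rcal_{F_\chi})}
(k\Theta(\xbf)\mu_{P_{F_\chi}^{(1)}})(\Psi_\vare)\,
e^{l_{E^c}(\xbf)-\apzc l_\chi(\xbf)}\,d\xbf\,dk + O_\vare(\vare'),
\]
and only the single term $F=F_\chi$ survives; the pieces with $F\supsetneq F_\chi$ are not separate contributions $L_F(\vare)$ but are absorbed into the $O_\vare(\vare')$ term by choosing $y_F$ large (Lemma~\ref{l:GettingToMaxTypeTerm}). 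The bounded region ${\rm pr}_{F_\chi^c}(\Rcal_{F_\chi})$ depends on $\vare'$ and, as $\vare'\to 0$, exhausts the \emph{unbounded} cone $\mathcal{C}_{-\vare}(F_\chi)$. Your diagonal choice $\vare'=\vare'(T)\to0$ therefore does not ``absorb the errors'' unless you first know that
\[
\int_K\int_{\mathcal{C}_{-\vare}(F_\chi)}(k\Theta(\xbf)\mu_{P_{F_\chi}^{(1)}})(\Psi_\vare)\,
e^{l_{E^c}(\xbf)-\apzc l_\chi(\xbf)}\,d\xbf\,dk
\]
converges. This is exactly what is missing from your argument, and it is not a formality: the integrand is only bounded by $\|\Psi_\vare\|_\infty$, and the weight $e^{l_{E^c}-\apzc l_\chi}$ is merely bounded (not integrable) on the cone, so convergence genuinely comes from decay of $(k\Theta(\xbf)\mu_{P_{F_\chi}^{(1)}})(\Psi_\vare)$.

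The paper supplies this by \emph{re-folding}: the above integral equals $\langle\Psi_\vare,\xi\rangle$, where
\[
\xi(g\Gamma)=\sum_{\gamma\in\Gamma/\Gamma\cap P_{F_\chi}^{(1)}} e^{-\apzc l_\chi(H(g\gamma))},
\]
and then proves (Lemma~\ref{l:abs-conv}) that this Eisenstein-type series converges uniformly near $\pi(e)$, using the already-established anti-canonical count (Corollary~\ref{c:Anticanonical}) as the input $|S_{g,T'}|\sim T'(\log T')^{|F_\chi^c|-1}$. This simultaneously gives part~(1) with $L(\vare)=C\langle\Psi_\vare,\xi\rangle$ and part~(2) via continuity of $\xi$, so $\lim_{\vare\to0}L(\vare)=C\,\xi(\pi(e))$. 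Your appeal to a ``continuous transverse density'' in part~(2) is precisely the continuity of $\xi$, which you have not established; the special form~\eqref{e;supp-psi-vare} of $\Psi_\vare$ is used for the sandwich~\eqref{pointwise-flag}, not to produce this convergence.
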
 
Proposition~\ref{p:AlmostWeak} plays a central role in this section and we prove it in several steps. 
Before doing so, let us prove Theorem~\ref{t:Flag} modulo Proposition~\ref{p:AlmostWeak}.

\begin{proof}[Proof of Theorem~\ref{t:Flag} module Proposition~\ref{p:AlmostWeak}.]
The proof is similar to the end of the proof in Section~\ref{s:CountingHorosphers}.
Let $\Psi_{\vare}$ be as above then in view of~\eqref{e;supp-psi-vare}
we have
\begin{equation}~\label{pointwise-flag}
\frac{f(e^{-\vare}T)}{f( T)}\cdot\widetilde{F}_{e^{-\vare}T}(\pi(e))\le
\langle \widetilde{F}_T,\Psi_{\vare}\rangle\le
\frac{f(e^{\vare}T)}{f(T)}\cdot\widetilde{F}_{e^{\vare}T}(\pi(e)).
\end{equation}
Now since for a given $\vare$ we have $\lim_{T\rightarrow \infty}\frac{f(e^{\pm\vare}T)}{f(T)}=e^{\pm O(\vare)},$ Theorem~\ref{t:Flag} follows from Proposition~\ref{p:AlmostWeak}.
\end{proof}

\begin{lem}\label{l:PartitionForIntegration}
Let $\chi$, $E$ and $\{\Psi_{\vare}\}$ be as before. Let $\vare$ be a (small) positive real number. For any (small) $\vare'>0$ there is a partition $\{\Rcal_{F}\}_{E\subseteq F\subseteq \Delta}$ of $\prod_{\alpha\in E}\{0\}\cdot\prod_{\alpha\in E^c} [-\vare,\infty)$ with the following properties (as before the implied constants depend only on $\bbg$ and $\chi$).
\begin{enumerate}
	\item We have $|(\Theta(\xbf)\mu_{Q_E})(\Psi_{\vare})-(\Theta({\rm pr}_{F^c}(\xbf))\mu_{Q_F})(\Psi_{\vare})|\le \vare'$ if $\xbf\in \Rcal_F$. 
	\item We have $|(\Theta(\xbf)\mu_{\peone})(\Psi_{\vare})-(\Theta({\rm pr}_{F^c}(\xbf))\mu_{\pfone})(\Psi_{\vare})|\le \vare'$ if $\xbf\in \Rcal_F$. 
	\item For any $E\subseteq F\subseteq \Delta$, $\Rcal_F=({\rm pr}_{F^c} \Rcal)\oplus (\ybf_F+\vcal^+_{F\setminus E})$ where $\ybf_F=y_F\sum_{\alpha\in F\setminus E}\ebf_{\alpha}$ for some positive number $y_F$. 
	\item For any $E\subseteq F\subseteq \Delta$, ${\rm pr}_{F^c}(\Rcal_F)$ is bounded. 
\end{enumerate}
\end{lem}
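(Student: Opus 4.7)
The plan is to construct the partition from a single uniform threshold obtained from Corollary~\ref{c:EPartitionIntoRegions}, and then to deduce property (2) from property (1) by integrating over a compact fundamental domain of $Y_E A''_E$ via the decomposition~\eqref{e;Fubini}.

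First I would apply Corollary~\ref{c:EPartitionIntoRegions} to the test function $\Psi_{\vare}$ with error $\vare'/2$ and the constant $M=\vare$, to obtain thresholds $x_{\Psi_{\vare},\vare'/2,F}$ for each $E\subseteq F\subseteq\Delta$. Since the conclusion of that corollary remains valid when any individual threshold is replaced by a larger value, I may enlarge $x_{\Psi_{\vare},\vare'/2,\Delta}$ so that the overall maximum $x^{\ast}:=\max_F x_{\Psi_{\vare},\vare'/2,F}$ is attained at $F=\Delta$. Set $y:=\log x^{\ast}$, take $y_F:=y$ for every $F\supsetneq E$, and define
\[
\Rcal_F:=\{\xbf:x_\alpha=0\text{ for }\alpha\in E,\ x_\alpha\ge y\text{ for }\alpha\in F\setminus E,\ -\vare\le x_\alpha<y\text{ for }\alpha\in E^c\setminus F\},
\]
with $\Rcal_E$ the complementary piece where $x_\alpha<y$ for every $\alpha\in E^c$. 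These sets clearly partition $\prod_{\alpha\in E}\{0\}\cdot\prod_{\alpha\in E^c}[-\vare,\infty)$. Property (4) follows from ${\rm pr}_{F^c}(\Rcal_F)\subseteq[-\vare,y)^{|E^c\setminus F|}$, and property (3) is immediate with $\ybf_F=y\sum_{\alpha\in F\setminus E}\ebf_\alpha$.

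For property (1), take $\xbf\in\Rcal_F$ and set $a=\Theta(\xbf)$. Then $\lambda_\alpha(a)=e^{x_\alpha}\ge e^y=x^{\ast}\ge x_{\Psi_{\vare},\vare'/2,F}$ for $\alpha\in F\setminus E$, and $e^{-\vare}\le\lambda_\alpha(a)<e^y=x^{\ast}$ for $\alpha\in E^c\setminus F$. Because our choice of $x^{\ast}$ places the maximum at $F'=\Delta$, we have $x^{\ast}\le\max\{x_{\Psi_{\vare},\vare'/2,F'}:|F'|>|F|\}$ whenever $F\ne\Delta$, so the hypothesis of Corollary~\ref{c:EPartitionIntoRegions} is satisfied and property (1) holds with error $\vare'/2<\vare'$.

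The main obstacle will be property (2), which concerns $\mu_{\peone}$ and $\mu_{\pfone}$ rather than $\mu_{Q_E}$ and $\mu_{Q_F}$. For this I would invoke the decomposition~\eqref{e;Fubini}: because $Y_E A''_E$ centralizes $A$ and $Y_E A''_E/(Y_E A''_E\cap\Gamma)$ is compact ($Y_E$ being compact and $A''_E$ being $\bbq$-anisotropic), for every $a\in A$ one has
\[
(a\mu_{\peone})(\Psi_{\vare})=\int_{F_{YA}}(a\mu_{Q_E})(g^{-1}\Psi_{\vare})\,dg,
\]
where $F_{YA}\subset Y_E A''_E$ is a compact fundamental domain. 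An analogous identity holds on the $\pfone$ side: since $Y_E A''_E\subseteq\pfone=Y_FA''_FQ_F$, the average $\int_{F_{YA}}g(a_{F^c}\mu_{Q_F})\,dg$ is a $\pfone$-invariant probability measure on $\pfone\Gamma/\Gamma$, hence equals $a_{F^c}\mu_{\pfone}$ by the uniqueness coming from $\pfone$-ergodicity. The technical subtlety is that Corollary~\ref{c:EPartitionIntoRegions} supplies thresholds depending on the test function; however, $\{g^{-1}\Psi_{\vare}:g\in F_{YA}\}$ is equicontinuous with uniformly bounded support, so covering $F_{YA}$ by finitely many small balls and carrying out the enlargement above on each produces a single $x^{\ast}$ valid for the whole family. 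Integrating property (1) against the Haar measure on $F_{YA}$ then yields property (2). The most delicate step will be matching the almost-direct-product decompositions of $\peone$ and $\pfone$ to verify $\int_{F_{YA}}g\mu_{Q_F}\,dg=\mu_{\pfone}$, using that the factors of $Y_E A''_E$ not present in $Y_F A''_F$ are absorbed into $Q_F\subseteq\pfone$.
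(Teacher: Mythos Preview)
Your proposal is correct and follows essentially the same route as the paper: parts (1), (3), (4) come directly from Corollary~\ref{c:EPartitionIntoRegions}, and part (2) is obtained from part (1) via the Fubini decomposition~\eqref{e;Fubini}, using that $Y_EA''_E$ centralizes $A$, that $Y_FA''_F\subseteq Y_EA''_E\subseteq \pfone$, and that the extra factors of $Y_EA''_E$ over $Y_FA''_F$ are absorbed into $Q_F$. Your treatment is in fact more careful than the paper's on one point: the paper simply writes ``the conclusion in (2) now follows from (1)'' after bounding by an integral over the compact quotient $Y_EA''_E/(Y_EA''_E\cap\Gamma)$, whereas you correctly observe that one actually needs (1) for the whole equicontinuous family $\{g^{-1}\Psi_\vare:g\in F_{YA}\}$ and handle this by a finite-cover compactness argument before fixing the threshold $x^\ast$.
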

\begin{proof}
Parts (1), (3) and (4) are just rewriting Corollary~\ref{c:EPartitionIntoRegions} in the new setting,
we now show part (2). 
Recall from the beginning of this section that $P_\bullet^{(1)}=Y_\bullet A_\bullet''Q_\bullet $
where the product is an almost direct product, $Y_\bullet$ is a
compact group and $A_\bullet''\subseteq A'.$ Moreover, both $Y_\bullet$
and $A_\bullet''$ centralize $A.$ 
Also recall that if $E\subseteq F,$ then $Y_FA''_F\subseteq Y_EA_E.$ 

Now using~\eqref{e;Fubini} we have
\begin{align*}
|\int_{G/\Gamma}\Psi_\vare d(\Theta(\xbf)\mu_{\peone})&-\int_{G/\Gamma}\Psi_\vare d(\Theta({\rm pr}_{F^c}(\xbf))\mu_{\pfone})|=\\
&|\int_{G/\Gamma}\Psi_\vare d\mu_{Y_{E}}d\mu_{A_E''}d(\Theta(\xbf)\mu_{Q_E})-\int_{G/\Gamma}\Psi_\vare 
d\mu_{Y_{F}}d\mu_{A''_F}d(\Theta({\rm pr}_{F^c}(\xbf))\mu_{Q_F})|\\
&\int_{G/\Gamma}\int_{G/\Gamma}
|(\Theta(\xbf)\mu_{Q_E})(\Psi_{\vare})-(\Theta({\rm pr}_{F^c}(\xbf))\mu_{Q_F})(\Psi_{\vare})|
d\mu_{Y_{F}}d\mu_{A''_F}d\nu.
\end{align*}
Where $d\nu$ is the probability measure
on $(Y_E/Y_E\cap\Gamma)/(Y_F/Y_F\cap\Gamma)\cdot(A''_E/A''_E\cap\Gamma)/(A''_F/A''_F\cap\Gamma);$ 
recall that $Y_FA''_F\subseteq Y_EA''_E$ is a normal subgroup.

The conclusion in (2) now follows from (1).
\end{proof}

The next lemma helps us to see the importance of the max-type $F_{\chi}$ of $\chi$.
\begin{lem}\label{l:GettingToMaxTypeTerm}
Let $\chi$, $E$ and $\{\Psi_{\vare}\}$ be as before. Let $\vare,\vare'$ be (small) positive numbers. Let $\{\Rcal_F\}_{E\subseteq F\subseteq \Delta}$ and $\{\ybf_F\}_{E\subseteq F\subseteq \Delta}$ be as in Lemma~\ref{l:PartitionForIntegration} and further we can and will assume that $y_F$ is large depending on $\vare$ and $\vare'$. Then there is a positive number $T_0:=T_0(\vare',\vare)$ such that for any $T\ge T_0$ and any $\xbf_{F^c}\in {\rm pr}_{F^c}(\Rcal_F)$, where $F\neq F_{\chi}$, we have
\[
g_T(\xbf_{F^c}):=\frac{\int_{\Rcal_{F,T}(\xbf_{F^c})}e^{l_{E^c}(\xbf_F)}d\xbf}{f(T)}\ll_{\vare} \vare'
\] 
where $\Rcal_{F,T}(\xbf_{F^c}):=\{\xbf\in (\xbf_{F^c}+\ybf_F)+\vcal^+_{F\setminus E}|\h l_{\chi}(\xbf)\le \log T\}$ and as before 
\[
f(T)=\int_{\xbf\in \vcal^+_{0,T}} e^{l_{E^c}(\xbf)}d\xbf
,\]
(See Figure~\ref{F:Integration}).
\end{lem}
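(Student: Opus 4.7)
The plan is to reduce the integral via translation to one covered by Lemma~\ref{int-est1}, divide by the asymptotic of $f(T)$ given by the same lemma, and then split into sub-cases depending on whether $F\supseteq F_\chi$.

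First I would parametrize $\xbf\in\Rcal_{F,T}(\xbf_{F^c})$ by $\xbf=\xbf_{F^c}+\ybf_F+\xbf'$ with $\xbf'\in\vcal^+_{F\setminus E}$, so $d\xbf=d\xbf'$, $\xbf_F=\ybf_F+\xbf'$, and the constraint $l_{\chi}(\xbf)\le\log T$ becomes $l_{\chi}(\xbf')\le\log S$ with $S=T\exp(-l_{\chi}(\xbf_{F^c})-l_{\chi}(\ybf_F))$. Pulling out the constant $e^{l_{E^c}(\ybf_F)}$ and applying Lemma~\ref{int-est1} to the resulting integral over $\vcal^+_{0,S}\cap\vcal_{F\setminus E}$, I obtain
\begin{equation*}
\int_{\Rcal_{F,T}(\xbf_{F^c})} e^{l_{E^c}(\xbf_F)}\,d\xbf\ \sim\ C_1\, e^{l_{E^c}(\ybf_F)-\apzc_F l_{\chi}(\ybf_F)-\apzc_F l_{\chi}(\xbf_{F^c})}\cdot T^{\apzc_F}(\log T)^{\bpzc_F-1}
\end{equation*}
as $T\to\infty$, using that $\log S=\log T+O_\vare(1)$ uniformly for $\xbf_{F^c}$ in the bounded set ${\rm pr}_{F^c}(\Rcal_F)$ (Lemma~\ref{l:PartitionForIntegration}(4)). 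Dividing by $f(T)\sim C_2 T^{\apzc}(\log T)^{\bpzc-1}$ yields
\begin{equation*}
g_T(\xbf_{F^c})\ =\ O_\vare\!\bigl(e^{l_{E^c}(\ybf_F)-\apzc_F l_{\chi}(\ybf_F)}\cdot T^{\apzc_F-\apzc}(\log T)^{\bpzc_F-\bpzc}\bigr).
\end{equation*}

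When $F\not\supseteq F_\chi$, Lemma~\ref{l:MaxCone}(2) together with the combinatorial description of $\bpzc_F$ as the number of coordinate rays achieving the maximum of $m_\alpha/c_\alpha$ on $F\setminus E$ shows that either $\apzc_F<\apzc$ (if $F\cap F_\chi=E$) or $\apzc_F=\apzc$ with $\bpzc_F=|(F\cap F_\chi)\setminus E|<|F_\chi\setminus E|=\bpzc$ (if $F\cap F_\chi\subsetneq F_\chi$). Either way $T^{\apzc_F-\apzc}(\log T)^{\bpzc_F-\bpzc}\to 0$ as $T\to\infty$, and since the remaining exponential prefactor depends only on the fixed $\ybf_F$, taking $T\ge T_0(\vare,\vare')$ forces $g_T\ll_\vare \vare'$ uniformly in $\xbf_{F^c}$.

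The main obstacle is the sub-case $F\supsetneq F_\chi$: here $\apzc_F=\apzc$ and $\bpzc_F=\bpzc$, so the $T$-dependent factor becomes $\Theta(1)$ and one must rely entirely on the $y_F$-dependence. But
\begin{equation*}
l_{E^c}(\ybf_F)-\apzc\, l_{\chi}(\ybf_F)\ =\ y_F\sum_{\alpha\in F\setminus F_\chi}(m_\alpha-\apzc\, c_\alpha)\ <\ 0,
\end{equation*}
strictly, because $F\setminus F_\chi$ is nonempty and every $\alpha\in F\setminus F_\chi$ satisfies $m_\alpha<\apzc\, c_\alpha$ by the very definition of the max-type. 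Choosing $y_F$ sufficiently large in terms of $\vare$ and $\vare'$---precisely the freedom built into the hypothesis of the lemma---drives this exponential prefactor below $\vare'$, finishing the argument.
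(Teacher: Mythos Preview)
Your argument is correct and follows essentially the same route as the paper: translate to reduce to Lemma~\ref{int-est1}/Corollary~\ref{int-est2}, then compare exponents $(\apzc_F,\bpzc_F)$ to $(\apzc,\bpzc)$, with the case $F_\chi\subsetneq F$ handled by the strict negativity of $l_{E^c}(\ybf_F)-\apzc\, l_\chi(\ybf_F)$ and the freedom to enlarge $y_F$. Your case split for $F\not\supseteq F_\chi$ and your computation $l_{E^c}(\ybf_F)-\apzc\, l_\chi(\ybf_F)=y_F\sum_{\alpha\in F\setminus F_\chi}(m_\alpha-\apzc c_\alpha)$ are in fact cleaner and more explicit than the paper's corresponding steps.
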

\begin{figure}
\includegraphics[width=\columnwidth]{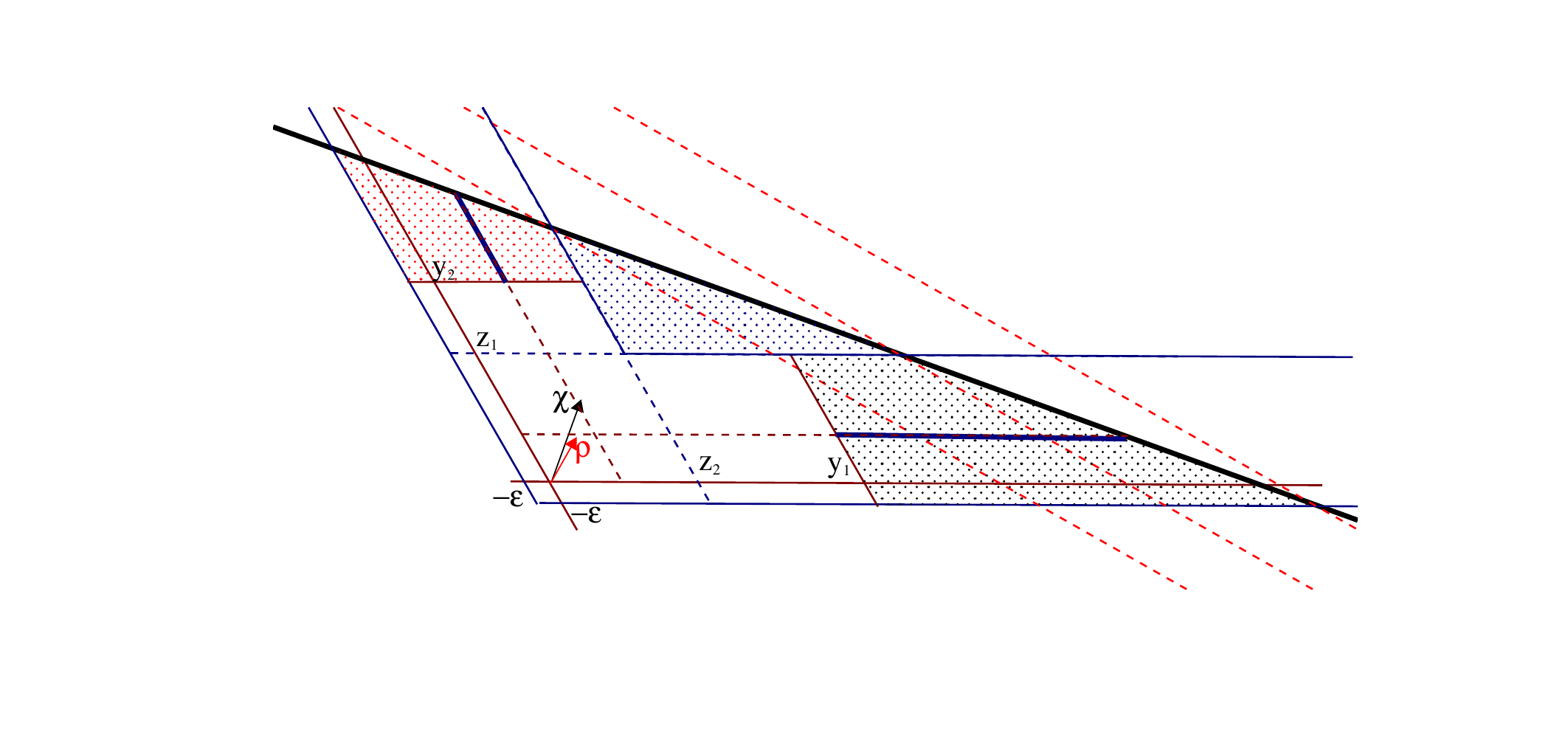}
\caption{Integration regions, $\mathbb{SL}_3$ case.}
\label{F:Integration}
\end{figure}
\begin{proof}
By the proof of Corollary~\ref{int-est2}, we have that 
\[
\int_{\Rcal_{F,T}(\xbf_{F^c})}e^{l_{E^c}(\xbf_F)}d\xbf \sim C e^{l_{E^c}(\ybf_F)-\apzc_F l_{\chi}(\xbf_{F^c}+\ybf_F)} T^{\apzc_F} (\log T)^{\bpzc_F-1},
\]
as $T$ goes to infinity. Hence there is nothing to prove if either $\apzc>\apzc_F$ or $\bpzc>\bpzc_F$. So without loss of generality we can and will assume that $F_{\chi}\subseteq F$, and so $\apzc=\apzc_F$ and $\bpzc=\bpzc_F$. 
Now since 
\[
\xbf_{F^c}\in \prod_{\alpha\in F}\{0\}\cdot \prod_{\alpha\in \Delta\setminus F} [-\vare,\infty),
\]
and $\apzc>0$ we have that 
$
e^{-\apzc l_{\chi}(\xbf)}\ll_{\vare} 1.
$

If $F_{\chi}\subsetneq F$, then $l_{E^c}(\sum_{\alpha\in F\setminus E}\ebf_{\alpha})-\apzc l_{\chi}(\sum_{\alpha\in F\setminus E}\ebf_{\alpha})\le \max_{\alpha\in \Delta\setminus F_{\chi}}l_{E^c}(\ebf_{\alpha})-\apzc l_{\chi}(\ebf_{\alpha})<0$. So assuming $y_F$ is large enough, we have that 
\[
e^{l_{E^c}(\ybf_F)-\apzc l_{\chi}(\ybf_F)}\ll \vare'.
\]
Altogether we get the desired result.
\end{proof}
\begin{remark}
It is worth pointing out that 
\[
\Rcal_{F,T}(\xbf_{F^c})=\{\xbf' \in \Rcal_F|\h {\rm pr}_{F^c}(\xbf')=\xbf_{F^c},\h l_{\chi}(\xbf')\le \log T\},
\]
for large enough $T$ (depending on $\vare$, $\vare'$ and the choice of $\{y_F\}_F$).
\end{remark}
\begin{lem}\label{l:FirstStep}
Let $\chi$, $E$ and $\{\Psi_{\vare}\}$ be as before. Let $\vare$ be a (small) positive real number. 
For a given (small) $\vare'>0$, let $\Rcal_F$ be as in Lemma~\ref{l:PartitionForIntegration}. 
Then there is $T_1=T_1(\vare,\vare')$ such that for $T\ge T_1$ we have
\[
\langle \wt{F}_T,\Psi_{\vare}\rangle=C\int_K\int_{\xbf\in {\rm pr}_{F_{\chi}^c}(\Rcal_{F_{\chi}})} (k\Theta(\xbf)\mu_{Q_{F_{\chi}}})(\Psi_{\vare}) e^{l_{E^c}(\xbf)-\apzc l_{\chi}(\xbf)} d\xbf  dk+O_{\vare}(\vare'),
\] 
where $C$ is a positive number given in~\ref{int-est1}, $F_{\chi}$ is the max-type of 
$\chi$ (see Definition~\ref{d:aAndbParameters}). We use this convention that, if $F_{\chi}=\Delta$, 
then the inner integral is one, and so 
\[
\langle \wt{F}_T,\Psi_{\vare}\rangle=C\vol(K)+O_{\vare}(\vare').
\]
\end{lem}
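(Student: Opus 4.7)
The proof follows the template of Section~\ref{s:CountingHorosphers}, adapted to the present setting. \textbf{Unfolding.} Since $\peone$ is the stabilizer of $v$ in $G,$ the function $g\mapsto\mathbf{1}_T(\eta(g)v)$ descends to $G/\peone,$ and standard unfolding gives, up to a positive index constant,
\[
\langle F_T,\Psi_{\vare}\rangle = C_1\int_{G/\peone}\mathbf{1}_T(\eta(g)v)\,(g\mu_{\peone})(\Psi_{\vare})\,d\bar\nu(g).
\]
Lemma~\ref{B-decomposition} identifies $\overline{B}_T$ with $KA^+_{E^c,T}\peone/\peone;$ feeding this into the Iwasawa-type change of variables on $G/\peone$ produces the modular factor $\rho'_E,$ so after dividing by $f(T)$ we obtain
\[
\langle\widetilde F_T,\Psi_{\vare}\rangle = \frac{C_2}{f(T)}\int_K\int_{A^+_{E^c,T}}\rho'_E(a)\,(ka\mu_{\peone})(\Psi_{\vare})\,da\,dk.
\]

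\textbf{Partition and approximation.} The support hypothesis on $\Psi_{\vare}$ combined with Lemma~\ref{out} confines the effective $a$-range to $\xbf = \log a$ lying in $\prod_{\alpha\in E}\{0\}\cdot\prod_{\alpha\in E^c}[-C\vare,\infty).$ I partition this region by Lemma~\ref{l:PartitionForIntegration}(2); on each piece $\Rcal_F$ one replaces $(k\Theta(\xbf)\mu_{\peone})(\Psi_{\vare})$ by $(k\Theta({\rm pr}_{F^c}\xbf)\mu_{\pfone})(\Psi_{\vare})$ at the cost of $\vare'.$ Since $\frac{1}{f(T)}\int_{A^+_{E^c,T}}\rho'_E\,da$ is bounded as $T\to\infty$ (Lemma~\ref{int-est1}), the total approximation error is $O_{\vare}(\vare').$

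\textbf{Extraction of the main term.} Fubini separates the $\xbf$-integration into $\xbf_{F^c}$ and $\xbf_F$ pieces by linearity of $l_{E^c},$ and the $\xbf_F$-integral divided by $f(T)$ is exactly $g_T(\xbf_{F^c})$ of Lemma~\ref{l:GettingToMaxTypeTerm}. That lemma shows $g_T = O_{\vare}(\vare')$ uniformly on the bounded base ${\rm pr}_{F^c}\Rcal_F$ whenever $F \neq F_\chi,$ killing those summands. For $F = F_\chi,$ Corollary~\ref{int-est2} together with the asymptotic for $f(T)$ from Lemma~\ref{int-est1} yields $g_T(\xbf_{F_\chi^c}) \to C_3\,e^{-\apzc\,l_\chi(\xbf_{F_\chi^c})}$ uniformly on ${\rm pr}_{F_\chi^c}\Rcal_{F_\chi};$ the prefactor $e^{l_{E^c}(\ybf_{F_\chi})-\apzc\,l_\chi(\ybf_{F_\chi})}$ equals $1$ by the very definition of the max-type $F_\chi.$ Dominated convergence (with majorant $\|\Psi_{\vare}\|_\infty$ on the compact base) then yields the main term. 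To convert the $\mu_{\pfone}$-integrand (with $F=F_\chi$) into the $\mu_{Q_{F_\chi}}$-integrand of the statement, use the product decomposition~\eqref{e;Fubini}: the extra integrations over the compact groups $Y_{F_\chi}$ and $A''_{F_\chi}/A''_{F_\chi}\cap\Gamma,$ together with the fact that both centralize $A,$ get absorbed into the overall constant $C.$

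\textbf{Main obstacle.} The principal technical point is the bookkeeping of errors: one must verify that the two independent $\vare'$-errors (from the partition step and from Lemma~\ref{l:GettingToMaxTypeTerm}) remain $O_{\vare}(\vare')$ after being weighted by $\rho'_E$ and normalized by $f(T),$ and that the $T\to\infty$ limit can be brought inside the bounded $K\times{\rm pr}_{F_\chi^c}\Rcal_{F_\chi}$ integration via a $T$-independent, $\vare$-dependent majorant.
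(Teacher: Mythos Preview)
Your approach is essentially the same as the paper's: unfold via Lemma~\ref{B-decomposition} and the Haar-measure decomposition to get an integral over $K\times A_{E^c,T}$ against $(ka\mu_{\peone})(\Psi_\vare)$ with Jacobian $\rho'_E$, cut off using Lemma~\ref{out}, apply the partition from Lemma~\ref{l:PartitionForIntegration}(2), split each $F$-summand by Fubini, and then use Lemma~\ref{l:GettingToMaxTypeTerm} together with Corollary~\ref{int-est2} to isolate the $F=F_\chi$ term. Your observation that $l_{E^c}(\ybf_{F_\chi})-\apzc\, l_\chi(\ybf_{F_\chi})=0$ (because $m_\alpha=\apzc c_\alpha$ for $\alpha\in F_\chi\setminus E$) is correct and is exactly what makes the prefactor disappear.

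There is one genuine gap. Your final step, converting $(k\Theta(\xbf)\mu_{P_{F_\chi}^{(1)}})(\Psi_\vare)$ into $(k\Theta(\xbf)\mu_{Q_{F_\chi}})(\Psi_\vare)$ by ``absorbing'' the $Y_{F_\chi}$ and $A''_{F_\chi}/A''_{F_\chi}\cap\Gamma$ integrations into the constant $C$, does not work. Using~\eqref{e;Fubini} and the fact that $Y_{F_\chi}$, $A''_{F_\chi}$ centralize $A$, one has
\[
(k\Theta(\xbf)\mu_{P_{F_\chi}^{(1)}})(\Psi_\vare)=\int\!\!\int (kya''\Theta(\xbf)\mu_{Q_{F_\chi}})(\Psi_\vare)\,d\mu_{Y_{F_\chi}}\,d\mu_{A''_{F_\chi}},
\]
and this is \emph{not} a constant multiple of $(k\Theta(\xbf)\mu_{Q_{F_\chi}})(\Psi_\vare)$: the translates by $y,a''$ genuinely move the support of $\Psi_\vare$, and no $K$-averaging absorbs this. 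In fact the paper's own proof terminates with $\mu_{P_{F_\chi}^{(1)}}$, not $\mu_{Q_{F_\chi}}$ (see the last displayed line of the proof and its subsequent use in the proof of Proposition~\ref{p:AlmostWeak}); the appearance of $\mu_{Q_{F_\chi}}$ in the statement is a typo. So you should simply stop at the $\mu_{P_{F_\chi}^{(1)}}$ expression rather than attempt a conversion that is neither valid nor needed.
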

\begin{proof}
First we notice that by Lemma~\ref{out}, if $\lambda_\alpha(a)<e^{-\vare}$ for some $\alpha,$ then $\int_{G/\Gamma} \Psi_{\vare} d(a\mu_{Q_E})=0$. 
For any real number $r$, let $\ccal_{T,r}:=\{\xbf\in r\sum_{\alpha\in E^c}\ebf_{\alpha}+\vcal^+|\h l_{\chi}(\xbf)\le \log T\}$ and 
\[
\Rcal_{F,T}:=\Rcal_F\cap \ccal_{T,-\vare}=\{\xbf\in \Rcal_F|\h l_{\chi}(\xbf)\le \log T\}.
\]
Now using Lemma~\ref{B-decomposition} and the decomposition of 
the Haar measure, e.g.~\cite[Proposition 8.44]{Kn}, we have
\begin{align}
\langle \wt{F}_T,\Psi_{\vare}\rangle&
=\frac{1}{f(T)}\int_K\int_{\xbf\in \ccal_{T,-\vare}}\int_{G/\Gamma} \Psi_{\vare}(g'\Gamma) 
d(k\Theta(\xbf)\mu_{\peone})(g') e^{l_{E^c}(\xbf)} d\xbf dk
\\
\notag {}^{\text{(Lemma \ref{l:PartitionForIntegration})}\leadsto}&
=\sum_{E\subseteq F\subseteq \Delta}\frac{1}{f(T)}\int_K\int_{\Rcal_{F,T}}(k\Theta(\xbf)\mu_{\peone})(\Psi_{\vare}) e^{l_{E^c}(\xbf)} d\xbf dk
\\
\notag {}^{\text{(Lemma \ref{l:PartitionForIntegration})}\leadsto}&
= \sum_{E\subseteq F\subseteq \Delta}\frac{1}{f(T)}\int_K\int_{\Rcal_{F,T}}(k\Theta({\rm pr}_{F^c}(\xbf))\mu_{\pfone})(\Psi_{\vare}) e^{l_{E^c}(\xbf)} d\xbf dk 
\\
\notag
&+ \vare' \vol (K) O\left(\frac{\int_{\xbf\in \ccal_{T,-\vare}} e^{l_{E^c}(\xbf)} d\xbf}{f(T)}\right)
\end{align}
Hence by Corollary \ref{int-est2} we have
\begin{align}\label{e:pairing-flag}
\langle \wt{F}_T,\Psi_{\vare}\rangle&
=\sum_{E\subseteq F\subseteq \Delta}\frac{1}{f(T)}\int_K\int_{\Rcal_{F,T}}(k\Theta({\rm pr}_{F^c}(\xbf))\mu_{\pfone})(\Psi_{\vare}) e^{l_{E^c}(\xbf)} d\xbf dk +O_{\vare}(\vare')
\\
\notag {}^{\text{(Lemma~\ref{l:GettingToMaxTypeTerm}, $T\gg_{\vare,\vare'} 1$)}\leadsto}&
=\sum_{E\subseteq F\subseteq \Delta} \int_K\int_{{\rm pr}_{F^c}(\Rcal_F)} (k\Theta(\xbf_{F^c})\mu_{\pfone})(\Psi_{\vare}) e^{l_{E^c}(\xbf_{F^c})} g_T(\xbf_{F^c}) d\xbf_{F^c}  dk +O_{\vare}(\vare')
\\
\notag {}^{\text{(Lemma~\ref{l:GettingToMaxTypeTerm})}\leadsto}
&
=C\int_K\int_{\xbf\in {\rm pr}_{F_{\chi}^c}(\Rcal_{F_{\chi}})} (k\Theta(\xbf)\mu_{P_{F_{\chi}}^{(1)}})(\Psi_{\vare}) e^{l_{E^c}(\xbf)-\apzc l_{\chi}(\xbf)} d\xbf  dk
\\
\notag &
+O_{\vare}(\vare')\sum_{E\subseteq F\subseteq \Delta, F\neq F_\chi}\int_K\int_{{\rm pr}_{F^c}(\Rcal_F)} (k\Theta(\xbf_{F^c})
\mu_{\pfone})(\Psi_{\vare}) e^{l_{E^c}(\xbf_{F^c})} d\xbf_{F^c}  dk+O_{\vare}(\vare')
\\
\notag &
= C\int_K\int_{\xbf\in {\rm pr}_{F_{\chi}^c}(\Rcal_{F_{\chi}})} (k\Theta(\xbf)\mu_{P_{F_{\chi}}^{(1)}})(\Psi_{\vare}) e^{l_{E^c}(\xbf)-\apzc l_{\chi}(\xbf)} d\xbf  dk +O_{\vare}(\vare').
\end{align}
\end{proof}

\begin{cor}~\label{c:Anticanonical}
In the anti-canonical line-bundle case, i.e. when $\chi=\rho_E'$, we have that
\[
\lim_{T\rightarrow \infty} \langle \wt{F}_T,\Psi_{\vare}\rangle =C\vol (K),
\]
where $C$ is the constant given in Lemma~\ref{int-est1}. 

In particular, this implies Proposition~\ref{p:AlmostWeak} and therefore Theorem~\ref{t:Flag} when $\chi=\rho_E'$.
\end{cor}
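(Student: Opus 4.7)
The strategy is to reduce Corollary~\ref{c:Anticanonical} to the already-established Lemma~\ref{l:FirstStep} by identifying the anti-canonical choice $\chi=\rho_E'$ as the one in which the max-type $F_\chi$ is as large as possible, namely $F_\chi=\Delta$.

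First I would verify this identification of the max-type. Since $\rho_E'=\sum_{\alpha\in E^c}m_\alpha\lambda_\alpha$, the coefficients $c_\alpha$ of $\chi=\rho_E'$ coincide with $m_\alpha$ for every $\alpha\in E^c$, hence $m_\alpha/c_\alpha=1$ uniformly. By Definition~\ref{d:aAndbParameters} and Lemma~\ref{l:MaxCone} this gives $\apzc=\max_{\alpha\in E^c}m_\alpha/c_\alpha=1$ and
\[
F_\chi = E\cup\{\alpha\in E^c : m_\alpha/c_\alpha=1\}=\Delta.
\]
In particular $l_{E^c}-\apzc\, l_\chi\equiv 0$ on $\vcal_{E^c}$, so the weight in the integrand of the main term is trivial.

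Next I would apply Lemma~\ref{l:FirstStep} with $F_\chi=\Delta$. Under the convention stated there, the conclusion reads: for every sufficiently small $\vare>0$ and $\vare'>0$ there exists $T_1=T_1(\vare,\vare')$ such that
\[
\bigl|\langle \wt{F}_T,\Psi_\vare\rangle - C\,\vol(K)\bigr|=O_\vare(\vare')\qquad\text{for all } T\ge T_1.
\]
Fixing $\vare$, sending $T\to\infty$ first and then $\vare'\to 0^+$ yields $\lim_{T\to\infty}\langle\wt{F}_T,\Psi_\vare\rangle=C\,\vol(K)$, which is the asserted limit. Setting $L(\vare):=C\,\vol(K)$, part (1) of Proposition~\ref{p:AlmostWeak} is the statement just established, and part (2) is trivial because $L$ is constant in $\vare$.

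Finally, I would deduce Theorem~\ref{t:Flag} in the anti-canonical case by invoking the squeeze argument already carried out immediately after the statement of Proposition~\ref{p:AlmostWeak}: the sandwiching inequality~\eqref{pointwise-flag}, together with the asymptotic $f(T)\sim C'\,T(\log T)^{|E^c|-1}$ from Lemma~\ref{int-est1}(2) and the ratio estimate $f(e^{\pm\vare}T)/f(T)\to e^{\pm O(\vare)}$, yields $\wt{F}_T(\pi(e))\to C\,\vol(K)$ as $T\to\infty$, and hence $N_{\rho_E'}(T)\sim C''\,T(\log T)^{|E^c|-1}$ for a positive constant $C''$, as required.

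There is essentially no hard step in this corollary: all of the analytic and dynamical work has already been done in Lemmas~\ref{l:PartitionForIntegration}, \ref{l:GettingToMaxTypeTerm} and \ref{l:FirstStep}. The only nontrivial point is the identification $F_\chi=\Delta$, which ensures that in the decomposition~\eqref{e:pairing-flag} only the ``top'' stratum $F=\Delta$ contributes to the limit, while the strata with $F\subsetneq\Delta$ are absorbed into the $O_\vare(\vare')$ error by Lemma~\ref{l:GettingToMaxTypeTerm}. This is the conceptual reason the anti-canonical case is clean, and why it serves as the base case from which the general case of Theorem~\ref{t:Flag} is bootstrapped.
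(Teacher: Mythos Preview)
Your proposal is correct and follows essentially the same approach as the paper: identify $F_\chi=\Delta$ from $m_\alpha/c_\alpha\equiv 1$, then invoke Lemma~\ref{l:FirstStep} (using that $\mu_G(\Psi_\vare)=1$, which is what underlies the ``convention'' you cite) to get $\langle\wt{F}_T,\Psi_\vare\rangle=C\vol(K)+O_\vare(\vare')$, and let $\vare'\to 0$. Your added remarks on why Proposition~\ref{p:AlmostWeak} and Theorem~\ref{t:Flag} follow are accurate elaborations of what the paper leaves implicit.
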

\begin{proof}
In this case we have $F_{\chi}=\Delta$ and so by Lemma~\ref{l:FirstStep} and the fact that $\mu_G(\Psi_{\vare})=1$ we have that
\[
\langle \wt{F}_T,\Psi_{\vare}\rangle=C\vol (K)+O_{\vare}(\vare'),
\]
for large enough $T$. 
\end{proof}

The following Lemma is a well known fact and follows from reduction theory and 
results from harmonic analysis on $G,$ see ~\cite[Lemma 23]{HC}. We give a self-contained 
treatment of this convergence using a somewhat technique.

Let $E$ and $\chi$ be as in the beginning of this section. Recall that for any 
$E\subseteq F\subseteq \Delta;$ 
we put $\rho'_F=\sum_{\alpha\not\in F}m_\alpha\lambda_\alpha.$  

Let $\eta_{F_\chi}=\wedge ^{\dim R_u(\bbp_{F_\chi})}\Ad$ 
and let $v_{F_\chi}$ a unit vector on the (rational) line 
\[
\wedge^{\dim R_u(\bbp_{F_\chi})}\Lie(R_u(\bbp_{F_{\chi}})).
\]
Note that for any $p\in\bbp_{F_\chi}$ we have $\eta_{F_\chi}( p)v_{F_\chi}=\rho'_{F_\chi}( p)v_{F_\chi},$
in particular, ${\rm Stab}_{\mathbb{G}}(v_{F_\chi})=\bbp_{F_\chi}^{(1)}.$ 

Fix a bounded neighborhood of the identity $\mathcal{O}\subset G.$ 
By the Iwasawa decomposition any $g\in G$ can be decomposed as $g=k_g\Theta(H(g))q_g$ 
where $k_g\in K,$ $q_g\in\pfone$  and $H(g)\in \vcal_{F_{\chi}^c}$.

\begin{lem}\label{l:abs-conv}
The series 
\[
\xi(g\Gamma)=\sum_{{\gamma}\in \Gamma/\Gamma\cap P_{F_\chi}^{(1)}} e^{-\apzc l_\chi (H(g\gamma))}
\]
is uniformly convergent on $\mathcal{O},$ and in particular, it defines an analytic function 
on $\mathcal{O}.$ 
\end{lem}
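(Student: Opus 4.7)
The plan is to dominate $\xi$ by a classical Eisenstein-type series attached to $\bbp_{F_\chi}$ whose uniform convergence is standard. By the very definition of the max-type, $m_\alpha/c_\alpha<\apzc$ is strict for every $\alpha\in F_\chi^c=\Delta\setminus F_\chi$, so we can fix $s>1$ with $sm_\alpha\le\apzc c_\alpha$ for all such $\alpha$. Since $H(g\gamma)\in\vcal_{F_\chi^c}$ the $x_\alpha$-coordinates with $\alpha\in F_\chi$ vanish, and a direct comparison shows
\[
\apzc l_\chi(\xbf)=\sum_{\alpha\in F_\chi^c}\apzc c_\alpha x_\alpha\ge s\sum_{\alpha\in F_\chi^c}m_\alpha x_\alpha=s\log\rho'_{F_\chi}(\Theta(\xbf))
\]
on the positive chamber $\vcal_{F_\chi^c}^+$.

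I would next show that $H(g\gamma)$ lies in $\vcal_{F_\chi^c}^+$ up to a uniform constant depending only on $\mathcal{O}$. For $\alpha\in F_\chi^c$, the fundamental weight $\lambda_\alpha$ is a $\bbq$-character of $\bbp_{F_\chi}\subseteq\bbp_\alpha$ by Lemma~\ref{l:CharParabolic}, hence trivial on $P_{F_\chi}^{(1)}$; combined with $\vartheta_\alpha(p)v_\alpha=\lambda_\alpha(p)^{k_\alpha}v_\alpha$ for $p\in\bbp_\alpha$ (Lemma~\ref{root}), this gives $\vartheta_\alpha(q)v_\alpha=v_\alpha$ for every $q\in P_{F_\chi}^{(1)}$. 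Writing $g\gamma=k\,\Theta(H(g\gamma))\,q$ one therefore gets
\[
\|\vartheta_\alpha(g\gamma)v_\alpha\|=\lambda_\alpha(\Theta(H(g\gamma)))^{k_\alpha}\,\|\vartheta_\alpha(k)v_\alpha\|.
\]
On the other hand $\vartheta_\alpha(\gamma)v_\alpha$ is a nonzero vector in $\wedge^{\ell_\alpha}\gfr(\bbz)$, while $\|\vartheta_\alpha(g)^{\pm1}\|$ is bounded on $\mathcal{O}$ and $\|\vartheta_\alpha(k)v_\alpha\|$ is bounded on $K$. This yields the uniform lower bound $\lambda_\alpha(\Theta(H(g\gamma)))\ge c(\mathcal{O})>0$ for all $\alpha\in F_\chi^c$, $g\in\mathcal{O}$ and $\gamma\in\Gamma$.

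Combining these two ingredients produces the pointwise majorization, uniform in $g\in\mathcal{O}$ and $\gamma$,
\[
e^{-\apzc l_\chi(H(g\gamma))}\le C(\mathcal{O})\,\rho'_{F_\chi}(\Theta(H(g\gamma)))^{-s}.
\]
The majorant $\sum_{\gamma\in\Gamma/\Gamma\cap P_{F_\chi}^{(1)}}\rho'_{F_\chi}(\Theta(H(g\gamma)))^{-s}$ with $s>1$ is the standard Eisenstein series for the parabolic $\bbp_{F_\chi}$, and its absolute convergence uniformly on compact subsets of $G$ is a classical consequence of reduction theory and harmonic analysis on $G$; see~\cite[Lemma~23]{HC}. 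Thus $\xi$ converges uniformly on $\mathcal{O}$. Since the Iwasawa map $g\mapsto H(g)$ is real-analytic, each summand of $\xi$ is analytic, and Weierstrass' theorem upgrades uniform convergence to analyticity of $\xi$ on $\mathcal{O}$.

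The main difficulty is the middle step, which converts the abstract strict inequality of the first step into a pointwise inequality at every actual value of $H(g\gamma)$. This is possible because $P_{F_\chi}^{(1)}$ is by definition the intersection of kernels of all $\bbq$-characters of $\bbp_{F_\chi}$, so its Iwasawa factor is invisible to each $\vartheta_\alpha$ with $\alpha\in F_\chi^c$; combined with the fact that $\Gamma=\bbg(\bbz)$ preserves a $\bbz$-structure on $\wedge^{\ell_\alpha}\gfr$, this forces the uniform positivity of $\lambda_\alpha\circ\Theta\circ H$. Once this is in hand, the classical convergence of the dominating Eisenstein series and the passage to analyticity are essentially formal.
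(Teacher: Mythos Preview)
Your argument is correct, and it is exactly the alternative route the paper itself alludes to in the paragraph preceding the lemma: reduce to the classical convergence of the Eisenstein series attached to $\bbp_{F_\chi}$ from \cite[Lemma~23]{HC}. Your two key observations --- that $m_\alpha/c_\alpha<\apzc$ strictly for every $\alpha\in F_\chi^c$ (so an $s>1$ with $sm_\alpha\le\apzc c_\alpha$ exists), and that the Iwasawa coordinate $H(g\gamma)$ is uniformly bounded below on $\mathcal{O}$ via the integral structure and Lemma~\ref{root} --- are the same inputs the paper uses. The paper, however, deliberately avoids invoking \cite{HC} and instead gives a self-contained tail estimate: it groups the terms dyadically according to $\|\eta_{F_\chi}(g\gamma)v_{F_\chi}\|\in[2^n,2^{n+1})$, bounds each such term by $e^{-(1+\delta)n}$ using the same strict inequality you exploit, and then controls the number of terms in each dyadic shell by appealing to the anticanonical counting for $\bbp_{F_\chi}$ just established in Corollary~\ref{c:Anticanonical}. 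Your approach is shorter and cleaner; the paper's approach is internally self-contained and has the pleasant feature of feeding the counting result for the anticanonical bundle back into the proof for a general $\chi$. One minor caution: the symbols $m_\alpha$ in $\rho'_E$ and in $\rho'_{F_\chi}$ need not coincide (the paper's notation here is loose), but this does not affect your argument, since the Eisenstein series for any character in the interior of the positive cone spanned by $\{\lambda_\alpha\}_{\alpha\in F_\chi^c}$ converges for $s$ large, and your choice of $s$ can absorb the ratio.
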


\begin{proof} 
We need to show that the above series is uniformly convergent.
First note that from the definition of $H(g)$ it follows that 
$l_\chi(H(g))=\sum_{\alpha\not\in F_\chi}c_\alpha H(g)_{\alpha}.$

Now, since $\|\;\|$ is $K$-invariant and $\pfone$ 
fixes $v_{F_\chi}$ we have: $2^n\leq \|\eta_{F_\chi}(g\gamma) v_{F_\chi}\|< 2^{n+1}$ implies that 
\be\label{e;m-alpha-inner}
n\leq_\mathcal{O} l_{F_{\chi}^c}(H(g\gamma))=\sum_{\alpha\not\in F_{\chi}}m_{\alpha}H(g\gamma)_{\alpha}\leq_{\mathcal{O}} n+1,
\ee
where $g\gamma=k_{g\gamma}\Theta(H(g\gamma))q_{g\gamma}$ 
as above. 

We utilize the notation from Section~\ref{ss:MargulisDani}.
As was discussed in the proof of Lemma~\ref{out}, see also Lemma~\ref{root},
there is a constant $D_{\mathcal{O}}\geq 0$ so that 
note that for any $\alpha\in\Delta$ and every $g\in \mathcal{O}$
we have $\lambda_\alpha(g\gamma)\geq e^{-D_{\mathcal{O}}}.$

We fix a positive number $\delta$ such that $m_\alpha(1+\delta)<\apzc c_\alpha$ 
for any $\alpha\not\in F_\chi$. 
Now for any $g\in\mathcal{O}$ and $\gamma\in \Gamma/\Gamma\cap \pfone$ 
so that $e^n\leq \|\eta_{F_\chi}(g\gamma) v_{F_\chi}\|< e^{n+1}$
we have 
\begin{align*}
(\apzc c_\alpha)\cdot H(g\gamma)&=
\sum_{-D_{\mathcal{O}}\leq H(g\gamma)_\alpha<0} \apzc c_\alpha H(g\gamma)_\alpha+
\sum_{ H(g\gamma)_\alpha\geq0} \apzc c_\alpha H(g\gamma)_\alpha\\
&\geq D'(\mathcal{O},\chi)+\sum_{ H(g\gamma)_\alpha\geq0}(1+\delta)(m_\alpha)H(g\gamma)_\alpha.
\end{align*}
This, a similar calculation for $(m_\alpha)\cdot(H(g\gamma))$ 
and~\eqref{e;m-alpha-inner} imply that for large enough $n$
we have
\[
e^{-\apzc l_\chi (H(g\gamma))}\leq_{\mathcal{O,\chi}} e^{(-1-\delta)n}.
\]
For any $g\in\mathcal{O}$ and any $T'>0$, let 
$
S_{g,T'}=\{\gamma(\Gamma\cap \pfone)\h|\h \|\eta_{F_\chi}(g\gamma) v_{F_\chi}\|\le T'\}.
$ 
Using the anticanonical case for the parabolic $\bbp_{F_\chi}$, see Corollary~\ref{c:Anticanonical}, 
we have 
\[
 \mbox{$|S_{g,T'}|\sim T' \log(T')^{|F_\chi^c|-1},\quad$ 
uniformly for $g\in\mathcal{O}.$}
\]
Hence for large enough $N$'s we have
\[
\sum^{\infty}_{n=N} e^{(-1-\delta)n} |S_{e^{n+1}}| \le 
2\sum^{\infty}_{n=N} e^{(-1-\delta)n} e^{n+1} (n+1)^{|F_\chi^c|-1} <\infty,
\]
this finishes the proof.
\end{proof}

\begin{proof}[Proof of of Proposition~\ref{p:AlmostWeak}.] 
Let the notation be as above. 
Put 
\[
\mathcal{C}_{-\vare}(F_\chi)=\{{\bf x}\in\bbr^{|F_{\chi}^c|}\h|
-\vare\le x_\alpha, \forall \alpha \in F_{\chi}^c\}.
\]
Using the decomposition of the Haar measure we have
\begin{align*}
\int_K\int_{\mathcal{C}_{-\vare}(F_\chi)} (k\Theta(\xbf)\mu_{P_{F_{\chi}}^{(1)}})(\Psi_{\vare})
e^{l_{E^c}(\xbf)-\apzc l_{\chi}(\xbf)} d\xbf  dk
&=\int_{G/P_{F_\chi}^{(1)}}\int_{P_{F_\chi}^{(1)}/P_{F_\chi}^{(1)}\cap\Gamma} \Psi_{\vare}(g'q)\mu_{P_{F_{\chi}}^{(1)}}( q)e^{-\apzc l_{\chi}(H(g'))} dg'\\
&=\int_{G/P_{F_\chi}^{(1)}\cap\Gamma}\Psi_{\vare}(g')e^{-\apzc l_{\chi}(H(g'))} dg'\\
&=\int_{G/\Gamma}\Psi_{\vare}(g')\sum_{\Gamma/\Gamma\cap P_{F_\chi}^{(1)}}e^{-\apzc l_{\chi}(H(g'))} dg'=\langle \Psi_{\vare},\xi\rangle.
\end{align*}
So by Lemma~\ref{l:abs-conv}, we have $L(\vare):=\langle \Psi_{\vare},\xi\rangle$ is defined for $\vare<\vare_0$
and $\lim_{\vare\to 0} L(\vare)$ exists. 

Note that ${\rm pr}_{F_{\chi}^c}(\Rcal_{F_{\chi}})$ is a bounded region 
independent of $T.$ 
Moreover, as $\vare'\to0$ we have ${\rm pr}_{F_{\chi}^c}(\Rcal_{F_{\chi}})$
covers $\mathcal{C}_{-\vare}(F_\chi)$
Therefore, by if we let $T\to\infty$ in Lemma~\ref{int-est1}, 
see in particular~\eqref{e:pairing-flag} we get
\[
\lim_{T\to\infty}\langle \wt{F}_T,\Psi_{\vare}\rangle=C\int_K\int_{\mathcal{C}_{-\vare}(F_\chi)} (k\Theta(\xbf)\mu_{P_{F_{\chi}}^{(1)}})(\Psi_{\vare}) e^{l_{E^c}(\xbf)} e^{l_{E^c}(\xbf)-\apzc l_{\chi}(\xbf)} d\xbf  dk.
\]
This and the above finish the proof.
\end{proof}
\section*{Acknowledgment} 
We would like to thank Gregory Margulis and Peter Sarnak for the helpful conversations. We are grateful to Zeev Rudnick and Nicholas Templier for asking us about the absolutely convergence case. We also thank the anonymous referee whose comments helped to improved the exposition. 


\begin{thebibliography}{ZZZZ}

\bibitem[BaT98]{BaT} V.~V.~Batyrev, Y.~Tschinkel,
{\it Tamagawa numbers of polarized algebraic varieties,}
Asterisque {\bf 251} (1998) 299-340.

\bibitem[BO12]{BO}Y.~Benoist, H.~Oh,
{\it Effective equidistribution of $S$-arithmetic points on symmetric varieties.}
Annales de L'Institut Fourier, vol 62, (2012), 1889-1942.

\bibitem[Bo91]{Bor} A.~Borel,
Linear algebraic groups, 2nd edition,
Springer-Verlag, New York, 1991.

\bibitem[BoT65]{BT} A.~Borel, J.~Tits,
{\it Groupes r\'{e}ductifs,}
Publications math\'{e}matiques de l'I.H.\'{E}.S. {\bf 27} (1965) 55-151.

\bibitem[BoT73]{BoT2} A.~Borel, J.~Tits,
{\it Homomorphismes ``Abstraits'' de Groupes Algebriques Simples,}
Ann. of Math., 2nd ser., {\bf 97} no. 3 (1973) 499-571.

\bibitem[Bu90]{Bur}M.~Burger, 
{\it Horocycle flow on geometrically finite surfaces,}  
Duke Math. J.  {\bf 61}  (1990),  no. 3, 779-803.

\bibitem[Bu48]{Bu} H. Busemann, 
{\it Spaces with non-positive curvature,}
Acta Mathematica. {\bf 80} (1948) 259-310.


\bibitem[CG04]{CG} L.~Corwin, F.~P.~Greenleaf,
Representations of nilpotent Lie groups and their applications Part1: Basic theory and examples,
Cambridge studies in advanced mathematics, Cambridge, 2004. 

\bibitem[D79]{D1}S.~G.~Dani,
{\it On invariant measures, minimal sets and a lemma of Margulis,}  
Invent. Math. {\bf 51}  (1979), no. 3, 239-260. 

\bibitem[D81]{D2}S.~G.~Dani, 
{\it Invariant measures and minimal sets of horospherical flows,}  
Invent. Math. {\bf 64}  (1981), no. 2, 357-385.

\bibitem[D86]{D3}S.~G.~Dani, 
{\it Orbits of horospherical flows,}  
Duke Math. J.  {\bf 53}  (1986),  no. 1, 177-188.

\bibitem[DM91]{DM} S.~G.~Dani, G.~A.~Margulis,
{\it Asymptotic behavior of trajectories of unipotent flows on homogeneous spaces,}
Proc. Indian Acad. Sci. (Math. Sci.) {\bf 101} no. 1 (1991) 1-17.

\bibitem[DRS93]{DRS} W.~Duke, Z.~ Rudnik, P.~Sarnak,
{\it Density of integer points on affine homogeneous varieties,}
Duke mathematical journal {\bf 71} no. 1, (1993) 143-179.

\bibitem[EM93]{EM} A.~Eskin, C.~McMullen,
{\it Mixing, counting, and equidistribution in Lie groups,} 
Duke Math. J. {\bf 71} no. 1 (1993) 181-209.

\bibitem[EMS97]{EMS1} A.~Eskin, S.~Mozes, N.~Shah,
{\it Non-divergence of translates of certain algebraic measures,}
Geom. Funct. Anal. {\bf 7} (1997) 48-80.

\bibitem[EMS96]{EMS2} A.~Eskin, S.~Mozes, N.~Shah,
{\it Unipotent flows and counting lattice points on homogenous varieties,}
Ann. of Math., 2nd ser., {\bf 143} no. 2 (1996) 253-299. 

\bibitem[FMT89]{FMT} J.~ Franke, Y.~I.~Manin, Y.~Tschinkel,
{\it Rational points of bounded height on Fano varieties,}
Invent. Math. {\bf 95} no. 2 (1989) 421-435. 

\bibitem[GOS10]{GOS}A.~Gorodnik, H.~Oh, N.~Shah,
{\it Strong wavefront lemma and counting lattice points in sectors,}
Israel J. Math. {\bf 176} (2010) 419-444. 

\bibitem[GW07]{GW}A.~Gorodnik, B.~Weiss, 
{\it Distribution of lattice orbits on homogeneous varieties.}  
Geom. Funct. Anal.  \textbf{17}  no. 1  (2007) 58-115.

\bibitem[HC68]{HC}Harish-Chandra,
{Automorphic forms on semisimple Lie groups,}
Lecture Notes in Mathematics, Springer-Verlag, {\bf 62}, (1968)

\bibitem[KM96]{KM1}D.~Kleinbock, G.~Margulis,
{\it Bounded orbits of nonquasiunipotent flows on homogeneous spaces.}
Amer. Math. Soc. Transl. \textbf{2} vol 171, (1996), 144-172.

\bibitem[KW08]{KW}D.~Kleinbock, B.~Weiss,
{\it Dirichlet's theorem on diophantine approximation and homogeneous flows} 
J. Mod. Dyn. {\bf 4} (2008), 43-62. 

\bibitem[Kn96]{Kn}A.~Knapp
{ Lie groups beyod and introduction,}
Brikh\"{a}user, 1996.


\bibitem[Mar91]{Mar-book}G.~A.~Margulis,
Discrete subgroups of semisimple Lie groups, 
Ergebnisse der Mathematik und ihrer Grenzgebiete (3), 17. Springer-Verlag, Berlin, 1991.

\bibitem[Mar91b]{Mar-ICM}G. A. Margulis, 
{\it Dynamical and ergodic properties of subgroup actions on homogeneous
spaces with applications in number theory,} 
In: Sutake, I. (ed.) Proceedings
of the International Congress of Mathematicians. Kyoto 1990, 193-215. Tokyo:
The Mathematical Society of Japan and Berlin Heidelberg New York: Springer 1991.

\bibitem[Mar04]{Mar4}G.~A.~Margulis, 
On some aspects of the theory of Anosov systems,
With a survey by Richard Sharp: Periodic orbits of hyperbolic flows. 
Translated from the Russian by Valentina Vladimirovna Szulikowska. Springer Monographs in Mathematics. Springer-Verlag, Berlin, 2004.   

\bibitem[MS93]{MS} S.~Mozes, N.~Shah,
{\it On the space of ergodic invariant measures of unipotent flows,}
Ergodic Theory Dynam. Systems  \textbf{15} no. 1,   (1995) 149-159.

\bibitem[Oh10]{Oh}H.~Oh,
{\it Orbital counting via mixing and unipotent flows,}
Homogeneous flows, Moduli Spaces and Arithmetic, Clay Mathematics Proceedings, vol 10, (2010), 339-375.

\bibitem[Pe95]{Pe}E. Peyre,
{\it Hauteurs et mesures de Tamagawa sur les vari\'{e}t\'{e}s de Fano,} 
Duke Math. J. {\bf 79} no. 1, (1995), 101-218.

\bibitem[Rag72]{Rag} M.~S.~Raghunathan,
{Discrete subgroups of Lie groups,}
Springer-Verlag, New York, 1972.

\bibitem[R83]{Rat1}M.~Ratner, 
{\it Horocycle flows: joining and rigidity of products.} 
Ann. Math. \textbf{118} (1983) 277-313.

\bibitem[R90]{Rat2}M.~Ratner,  
{\it On measure rigidity of unipotent subgroups of semi-simple groups.} 
Acta, Math. \textbf{165} (1990) 229 -309  

\bibitem[R91a]{Rat3}M.~Ratner,
{\it On Raghunathan's measure conjecture,}
Ann. of Math. {\bf 134} (1991), 545-607.

\bibitem[R91b]{Rat4}M.~Rather, 
{\it Raghunathan topological conjecture and distributions of unipotent flows.} 
Duke Math. J. \textbf{63} (1991) 235-280 

\bibitem [Sa81]{Sar}P.~Sarnak,
{\it Asymptotic behavior of periodic orbits of the horocycle flow and Eisenstein series,} 
Comm. Pure Appl. Math. {\bf 34} (1981), no. 6, 719-739.

\bibitem[Sh96]{Sh} N.~A.~Shah,
{\it Limit distribution of expanding translates of certain orbits on homogeneous spaces,}
 Proc. Indian Acad. Sci. Math. Sci.  \textbf{106} no. 2,  (1996) 105-125.
 
\bibitem[Spr98]{Spr} T.~A.~Springer,
 Linear algebraic groups, Second edition,
 Birkh$\ddot{{\rm a}}$user, Boston, MA, 1998.
 


\end{thebibliography}
\end{document}